\theoremstyle{plain}
\newtheorem{theorem}{Theorem}[section]
\theoremstyle{plain}
\newtheorem{lemma}[theorem]{Lemma}
\theoremstyle{plain}
\newtheorem{corollary}[theorem]{Corollary}
\theoremstyle{definition}
\newtheorem{definition}[theorem]{Definition}
\theoremstyle{plain}
\newtheorem{proposition}[theorem]{Proposition}
\theoremstyle{remark}
\newtheorem{remark}[theorem]{Remark}
\theoremstyle{definition}
\newtheorem{example}[theorem]{Example}
\theoremstyle{plain}
\newtheorem{claim}[theorem]{Claim}
\theoremstyle{plain}
\theoremstyle{plain}
\newtheorem{question}[theorem]{Question}
\newcommand\blfootnote[1]{%
  \begingroup
  \renewcommand\thefootnote{}\footnote{#1}%
  \addtocounter{footnote}{-1}%
  \endgroup
}
\title[]{Relating Asymptotic Dimension to Ponomarev's Cofinal Dimension via Coarse Proximities}
\author{Jeremy Siegert}
\address{Ben Gurion University of the Negev, Beer Sheva, Israel} 
\email{siegertj@post.bgu.ac.il}
\date{\today} 
\keywords{asymptotic dimension, cofinal dimension, Higson corona, coarse proximity space}
\subjclass[2020]{54F45  , 54F50, 54G20}
\begin{document}

\begin{abstract}
In this paper we show that the asymptotic dimension of an unbounded proper metric space is bounded above by a coarse analog of Ponomarev's cofinal dimension of topological spaces, which we call the coarse cofinal dimension. We also show that asymptotic dimension is bounded below by the cofinal dimension of the Higson corona by existing results of Miyata, Austin, and Virk. We do this by introducing several constructions in the theory of coarse proximity spaces. In particular we introduce the inverse limit of coarse proximity spaces.  We end with some open problems.
\end{abstract}

\maketitle
\tableofcontents

\blfootnote{The author was supported by Israel Science Foundation grant no. 2196/20}
\section{Introduction}
Coarse geometry studies metric spaces and abstract spaces from a large scale perspective where phenomena occurring in sets designated as bounded are irrelevant and one considers instead the properties that manifest as interactions between unbounded sets and are persistent through ``zooming out". One such large scale invariant is the asymptotic dimension defined originally for metric spaces by Gromov in \cite{Gromov} as a large scale analog of classical covering dimension. The use of this property would find application in the study of groups when Yu showed in \cite{Yu} that the Novikov conjecture is satisfied for groups of finite asymptotic dimension. There remain open problems directly involving asymptotic dimension that would advance progress towards other major problems. In particular, in \cite{dranishnikov} two open problems about the asymptotic dimension of geometrically finite groups are stated that, together, imply the Novikov conjecture. In \cite{burghelea} a conjecture on the asymptotic dimension of groups is stated that has implications for the Burghelea conjecture. As these problems remain open, greater investigation of asymptotic dimension is merited. A related large scale invariant is the Higson corona of a proper metric space, which is a compact Hausdorff remainder of a compactification that models the large scale behavior of a space. Dranishnikov asked in \cite{dranishnikov} if the asymptotic dimension of an unbounded proper metric space is equal to the covering dimension of its Higson corona. Dranishnikov, Keesling, and Uspenkij would show in \cite{dranishnikov} and \cite{asdimlessthandim} that if the asymptotic dimension of an unbounded proper metric space is finite, then the two dimensions do indeed agree. The case where asymptotic dimension is infinite is still open. A characterization of asymptotic dimension or sufficient criteria for finite asymptotic dimension purely in terms of topological dimensions of the Higson corona allows questions regarding asymptotic dimension to be evaluated with the much more mature techniques of classical dimension theory. 
\vspace{\baselineskip}

In this paper we make the case that asymptotic dimension is more closely related to the cofinal dimension of the Higson corona than the covering dimension. The cofinal dimension of a topological space is defined in terms of covers that are made up of closures of pairwise disjoint open sets. Such covers are called canonical covers. The cofinal dimension of spaces, and compact Hausdorff spaces in particular, can be characterized by certain finite-to-one surjections of zero dimensional spaces. A detailed explanation of cofinal dimenison can be found in \cite{pears}. Recently, Austin, Miyata, and Virk proved characterizations of asymptotic dimension using large scale analogs of surjective finite-to-one maps in \cite{austinvirk} and \cite{miyatavirk}. This makes up the motivation for the results of this paper. Using slight variations of the results of Austin, Miyata, and Virk and extant results about cofinal dimension we prove that the asymptotic dimension of unbounded proper metric spaces is bounded above by a coarse analog of Ponomarev's cofinal dimension that we call the coarse cofinal dimension. To do this we make use of coarse proximity spaces, abstract large scale objects first defined by Grzegrzolka and the author in \cite{paper1}. Coarse proximity spaces are a large scale analog of small scale proximity spaces originally introduced by Efremovic as detailed in \cite{proximityspaces}. As is characteristic of coarse properties defined using the language of coarse proximity spaces, our definition of the coarse cofinal dimension is purely in terms of unbounded sets and closeness relations between them, as opposed to the usual characterization of asymptotic dimension using uniform covers. The convenience of coarse proximity spaces comes from the boundary functor which assigns to each coarse proximity space a compact Hausdorff space that captures the geometry evident in closeness relations of the unbounded sets of the base space. In the case of proper metric spaces, the boundary of the obvious coarse proximity structure is homeomorphic to the Higson corona, making characterizing coarse properties in terms of unbounded sets very useful in recharacterizing coarse properties such as asymptotic dimension in terms of the Higson corona. 
\vspace{\baselineskip}

The progression of this paper is as follows. In Section \ref{basic constructions} we cover the basic machinery used in later sections. Much of this material is published elsewhere, but is reproduced here in the interest of self containment. Included herein are the definitions of small scale proximity spaces, the construction of the Smirnov compactification, coarse proximity spaces, the boundary functor, and cofinal dimension. While most of the definitions and results in this section appear elsewhere, there are some new results that are critical for the work in later sections. In section \ref{asymptotic dimension of coarse proximity spaces} we define and very briefly explore the asymptotic dimension of coarse proximity spaces. This is done via uniformly bounded covers of coarse proximity spaces the definition of which is an aesthetic variation of the same definition provided by Honari and Kalantari in \cite{Honari}. In the case where a coarse proximity structure comes from a metric, our definition of asymptotic dimension agrees with the usual definition. Section \ref{metrizability} is a short section in which we prove a necessary and sufficient condition for the structure of a coarse proximity space to be induced by a metric. Section \ref{inverse limits} introduces our first major construction, the inverse limit of coarse proximity spaces. Here we construct inverse limits in the categories of coarse proximity spaces with coarse proximity maps. Note that the morphisms in this category are not closeness classes, but individual functions. We show that the inverse limit construction commutes with the boundary functor. In section \ref{coarse n to 1} we review the notions and results surrounding coarsely $n$-to-$1$ maps as defined by Austin, Miyata, and Virk. We also prove some generalizations to coarse proximity spaces of their results. It is also in this section that we introduce the coarse cofinal dimension, the large scale analog of Ponomarev's cofinal dimension. Finally in section \ref{equality of coarse cofinal and asdim} we provide a characterization of the coarse cofinal dimension that demonstrates that the dimension is at least the asymptotic dimension in the case of proper metric spaces. In the final section we conclude with some open problems.

\section{Basic constructions}\label{basic constructions}

In this first preliminary section we will review many of the basic structures and definitions used throughout the rest of the paper. Beginning with the basic theory of coarse proximity spaces we review the definitions of bornologies, coarse proximity spaces, maps between coarse proximity spaces, boundaries of coarse proximity spaces, and the basic properties of those boundaries. This material is presented roughly as it was across \cite{paper1}, \cite{paper2}, \cite{paper3}, and \cite{thesis}. However there are some new results. After finishing the basic coarse constructions and concepts we proceed to a review of, and a new charaterization,  the cofinal dimension of topological spaces as described in \cite{pears}. The definitions in this subsection are of critical importance to later sections.

\subsection{Small-scale Proximity Spaces and the Smirnov compactification}

The presentation of small-scale proximity spaces here is as seen in \cite{proximityspaces}.

\begin{definition}
A binary relation $\delta$ on the power set of a nonempty set $X$ is called a {\bf proximity relation} and the pair $(X,\delta)$ a {\bf proximity space} if the relation satisfies:\\
\begin{enumerate}
\item For all $A\subseteq X$, $A\bar{\bf \delta}\emptyset$.
\item $A\delta B$ implies $B\delta A$.
\item $A\cap B\neq\emptyset$ implies $A\delta B$.
\item $A\delta(B\cup C)$ if and only if $A\delta B$ or $A\delta C$.
\item $A\bar{\delta}B$ implies that there is an $E\subseteq X$ such that $A\bar{\delta}(X\setminus E)$ and $E\bar{\delta}B$.
\end{enumerate}

\noindent
Where $A\bar{\delta}B$ denotes the statement that ``$A\delta B$ does not hold". If in addition the relation satisfies:

\[\{x\}\delta\{y\}\iff x=y\]

\noindent
then we say that the proximity $\delta$ and the corresponding proximity space $(X,\delta)$ are {\bf separated}.
\end{definition}

\begin{proposition}
The {\bf induced topology} of a proximity space $(X,\delta)$ is the topology on $X$ determined by the closure operator defined by

\[cl(A)=\{x\in X\mid \{x\}\delta A\}\]

\noindent
This topology is Hausdorff if and only if the proximity is separated.
\end{proposition}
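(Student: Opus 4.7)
The plan is to verify in the first stage that the assignment $A \mapsto cl(A) := \{x \in X \mid \{x\}\delta A\}$ satisfies the Kuratowski closure axioms, so that it indeed defines a topology on $X$; and in the second stage to show that this topology is Hausdorff exactly when the proximity is separated.

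For the first stage, four axioms must be checked. The identity $cl(\emptyset) = \emptyset$ falls out immediately from axiom (1). The extensivity $A \subseteq cl(A)$ follows because if $x \in A$, then $\{x\} \cap A \neq \emptyset$, so axiom (3) gives $\{x\}\delta A$. Finite additivity $cl(A \cup B) = cl(A) \cup cl(B)$ is exactly axiom (4) with the first argument taken to be the singleton $\{x\}$. Before attacking idempotence, I will first record a monotonicity lemma: if $A \subseteq B$ and $A \delta C$, then $B \delta C$. This is obtained by writing $B = A \cup (B \setminus A)$ and invoking axiom (4). I expect the idempotence $cl(cl(A)) \subseteq cl(A)$ to be the main obstacle of this stage, and the strategy is to argue by contradiction. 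Assume $x \in cl(cl(A))$ but $x \notin cl(A)$, so $\{x\}\bar{\delta}A$. Apply axiom (5) to obtain $E \subseteq X$ with $\{x\}\bar{\delta}(X \setminus E)$ and $E \bar{\delta} A$. Use monotonicity to deduce $cl(A) \subseteq X \setminus E$: otherwise any $y \in cl(A) \cap E$ would give $\{y\}\delta A$ and hence, by monotonicity, $E \delta A$, a contradiction. Applying monotonicity to $\{x\}\bar{\delta}(X \setminus E)$ then forces $\{x\}\bar{\delta}cl(A)$, contradicting $x \in cl(cl(A))$.

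For the second stage, the direction "Hausdorff implies separated" is short: Hausdorff implies $T_1$, so singletons are closed in the induced topology, which means $cl(\{x\}) = \{x\}$ for every $x$; unpacking the definition of $cl$ yields $\{y\}\delta\{x\} \iff y = x$. For the reverse direction, suppose the proximity is separated and take $x \neq y$. Then $\{x\}\bar{\delta}\{y\}$, so axiom (5) supplies a set $E$ with $\{x\}\bar{\delta}(X \setminus E)$ and $E \bar{\delta}\{y\}$ (the latter rewritten via symmetry as $\{y\}\bar{\delta}E$). Unwinding these via the definition of $cl$, the set $U := X \setminus cl(X \setminus E)$ is open and contains $x$, and the set $V := X \setminus cl(E)$ is open and contains $y$. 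The inclusion $U \subseteq E$ together with $V \cap E = \emptyset$ gives $U \cap V = \emptyset$, establishing Hausdorffness.

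The conceptual steering throughout is that axiom (5) is the proximity-theoretic analog of normality, and it is precisely this axiom that powers both the idempotence of $cl$ and the production of disjoint open neighborhoods; all the remaining work is a straightforward translation between ``closeness'' and ``closure.''
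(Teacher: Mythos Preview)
Your proof is correct. The paper itself does not provide a proof of this proposition; it is stated as background material drawn from the standard reference on proximity spaces (Naimpally--Warrack), so there is no proof in the paper to compare against. Your argument is the standard textbook verification: the Kuratowski axioms are checked directly from axioms (1)--(4), idempotence is obtained from the strong axiom (5) via the monotonicity lemma, and the Hausdorff equivalence is handled by using axiom (5) as a separation device. Everything is in order.
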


\begin{example}
If $(X,d)$ is a metric space, then the relation $\delta$ defined by $A\delta B$ if and only if $d(A,B)=0$ is a proximity relation on $X$, whose induced topology is the topology induced by the metric $d$. We call this proximity the {\bf metric proximity}.
\end{example}

\begin{example}
If $X$ is a compact Hausdorff space, then the relation $\delta$ defined by $A\delta B$ if and only if their closures intersect is a proximity relation on $X$ that induces the original topology on $X$. This proximity relation is in fact the unique proximity relation on $X$ that induces the original topology.
\end{example}

\begin{example}
If $(X,d)$ is a proper metric space (that is, a metric space in which all closed and bounded sets are compact), then the relation $\delta$ defined on $X$ by

\[A\delta B\iff\left\{\begin{array}{ll}
d(A,B)=0 & \text{ or }
\lim_{r\to\infty}d(A\setminus B(x_{0},r),B\setminus B(x_{0},r))<\infty
\end{array}
\right.\]

\noindent
where $x_{0}$ is any point in $X$, is a proximity relation on $X$ that induces the metric topology induced by $d$. We call this proximity the {\bf Higson proximity}.

\end{example}

\begin{example}
If $(X,\delta)$ is a proximity space and $Y\subseteq X$ is a nonempty set, then the relation $\delta_{Y}$ defined on subsets of $Y$ by

\[A\delta_{Y}B\iff A\delta B\]

\noindent
is a proximity relation on $Y$ that induces the subspace topology on $Y$ inherited from the topology on $X$ induced by $\delta$. We of course call this proximity the {\bf subspace proximity}.

\end{example}

\begin{definition}
A function $f:(X,\delta_{X})\rightarrow (Y,\delta_{Y})$ is called a {\bf proximity map} if for all $A,B\subseteq X$ we have

\[A\delta_{X}B\implies f(A)\delta_{Y} f(B)\]

\noindent
We call $f$ a {\bf proximity isomorphism} if $f$ is bijective and the inverse function $g:Y\rightarrow X$ is also a proximity map. 
\end{definition}

Proximity maps are continuous with respect to the topologies induced by the proximity relations. The collection of proximity spaces together with proximity maps makes up the category {\bf Prox}. 

\begin{definition}
A nonempty collection $\sigma$ of subsets of a proximity space $(X,\delta)$ is called a {\bf cluster} if it satisfies the following:\\
\begin{enumerate}
\item For all $A,B\in\sigma$, $A\delta B$.
\item If $(A\cup B)\in\sigma$ then $A\in\sigma$ or $B\in\sigma$.
\item If $C\delta A$ for all $A\in\sigma$, then $C\in\sigma$.
\end{enumerate}

\noindent
If $\sigma$ contains $\{x\}$ for some $x\in X$, we call $\sigma$ a {\bf point cluster} and denote it by $\sigma_{x}$.
\end{definition}

Note that in a separated proximity space, $\sigma_{x}=\sigma_{y}$ if and only if $x=y$.
\vspace{\baselineskip}

\begin{definition}
Given a collection of clusters $\mathcal{A}$ of a proximity space $(X,\delta)$ we say that a subset $A\subseteq X$ {\bf absorbs} $\mathcal{A}$ if $A\in\sigma$ for all $\sigma\in\mathcal{A}$.
\end{definition}

Given a separated proximity space $(X,\delta)$ the Smirnov compactification of $(X,\delta)$ is constructed in the following way. Let $\mathfrak{X}$ be the set of all clusters in $X$. We define a relation $\delta^{*}$ on the collection of subsets of $\mathfrak{X}$ as follows. If $\mathcal{A},\mathcal{B}\subseteq\mathfrak{X}$ then, $\mathcal{A}\delta^{*}\mathcal{B}$ if and only if for all absorbing sets $A$ for $\mathcal{A}$, and $B$ for $\mathcal{B}$, we have that $A\delta B$. Then $(\mathfrak{X},\delta^{*})$ is a compact separated proximity space. Moreover, the map $f:X\rightarrow\mathfrak{X}$ defined by $f(x)=\sigma_{x}$ is a proximity (and consequently topological) embedding whose image is dense in $\mathfrak{X}$. The proximity $\delta$ on $X$ can be recharacterized by saying that $A\delta B$ if and only if $cl_{\mathfrak{X}}(A)\cap cl_{\mathfrak{X}}(B)\neq\emptyset$. The Smirnov compactification is the unique compact Hausdorff space into which $(X,\delta)$ proximity embeds as a dense subspace. 
\vspace{\baselineskip}

\begin{proposition}\label{associated cluster}
	Let $(X, \delta_1)$ and $(Y, \delta_2)$ be proximity spaces. Let $f:X\rightarrow Y$ be a proximity map. Then to each cluster $\sigma_1$ in $X,$ there corresponds a cluster $\sigma_2$ in $Y$ given by
	\[\sigma_2:=\{A\subseteq Y\mid A\delta_2 f(B)\text{ for all }B\in\sigma_1\}.\]
\end{proposition}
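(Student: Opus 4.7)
The plan is to verify each of the three cluster axioms for the candidate $\sigma_2$. Nonemptiness comes for free: for any $B\in\sigma_1$, cluster axiom (1) applied to $\sigma_1$ gives $B\delta_1 B'$ for every $B'\in\sigma_1$, so $f(B)\delta_2 f(B')$ since $f$ is a proximity map, and therefore $f(B)\in\sigma_2$. Axiom (3) is then immediate: any $C$ satisfying $C\delta_2 A$ for all $A\in\sigma_2$ satisfies in particular $C\delta_2 f(B)$ for all $B\in\sigma_1$, which is exactly the definition of $C\in\sigma_2$.

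For axiom (1), I would assume $A,B\in\sigma_2$ and suppose toward contradiction that $A\bar{\delta}_2 B$. Proximity axiom (5) produces $E\subseteq Y$ with $A\bar{\delta}_2(Y\setminus E)$ and $E\bar{\delta}_2 B$. Since every cluster contains the whole space, $X\in\sigma_1$, and the decomposition $X=f^{-1}(E)\cup f^{-1}(Y\setminus E)$ together with axiom (2) for $\sigma_1$ forces at least one of the two preimages into $\sigma_1$. If $f^{-1}(E)\in\sigma_1$, then membership $B\in\sigma_2$ yields $B\delta_2 f(f^{-1}(E))\subseteq E$, contradicting $E\bar{\delta}_2 B$ via axiom (4); the alternative symmetrically contradicts $A\bar{\delta}_2(Y\setminus E)$.

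The main obstacle is axiom (2). Since clusters are not closed under intersection, there is no obvious way to merge hypothetical witnesses $C_A,C_B\in\sigma_1$ for $A,B\notin\sigma_2$ on the domain side. My workaround is to combine on the codomain: apply axiom (5) to each of $A\bar{\delta}_2 f(C_A)$ and $B\bar{\delta}_2 f(C_B)$ to extract sets $E,F\subseteq Y$ with $A\bar{\delta}_2(Y\setminus E)$, $B\bar{\delta}_2(Y\setminus F)$, $E\bar{\delta}_2 f(C_A)$, and $F\bar{\delta}_2 f(C_B)$. The proximity-map property $U\delta_1 V\Rightarrow f(U)\delta_2 f(V)$, used via its contrapositive and the inclusion $f(f^{-1}(E))\subseteq E$, transfers the last two separations to $f^{-1}(E)\bar{\delta}_1 C_A$ and $f^{-1}(F)\bar{\delta}_1 C_B$, forcing $f^{-1}(E),f^{-1}(F)\notin\sigma_1$ by axiom (1) applied to $\sigma_1$. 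The contrapositive of axiom (2) for $\sigma_1$ then gives $f^{-1}(E\cup F)=f^{-1}(E)\cup f^{-1}(F)\notin\sigma_1$, hence $f^{-1}(Y\setminus(E\cup F))\in\sigma_1$. Membership $A\cup B\in\sigma_2$ now yields $(A\cup B)\delta_2(Y\setminus(E\cup F))$ by passing through $f$ and using axiom (4). On the other hand, $Y\setminus(E\cup F)\subseteq(Y\setminus E)\cap(Y\setminus F)$, so the two separations together with axiom (4) give $(A\cup B)\bar{\delta}_2(Y\setminus(E\cup F))$, producing the desired contradiction.
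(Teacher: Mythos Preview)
Your proof is correct. The paper itself states this proposition without proof, citing it as a standard fact from the classical theory of proximity spaces (Naimpally--Warrack); there is thus no argument in the paper to compare against. Your verification of the three cluster axioms is clean, and the use of the strong axiom (5) together with pulling back the separating sets $E,F$ to $X$ and invoking the contrapositive of cluster axiom (2) for $\sigma_1$ is exactly the right maneuver to handle the union axiom for $\sigma_2$.
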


\begin{theorem}\label{extension_theorem}
	Let $(X, \delta_1)$ and  $(Y, \delta_2)$ be separated proximity spaces and $(\mathfrak{X}, \delta_1^{*})$ and $(\mathfrak{Y}, \delta_2^{*})$ the respective Smirnov compactifications. Let $f: X \to Y$ be a proximity map. Then $f$ extends to a unique proximity map $\bar{f}:\mathfrak{X} \to \mathfrak{Y}$ where $\sigma_{1}\in\mathfrak{X}$ is mapped to the associated cluster $\sigma_{2}\in\mathfrak{Y}$ as described in Proposition \ref{associated cluster}.
\end{theorem}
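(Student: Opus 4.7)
The plan is to define $\bar{f}$ on clusters via Proposition \ref{associated cluster}, verify that it restricts to $f$ on the image of $X$, show that the resulting map respects the compactification proximities, and then obtain uniqueness from density.

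First, set $\bar{f}(\sigma_1) := \{A \subseteq Y \mid A\,\delta_2\, f(B) \text{ for all } B \in \sigma_1\}$. Proposition \ref{associated cluster} guarantees this is a cluster in $(Y,\delta_2)$, so $\bar{f}: \mathfrak{X} \to \mathfrak{Y}$ is a well-defined function. To check that $\bar{f}$ extends $f$, fix $x \in X$ and examine $\bar{f}(\sigma_x)$. For every $B \in \sigma_x$, cluster axiom (1) gives $\{x\}\,\delta_1\, B$, and since $f$ is a proximity map this yields $\{f(x)\} = f(\{x\})\,\delta_2\, f(B)$; hence $\{f(x)\} \in \bar{f}(\sigma_x)$. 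Thus $\bar{f}(\sigma_x)$ is a cluster containing $\{f(x)\}$, and because point clusters in a separated proximity space are unique, $\bar{f}(\sigma_x) = \sigma_{f(x)}$. Under the identification $x \leftrightarrow \sigma_x$, this says exactly $\bar{f}|_X = f$.

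Next I verify that $\bar f$ is a proximity map $(\mathfrak{X},\delta_1^{*}) \to (\mathfrak{Y},\delta_2^{*})$. Given $\mathcal{A}\,\delta_1^{*}\,\mathcal{B}$ and absorbing sets $A'$ for $\bar{f}(\mathcal{A})$ and $B'$ for $\bar{f}(\mathcal{B})$, the goal is $A'\,\delta_2\,B'$. Unpacking the definition, $A'\,\delta_2\, f(B)$ for every $B \in \sigma_1$ and every $\sigma_1 \in \mathcal{A}$, and symmetrically for $B'$. The most efficient route is to first establish continuity of $\bar f$ and then invoke the fact that on the compact Hausdorff space $\mathfrak{Y}$ the unique compatible proximity is the closure-intersection proximity, so a continuous map into $\mathfrak{Y}$ automatically preserves the proximity. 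Continuity of $\bar f$ can be obtained from density of $X$ in $\mathfrak{X}$ together with the cluster-based characterization of the topology on each Smirnov compactification. Uniqueness is then immediate: any proximity map $g:\mathfrak{X}\to\mathfrak{Y}$ extending $f$ is continuous, agrees with $\bar{f}$ on the dense subspace $X$, and lands in the Hausdorff space $\mathfrak{Y}$, forcing $g=\bar{f}$.

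The main obstacle is the proximity-preservation step. Either via continuity or via a direct argument, one must translate the cluster-based $\delta^{*}$-relation between the two compactifications; the direct approach appears to require a careful application of the strong axiom (axiom 5) of proximity on $Y$ to manufacture absorbing sets of $\mathcal{A}$ and $\mathcal{B}$ in $X$ from the given $A'$ and $B'$ in $Y$, and then derive a contradiction with $\mathcal{A}\,\delta_1^{*}\,\mathcal{B}$.
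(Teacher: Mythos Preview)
The paper does not supply its own proof of this theorem: it is stated in the preliminaries as a classical result taken from \cite{proximityspaces}, so there is no argument in the paper to compare against.

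As a standalone attempt, your write-up is incomplete at exactly the point you yourself flag. The extension formula and the verification that $\bar f(\sigma_x)=\sigma_{f(x)}$ are fine, and the uniqueness argument via density into a Hausdorff target is correct once continuity is known. But the central step, that $\bar f$ preserves $\delta^{*}$, is not carried out. Your ``continuity first'' route is not yet an argument: you assert that continuity can be read off from density and the cluster description of the topology, but without saying how, and in practice the standard way to get continuity here \emph{is} to prove the proximity map property. Your ``direct'' route is only a hint.

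The missing computation is short and worth writing down. Suppose $\mathcal{A}\,\delta_1^{*}\,\mathcal{B}$ but $\bar f(\mathcal{A})\,\bar\delta_2^{*}\,\bar f(\mathcal{B})$; pick absorbing sets $A',B'\subseteq Y$ with $A'\,\bar\delta_2\,B'$. Apply the strong axiom twice to obtain $E,F\subseteq Y$ with $A'\,\bar\delta_2\,(Y\setminus E)$, $E\,\bar\delta_2\,F$, and $B'\,\bar\delta_2\,(Y\setminus F)$. For any $\sigma\in\mathcal{A}$, if $f^{-1}(E)\notin\sigma$ then $f^{-1}(Y\setminus E)\in\sigma$ (since their union is $X$), hence $A'\,\delta_2\,f(f^{-1}(Y\setminus E))\subseteq Y\setminus E$, a contradiction; so $f^{-1}(E)$ absorbs $\mathcal{A}$. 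Symmetrically $f^{-1}(F)$ absorbs $\mathcal{B}$. Finally, $f^{-1}(E)\,\delta_1\,f^{-1}(F)$ would force $E\,\delta_2\,F$ since $f$ is a proximity map and images are contained in $E$ and $F$; hence $f^{-1}(E)\,\bar\delta_1\,f^{-1}(F)$, contradicting $\mathcal{A}\,\delta_1^{*}\,\mathcal{B}$. Adding this paragraph would make your proof complete.
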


The assignment of the Smirnov compactification to a separated proximity space and the assignment of the extended map between Smirnov compactifications to a proximity map makes up a functor from {\bf Prox} to the category of compact Hausdorff spaces, {\bf CompHs}.

\subsection{Coarse Proximity Spaces} 
The methods we will use in later sections to prove our main results are centered around the use of coarse proximity spaces, a structure in coarse geometry first introduced in \cite{paper1}. Here we will review the important basic definitions and results. They are as they appear in \cite{paper1} and \cite{paper3}.
\vspace{\baselineskip}

\begin{definition}
	A {\bf bornology} $\mathcal{B}$ on a set $X$ is a family of subsets of $X$ satisfying:
	\begin{enumerate}
		\item $\{x\}\in\mathcal{B}$ for all $x\in X,$
		\item $A\in\mathcal{B}$ and $B\subseteq A$ implies $B\in\mathcal{B},$
		\item If $A,B\in\mathcal{B},$ then $A\cup B\in\mathcal{B}.$
	\end{enumerate}
	Elements of $\mathcal{B}$ are called {\bf bounded} and subsets of $X$ not in $\mathcal{B}$ are called {\bf unbounded}. 
\end{definition}

\begin{definition}\label{coarseproximitydefinition}
	Let $X$ be a set equipped with a bornology $\mathcal{B}$. Let $A,B,$ and $C$ be subsets of $X$. A \textbf{coarse proximity} on a set $X$ is a relation ${\bf b}$ on the power set of $X$ satisfying the following axioms:
	
	\begin{enumerate}
		\item $A{\bf b}B$ implies $B{\bf b}A,$ \label{axiom1}
		\item $A{\bf b}B$ implies $A \notin \mathcal{B}$ and $B \notin \mathcal{B},$ \label{axiom2}
		\item $A\cap B \notin \mathcal{B}$ implies $A {\bf b} B,$ \label{axiom3}
		\item $(A \cup B){\bf b}C$ if and only if $A{\bf b}C$ or $B{\bf b}C,$ \label{axiom4}
		\item $A\bar{\bf b}B$ implies that there exists a subset $E$ such that $A\bar{\bf b}E$ and $(X\setminus E)\bar{\bf b}B,$ \label{axiom5}
	\end{enumerate}
	where $A\bar{ {\bf b}}B$ means ``$A{\bf b} B$ is not true." If $A {\bf b} B$, then we say that $A$ is \textbf{coarsely close} to (or \textbf{coarsely near}) $B.$ Axiom (\ref{axiom4}) will be called the \textbf{union axiom} and axiom (\ref{axiom5}) will be called the \textbf{strong axiom}. A triple $(X,\mathcal{B},{\bf b})$ where $X$ is a set, $\mathcal{B}$ is a bornology on $X$, and ${\bf b}$ is a coarse proximity relation on $X,$ is called a {\bf coarse proximity space}.
\end{definition}

\begin{example}
	Let $(X,d)$ be a metric space with the bornology consisting of all the metrically bounded sets. Define two subsets $A$ and $B$ of $X$ to be coarsely close if and only if there exists $\epsilon < \infty$ such that for all bounded sets $D$, there exists $a \in (A \setminus D)$ and $b \in (B \setminus D)$ such that $d(a,b) < \epsilon.$  Then this relation is a coarse proximity, called \textbf{the metric coarse proximity.}
\end{example}

\begin{example}
Let $X$ be a Hausdorff topological space and $Z$ a compact Hausdorff space containing $X$. Define $\mathcal{B}$ to be the collection of subsets of $X$ that are closed as subsets of $Z$. For subsets $A,C\subseteq X$ not in $\mathcal{B}$, define $A{\bf b}C$ if and only if $cl_{Z}(A)\cap cl_{Z}(C)\cap(Z\setminus X)\neq\emptyset$. Then $(X,\mathcal{B},{\bf b})$ is a coarse proximity space. 
\end{example}

\begin{definition}
Given subsets $A,C$ of a coarse proximity space $(X,\mathcal{B},{\bf b})$ we say that $A$ is a {\bf coarse neighbourhood} of $C$, denoted $C\ll A$, if $C\bar{\bf b}(X\setminus A)$. 
\end{definition}

It was shown in \cite{paper2} that coarse proximity spaces can be characterized entirely by the relation $\ll$. 

\begin{definition}\label{weakasymptoticresemblance}
	Let $X$ be a set and $\phi$ an equivalence relation on the power set of $X$ satisfying the following property:	
	\[A\phi B \text{ and } C\phi D \quad \quad \Longrightarrow \quad \quad (A\cup C)\phi(B\cup D).\]
	Then we call $\phi$ a \textbf{weak asymptotic resemblance}. If $A\phi B,$ then we say that $A$ and $B$ are \textbf{$\phi$ related}.
\end{definition}

Weak asymptotic resemblances are relaxations of the stronger asymptotic resemblances defined by Honari and Kalantari in \cite{Honari}. Any coarse proximity naturally induces a weak asymptotic resemblance, as in the following theorem:

\begin{theorem}\label{asymptoticresemblance}
	Let $(X,\mathcal{B},{\bf b})$ be a coarse proximity space. Let $\phi$ be the relation on the power set of $X$ defined in the following way: $A\phi B$ if and only if the following hold:
	\begin{enumerate}
		\item for every unbounded $B^{\prime}\subseteq B$ we have $A{\bf b}B^{\prime},$
		\item for every unbounded $A^{\prime}\subseteq A$ we have $A^{\prime}{\bf b}B.$
	\end{enumerate}	
	Then  $\phi$ is a weak asymptotic resemblance that we call the \textbf{weak asymptotic resemblance induced by the coarse proximity ${\bf b}$.}
\end{theorem}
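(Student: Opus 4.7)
The plan is to verify the four requirements in turn: $\phi$ is reflexive, symmetric, transitive, and respects unions. Reflexivity and symmetry fall out of the first three axioms of a coarse proximity; the compatibility with unions reduces to the monotonicity that is hidden inside axiom (\ref{axiom4}); transitivity is the only step that requires the strong axiom, and will be the main obstacle.

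For reflexivity, if $A\in\mathcal{B}$ then every subset of $A$ is bounded, so both conditions in the definition of $\phi$ are vacuous; if $A\notin\mathcal{B}$ and $A'\subseteq A$ is unbounded, then $A\cap A' = A'\notin\mathcal{B}$, so axiom (\ref{axiom3}) gives $A{\bf b}A'$ and axiom (\ref{axiom1}) gives $A'{\bf b}A$. Symmetry is immediate: the two defining conditions for $A\phi B$ become the two defining conditions for $B\phi A$ upon applying axiom (\ref{axiom1}) to each instance of ${\bf b}$.

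The heart of the argument is transitivity. Suppose $A\phi B$ and $B\phi C$. To show condition (1) of $A\phi C$, argue by contradiction: assume there is an unbounded $C'\subseteq C$ with $A\bar{\bf b}C'$. By the strong axiom (\ref{axiom5}) there exists $E\subseteq X$ with $A\bar{\bf b}E$ and $(X\setminus E)\bar{\bf b}C'$. First I would establish the monotonicity principle from axiom (\ref{axiom4}): if $Y'\subseteq Y$ and $X\bar{\bf b}Y$ then $X\bar{\bf b}Y'$ (write $Y = Y' \cup (Y\setminus Y')$ and apply the union axiom). Applying monotonicity to $B\cap E\subseteq E$ yields $A\bar{\bf b}(B\cap E)$; since $A\phi B$ demands $A{\bf b}B''$ for every unbounded $B''\subseteq B$, we conclude $B\cap E\in\mathcal{B}$. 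Axiom (\ref{axiom2}) then forces $(B\cap E)\bar{\bf b}C'$, while monotonicity on $B\setminus E\subseteq X\setminus E$ forces $(B\setminus E)\bar{\bf b}C'$. Axiom (\ref{axiom4}) combines these into $B\bar{\bf b}C'$, contradicting $B\phi C$. Condition (2) follows by symmetry (which we have already established), so $A\phi C$.

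Finally, for the union property, suppose $A\phi B$ and $C\phi D$ and let $F\subseteq B\cup D$ be unbounded. Since $F = (F\cap B)\cup(F\cap D)$ is unbounded, at least one of $F\cap B$, $F\cap D$ is unbounded; in the first case $A\phi B$ gives $A{\bf b}(F\cap B)$ and monotonicity in both arguments promotes this to $(A\cup C){\bf b}F$, and the second case is analogous using $C\phi D$. The symmetric condition on unbounded subsets of $A\cup C$ follows identically, completing the verification that $(A\cup C)\phi(B\cup D)$ and hence that $\phi$ is a weak asymptotic resemblance.
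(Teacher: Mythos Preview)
Your proof is correct. The paper itself does not give a proof of this theorem; it simply writes ``See \cite{paper1}'' and defers to the original reference. Your argument is the standard one and almost certainly coincides with what appears there: reflexivity and symmetry are immediate from axioms (\ref{axiom1})--(\ref{axiom3}), the union compatibility follows from the monotonicity hidden in axiom (\ref{axiom4}), and transitivity is the only nontrivial step, handled exactly as you do by invoking the strong axiom (\ref{axiom5}) to split $B$ into a bounded piece and a piece far from $C'$. One minor remark: when you apply the strong axiom to $A\bar{\bf b}C'$ you have silently relabeled the set $E$ of axiom (\ref{axiom5}) by its complement, which is harmless but worth noting explicitly.
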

\begin{proof}
	See \cite{paper1}.
\end{proof}

Notice that if $\phi$ is the relation defined in Theorem \ref{asymptoticresemblance} and $A$ and $B$ are bounded, then they are always $\phi$ related. If $A$ is bounded and $B$ unbounded, then they are not $\phi$ related. If $A, B$ and $C$ are any subsets of $X$ such that $A \phi B,$ then $A{\bf b}C$ if and only if $B{\bf b}C.$ If $\phi$ is induced by the metric coarse proximity, then for nonempty subsets $\phi$  is the relation of having finite Hausdorff distance. For the proofs of the above statements, see \cite{paper1}.

\begin{remark}
As the weak asymptotic resemblance $\phi$ induced on a coarse proximity space $(X,\mathcal{B},{\bf b})$ is completely determined by ${\bf b}$ we will often use it without explicitly mentioning what it is. When it is not clear which space or which coarse proximity structure it is induced by we will index it by the space or coarse proximity structure. That is, if the weak asymptotic resemblance $\phi$ is induced on the coarse proximity space $(X,\mathcal{B},{\bf b})$ we may denote this relation as $\phi_{X}$ or $\phi_{(X,\mathcal{B},{\bf b})}$ as needed to avoid confusion. Of course, when the coarse proximity space is understood, we will not index the relation at all. 
\end{remark}

\begin{definition}
	Let $(X,\mathcal{B}_{1},{\bf b}_{1})$ and $(Y,\mathcal{B}_{2},{\bf b}_{2})$ be coarse proximity spaces. Let $f:X\rightarrow Y$ be a function. Then $f$ is a \textbf{coarse proximity map} provided that the following are satisfied for all $A,B \subseteq X$:
	\begin{enumerate}
		\item $B\in\mathcal{B}_{1}$ implies $f(B)\in\mathcal{B}_{2},$
		\item $A{\bf b}_{1}B$ implies $f(A){\bf b}_{2}f(B).$
	\end{enumerate}
\end{definition}

\begin{definition}\label{coarsecloseness}
	Let $X$ be a set and $(Y,\mathcal{B},{\bf b})$ a coarse proximity space. Two functions $f,g:X \to Y$ are {\bf close}, denoted $f\sim g$, if for all $A \subseteq X$
	\[f(A)\phi g(A),\]
	where $\phi$ is the weak asymptotic resemblance relation induced by the coarse proximity structure ${\bf b}.$
\end{definition}

\begin{definition}
	Let $(X,\mathcal{B}_{1},{\bf b}_{1})$ and $(Y,\mathcal{B}_{2},{\bf b}_{2})$ be coarse proximity spaces. We call a coarse proximity map $f: X \to Y$ a \textbf{coarse proximity isomorphism} if there exists a coarse proximity map $g:Y\to X$ such that $g\circ f\sim id_{X}$ and $f\circ g\sim id_{Y}.$ We say that $(X,\mathcal{B}_{1},{\bf b}_{1})$ and $(Y,\mathcal{B}_{2},{\bf b}_{2})$ are \textbf{isomorphic} if there exists a coarse proximity isomorphism $f:X \to Y.$
\end{definition}

The collection of coarse proximity spaces and closeness classes of coarse proximity maps makes up the category {\bf CrsProx} of coarse proximity spaces. For details, see \cite{paper1}. In \cite{paper3} a functor $\mathcal{U}$ from the category ${\bf CrsProx}$ to the category of compact Hausdorff spaces and continuous maps called the boundary functor was constructed. For a given coarse proximity space $(X,\mathcal{B},{\bf b})$ the compact Hausdorff space $\mathcal{U}X$ is called the {\bf boundary} of the coarse proximity space. It is constructed as follows.

\begin{proposition}\label{discrete extension}
Given a coarse proximity space $(X,\mathcal{B},{\bf b})$, the relation $\delta_{dis}$ on the collection of subsets of $X$ defined by

\[A\delta_{dis}B\iff\left\{\begin{array}{ll}
A\cap B\neq\emptyset & or\\
A{\bf b}B &

\end{array}
\right.\]

\noindent
is a separated proximity on $X$.
\end{proposition}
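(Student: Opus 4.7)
The plan is to verify the five proximity axioms and separatedness directly, using the axioms of a coarse proximity space together with the fact that singletons are bounded. Axioms (1)--(4) of a proximity relation transfer from the coarse proximity axioms essentially by inspection: $A\bar{\delta}_{dis}\emptyset$ because $\emptyset$ is bounded and hence $A\bar{\bf b}\emptyset$ by axiom (2) of the coarse proximity; symmetry of $\delta_{dis}$ comes from symmetry of ${\bf b}$ and of intersection; axiom (3) for $\delta_{dis}$ is immediate; and the union axiom for $\delta_{dis}$ follows by distributing intersection over union and applying the union axiom (4) for ${\bf b}$.

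The only subtle verification is the strong axiom (5). Suppose $A\bar{\delta}_{dis}B$, so that $A\cap B=\emptyset$ and $A\bar{\bf b}B$. By the strong axiom for the coarse proximity, pick $E'\subseteq X$ with $A\bar{\bf b}E'$ and $(X\setminus E')\bar{\bf b}B$. Setting $F:=X\setminus E'$, we have $A\bar{\bf b}(X\setminus F)$ and $F\bar{\bf b}B$, but $F$ need not contain $A$ and need not be disjoint from $B$. The fix is to define
\[
E:=(F\cup A)\setminus B.
\]
Then clearly $A\subseteq E$ (using $A\cap B=\emptyset$) and $E\cap B=\emptyset$. Writing $X\setminus E = ((X\setminus F)\cap(X\setminus A))\cup B$, one uses monotonicity of ${\bf b}$ (a consequence of the union axiom) together with $A\bar{\bf b}(X\setminus F)$ and $A\bar{\bf b}B$ to conclude $A\bar{\bf b}(X\setminus E)$; combined with $A\cap(X\setminus E)=\emptyset$ this gives $A\bar{\delta}_{dis}(X\setminus E)$. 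Similarly $E\subseteq F\cup A$, so by monotonicity and the union axiom $E\bar{\bf b}B$, and together with $E\cap B=\emptyset$ this yields $E\bar{\delta}_{dis}B$.

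Finally, separatedness follows because singletons $\{x\},\{y\}$ lie in $\mathcal{B}$, so $\{x\}\bar{\bf b}\{y\}$ by axiom (2) of the coarse proximity, and hence $\{x\}\delta_{dis}\{y\}$ collapses to $\{x\}\cap\{y\}\neq\emptyset$, which is equivalent to $x=y$. The main obstacle is the bookkeeping in the strong axiom, where the chosen set $E$ has to simultaneously absorb $A$, avoid $B$, and preserve both non-closeness conditions inherited from ${\bf b}$; the construction $E=(F\cup A)\setminus B$ handles all four requirements at once via monotonicity.
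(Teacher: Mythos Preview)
Your proof is correct. The paper itself does not give a proof of this proposition; it is stated there as background material drawn from the earlier references on coarse proximity spaces, so there is no ``paper's own proof'' to compare against. Your verification of the five proximity axioms and separatedness is sound, and the only genuinely delicate step---the strong axiom---is handled cleanly by your construction $E=(F\cup A)\setminus B$, which simultaneously ensures $A\subseteq E$, $E\cap B=\emptyset$, and (via monotonicity and the union axiom for ${\bf b}$) the two required non-closeness conditions.
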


The proximity relation described above is called the {\bf discrete extension} of the coarse proximity relation ${\bf b}$. We can then take the corresponding Smirnov compactification $(\mathfrak{X},\delta_{dis}^{*})$. 

\begin{definition}
Given a coarse proximity space $(X,\mathcal{B},{\bf b})$ with corresponding discrete extension proximity space $(X,\delta_{dis})$ and Smirnov compactification $(\mathfrak{X},\delta_{dis}^{*})$, the {\bf boundary} of the coarse proximity space $X$, is the subset $\mathcal{U}X\subseteq\mathfrak{X}$ whose elements are precisely those clusters $\sigma\in\mathfrak{X}$ that do no contain any element of $\mathcal{B}$. 
\end{definition}

\begin{proposition}
For every coarse proximity space $(X,\mathcal{B},{\bf b})$ the boundary $\mathcal{U}X$ is a compact Hausdorff space.
\end{proposition}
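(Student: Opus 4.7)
The plan is to realize $\mathcal{U}X$ as a closed subspace of the compact Hausdorff space $\mathfrak{X}$, the Smirnov compactification of $(X,\delta_{dis})$ constructed just above. Closed subspaces of compact Hausdorff spaces are themselves compact Hausdorff, so this will suffice. Throughout I will use the standard description of the Smirnov topology: for a subset $A\subseteq X$ (identified with its image under $x\mapsto\sigma_{x}$), the closure $cl_{\mathfrak{X}}(A)$ coincides with the set of clusters $\sigma\in\mathfrak{X}$ satisfying $A\in\sigma$.

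The key step is to show that for every $B\in\mathcal{B}$ the set $cl_{\mathfrak{X}}(B)$ is clopen in $\mathfrak{X}$. Coarse proximity axiom (2) forces $B\bar{\bf b}(X\setminus B)$, and since $B\cap(X\setminus B)=\emptyset$, the definition of the discrete extension gives $B\bar{\delta}_{dis}(X\setminus B)$. Applying the recharacterization of $\delta_{dis}^{*}$ in terms of Smirnov closures yields $cl_{\mathfrak{X}}(B)\cap cl_{\mathfrak{X}}(X\setminus B)=\emptyset$. Conversely, every cluster $\sigma$ contains $X$ itself (since for every nonempty $A\subseteq X$ one has $X\cap A=A\neq\emptyset$, whereas the empty set belongs to no cluster by proximity axiom (1) together with cluster axiom (1)), so cluster axiom (2) applied to $X=B\cup(X\setminus B)$ gives $cl_{\mathfrak{X}}(B)\cup cl_{\mathfrak{X}}(X\setminus B)=\mathfrak{X}$. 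Hence $cl_{\mathfrak{X}}(B)$ is both closed and open.

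By the definition of $\mathcal{U}X$, a cluster $\sigma$ lies in $\mathfrak{X}\setminus\mathcal{U}X$ precisely when it contains some bounded set, so
\[\mathfrak{X}\setminus\mathcal{U}X=\bigcup_{B\in\mathcal{B}}cl_{\mathfrak{X}}(B).\]
By the previous paragraph, each set in this union is open, hence the complement of $\mathcal{U}X$ is open and $\mathcal{U}X$ is closed in $\mathfrak{X}$, completing the proof. The main technical point to watch is the identification $cl_{\mathfrak{X}}(A)=\{\sigma\in\mathfrak{X}:A\in\sigma\}$, together with the membership of $X$ in every cluster; both are standard features of the Smirnov construction, but they are what drives the clopenness of $cl_{\mathfrak{X}}(B)$ and so should be invoked (or cited) explicitly.
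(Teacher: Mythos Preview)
Your proof is correct. The paper itself states this proposition without proof, deferring to the references \cite{paper3} and \cite{thesis}; your argument---show $\mathcal{U}X$ is closed in the Smirnov compactification $\mathfrak{X}$ by writing its complement as the union of the open sets $cl_{\mathfrak{X}}(B)$ for $B\in\mathcal{B}$---is precisely the standard route taken there.

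One minor simplification you might consider: to obtain $cl_{\mathfrak{X}}(B)\cup cl_{\mathfrak{X}}(X\setminus B)=\mathfrak{X}$ you do not strictly need the cluster-theoretic description of closures. Since $B\subseteq cl_{\mathfrak{X}}(B)$ and $X\setminus B\subseteq cl_{\mathfrak{X}}(X\setminus B)$, the union of the two closures contains $X$; its complement is therefore an open subset of $\mathfrak{X}$ disjoint from the dense set $X$, hence empty. That said, you do still need the identification $cl_{\mathfrak{X}}(A)=\{\sigma\in\mathfrak{X}:A\in\sigma\}$ to translate the definition of $\mathcal{U}X$ into $\mathfrak{X}\setminus\mathcal{U}X=\bigcup_{B\in\mathcal{B}}cl_{\mathfrak{X}}(B)$, so your care in flagging it is warranted.
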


\begin{proposition}
If $f:(X,\mathcal{B}_{X},{\bf b}_{X})\rightarrow(Y,\mathcal{B}_{Y},{\bf b}_{Y})$ is a coarse proximity maps, and $\overline{f}:\mathfrak{X}\rightarrow\mathfrak{Y}$ is the extension of $f$ to the Smirnov compactifications of the discrete extensions of $X$ and $Y$, then $\mathcal{U}f=\overline{f}\vert_{\mathcal{U}X}:\mathcal{U}X\rightarrow\mathcal{U}Y$ is well defined and continuous. Moreover, if $g,f:X\rightarrow Y$ are coarse proximity maps that are close, then $\mathcal{U}f=\mathcal{U}g$. 
\end{proposition}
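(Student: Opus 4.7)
My plan is to split the proof into three parts: producing the extension $\overline{f}$, verifying that it sends $\mathcal{U}X$ into $\mathcal{U}Y$, and handling closeness. First I would verify that a coarse proximity map $f$ is automatically a proximity map between the discrete extensions $(X,\delta_{dis})$ and $(Y,\delta_{dis})$ via a routine two-case analysis on the disjunction defining $\delta_{dis,X}$: if $A\cap B\neq\emptyset$ then $f(A)\cap f(B)\neq\emptyset$, while if $A{\bf b}_XB$ then $f(A){\bf b}_Yf(B)$. Applying Theorem \ref{extension_theorem} then yields the unique continuous extension $\overline{f}:\mathfrak{X}\to\mathfrak{Y}$.

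The central technical observation that drives everything else is that a coarse proximity map preserves unboundedness: if $U\subseteq X$ is unbounded, then $U{\bf b}_XU$ by axiom (\ref{axiom3}), so $f(U){\bf b}_Yf(U)$, and axiom (\ref{axiom2}) forces $f(U)$ to be unbounded. Using this, I would establish well-definedness by contradiction. Suppose $\sigma_1\in\mathcal{U}X$ but $\overline{f}(\sigma_1)$ contains some bounded $A\subseteq Y$. Unpacking the description of $\overline{f}(\sigma_1)$ from Proposition \ref{associated cluster}, for every $B\in\sigma_1$ we have $A\delta_{dis,Y}f(B)$, and boundedness of $A$ eliminates the ${\bf b}_Y$-alternative, so $A\cap f(B)\neq\emptyset$ and hence $f^{-1}(A)\cap B\neq\emptyset$. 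This gives $f^{-1}(A)\delta_{dis,X}B$ for every $B\in\sigma_1$, placing $f^{-1}(A)\in\sigma_1$ by cluster axiom (3). But $\sigma_1\in\mathcal{U}X$ makes $f^{-1}(A)$ unbounded, and the preservation observation then makes $f(f^{-1}(A))\subseteq A$ unbounded, contradicting the boundedness of $A$. Continuity of $\mathcal{U}f=\overline{f}|_{\mathcal{U}X}$ is immediate from continuity of $\overline{f}$ and the subspace topology inherited from $\mathfrak{X}$.

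For the closeness statement, the main step is an auxiliary lemma: inside any boundary cluster $\sigma\in\mathcal{U}Y$, any two members $A,A'\in\sigma$ satisfy $A{\bf b}_YA'$. This follows by decomposing $A=(A\cap A')\cup(A\setminus A')$; either $A\cap A'$ is unbounded and axiom (\ref{axiom3}) gives $A{\bf b}_YA'$ directly, or $A\cap A'$ is bounded so that cluster axiom (2) combined with $\sigma\in\mathcal{U}Y$ forces $A\setminus A'\in\sigma$, whence cluster axiom (1) applied to the disjoint pair $A\setminus A',A'$ produces $(A\setminus A'){\bf b}_YA'$ and the union axiom (\ref{axiom4}) gives $A{\bf b}_YA'$. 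Given this lemma, if $f\sim g$ and $\sigma_1\in\mathcal{U}X$, then $\overline{f}(\sigma_1)\in\mathcal{U}Y$ and $f(B)\in\overline{f}(\sigma_1)$ for every $B\in\sigma_1$ (the latter because $B\delta_{dis,X}B'$ for all $B'\in\sigma_1$ gives $f(B)\delta_{dis,Y}f(B')$), so the lemma applied to any $A\in\overline{f}(\sigma_1)$ and $f(B)$ yields $A{\bf b}_Yf(B)$. The $\phi$-interchangeability property recalled after Theorem \ref{asymptoticresemblance}, together with $f(B)\phi g(B)$, then produces $A{\bf b}_Yg(B)$, and so $A\in\overline{g}(\sigma_1)$. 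Symmetry gives $\overline{f}(\sigma_1)=\overline{g}(\sigma_1)$.

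The hard part will be identifying and proving the auxiliary lemma on coarse closeness inside boundary clusters and recognizing the preservation of unboundedness; everything else is essentially bookkeeping with the cluster axioms and the definition of $\delta_{dis}$.
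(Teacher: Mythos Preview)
Your proof is correct and complete. Note, however, that the paper itself does not supply a proof of this proposition: it appears in the preliminary section as background material imported from the references \cite{paper1} and \cite{paper3}, so there is no in-paper argument to compare against. That said, your approach is the natural one and matches the standard treatment in the cited literature: verify that $f$ is a proximity map for the discrete extensions, invoke the Smirnov extension theorem, use preservation of unboundedness to show the extension respects the boundary, and exploit the $\phi$-interchangeability of ${\bf b}$ for the closeness clause. The auxiliary lemma you isolate (that any two members of a boundary cluster are coarsely close, not merely $\delta_{dis}$-close) is exactly the right device, and your decomposition argument for it is clean.
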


The assignment of the boundary to a coarse proximity space and the assignment of $\mathcal{U}f$ to a closeness class of coarse proximity maps $[f]$ makes up a functor from the category of coarse proximity space {\bf CrsProx} to {\bf CompHs}.

\begin{example}
If $(X,d)$ is an unbounded proper metric space, then the boundary of the metric coarse proximity structure on $X$ is homeomorphic to the Higson corona of $X$. 

\end{example}

\begin{definition}
Let $(X,\mathcal{B},{\bf b})$ be a coarse proximity space, $\delta_{dis}$ the discrete extension of {\bf b}, and $\mathfrak{X}$ the corresponding Smirnov compactification. Given $A\subseteq X$ we define the {\bf trace} of $A$ on $\mathcal{U}X$ to be $cl_{\mathfrak{X}}(A)\cap\mathcal{U}X$.
\end{definition}

 In \cite{paper3} it was shown that traces are closed subsets of the boundary that characterize coarse proximities in the following way.

\begin{proposition}\label{coarse proximity characterization}
Let $A$ and $C$ be subsets of a coarse proximity space $(X,\mathcal{B},{\bf b})$, then

\begin{enumerate}
\item $tr(A)\neq\emptyset$ if and only if $A$ is unbounded
\item $A{\bf b}C\iff tr(A)\cap tr(C)\neq\emptyset$
\item $A\phi C\iff tr(A)=tr(C)$
\item $A\ll C\implies tr(A)\subseteq int(tr(C))$
\end{enumerate}

\end{proposition}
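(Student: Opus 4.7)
The plan is to use the fact that in the Smirnov compactification $(\mathfrak{X},\delta_{dis}^*)$, the closure $cl_{\mathfrak{X}}(A)$ equals $\{\sigma\in\mathfrak{X}\mid A\in\sigma\}$, so $tr(A)=\{\sigma\in\mathcal{U}X\mid A\in\sigma\}$. Parts (1) and (2) reduce to cluster-existence arguments in proximity theory; parts (3) and (4) are formal consequences.

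For (1), if $A\in\mathcal{B}$ then no $\sigma\in\mathcal{U}X$ contains $A$, so $tr(A)=\emptyset$. If $A\notin\mathcal{B}$, apply the standard bunch-extension (Zorn's lemma) to $\mathcal{F}=\{A\}\cup\{X\setminus B:B\in\mathcal{B}\}$, which is pairwise $\delta_{dis}$-related because $A$ meets every $X\setminus B$ in an unbounded set and any two co-bounded sets intersect. The resulting cluster $\sigma\supseteq\mathcal{F}$ automatically lies in $\mathcal{U}X$: if some $B\in\mathcal{B}$ were in $\sigma$ then having $X\setminus B\in\sigma$ would force $B\delta_{dis}(X\setminus B)$, while $B\cap(X\setminus B)=\emptyset$ and $B\bar{\bf b}(X\setminus B)$ by axiom (2).

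For (2), the reverse implication is the key step. Assume $\sigma\in tr(A)\cap tr(C)$ but $A\bar{\bf b}C$; apply the strong axiom to get $E$ with $A\bar{\bf b}E$ and $(X\setminus E)\bar{\bf b}C$. The contrapositive of axiom (3) forces $A\cap E\in\mathcal{B}$ and $C\setminus E\in\mathcal{B}$. Since $\sigma$ contains no bounded set, cluster axiom (2) applied to $A=(A\cap E)\cup(A\setminus E)$ and $C=(C\cap E)\cup(C\setminus E)$ puts $A\setminus E\in\sigma$ and $C\cap E\in\sigma$, which are disjoint; cluster axiom (1) then gives $(A\setminus E){\bf b}(C\cap E)$, and the union axiom promotes this to $(X\setminus E){\bf b}C$—a contradiction. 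The forward implication repeats the extension of (1) starting from $\{A,C\}\cup\{X\setminus B:B\in\mathcal{B}\}$, pairwise $\delta_{dis}$-compatibility being supplied by $A{\bf b}C$ together with the unboundedness of $A$ and $C$.

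Parts (3) and (4) follow formally. For (3), using (from \cite{paper1}) that $A\phi C$ implies $A{\bf b}D\iff C{\bf b}D$ for all $D$: if $\sigma\in tr(A)\setminus tr(C)$, cluster axiom (3) gives $D\in\sigma$ with $C\bar{\bf b}D$ and $C\cap D=\emptyset$. Writing $D=(D\cap A)\cup(D\setminus A)$, cluster axiom (2) puts one (unbounded) piece in $\sigma$; if $D\cap A\in\sigma$ then $(D\cap A){\bf b}C$ by $A\phi C$ applied to the unbounded subset $D\cap A$ of $A$, promoted to $D{\bf b}C$, contradicting $C\bar{\bf b}D$; if $D\setminus A\in\sigma$ then $A\delta_{dis}(D\setminus A)$ with disjoint supports forces $A{\bf b}D$, hence $C{\bf b}D$ via $\phi$, same contradiction. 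The converse is immediate: every unbounded $A'\subseteq A$ has $\emptyset\ne tr(A')\subseteq tr(A)=tr(C)$ by (1), so $A'{\bf b}C$ by (2), and symmetrically for $C$. For (4), $A\ll C$ unfolds to $A\bar{\bf b}(X\setminus C)$, so $tr(A)\cap tr(X\setminus C)=\emptyset$ by (2); since $tr$ distributes over finite unions and $tr(X)=\mathcal{U}X$ (by density of $X$ in $\mathfrak{X}$), $tr(C)\cup tr(X\setminus C)=\mathcal{U}X$, yielding $tr(A)\subseteq\mathcal{U}X\setminus tr(X\setminus C)\subseteq tr(C)$. The middle set is open as the complement of a closed trace, so $tr(A)\subseteq int(tr(C))$.

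The main obstacle is the cluster-extension underlying (1) and the forward direction of (2): one must verify that the Zorn's-lemma extension of the starting bunch can always be forced to avoid every bounded set, using the strong axiom and the bornology in concert. Once this existence step is granted, the other statements are bookkeeping with the cluster axioms and the trace-union formula $tr(A\cup B)=tr(A)\cup tr(B)$.
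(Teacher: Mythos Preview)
The paper does not prove this proposition; it is quoted from \cite{paper3}. Your direct argument is essentially correct, but one step needs repair.

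In (1) and in the forward direction of (2) you appeal to a ``standard bunch-extension (Zorn's lemma)'' after checking only \emph{pairwise} $\delta_{dis}$-closeness. Pairwise closeness alone does not guarantee extension to a cluster: in a three-point discrete space the sets $\{a,b\},\{b,c\},\{a,c\}$ are pairwise close yet lie in no common cluster. What actually makes (1) work is that the family $\{A\}\cup\{X\setminus B:B\in\mathcal{B}\}$ has nonempty finite intersections (each $A\setminus(B_{1}\cup\cdots\cup B_{k})$ is unbounded), so the closed sets $cl_{\mathfrak{X}}(\cdot)$ have the finite intersection property and compactness of $\mathfrak{X}$ yields the required cluster. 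For the forward direction of (2) that FIP argument is unavailable, since $A\cap C$ may be bounded or empty. A clean fix: if $A\cap C$ is unbounded, apply (1) to $A\cap C$; otherwise replace $A$ by $A\setminus(A\cap C)$ (still $A{\bf b}C$ by the union axiom) so that $A$ and $C$ are disjoint, take any $\sigma\in cl_{\mathfrak{X}}(A)\cap cl_{\mathfrak{X}}(C)$ (nonempty since $A\delta_{dis}C$), and observe that such a $\sigma$ is automatically in $\mathcal{U}X$: if some bounded $B$ lay in $\sigma$, then from $A=(A\cap B)\cup(A\setminus B)$ either $A\cap B\in\sigma$ (impossible, as $(A\cap B)\bar{\delta}_{dis}C$ while $C\in\sigma$) or $A\setminus B\in\sigma$ (impossible, as $(A\setminus B)\bar{\delta}_{dis}B$ while $B\in\sigma$).

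With this adjustment the remainder of your proof---the reverse direction of (2) via the strong axiom, and the formal deductions of (3) and (4) from (1)--(2) together with $tr(C)\cup tr(X\setminus C)=\mathcal{U}X$---is correct.
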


Another convenient property of the boundary that we use often and follows quickly from the Urysohn lemma is the following.

\begin{proposition}
If $(X,\mathcal{B},{\bf b})$ is a coarse proximity space with boundary $\mathcal{U}X$ and $K_{1},K_{2}\subseteq \mathcal{U}X$ are disjoint closed sets, then there are unbounded sets $A,C\subseteq X$ such that $A\bar{\bf b}C$, $K_{1}\subseteq int_{\mathcal{U}X}(tr(A))$, and $K_{2}\subseteq int_{\mathcal{U}X}(tr(C))$. 
\end{proposition}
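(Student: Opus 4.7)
The plan is to lift the topological separation of $K_1$ and $K_2$ inside $\mathcal{U}X$ up to a separation in the ambient Smirnov compactification $\mathfrak{X}$, then intersect with $X$ to produce $A$ and $C$, and finally read off all three required conditions from Proposition \ref{coarse proximity characterization}.

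First, I would note that $\mathcal{U}X$ is a closed subspace of the compact Hausdorff (hence normal) space $\mathfrak{X}$, so $K_1$ and $K_2$ are disjoint closed subsets of $\mathfrak{X}$ itself. Applying Urysohn's lemma in $\mathfrak{X}$ produces a continuous $f:\mathfrak{X}\to[0,1]$ with $f|_{K_1}\equiv 0$ and $f|_{K_2}\equiv 1$; setting $U_1:=f^{-1}([0,1/3))$ and $U_2:=f^{-1}((2/3,1])$ yields open sets with $K_i\subseteq U_i$ and with the crucial strengthening $cl_{\mathfrak{X}}(U_1)\cap cl_{\mathfrak{X}}(U_2)=\emptyset$, which is needed to eventually force $A\bar{\bf b}C$ via the trace characterization.

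Next, I would define $A:=X\cap U_1$ and $C:=X\cap U_2$. Since $X$ is dense in $\mathfrak{X}$, each $X\cap U_i$ is dense in $U_i$, and therefore $cl_{\mathfrak{X}}(A)=cl_{\mathfrak{X}}(U_1)$ and $cl_{\mathfrak{X}}(C)=cl_{\mathfrak{X}}(U_2)$. Intersecting with $\mathcal{U}X$, I get $tr(A)=cl_{\mathfrak{X}}(U_1)\cap\mathcal{U}X$ and $tr(C)=cl_{\mathfrak{X}}(U_2)\cap\mathcal{U}X$, which are disjoint. By Proposition \ref{coarse proximity characterization}(2) this gives $A\bar{\bf b}C$, and by part (1) both sets are unbounded (assuming the $K_i$ are nonempty; if some $K_i$ is empty the corresponding interior condition is vacuous and one chooses the other set to be any unbounded set avoiding a suitable open neighborhood of the non-empty $K_j$). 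The interior condition is immediate: $U_i\cap\mathcal{U}X$ is open in $\mathcal{U}X$, contains $K_i$, and sits inside $cl_{\mathfrak{X}}(U_i)\cap\mathcal{U}X=tr(\cdot)$, so $K_i\subseteq int_{\mathcal{U}X}(tr(\cdot))$.

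The main technical hinge is the step that identifies $cl_{\mathfrak{X}}(X\cap U_i)$ with $cl_{\mathfrak{X}}(U_i)$; without it, one only obtains $tr(A)\subseteq cl_{\mathfrak{X}}(U_1)\cap\mathcal{U}X$ and so cannot directly conclude $K_1\subseteq tr(A)$, let alone land $K_1$ in its interior. This identification is a standard general topology lemma (if $D$ is dense and $V$ is open in any space, then $V\cap D$ is dense in $V$), and once it is in hand the rest of the argument is a direct unpacking of the trace characterization of $\bar{\bf b}$ and of unboundedness.
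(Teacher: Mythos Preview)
Your proof is correct and follows essentially the same approach the paper indicates: the paper remarks that the result ``follows quickly from the Urysohn lemma'' and defers to \cite{paper3}, and the nearby arguments (Lemma~\ref{traces to coarse neighbourhoods} and Proposition~\ref{preliminary proposition}) use exactly your technique of applying Urysohn's lemma in $\mathfrak{X}$ and intersecting preimages of subintervals with $X$. Your explicit isolation of the density step $cl_{\mathfrak{X}}(X\cap U_i)=cl_{\mathfrak{X}}(U_i)$ is the right hinge and is handled correctly.
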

\begin{proof}
Proof can be found in \cite{paper3}.
\end{proof}

\begin{lemma}\label{traces to coarse neighbourhoods}
Let $(X,\mathcal{B},{\bf b})$ be a coarse proximity space, $C,D\subseteq X$ unbounded sets such that $C\ll D$, and $\sigma\in int(tr(C))$. Then there is an unbounded set $K\subseteq X$ such that $\sigma\in int(tr(K))$ and $K\ll D$. 
\end{lemma}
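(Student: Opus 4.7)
The plan is to invoke the strong axiom of the coarse proximity structure directly and then read off the conclusion from Proposition \ref{coarse proximity characterization}. The hypothesis $C\ll D$ unpacks as $C\bar{\bf b}(X\setminus D)$, which is exactly an instance of the ``$A\bar{\bf b}B$'' premise of axiom (\ref{axiom5}). So axiom (\ref{axiom5}) produces a set $E\subseteq X$ with $C\bar{\bf b}E$ and $(X\setminus E)\bar{\bf b}(X\setminus D)$.

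The natural candidate is then $K:=X\setminus E$. The second relation gives $K\bar{\bf b}(X\setminus D)$, i.e., $K\ll D$, which is one of the two things we need. The first relation $C\bar{\bf b}E$ rewrites as $C\bar{\bf b}(X\setminus K)$, i.e., $C\ll K$. Applying Proposition \ref{coarse proximity characterization}(4) to this yields $tr(C)\subseteq int(tr(K))$. Since $\sigma\in int(tr(C))\subseteq tr(C)\subseteq int(tr(K))$, we get $\sigma\in int(tr(K))$, which is the remaining requirement.

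It only remains to check that $K$ is unbounded, so that $K\ll D$ is a nontrivial statement in the coarse proximity sense; but since $C$ is unbounded, $tr(C)\neq\emptyset$ by Proposition \ref{coarse proximity characterization}(1), and because $tr(C)\subseteq tr(K)$ we conclude $tr(K)\neq\emptyset$, so $K$ is unbounded by the same proposition.

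I do not expect any genuine obstacle: the lemma is essentially a translation between the strong axiom and the trace-theoretic language of Proposition \ref{coarse proximity characterization}, with the only subtle point being to notice that the set $E$ produced by the strong axiom is precisely the complement of the $K$ we want, so that both of its conclusions line up with the two required properties of $K$.
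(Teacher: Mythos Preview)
Your proof is correct, and it is genuinely different from the paper's. You apply the strong axiom directly to $C\bar{\bf b}(X\setminus D)$ to produce $K=X\setminus E$ with $C\ll K\ll D$, then read off $\sigma\in tr(C)\subseteq int(tr(K))$ from Proposition~\ref{coarse proximity characterization}(4). The paper instead works inside the Smirnov compactification $\mathfrak{X}$: it chooses an open $U\subseteq\mathcal{U}X$ with $\sigma\in U$ and $\overline{U}\subseteq int(tr(C))$, applies Urysohn's lemma to obtain a continuous $f:\mathfrak{X}\to[0,1]$ separating $\{\sigma\}$ from $\mathcal{U}X\setminus U$, and sets $K=f^{-1}([0,1/3))\cap X$. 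This gives $tr(K)\subseteq U\subseteq tr(C)$, so $tr(K)\cap tr(X\setminus D)=\emptyset$ and $K\ll D$ follows from Proposition~\ref{coarse proximity characterization}(2). In other words, your $K$ is \emph{large} (its trace contains $tr(C)$) while the paper's $K$ is \emph{small} (its trace sits inside a chosen neighbourhood of $\sigma$). Your argument is more elementary and avoids the compactification machinery entirely; the paper's construction, on the other hand, localises $K$ near $\sigma$, a feature that is not needed for the lemma as stated but matches the Urysohn-based pattern the paper reuses in Proposition~\ref{preliminary proposition}.
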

\begin{proof}
As $\sigma\in int(tr(C))$ we can find an open subset $U\subseteq\mathcal{U}X$ that contains $\sigma$ and whose closure is completely contained in $int(tr(C))$. Using Urysohn's lemma on the Smirnov compactification $\mathfrak{X}$ of the discrete extension of $(X,\mathcal{B},{\bf b})$ there is a continuous function $f:\mathfrak{X}\rightarrow[0,1]$ that maps $\{\sigma\}$ to $0$ and $\mathcal{U}X\setminus U$ to $1$. Defining $K=f^{-1}([0,1/3))\cap X$ we have that $\sigma\in int(tr(K))\subseteq tr(K)\subseteq U\subseteq tr(C)$. It is then clear from Proposition \ref{coarse proximity characterization} that $K\ll D$.

\end{proof}

\begin{proposition}\label{preliminary proposition}
Let $(X,\mathcal{B},{\bf b})$ be a coarse proximity space with nonempty boundary $\mathcal{U}X$. If $U,V\subseteq\mathcal{U}X$ are nonempty empty sets such that $\overline{V}\subseteq U$, then there is an unbounded set $A\subseteq X$ such that $\overline{V}\subseteq int(tr(A))\subseteq tr(A)\subseteq U$ and if $C\subseteq X$ is an unbounded set such that $tr(C)\subseteq \overline{V}$, then $C\ll A$.

\end{proposition}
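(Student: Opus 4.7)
The plan is to mimic the construction from the preceding Lemma \ref{traces to coarse neighbourhoods}, invoking Urysohn's lemma on the Smirnov compactification $\mathfrak{X}$ of the discrete extension of $(X,\mathcal{B},{\bf b})$ to produce a suitable continuous function, and then let $A$ be the $X$-preimage of an appropriate sublevel set. Specifically, I would first observe that $\overline{V}$ and $\mathcal{U}X\setminus U$ are disjoint closed subsets of $\mathcal{U}X$, hence disjoint closed subsets of the compact Hausdorff space $\mathfrak{X}$ (since $\mathcal{U}X$ is closed in $\mathfrak{X}$). By Urysohn's lemma there exists a continuous $f:\mathfrak{X}\to[0,1]$ with $f\equiv 0$ on $\overline{V}$ and $f\equiv 1$ on $\mathcal{U}X\setminus U$. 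I then define
\[ A := f^{-1}([0,1/2))\cap X. \]

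The next step is to verify the trace inclusions. For any $\tau\in\mathcal{U}X$ with $f(\tau)<1/2$, the set $f^{-1}([0,1/2))$ is an open neighborhood of $\tau$ in $\mathfrak{X}$; since $X$ is dense in $\mathfrak{X}$ and every neighborhood of $\tau$ in $\mathfrak{X}$ meets $f^{-1}([0,1/2))\cap X = A$, we conclude $\tau\in cl_{\mathfrak{X}}(A)$, so $\tau\in tr(A)$. Thus $W:=f^{-1}([0,1/2))\cap\mathcal{U}X$ is an open subset of $\mathcal{U}X$ contained in $tr(A)$ and containing $\overline{V}$, which gives $\overline{V}\subseteq int(tr(A))$. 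On the other hand, $A\subseteq f^{-1}([0,1/2])$, a closed set in $\mathfrak{X}$, so $cl_{\mathfrak{X}}(A)\subseteq f^{-1}([0,1/2])$ and therefore $tr(A)\subseteq f^{-1}([0,1/2])\cap\mathcal{U}X\subseteq U$ (using $f\equiv 1$ on $\mathcal{U}X\setminus U$). Since $\overline{V}$ is nonempty and contained in $tr(A)$, Proposition \ref{coarse proximity characterization}(1) guarantees $A$ is unbounded.

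For the final assertion, let $C\subseteq X$ be unbounded with $tr(C)\subseteq\overline{V}$. I want to show $C\ll A$, which by definition means $C\bar{\bf b}(X\setminus A)$. If $X\setminus A$ is bounded this is immediate from axiom (2) of the coarse proximity. Otherwise, by Proposition \ref{coarse proximity characterization}(2) it suffices to show $tr(C)\cap tr(X\setminus A)=\emptyset$. But $X\setminus A\subseteq f^{-1}([1/2,1])$, so $tr(X\setminus A)\subseteq f^{-1}([1/2,1])\cap\mathcal{U}X$, while $tr(C)\subseteq\overline{V}\subseteq f^{-1}(\{0\})$, and these are plainly disjoint.

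The only real obstacle is purely bookkeeping, namely confirming that the trace of $A$ (defined as the $X$-intersection of an open set of $\mathfrak{X}$) actually fills out the full open subset of $\mathcal{U}X$ that it should, which follows from the density of $X$ in $\mathfrak{X}$ together with the characterization of boundary clusters. Everything else is a direct assembly of Urysohn's lemma with Proposition \ref{coarse proximity characterization}.
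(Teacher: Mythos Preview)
Your proof is correct and follows essentially the same approach as the paper: both apply Urysohn's lemma on $\mathfrak{X}$ to separate $\overline{V}$ from $\mathcal{U}X\setminus U$, take $A$ to be the $X$-part of a sublevel set of the resulting function, and then verify the trace inclusions and the coarse neighbourhood condition via Proposition \ref{coarse proximity characterization}. The only cosmetic differences are the choice of cutoff value ($1/2$ versus $1/3$) and that you supply more explicit justification for the trace containments and for the disjointness $tr(C)\cap tr(X\setminus A)=\emptyset$ using the $f$-values directly, whereas the paper argues the latter from $tr(C)\subseteq int(tr(A))$.
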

\begin{proof}
Let $\delta$ be the discrete extension of the coarse proximity on $X$ and let $\mathfrak{X}$ be the corresponding Smirnov compactification. As in Lemma \ref{traces to coarse neighbourhoods} we use Urysohn's lemma to get a continuous function $f:\mathfrak{X}\rightarrow[0,1]$ such that $f(\overline{V})=0$ and $f(\mathcal{U}X\setminus U)=1$. Define $A=f^{-1}([0,1/3])\cap X$. Then $\overline{V}\subseteq int(tr(A))$ and $tr(A)\subseteq U$. Now let $C\subseteq X$ be such that $tr(C)\subseteq\overline{V}$. If $C{\bf b}(X\setminus A)$, then by  Lemma \ref{coarse proximity characterization} we would have to have that $tr(C)\cap tr(X\setminus A)\neq\emptyset$. However, as $tr(C)\subseteq \overline{V}\subseteq int(tr(A))$, this is impossible. Therefore $C\ll A$.
\end{proof}

\begin{definition}
Given a coarse proximity space $(X,\mathcal{B},{\bf b})$ and a nonempty subset $Y\subseteq X$, the {\bf subspace coarse proximity structure} on $Y$ is $(Y,\hat{\mathcal{B}},\hat{\bf b})$ where

\[\hat{\mathcal{B}}=\{B\cap Y\mid B\in\mathcal{B}\}\]

\noindent
and for subsets $A,C\subseteq Y$ we define

\[A{\bf b}_{Y}C\iff A{\bf b}_{X}C\]

\end{definition}

\begin{proposition}\label{trace of subspace is its boundary}
Let $(X,\mathcal{B}_{X},{\bf b}_{X})$ be a coarse proximity space and $(Y,\mathcal{B}_{Y},{\bf b}_{Y})$ a subspace of $X$. Then $\mathcal{U}Y$ is homeomorphic to $tr_{\mathcal{U}X}(Y)$ and the inclusion map $\iota:Y\rightarrow X$ induces an embedding of $\mathcal{U}Y$ into $\mathcal{U}X$ such that $\mathcal{U}\iota(\mathcal{U}Y)=tr_{\mathcal{U}X}(Y)$.

\end{proposition}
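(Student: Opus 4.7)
The plan is to realize $\mathcal{U}\iota$ as the restriction to boundaries of a topological embedding $\bar\iota:\mathfrak{Y}\hookrightarrow\mathfrak{X}$ of Smirnov compactifications of the discrete extensions, and then identify its image by a direct bornological comparison. First I would check that $\iota:Y\to X$ is a coarse proximity map: each $B\in\mathcal{B}_Y=\hat{\mathcal{B}}$ has the form $B'\cap Y$ with $B'\in\mathcal{B}_X$ and hence lies in $\mathcal{B}_X$ by downward closure of the bornology, while $A\,{\bf b}_Y\,C\Rightarrow A\,{\bf b}_X\,C$ is immediate from the definition of the subspace coarse proximity. The same remarks show that $\iota$ is in fact a proximity embedding of $(Y,\delta_{dis,Y})$ into $(X,\delta_{dis,X})$, because for $A,C\subseteq Y$ both the intersection condition and $A\,{\bf b}_X\,C$ are evaluated identically in $Y$ and in $X$.

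Next I would invoke the classical fact that when $Y$ is a subspace of a separated proximity space with the subspace proximity, $\mathfrak{Y}$ is canonically $cl_{\mathfrak{X}}(Y)$: that space is compact Hausdorff, contains $Y$ densely, and its unique compatible proximity restricts on $Y$ to $\delta_{dis,Y}$, since closures of subsets of $Y$ taken in $\mathfrak{X}$ already lie inside $cl_{\mathfrak{X}}(Y)$. Uniqueness of the Smirnov compactification then forces $\mathfrak{Y}=cl_{\mathfrak{X}}(Y)$, and in particular the extension $\bar\iota$ of Theorem \ref{extension_theorem} is a topological embedding whose image is exactly $cl_{\mathfrak{X}}(Y)$.

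Finally I would show $\bar\iota(\mathcal{U}Y)=cl_{\mathfrak{X}}(Y)\cap\mathcal{U}X=tr_{\mathcal{U}X}(Y)$. By Proposition \ref{associated cluster} we have $\bar\iota(\sigma_Y)=\{A\subseteq X \mid A\,\delta_{dis,X}\,B \text{ for all } B\in\sigma_Y\}$. If $\sigma_Y\in\mathcal{U}Y$ and some $A\in\bar\iota(\sigma_Y)$ were in $\mathcal{B}_X$, then $A\bar{\bf b}_X B$ for every $B$, forcing $A\cap B\neq\emptyset$ for each $B\in\sigma_Y$; rewriting $A\cap B=(A\cap Y)\cap B$ gives $(A\cap Y)\,\delta_{dis,Y}\,B$ for all $B\in\sigma_Y$, so by cluster axiom $(3)$ the set $A\cap Y\in\mathcal{B}_Y$ lies in $\sigma_Y$, contradicting $\sigma_Y\in\mathcal{U}Y$. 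Conversely, any $\sigma_X\in cl_{\mathfrak{X}}(Y)\cap\mathcal{U}X$ has a unique preimage $\sigma_Y\in\mathfrak{Y}$ under $\bar\iota$, and any hypothetical $B\in\sigma_Y\cap\mathcal{B}_Y$ would then lie in $\mathcal{B}_X$ (by the same downward-closure argument) and, being pairwise $\delta_{dis,Y}$-close to all of $\sigma_Y$, also in $\sigma_X$, contradicting $\sigma_X\in\mathcal{U}X$. Since $\mathcal{U}Y$ is compact and $\mathcal{U}X$ is Hausdorff, the resulting continuous bijection $\mathcal{U}\iota=\bar\iota|_{\mathcal{U}Y}:\mathcal{U}Y\to tr_{\mathcal{U}X}(Y)$ is automatically a homeomorphism, yielding the embedding. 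The one genuinely non-routine step is the identification $\mathfrak{Y}\cong cl_{\mathfrak{X}}(Y)$; everything else is bookkeeping matching the bornologies $\mathcal{B}_X$ and $\mathcal{B}_Y$ against each other.
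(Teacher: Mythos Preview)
Your argument is correct and takes a genuinely different route from the paper. The paper proceeds by hand: given $\sigma\in tr_{\mathcal{U}X}(Y)$ it defines $\hat\sigma=\{A\subseteq Y\mid A\in\sigma\}$ and verifies directly that $\hat\sigma$ is a cluster in $(Y,\delta_{dis,Y})$, the delicate step being cluster axiom~(3), for which the paper uses the strong axiom, coarse neighbourhoods, and a trace argument to produce an element of $\hat\sigma$ witnessing a contradiction. It then checks $\mathcal{U}\iota(\hat\sigma)=\sigma$ and injectivity of $\mathcal{U}\iota$ separately. You instead observe that $(Y,\delta_{dis,Y})$ carries exactly the subspace proximity inherited from $(X,\delta_{dis,X})$ and invoke the classical Smirnov fact that in this situation $\mathfrak{Y}\cong cl_{\mathfrak{X}}(Y)$; this gives the embedding $\bar\iota$ for free, and what remains is the purely bornological check that $\bar\iota$ carries $\mathcal{U}Y$ bijectively onto $cl_{\mathfrak{X}}(Y)\cap\mathcal{U}X$, which you handle cleanly via the associated-cluster description and downward closure of $\mathcal{B}_X$. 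Your approach is shorter and more conceptual, at the cost of importing the subspace--Smirnov identification from outside the paper; the paper's approach is fully self-contained but has to redo, in the specific coarse setting, work that the classical theory already supplies.
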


\begin{proof}
It will suffice to show that if $\sigma\in tr_{\mathcal{U}X}(Y)$ then $\sigma$ contains an element of $\mathcal{U}Y$. Let $\sigma\in tr_{\mathcal{U}X}(Y)$ be given. Then $Y\in\sigma$. Let 

\[\hat{\sigma}=\{A\subseteq Y\mid A\in\sigma\}\]

We claim that $\hat{\sigma}$ is a cluster in $Y$. If $A,C\in\hat{\sigma}$, then $A,C\in\sigma$ which implies $A{\bf b}_{X}C$. The coarse proximity ${\bf b}_{Y}$ is simply a restriction of ${\bf b}_{X}$ to subsets of $Y$, so $A{\bf b}_{Y}C$. If $(A\cup C)\in\hat{\sigma}$, then $(A\cup C)\in\sigma$, so $A\in\sigma$ or $C\in\sigma$ which implies $A\in\hat{\sigma}$ or $C\in\hat{\sigma}$. Finally, let $A\subseteq Y$ be such such that $A{\bf b}_{Y}C$ for all $C\in\hat{\sigma}$. If $A\notin\sigma$ there is some $D\subseteq X$ such that $D\in\sigma$ and $D\bar{\bf b}_{X}A$. However, $D\in\sigma$ implies $\sigma\in tr_{\mathcal{U}X}(D)$, so we would be able to find an unbounded subset $E_{1}\subseteq X$ such that $\sigma\in int(tr_{\mathcal{U}X}(E_{1}))$ and $tr_{\mathcal{U}X}(E_{1})\cap tr_{\mathcal{U}X}(A)=\emptyset$. Using the strong axiom we can find two other such sets $E_{2},E_{3}\subseteq X$ with the additional property that $E_{1}\ll E_{2}\ll E_{3}$. We then claim that $E_{2}\cap Y$ must be unbounded. If not, then there is a bounded set $B\in\mathcal{B}_{X}$ such that $(Y\setminus B)\subseteq (X\setminus E_{2})$. Then $(Y\setminus B)\phi_{X} Y$ and we must have that $E_{1}\bar{\bf b}_{X}Y$. However, this is impossible as $Y\in\sigma$. Therefore $E_{2}\cap Y$ is unbounded. Note that $tr_{\mathcal{U}X}(Y)=tr_{\mathcal{U}X}(Y\cap E_{2})\cup tr_{\mathcal{U}X}(Y\setminus E_{2})$. Because $\sigma\notin tr_{\mathcal{U}X}(X\setminus E_{2})$ we must have that because $\sigma\in tr_{\mathcal{U}X}(Y)$ that $\sigma\in tr_{\mathcal{U}X}(Y\cap E_{2})$. Therefore $(Y\cap E_{2})\in\sigma$ and $(Y\cap E_{2})\in\hat{\sigma}$. Then, by assumption $A{\bf b}_{Y}(Y\cap E_{2})$. However, by the construction of $E_{2}$ we have that $A\bar{\bf b}_{X}(Y\cap E_{2})$, so this is impossible. We then must have that $A$ is an element of $\sigma$. We have then established that $\hat{\sigma}$ is a cluster in $Y$. It is then easy to see that $\mathcal{U}\iota(\hat{\sigma})=\sigma$ meaning $\mathcal{U}\iota(\mathcal{U}Y)=tr_{\mathcal{U}X}(Y)$. 
\vspace{\baselineskip}

The map $\mathcal{U}\iota$ is quickly seen to be injective as if $\sigma_{1},\sigma_{2}\in\mathcal{U}Y$ are distinct then there are $A\in\sigma_{1}$ and $C\in\sigma_{2}$ such that $A\bar{\bf b}_{Y}C$. Then $A\bar{\bf b}_{X}C$, so $\mathcal{U}\iota(\sigma_{1})$ can not be equal to $\mathcal{U}\iota(\sigma_{2})$, making $\mathcal{U}\iota$ injective. Then $\mathcal{U}\iota$ is a continuous injection of the compact Hausdorff $\mathcal{U}Y$ into $\mathcal{U}X$ whose image is $tr_{X}(Y)$, which makes $\mathcal{U}\iota$ and embedding whose image is exactly $tr_{X}(Y)$.  
\end{proof}

Next we will construct an analog of the disjoint union for coarse proximity spaces. Let $\{(X_{\alpha},\mathcal{B}_{\alpha},{\bf b}_{\alpha})\}_{\alpha\in S}$ be a family of coarse proximity spaces indexed by some set $S$. Define $X$ to be the disjoint union of all of the $X_{\alpha}$'s. That is, $X=\bigsqcup_{\alpha\in S}$. We then consider the following collection of sets.

\[\mathcal{B}=\{B\subseteq X\mid B\cap X_{\alpha}\in\mathcal{B}_{\alpha},\,\forall\alpha\in S\}\]

\begin{proposition}
The collection $\mathcal{B}$ is a bornology on $X$. 
\end{proposition}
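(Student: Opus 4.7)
The plan is to verify the three bornology axioms for $\mathcal{B}$ directly, by reducing each to the corresponding axiom in every $\mathcal{B}_\alpha$. Because membership in $\mathcal{B}$ is defined componentwise via the intersections $B \cap X_\alpha$, each axiom should translate into a statement about each coordinate separately.

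For axiom (1), I would take $x \in X$ and note that since $X$ is a disjoint union there is a unique $\alpha_0 \in S$ with $x \in X_{\alpha_0}$. Then $\{x\} \cap X_{\alpha_0} = \{x\} \in \mathcal{B}_{\alpha_0}$ by the first bornology axiom applied to $\mathcal{B}_{\alpha_0}$, while for $\alpha \neq \alpha_0$ we have $\{x\} \cap X_\alpha = \emptyset$, which belongs to $\mathcal{B}_\alpha$ (apply axiom (2) for $\mathcal{B}_\alpha$ to any singleton in $X_\alpha$, noting $S$ is nonempty so each $X_\alpha$ is nonempty; if some $X_\alpha$ is empty the statement is vacuous). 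Hence $\{x\} \in \mathcal{B}$.

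For axiom (2), suppose $A \in \mathcal{B}$ and $B \subseteq A$. For each $\alpha \in S$, $B \cap X_\alpha \subseteq A \cap X_\alpha$ and the latter lies in $\mathcal{B}_\alpha$ by assumption, so axiom (2) for $\mathcal{B}_\alpha$ gives $B \cap X_\alpha \in \mathcal{B}_\alpha$. Thus $B \in \mathcal{B}$. For axiom (3), given $A, B \in \mathcal{B}$, use the set-theoretic identity
\[
(A \cup B) \cap X_\alpha \;=\; (A \cap X_\alpha) \cup (B \cap X_\alpha),
\]
which is a union of two elements of $\mathcal{B}_\alpha$ and so lies in $\mathcal{B}_\alpha$ by axiom (3) for $\mathcal{B}_\alpha$. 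This holds for every $\alpha$, so $A \cup B \in \mathcal{B}$.

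There is no real obstacle here: the verification is entirely formal and the only subtlety worth noting explicitly is that $\emptyset \in \mathcal{B}_\alpha$ for every $\alpha$, which is needed to handle the ``other'' components when checking singletons. Everything else follows immediately from the corresponding axiom in the factor bornologies.
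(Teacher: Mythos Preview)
Your proof is correct and follows essentially the same approach as the paper: both verify the three bornology axioms componentwise, using that $\{x\}\cap X_\beta=\emptyset\in\mathcal{B}_\beta$ for $\beta\neq\alpha_0$, that subsets intersect componentwise, and the distributive identity $(A\cup B)\cap X_\alpha=(A\cap X_\alpha)\cup(B\cap X_\alpha)$. Your explicit remark on why $\emptyset\in\mathcal{B}_\alpha$ is a minor elaboration the paper leaves implicit.
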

\begin{proof}
If $x\in X$, then $x\in X_{\alpha}$ for some $\alpha\in S$. Then as $\{x\}\in\mathcal{B}_{\alpha}$ we have that $\{x\}\cap X_{\alpha}\in\mathcal{B}_{\alpha}$ and $\{x\}\cap X_{\beta}=\emptyset\in\mathcal{B}_{\beta}$ for all $\beta\neq\alpha$, so $\{x\}\in\mathcal{B}$, meaning $\mathcal{B}$ covers $X$. If $A\in\mathcal{B}$ and $B\subseteq A$, then $(B\cap X_{\alpha})\subseteq (A\cap X_{\alpha})$ for all $\alpha\in S$. As $(A\cap X_{\alpha})\in\mathcal{B}_{\alpha}$ for all $\alpha$ we have that $(B\cap X_{\alpha})\in\mathcal{B}_{\alpha}$ for each $\alpha$, meaning $B\in\mathcal{B}$. Finally if $A,B\in\mathcal{B}$, then $(A\cup B)\cap X_{\alpha}=(A\cap X_{\alpha})\cup(B\cap X_{\alpha})$, which yields $(A\cup B)\cap X_{\alpha}\in\mathcal{B}_{\alpha}$ for each $\alpha$ and $(A\cup B)\in\mathcal{B}$. Therefore $\mathcal{B}$ is a bornology on $X$.
\end{proof}

\begin{proposition}
The relation ${\bf b}$ defined on subsets of $(X,\mathcal{B})$ defined by

\[A{\bf b}B\iff \exists\alpha\in S,\,(A\cap X_{\alpha}){\bf b}_{\alpha}(B\cap X_{\alpha})\]

\noindent
is a coarse proximity on $X$.
\end{proposition}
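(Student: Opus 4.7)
The plan is to verify the five coarse proximity axioms for ${\bf b}$ one at a time, in each case reducing to the corresponding axiom for the individual ${\bf b}_\alpha$ after intersecting with each $X_\alpha$. The fundamental observation is that for any $A,B\subseteq X$ and any $\alpha\in S$, one has $(A\cup B)\cap X_\alpha = (A\cap X_\alpha)\cup(B\cap X_\alpha)$, and if $E\subseteq X_\alpha$ then $(X\setminus E)\cap X_\alpha = X_\alpha\setminus E$ because the $X_\alpha$'s are disjoint. These identities make each axiom ``componentwise.''

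First, I would dispatch the easy axioms. Axiom (1) follows immediately from the symmetry of each ${\bf b}_\alpha$. For axiom (2), if $A{\bf b}B$ then some $\alpha$ witnesses $(A\cap X_\alpha){\bf b}_\alpha(B\cap X_\alpha)$, so $A\cap X_\alpha\notin\mathcal{B}_\alpha$; by the definition of $\mathcal{B}$ this forces $A\notin\mathcal{B}$, and similarly for $B$. For axiom (3), if $A\cap B\notin\mathcal{B}$ then there exists $\alpha$ with $(A\cap X_\alpha)\cap(B\cap X_\alpha)=(A\cap B)\cap X_\alpha\notin\mathcal{B}_\alpha$, and the corresponding axiom for ${\bf b}_\alpha$ produces $(A\cap X_\alpha){\bf b}_\alpha(B\cap X_\alpha)$. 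Axiom (4) is then immediate: using the union identity above together with axiom (4) for each ${\bf b}_\alpha$, the condition ``there exists $\alpha$ with $((A\cup B)\cap X_\alpha){\bf b}_\alpha(C\cap X_\alpha)$'' unpacks to ``there exists $\alpha$ with $(A\cap X_\alpha){\bf b}_\alpha(C\cap X_\alpha)$ or $(B\cap X_\alpha){\bf b}_\alpha(C\cap X_\alpha)$,'' which by redistributing the existential quantifier is $A{\bf b}C$ or $B{\bf b}C$.

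The main issue is the strong axiom (5), where one must manufacture a single witness set $E\subseteq X$ from family of witnesses $E_\alpha\subseteq X_\alpha$. Suppose $A\bar{\bf b}B$; by definition this means $(A\cap X_\alpha)\bar{\bf b}_\alpha(B\cap X_\alpha)$ for every $\alpha\in S$. Applying axiom (5) inside each coarse proximity space $(X_\alpha,\mathcal{B}_\alpha,{\bf b}_\alpha)$ produces, for each $\alpha$, a set $E_\alpha\subseteq X_\alpha$ with
\[(A\cap X_\alpha)\bar{\bf b}_\alpha E_\alpha \quad\text{and}\quad (X_\alpha\setminus E_\alpha)\bar{\bf b}_\alpha(B\cap X_\alpha).\]
Set $E=\bigsqcup_{\alpha\in S}E_\alpha\subseteq X$. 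Because the pieces $X_\alpha$ are disjoint we have $E\cap X_\alpha=E_\alpha$ and $(X\setminus E)\cap X_\alpha=X_\alpha\setminus E_\alpha$, so the two displayed conditions translate exactly into $A\bar{\bf b}E$ and $(X\setminus E)\bar{\bf b}B$, completing the verification.

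The argument uses the axiom of choice implicitly when $S$ is infinite, in order to choose one $E_\alpha$ for each $\alpha$ simultaneously. No subtlety beyond that is required: each axiom for ${\bf b}$ is equivalent, slice by slice, to the same axiom for each ${\bf b}_\alpha$, and the disjointness of the $X_\alpha$'s is the sole geometric fact being used.
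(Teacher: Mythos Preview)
Your proof is correct and follows essentially the same approach as the paper: both treat axioms (1)--(3) as immediate, verify the union axiom via $(A\cup B)\cap X_\alpha=(A\cap X_\alpha)\cup(B\cap X_\alpha)$ together with the union axiom for ${\bf b}_\alpha$, and establish the strong axiom by choosing a witness $E_\alpha$ in each component and assembling $E=\bigsqcup_\alpha E_\alpha$. The only cosmetic difference is that the paper's $E$ is the complement of yours (it chooses $E_\alpha$ so that $(A\cap X_\alpha)\bar{\bf b}_\alpha(X_\alpha\setminus E_\alpha)$ and $E_\alpha\bar{\bf b}_\alpha(C\cap X_\alpha)$), which is immaterial.
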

\begin{proof}
The only axioms that aren't clear are the union axiom and the strong axiom. Let $A,B,C\subseteq X$ be such that $(A\cup B){\bf b}C$. Then there is an $\alpha\in S$ such that $[(A\cup B)\cap X_{\alpha}]{\bf b}_{\alpha}(C\cap X_{\alpha})$. Then $(A\cap X_{\alpha}){\bf b}_{\alpha}(C\cap X_{\alpha})$ or $(B\cap X_{\alpha}){\bf b}_{\alpha} C$. In either case we have that one of $A$ or $B$ is related to $C$ by ${\bf b}$. The converse is trivial. 
\vspace{\baselineskip}

Now let $A,C\subseteq X$ be such that $A\bar{\bf b}C$. Then for each $\alpha\in S$ we have that $(A\cap X_{\alpha})\bar{\bf b}_{\alpha}(C\cap X_{\alpha})$. For each $\alpha\in S$ let $E_{\alpha}\subseteq X_{\alpha}$ be such that $(A\cap X_{\alpha})\bar{\bf b}_{\alpha}(X_{\alpha}\setminus E_{\alpha})$ and $E_{\alpha}\bar{\bf b}_{\alpha}(C\cap X_{\alpha})$. Define $E=\bigcup_{\alpha\in S}E_{\alpha}$. Then $A\bar{\bf b}(X\setminus E)$ and $E\bar{\bf b}C$. Therefore ${\bf b}$ is a coarse proximity on $(X,\mathcal{B})$. 
\end{proof}

\begin{proposition}\label{boundary of disjoint union is disjoint union}
If the index set $S$ is finite in the above construction with $S=\{1,\ldots,n\}$, then $\mathcal{U}X$ is homeomorphic to the disjoint union of the $\mathcal{U}X_{i}$. 
\end{proposition}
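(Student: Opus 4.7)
My plan is to use the previously established Proposition \ref{trace of subspace is its boundary} which identifies each $\mathcal{U}X_i$ with $tr_{\mathcal{U}X}(X_i)$, and then to show that these traces form a finite partition of $\mathcal{U}X$ into clopen pieces.

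First I would verify that each inclusion $\iota_i : X_i \hookrightarrow X$ is a coarse proximity map when $X_i$ is given the coarse proximity structure it inherits as the disjoint summand, which in fact coincides with the subspace structure from $X$: a set $A \subseteq X_i$ lies in $\mathcal{B}_i$ iff $A = A \cap X_i \in \mathcal{B}$, and for $A, C \subseteq X_i$ we have $A \mathbf{b} C$ iff $A \mathbf{b}_i C$ by the very definition of $\mathbf{b}$ on the disjoint union. Thus each $\mathcal{U}\iota_i$ is an embedding of $\mathcal{U}X_i$ onto $tr_{\mathcal{U}X}(X_i)$ by Proposition \ref{trace of subspace is its boundary}.

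Next I would show that the traces $tr(X_i)$ are pairwise disjoint. For $i \ne j$, I claim $X_i \bar{\mathbf{b}} X_j$. Indeed, for every $\alpha \in S$, at least one of $X_i \cap X_\alpha$, $X_j \cap X_\alpha$ is empty (since $X_i \cap X_j = \emptyset$), hence bounded, so $(X_i \cap X_\alpha) \bar{\mathbf{b}}_\alpha (X_j \cap X_\alpha)$ by axiom (\ref{axiom2}). By Proposition \ref{coarse proximity characterization}(2), $tr(X_i) \cap tr(X_j) = \emptyset$.

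Then I would show the traces cover $\mathcal{U}X$. Fix $\sigma \in \mathcal{U}X$; pick any $A \in \sigma$. Since $X \supseteq A$ and $A$ is unbounded we have $X \cap A = A$ unbounded, so $X \, \delta_{dis} \, A'$ for every $A' \in \sigma$, and the third cluster axiom forces $X \in \sigma$. Writing $X = X_1 \cup \cdots \cup X_n$ and iterating the second cluster axiom, some $X_i \in \sigma$, i.e. $\sigma \in tr(X_i)$. Hence $\mathcal{U}X = \bigcup_{i=1}^n tr(X_i)$.

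Finally I would assemble the pieces. Since $\mathcal{U}X$ is compact Hausdorff and expressed as a finite disjoint union of the closed sets $tr(X_i)$, each $tr(X_i)$ is clopen. The map
\[
\Phi : \bigsqcup_{i=1}^n \mathcal{U}X_i \longrightarrow \mathcal{U}X, \qquad \Phi|_{\mathcal{U}X_i} = \mathcal{U}\iota_i,
\]
is therefore a continuous bijection from a compact space onto a Hausdorff space, and hence a homeomorphism. The only real point requiring care is the cluster-theoretic argument that every $\sigma \in \mathcal{U}X$ must contain some $X_i$; the disjointness and covering then combine with Proposition \ref{trace of subspace is its boundary} to give the homeomorphism essentially for free.
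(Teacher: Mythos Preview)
Your proof is correct. The approach differs from the paper's in its organization: the paper builds the map in the opposite direction, sending $\sigma\in\mathcal{U}X$ to the unique cluster $\sigma_i=\{A\cap X_i\mid A\in\sigma\}$ in $\mathcal{U}X_i$, after arguing (via the second cluster axiom and $X_i\bar{\mathbf b}X_j$) that exactly one index $i$ works, and then simply asserts that this assignment is a homeomorphism. You instead invoke Proposition~\ref{trace of subspace is its boundary} to get each $\mathcal{U}\iota_i$ as an embedding onto $tr(X_i)$, and then show these traces form a clopen partition of $\mathcal{U}X$. Your route packages the cluster-level verification inside the cited proposition and makes continuity and bijectivity of the assembled map automatic (continuous bijection from compact to Hausdorff), whereas the paper's hands-on construction leaves those points implicit. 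Both arguments ultimately hinge on the same observation that $X=\bigcup_i X_i$ forces some $X_i\in\sigma$, and $X_i\bar{\mathbf b}X_j$ forces uniqueness.
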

\begin{proof}
We will consider each $X_{i}$ as a subset of $X$. For each $A\subseteq X$ let $A_{i}=A\cap X_{i}$. For a given $\sigma\in\mathcal{U}X$ we claim that there is a unique $i\in S$ such that

\[\sigma_{i}=\{A_{i}\mid A\in\sigma\}\]

\noindent
is an element of $\mathcal{U}X_{i}$. First let $A\in\sigma$ be given. Say, $A=\left(\bigcup A_{i}\right)$. Then, by the definition of a cluster we must have that there is a $j$ for which $A_{j}\in\sigma$. In fact, there is only one such index as if $i\neq j$ is such that $A_{j}\in\sigma$ then we would have that $A_{i}{\bf b}A_{j}$, which is a contradiction. Therefore, for each $A\in\sigma$ there is a unique $j$ for which $A_{j}\in\sigma$. By similar reasoning as above we have that there is a unique index $i$ such that $A_{i}\in\sigma$ for all $A\in\sigma$. We then have that $\sigma_{i}\subseteq\sigma$. This itself implies that $\sigma_{i}$ is a cluster in $\mathcal{U}X_{i}$. That there is no $j\neq i$ for which $\sigma_{j}\in\mathcal{U}X_{j}$ is obvious.
\vspace{\baselineskip}

Finally, define a function $f:\mathcal{U}X\rightarrow\bigsqcup_{i=1}^{n}\mathcal{U}X_{i}$ by mapping each $\sigma\in\mathcal{U}X$ to the unique $\sigma_{i}$ such that $\sigma_{i}\in\mathcal{U}X_{i}$. This map is a homeomorphism.

\end{proof}

The following definition is a slight variation of a definition in \cite{thesis} to characterize the covering dimension of the boundaries of coarse proximity spaces. A similar definition is given by Hartmann in \cite{hartmann}.  Both definitions are based on the definition of a particular kind of cover for small scale proximity spaces defined by Smirnov in \cite{Smirnov} to define a dimension function for proximity spaces. We will use the following definition in later sections when defining a coarse analog of a fine collection of canonical covers.

\begin{definition}
A {\bf b-cover} (or {\bf coarse cover with respect to b}) of a coarse proximity space $(X,\mathcal{B},{\bf b})$ is a finite collection of sets $\{A_{1},\ldots,A_{n}\}$ satisfying:\\
\begin{enumerate}
\item There is a unique, potentially empty, bounded $A_{i}$.
\item The collection covers $X$
\item There are set $C_{1},\ldots,C_{n}$ that also cover and satisfy $C_{i}\ll A_{i}$ for each $i$. 
\end{enumerate}
\end{definition}

\begin{proposition}\label{b cover to open cover}
Let $(X,\mathcal{B},{\bf b})$ be a coarse proximity space with boundary $\mathcal{U}X$. If $\{A_{1},\ldots,A_{n}\}$ is a ${\bf b}$-cover of $X$, then $\{int(tr(A_{1})),\ldots,int(tr(A_{n})\}$ is an open cover of $\mathcal{U}X$.

\end{proposition}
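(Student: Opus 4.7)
The plan is to take an arbitrary cluster $\sigma \in \mathcal{U}X$ and exhibit an index $i$ with $\sigma \in int(tr(A_i))$. The key observation is that the auxiliary cover $\{C_1, \ldots, C_n\}$ from condition (3) of the b-cover definition lets us ``refine'' our way inside a coarse neighborhood: once we locate a $C_j$ belonging to $\sigma$, Proposition \ref{coarse proximity characterization}(4) applied to $C_j \ll A_j$ immediately pushes $\sigma$ into the interior of $tr(A_j)$.

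First I would show that $X \in \sigma$ for every cluster $\sigma$ in the discrete extension $(X, \delta_{dis})$. Indeed, for any nonempty $A \in \sigma$ we have $X \cap A = A \neq \emptyset$, so $X \delta_{dis} A$; applying cluster axiom (3) yields $X \in \sigma$. Since $X = C_1 \cup \cdots \cup C_n$, iterating cluster axiom (2) produces an index $j$ with $C_j \in \sigma$. Because $\sigma \in \mathcal{U}X$, the set $\sigma$ contains no bounded set, so $C_j$ is necessarily unbounded, and therefore $\sigma \in tr(C_j)$ by Proposition \ref{coarse proximity characterization}(1) (together with the definition of $tr$). The assumption $C_j \ll A_j$ then gives $tr(C_j) \subseteq int(tr(A_j))$ by Proposition \ref{coarse proximity characterization}(4), so $\sigma \in int(tr(A_j))$, as required. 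The sets $int(tr(A_i))$ are open in $\mathcal{U}X$ by definition of interior, so this completes the verification that they form an open cover.

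The only mild subtlety is the possibility of the unique bounded $A_i$ from condition (1): for such an $i$, $tr(A_i) = \emptyset$ and so $int(tr(A_i))$ contributes nothing, but this does not affect the argument because the $C_j$ chosen above is unbounded and $C_j \ll A_j$ forces $A_j$ to be unbounded as well (any unbounded $C_j$ meets $X \setminus A_j$ unboundedly when $A_j$ is bounded, contradicting $C_j \bar{\bf b}(X \setminus A_j)$). Hence the chosen index $j$ is never the bounded one, and the main obstacle — ensuring the cluster is actually caught by a nonempty $int(tr(A_j))$ — resolves automatically.
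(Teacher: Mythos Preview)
Your proof is correct. The paper states this proposition without proof (it is treated as routine, presumably established in \cite{thesis}), so there is no argument to compare against; your route via the auxiliary shrinking $\{C_i\}$, the cluster union axiom, and Proposition~\ref{coarse proximity characterization}(4) is exactly the natural one.
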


\begin{proposition}\label{intermediate covers}
Let $(X,\mathcal{B},{\bf b})$ be a coarse proximity space with boundary $\mathcal{U}X$. If $\mathcal{U}=\{U_{1},\ldots,U_{n}\}$ and $\mathcal{V}=\{V_{1},\ldots,V_{m}\}$ are open covers of $\mathcal{U}X$ such that $\overline{\mathcal{V}}=\{\overline{V}_{1},\ldots,\overline{V}_{m}\}$ refines $\mathcal{U}$, then there is a ${\bf b}$-cover $\{A_{1},\ldots,A_{m}\}$ of $X$ such that $\overline{V}_{i}\subseteq tr(A_{i})$ for each $i$ and $\overline{V}_{i}\subseteq U_{j}$ implies that $tr(A_{i})\subseteq U_{j}$.

\end{proposition}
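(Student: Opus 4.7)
The plan is to construct each $A_i$ by applying Proposition \ref{preliminary proposition} and to verify the b-cover axioms by unpacking its underlying Urysohn construction. For each $i$, set $U_i^* = \bigcap\{U_j \in \mathcal{U} : \overline{V}_i \subseteq U_j\}$; this is a finite intersection of opens containing $\overline{V}_i$, and so is itself an open set containing $\overline{V}_i$ (nonempty because $\overline{\mathcal{V}}$ refines $\mathcal{U}$). Since $\mathcal{U}X$ is compact Hausdorff and therefore normal, repeated use of normality yields nested open sets
\[\overline{V}_i \subseteq W_i^{(1)} \subseteq \overline{W_i^{(1)}} \subseteq W_i^{(2)} \subseteq \overline{W_i^{(2)}} \subseteq U_i^*.\]

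I would then apply Proposition \ref{preliminary proposition} twice for each $i$: once with $(V,U) = (W_i^{(2)}, U_i^*)$ to obtain an unbounded $A_i \subseteq X$ satisfying $\overline{W_i^{(2)}} \subseteq int(tr(A_i)) \subseteq tr(A_i) \subseteq U_i^*$, and once with $(V,U) = (W_i^{(1)}, W_i^{(2)})$ to obtain an unbounded $C_i \subseteq X$ satisfying $\overline{W_i^{(1)}} \subseteq int(tr(C_i)) \subseteq tr(C_i) \subseteq W_i^{(2)}$. Since $tr(C_i) \subseteq \overline{W_i^{(2)}}$, the second clause of Proposition \ref{preliminary proposition} applied to the first invocation yields $C_i \ll A_i$. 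The two trace conditions of the theorem are then immediate: $\overline{V}_i \subseteq tr(A_i)$ follows from the nested containment, while $\overline{V}_i \subseteq U_j$ places $U_j$ in the collection defining $U_i^*$ and gives $tr(A_i) \subseteq U_i^* \subseteq U_j$.

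It remains to verify that these sets form a b-cover of $X$, and here the challenge is that Proposition \ref{preliminary proposition} controls only traces in $\mathcal{U}X$ rather than covering of $X$ itself. To close this gap I would unpack the proof and use that $A_i$ can be realized as $f_i^{-1}([0, 2/3]) \cap X$ (and $C_i$ as $f_i^{-1}([0, 1/3]) \cap X$) for a Urysohn function $f_i : \mathfrak{X} \to [0,1]$ sending $\overline{V}_i$ to $0$ and $\mathcal{U}X \setminus U_i^*$ to $1$. Then $B := X \setminus \bigcup_i A_i = X \cap \bigcap_i f_i^{-1}((2/3, 1])$, and any $\sigma \in tr(B) \subseteq \mathcal{U}X$ would satisfy $f_i(\sigma) \geq 2/3$ for every $i$, contradicting $f_j(\sigma) = 0$ for the $j$ with $\sigma \in \overline{V}_j$ (which exists since $\mathcal{U}X = \bigcup_i \overline{V}_i$). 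Hence $B$ is bounded, and the analogous argument at threshold $1/3$ shows $X \setminus \bigcup_i C_i$ is bounded. Adjoining $B$ as the (potentially empty) bounded member of the b-cover and $\{C_1, \ldots, C_m, B\}$ as the subordinate family ($B \ll B$ holds automatically for bounded $B$) completes the construction.

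The main obstacle is exactly this last passage from ``traces cover $\mathcal{U}X$'' to ``the sets cover $X$ up to bounded leftover'': Proposition \ref{preliminary proposition} alone is not quite strong enough as a black box, and the boundedness of the complement must be deduced by returning to the explicit Urysohn function and exploiting the fact that the closures $\overline{V}_i$ already exhaust $\mathcal{U}X$.
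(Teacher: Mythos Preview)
The paper states this proposition without proof (it is presumably imported from \cite{paper3} or \cite{thesis}), so there is no argument to compare against directly. Your approach is correct in substance and is exactly the natural one: use Urysohn functions on the Smirnov compactification $\mathfrak{X}$ to carve out sets $A_i$ and $C_i$ with the right traces, and then argue that the leftover is bounded because the closures $\overline{V}_i$ already exhaust $\mathcal{U}X$.

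Two small points are worth tightening. First, the two presentations of the construction do not quite match: you begin with nested sets $W_i^{(1)},W_i^{(2)}$ and two invocations of Proposition~\ref{preliminary proposition}, but the boundedness argument at the end uses a single Urysohn function $f_i$ separating $\overline{V}_i$ from $\mathcal{U}X\setminus U_i^*$ with thresholds $1/3$ and $2/3$. The latter is the cleaner route and makes the $W$'s unnecessary; you should commit to it from the outset. Second, the subordinate family $\{C_1,\ldots,C_m,B\}$ need not cover $X$: with your thresholds one has $B=X\cap\bigcap_i f_i^{-1}((2/3,1])$ while $X\setminus\bigcup_i C_i = X\cap\bigcap_i f_i^{-1}((1/3,1]) =: B'$, and $B\subseteq B'$ rather than the reverse. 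The fix is simply to take $B'$ as the bounded member of the subordinate family; since $B'$ is bounded, $B'\ll B$ is automatic. Finally, note that the proposition as written lists $m$ sets while the definition of ${\bf b}$-cover requires a unique bounded member; since each $A_i$ has $\overline{V}_i\subseteq tr(A_i)\neq\emptyset$ and is therefore unbounded, one must read the statement as allowing the adjoined bounded set, exactly as you do.
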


It is clear from the fact that coarse proximity maps copreserve the coarse neighbourhood operator $\ll$ that the preimages of ${\bf b}$-covers under coarse proximity maps are ${\bf b}$-covers of the domain of the map. Some final definitions and results we will make use of are the following as can be found in \cite{thesis}.

\begin{definition}
Given subsets $A,C$ of a coarse proximity space $(X,\mathcal{B},{\bf b})$ we define
\begin{enumerate}
\item $A\sqsubseteq C$ if for all $D\subseteq X$ we have that $A{\bf b}D$ implies $C{\bf b}D$.
\item $A\ll_{w} C$ if there is a $D\subseteq X$ such that $A\ll D\sqsubseteq C$.
\end{enumerate}

In case $(1)$ we say that $A$ is a {\bf coarse subset} of $C$. In case $(2)$ we say that $C$ is a {\bf weak coarse neighbourhood} of $A$.
\end{definition}

The weak coarse neighbourhood operator is in some ways nicer than the normal coarse neighbourhood operator as can be seen in the following proposition.

\begin{proposition}\label{weak neighbourhood operator}
Let $A,C$ be subsets of coarse proximity spaces $(X,\mathcal{B},{\bf b})$. Then
\begin{enumerate}
\item $A\sqsubseteq C$ if and only if $tr(A)\subseteq tr(C)$.
\item $A\ll_{w}C$ if and only if $tr(A)\subseteq int(tr(C))$.
\end{enumerate}
\end{proposition}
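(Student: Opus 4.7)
The plan is to reduce both parts to the trace characterization of the coarse proximity provided by Proposition \ref{coarse proximity characterization}, supplemented by the coarse Urysohn-type proposition stated just before Lemma \ref{traces to coarse neighbourhoods} and the coarse neighborhood construction of Proposition \ref{preliminary proposition}.

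For part (1), the reverse implication is immediate from Proposition \ref{coarse proximity characterization}(2): if $tr(A)\subseteq tr(C)$ and $A{\bf b}D$, then $tr(A)\cap tr(D)\neq\emptyset$, hence $tr(C)\cap tr(D)\neq\emptyset$, so $C{\bf b}D$. The forward implication I would prove by contrapositive. First I dispose of the trivial cases in which $tr(A)$ or $tr(C)$ is empty, observing that $A\sqsubseteq C$ with $C$ bounded forces $A$ bounded (otherwise $A{\bf b}A$ would force $C{\bf b}A$, violating axiom (\ref{axiom2})). Assuming both traces nonempty, suppose $\sigma\in tr(A)\setminus tr(C)$. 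Applying the coarse Urysohn proposition to the disjoint closed sets $\{\sigma\}$ and $tr(C)$ produces unbounded $P,Q\subseteq X$ with $P\bar{\bf b}Q$, $\sigma\in int(tr(P))$, and $tr(C)\subseteq int(tr(Q))$. Then $\sigma\in tr(A)\cap tr(P)$ gives $A{\bf b}P$, while $tr(P)\cap tr(Q)=\emptyset$ together with $tr(C)\subseteq tr(Q)$ forces $tr(C)\cap tr(P)=\emptyset$, whence $C\bar{\bf b}P$, contradicting $A\sqsubseteq C$.

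For part (2), the forward implication chains the previous results: $A\ll D\sqsubseteq C$ yields $tr(A)\subseteq int(tr(D))$ by Proposition \ref{coarse proximity characterization}(4) and $tr(D)\subseteq tr(C)$ by part (1), so $tr(A)\subseteq int(tr(D))\subseteq int(tr(C))$. For the reverse implication I must manufacture an intermediate $D$ with $A\ll D\sqsubseteq C$ from purely topological data. The case when $A$ is bounded is handled by taking $D=\emptyset$, since then $A\bar{\bf b}X$ by axiom (\ref{axiom2}) and $\emptyset\sqsubseteq C$ holds vacuously. Otherwise, normality of the compact Hausdorff space $\mathcal{U}X$ lets me interpose an open $V$ with $tr(A)\subseteq V\subseteq\overline{V}\subseteq int(tr(C))$; Proposition \ref{preliminary proposition} applied to $\overline{V}\subseteq int(tr(C))$ furnishes an unbounded $D$ with $tr(D)\subseteq int(tr(C))\subseteq tr(C)$, together with the guarantee $A\ll D$ (since $tr(A)\subseteq\overline{V}$). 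Part (1) then promotes $tr(D)\subseteq tr(C)$ to $D\sqsubseteq C$, establishing $A\ll_{w} C$.

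I expect the main obstacle to be the backward direction of part (2), where the purely topological hypothesis $tr(A)\subseteq int(tr(C))$ must be lifted to a bona fide chain $A\ll D\sqsubseteq C$; Proposition \ref{preliminary proposition} is precisely the bridge that supplies the coarse neighborhood $D$ from the topological interposition step. The remaining pieces—disposing of bounded edge cases and matching the appropriate clause of Proposition \ref{coarse proximity characterization} to each step—are routine bookkeeping.
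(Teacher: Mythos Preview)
Your proof is correct. The paper does not supply its own proof of this proposition, instead citing \cite{thesis}; your argument, built on Proposition~\ref{coarse proximity characterization}, the Urysohn-type separation proposition, and Proposition~\ref{preliminary proposition}, fills this gap soundly and uses only tools already established in the paper.
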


\subsection{Cofinal Dimension}
A large part of our results will be focused on defining an analog of Ponomarev's cofinal dimension (a topological property) for coarse proximity spaces. For the sake of self containment we provide all of the necessary definitions and important results about the cofinal dimension here. They are as found in \cite{pears}. 

\begin{definition}\label{canonical cover definition}
A locally finite closed cover $\{\mathcal{F}_{\alpha}\}_{\alpha\in\Omega}$ of a topological space $X$ is called a {\bf canonical cover} if there is a disjoint collection of open sets $\{U_{\alpha}\}_{\alpha\in\Omega}$ such that $\mathcal{F}_{\alpha}=cl(U_{\alpha})$ for each $\alpha\in\Omega$. 
\end{definition}

\begin{lemma}\label{interior of canonical is disjoint}
If $\{F_{\alpha}\}_{\alpha\in\Omega}$ is a canonical cover of a topological space $X$, then the collection $\{int(F_{\alpha})\}_{\alpha\in\Omega}$ is a disjoint family.

\end{lemma}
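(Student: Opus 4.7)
The plan is to unpack the definition of a canonical cover and then argue by contradiction using a standard point-set fact: a nonempty open subset of $cl(U)$ must meet $U$ itself (since otherwise $U$ would lie in the closed set $X\setminus W$, forcing $cl(U)\subseteq X\setminus W$, contradicting $\emptyset\neq W\subseteq cl(U)$). This little lemma does all the real work.

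So first I would use Definition \ref{canonical cover definition} to fix a disjoint family of open sets $\{U_{\alpha}\}_{\alpha\in\Omega}$ with $F_{\alpha}=cl(U_{\alpha})$. The claim becomes: for $\alpha\neq\beta$,
\[
int(cl(U_{\alpha}))\cap int(cl(U_{\beta}))=\emptyset.
\]
Then I would suppose toward a contradiction that there exists a nonempty open set
\[
W\subseteq int(cl(U_{\alpha}))\cap int(cl(U_{\beta})).
\]

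Next, I would apply the auxiliary observation twice. Since $W$ is a nonempty open set contained in $cl(U_{\alpha})$, it must satisfy $W\cap U_{\alpha}\neq\emptyset$. Set $W':=W\cap U_{\alpha}$, which is open and nonempty, and still satisfies $W'\subseteq cl(U_{\beta})$. Applying the same observation again yields $W'\cap U_{\beta}\neq\emptyset$. But $W'\subseteq U_{\alpha}$, and $U_{\alpha}\cap U_{\beta}=\emptyset$ by hypothesis, which is the desired contradiction.

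Honestly there is no serious obstacle here; the only subtlety is recognizing that one cannot directly conclude disjointness of the interiors from disjointness of the $U_{\alpha}$'s (closures of disjoint opens can meet, e.g. $(0,1)$ and $(1,2)$ in $\mathbb{R}$), so the proof must exploit the \emph{interior} of the closure rather than the closure itself. Local finiteness of the cover plays no role in the argument.
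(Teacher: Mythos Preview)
Your proof is correct and takes essentially the same approach as the paper: both fix the witnessing disjoint open family, assume two interiors meet, and use (twice) the fact that a nonempty open subset of $cl(U)$ must intersect $U$ to force $U_{\alpha}\cap U_{\beta}\neq\emptyset$. Your write-up is in fact slightly more explicit, spelling out the auxiliary observation that the paper invokes tacitly.
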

\begin{proof}
Let $\{F_{\alpha}\}_{\alpha\in\Omega}$ be a canonical cover of a space $X$ with $\{G_{\alpha}\}_{\alpha\in\Omega}$ a disjoint collection of open sets such that $F_{\alpha}=cl(G_{\alpha})$ for each $\alpha$. Assume towards a contradiction that there are $\alpha\neq\beta$ and $x\in X$ such that $x\in int(F_{\alpha})\cap int(F_{\beta})$. As $cl(G_{\alpha})=F_{\alpha}$ and $cl(G_{\beta})=F_{\beta}$ we have that $int(F_{\alpha})\cap G_{\beta}\neq\emptyset$ and $int(F_{\beta})\cap G_{\alpha}\neq\emptyset$. This implies that $G_{\alpha}$ and $G_{\beta}$ must intersect, which is a contradiction. Thus the family $\{int(F_{\alpha})\}_{\alpha\in\Omega}$ must be disjoint.
\end{proof}

\begin{lemma}\label{compact canonical is finite}
If $\{F_{\alpha}\}_{\alpha\in\Omega}$ is a canonical cover of a compact Hausdorff space $X$, then $\Omega$ must be finite.
\end{lemma}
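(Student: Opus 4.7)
The plan is to invoke the standard fact that any locally finite cover of a compact space has only finitely many nonempty members, and then observe that the empty members (if the definition permits them) can be discarded without changing the cover. Since a canonical cover is by definition locally finite, this delivers the conclusion essentially immediately.

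More concretely, I would first use local finiteness to produce, for each point $x \in X$, an open neighbourhood $V_x$ such that the set
\[
\Omega_x = \{\alpha \in \Omega \mid F_\alpha \cap V_x \neq \emptyset\}
\]
is finite. The family $\{V_x\}_{x \in X}$ is an open cover of the compact Hausdorff space $X$, so there exist finitely many points $x_1, \ldots, x_n$ with $X = V_{x_1} \cup \cdots \cup V_{x_n}$. Every nonempty $F_\alpha$ must contain a point of $X$, hence must meet some $V_{x_i}$, so
\[
\{\alpha \in \Omega \mid F_\alpha \neq \emptyset\} \subseteq \Omega_{x_1} \cup \cdots \cup \Omega_{x_n},
\]
which is a finite union of finite sets and thus finite. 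Equivalently, in terms of the witnessing disjoint open family $\{U_\alpha\}_{\alpha \in \Omega}$ with $F_\alpha = \mathrm{cl}(U_\alpha)$, only finitely many $U_\alpha$ can be nonempty.

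If the definition of a canonical cover forbids empty members, we are done: $\Omega$ itself is finite. If empty members are allowed, the previous lemma together with the preceding paragraph shows that only finitely many indices contribute, and after removing the (at most one per empty set) redundant indices we may take $\Omega$ finite without loss of generality. There is no real obstacle in this argument; the only point requiring attention is whether the convention on canonical covers permits empty $F_\alpha$, and this is handled by the discard step.
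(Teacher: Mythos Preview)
Your proof is correct and takes a genuinely different, more elementary route than the paper's. You use only the local finiteness built into the definition of a canonical cover together with compactness: each point has a neighbourhood meeting only finitely many $F_\alpha$, finitely many such neighbourhoods cover $X$, hence only finitely many $F_\alpha$ are nonempty. This argument applies verbatim to any locally finite cover of a compact space and makes no use of the canonical structure. The paper instead invokes the preceding lemma that the interiors $\operatorname{int}(F_\alpha)$ are pairwise disjoint, chooses a point $x_\alpha$ in each, shrinks to open sets $W_\alpha$ with $\overline{W_\alpha}\subseteq\operatorname{int}(F_\alpha)$, and builds the open cover $\{X\setminus\bigcup_\alpha\overline{W_\alpha}\}\cup\{\operatorname{int}(F_\alpha)\}_{\alpha}$; since each $\operatorname{int}(F_\alpha)$ contains a point not covered by any other member, any finite subcover must include every $\operatorname{int}(F_\alpha)$, forcing $\Omega$ finite. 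Your approach is shorter and more general; the paper's approach exercises the disjoint-interiors lemma but is otherwise more elaborate than necessary. Your remark about possibly empty members is a fair caveat; the paper's argument tacitly assumes each $\operatorname{int}(F_\alpha)$ is nonempty (equivalently, each $U_\alpha\neq\emptyset$), which is the natural convention.
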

\begin{proof}
By Lemma \ref{interior of canonical is disjoint} we have that $\{int(F_{\alpha})\}_{\alpha\in\Omega}$ is a disjoint family. For each $\alpha$ let $x_{\alpha}\in int(F_{\alpha})$ be given and $W_{\alpha}\subseteq int(F_{\alpha})$ be an open set such that $cl(W_{\alpha})\subseteq int(F_{\alpha})$. Then define $U=X\setminus\left(\bigcup_{\alpha}cl(W_{\alpha})\right)$. Then the collection $\{U\}\cup\{int(F_{\alpha})\mid\alpha\in\Omega\}$ is an open cover of $X$ which must admit a finite subcover by compactness. As each $int(F_{\alpha})$ contains some point not contained in any other element of the open cover, we have that the finite subcover must contain each $int(F_{\alpha})$. Therefore $\Omega$ must be finite.

\end{proof}

\begin{proposition}\label{closed covers admit canonical refinements}
If $\{U_{\alpha}\}_{\alpha\in\Omega}$ is an open cover of a topological space $X$ whose corresponding closed cover $\{cl(U_{\alpha})\}_{\alpha\in\Omega}$ is locally finite, then there is a canonical cover $\{F_{\lambda}\}_{\lambda\in\Lambda}$ of $X$ that refines $\{cl(U_{\alpha})\}_{\alpha\in\Omega}$. 
\end{proposition}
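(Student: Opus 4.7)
My plan is to well-order the index set $\Omega$ and perform a classical disjointification. For each $\alpha \in \Omega$, set
\[W_\alpha = U_\alpha \setminus \bigcup_{\beta < \alpha} cl(U_\beta),\]
and then define $F_\alpha = cl(W_\alpha)$, taking $\Lambda = \{\alpha : W_\alpha \neq \emptyset\}$ if one wants to drop the empty members. I claim this yields a canonical cover of $X$ refining $\{cl(U_\alpha)\}_{\alpha \in \Omega}$.

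The first thing to verify is that each $W_\alpha$ is actually open, and this is precisely where the local finiteness hypothesis gets used in an essential way: the subfamily $\{cl(U_\beta)\}_{\beta < \alpha}$ is still locally finite, hence its union is closed, and $W_\alpha$ is the difference of an open set and a closed set. Pairwise disjointness of $\{W_\alpha\}$ follows immediately from the well-ordering, since for $\alpha < \beta$ we have $W_\alpha \subseteq cl(U_\alpha)$ while $W_\beta \cap cl(U_\alpha) = \emptyset$ by construction. The containment $cl(W_\alpha) \subseteq cl(U_\alpha)$ then gives refinement, and local finiteness of $\{cl(W_\alpha)\}$ is inherited from $\{cl(U_\alpha)\}$.

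The remaining, and main, step is to check that $\{cl(W_\alpha)\}$ covers $X$. Given $x \in X$, let $\alpha_0$ be the minimum element of the non-empty set $\{\alpha : x \in cl(U_\alpha)\}$ under the well-order. For any open neighborhood $N$ of $x$, the set $N' = N \cap \bigl(X \setminus \bigcup_{\beta < \alpha_0} cl(U_\beta)\bigr)$ is again an open neighborhood of $x$ (by minimality of $\alpha_0$ together with closedness of the locally finite union), and since $x \in cl(U_{\alpha_0})$ we have $N' \cap U_{\alpha_0} \neq \emptyset$; any point in this intersection lies in $N \cap W_{\alpha_0}$, so $x \in cl(W_{\alpha_0})$. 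I do not anticipate a serious obstacle in executing this plan: the only subtle point is the openness of $W_\alpha$, which is exactly the place the hypothesis of local finiteness of the closed cover is invoked.
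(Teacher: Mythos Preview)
Your argument is correct. The well-ordering disjointification is the standard device here, and you have identified exactly the two places where local finiteness of $\{cl(U_\alpha)\}$ is used: once to guarantee that $\bigcup_{\beta<\alpha} cl(U_\beta)$ is closed (so each $W_\alpha$ is open), and once to see that $x$ avoids $\bigcup_{\beta<\alpha_0} cl(U_\beta)$ in the covering step. The verification that $x\in cl(W_{\alpha_0})$ via the auxiliary neighbourhood $N'$ is clean and complete.

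As for comparison with the paper: the paper does not supply its own proof of this proposition. It is stated in the review subsection on cofinal dimension as a background result drawn from Pears, \emph{Dimension theory of general spaces}, and is used later without argument. Your disjointification proof is essentially the classical one found in that reference, so there is no substantive methodological difference to report.
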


\begin{definition}\label{fine system of covers}
A collection $\{\mathcal{F}_{\alpha}\}_{\alpha\in\Omega}$ of canonical covers of a topological space $X$ is called {\bf fine} if every open cover $\mathcal{U}$ of $X$ is refined by some $\mathcal{F}_{\alpha}$. 
\end{definition}

\begin{definition}\label{perfectly zero-dimensional}
A topological space $X$ with a basis of open and closed sets such that each open covering $X$ has a disjoint open refinement is called a {\bf perfectly zero-dimensional space}.
\end{definition}

\begin{definition}
A collection of canonical covers $\{\mathcal{F}_{\alpha}\}_{\alpha\in\Omega}$ of a topological space $X$ is called {\bf directed} if $\Omega$ is a directed set and $\alpha\leq\beta$ implies $\mathcal{F}_{\beta}$ refines $\mathcal{F}_{\alpha}$, where $\alpha,\beta\in\Omega$.
\end{definition}

\begin{definition}
Given a topological space $X$, the {\bf cofinal dimension} of $X$, denoted $\Delta(X)$, is defined to be $-1$ if $X$ is empty. Otherwise $\Delta(X)$ is the least nonnegative integer $n$ such that $X$ admits a fine directed collection of canonical covers $\{F_{\alpha}\}_{\alpha\in\Omega}$ such that the order of $\mathcal{F}_{\alpha}$ does not exceed $n+1$ for each $\alpha\in\Omega$. If no such $n$ exists, we say that $\Delta(X)$ is infinite.
\end{definition}

\begin{proposition}\label{cofinal dimension zero}
For a topological space $X$, the following are equivalent:
\begin{enumerate}
\item $X$ is perfectly zero dimensional,
\item $X$ is a paracompact regular space such that $dim(X)=0$,
\item $X$ is a regular space such that $\Delta(X)=0$.
\end{enumerate}
\end{proposition}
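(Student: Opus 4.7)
The plan is to establish the cycle $(1) \Rightarrow (2) \Rightarrow (3) \Rightarrow (1)$. The key structural observation that powers the hardest implication is that in a canonical cover of order at most $1$ every member is clopen: if $\{F_\alpha\}$ is a locally finite, pairwise disjoint closed family covering $X$, then the complement $X \setminus F_\alpha = \bigcup_{\beta \neq \alpha} F_\beta$ is a locally finite union of closed sets, hence closed, so $F_\alpha$ is also open.

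For $(1) \Rightarrow (2)$, perfect zero-dimensionality provides a disjoint open refinement of every open cover. Disjoint collections are locally finite, so $X$ is paracompact; a basis of clopen sets gives regularity; and disjoint open refinements have order $1$, so $\dim(X) = 0$. For $(2) \Rightarrow (3)$, in a paracompact regular space of covering dimension zero every open cover admits a refinement by pairwise disjoint clopen sets. Each such refinement $\{U_\alpha\}$ is a canonical cover of order at most $1$ with $F_\alpha := \overline{U_\alpha} = U_\alpha$ and witnessing disjoint opens $U_\alpha$ themselves. The collection of all such refinements is directed, because given two disjoint clopen covers $\{U_\alpha\}$ and $\{V_\beta\}$ the family $\{U_\alpha \cap V_\beta\}$ is again a disjoint clopen cover refining both. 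Fineness holds by construction, so $\Delta(X) = 0$.

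For $(3) \Rightarrow (1)$, fix a fine directed family $\{\mathcal{F}_\alpha\}_{\alpha \in \Omega}$ of canonical covers of order at most $1$. By the structural observation every member of every $\mathcal{F}_\alpha$ is clopen. To manufacture a clopen local basis, pick $x \in V$ with $V$ open, use regularity to choose an open $W$ with $x \in W \subseteq \overline{W} \subseteq V$, and consider the open cover $\{V, X \setminus \overline{W}\}$. By fineness some $\mathcal{F}_\alpha$ refines this cover, and the element $F \in \mathcal{F}_\alpha$ containing $x$ must lie in $V$, since $x \in W$ forbids it from lying in $X \setminus \overline{W}$. Thus $F$ is a clopen neighborhood of $x$ contained in $V$, so $X$ has a basis of clopen sets. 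Fineness itself provides, for every open cover, a refinement by pairwise disjoint (clopen) sets, confirming the second defining property of perfect zero-dimensionality.

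The main obstacle is the structural observation in the first paragraph: recognizing that order $\leq 1$ together with local finiteness forces the closed members of a canonical cover to be open. A secondary subtlety is the choice of the auxiliary cover $\{V, X \setminus \overline{W}\}$ in the last step, which is what lets regularity convert the abstract fineness hypothesis into an explicit clopen neighborhood of an arbitrary point.
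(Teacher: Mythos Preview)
Your argument is correct. Note, however, that the paper does not supply its own proof of this proposition: it is stated as a background result imported from Pears' \emph{Dimension theory of general spaces}, so there is no in-paper proof to compare against. Your cycle $(1)\Rightarrow(2)\Rightarrow(3)\Rightarrow(1)$ is the standard route and matches what one finds in Pears. The structural observation you single out---that a locally finite pairwise disjoint closed cover consists of clopen sets because each complement is a locally finite union of closed sets---is exactly the hinge of the $(3)\Rightarrow(1)$ direction, and your use of the auxiliary cover $\{V,\,X\setminus\overline{W}\}$ to extract a clopen basis from fineness plus regularity is clean. One minor remark: in $(2)\Rightarrow(3)$ you invoke without comment that a paracompact regular space with $\dim(X)=0$ has disjoint open (hence clopen) refinements of arbitrary open covers; this is classical, but if you were writing this out in full you would want to cite or sketch the passage from finite to arbitrary covers via paracompactness.
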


\begin{definition}
A continuous surjection $f:X\rightarrow Y$ between topological spaces is called {\bf irreducible} if $f(A)\neq Y$ for each proper closed set $A$ of $X$.
\end{definition}

\noindent
As the following proposition from \cite{pears} shows, irreducible maps are easy to find.

\begin{proposition}\label{irreducible from perfect}
If $f:X\rightarrow Y$ is a perfect map, then there is a closed set $A\subseteq X$ such that $f\mid A:A\rightarrow Y$ is a perfect irreducible mapping.
\end{proposition}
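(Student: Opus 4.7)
The plan is a standard Zorn's lemma argument on the collection of closed ``still surjecting'' subsets of $X$, using the compact fiber property of perfect maps to push through the limit step. Let $\mathcal{C}$ be the collection of closed subsets $A\subseteq X$ such that $f(A)=Y$, partially ordered by reverse inclusion (so ``larger'' means smaller as a subset). This collection is nonempty since $X\in\mathcal{C}$, and any minimal element (in the inclusion order) of $\mathcal{C}$ will by definition yield a closed $A\subseteq X$ with $f\mid A$ surjective and such that no proper closed subset of $A$ maps onto $Y$, which is exactly irreducibility.

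Next, I would verify that every chain in $\mathcal{C}$ admits an upper bound in the reverse-inclusion order, i.e.\ that the intersection of a descending chain $\{A_{\alpha}\}_{\alpha\in\Omega}$ of elements of $\mathcal{C}$ is still in $\mathcal{C}$. Let $A:=\bigcap_{\alpha\in\Omega}A_{\alpha}$, which is closed in $X$ as an intersection of closed sets. To show $f(A)=Y$, fix any $y\in Y$. Each $f^{-1}(y)\cap A_{\alpha}$ is nonempty (since $f(A_{\alpha})=Y$) and closed in $X$, hence closed in the compact set $f^{-1}(y)$ (compactness here uses that $f$ is perfect). Because the chain $\{A_{\alpha}\}$ is linearly ordered by inclusion, the family $\{f^{-1}(y)\cap A_{\alpha}\}$ has the finite intersection property, so by compactness of $f^{-1}(y)$ the total intersection $f^{-1}(y)\cap A$ is nonempty. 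Thus $y\in f(A)$, and since $y$ was arbitrary, $A\in\mathcal{C}$.

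Zorn's lemma then yields a minimal element $A\in\mathcal{C}$. To finish, I would verify that $f\mid A:A\rightarrow Y$ is perfect and irreducible. Continuity and surjectivity are immediate. Closedness of $f\mid A$ follows because any subset of $A$ that is closed in $A$ is closed in $X$ (as $A$ is closed in $X$), so its image under $f$ is closed in $Y$. For each $y\in Y$, the fiber $(f\mid A)^{-1}(y)=f^{-1}(y)\cap A$ is a closed subset of the compact fiber $f^{-1}(y)$, hence compact, so $f\mid A$ is perfect. Finally, if $B\subsetneq A$ were closed in $A$ (equivalently in $X$) with $f(B)=Y$, then $B\in\mathcal{C}$ and $B$ would be strictly larger than $A$ in the reverse-inclusion order, contradicting minimality of $A$; hence $f\mid A$ is irreducible.

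The main obstacle is the chain-intersection step, which is exactly where perfectness of $f$ is used: without compact fibers the intersection $\bigcap A_{\alpha}$ could fail to surject onto $Y$. The finite-intersection-property argument above is the whole content of the proof, and the rest is bookkeeping about what ``perfect'' and ``irreducible'' mean on the restriction.
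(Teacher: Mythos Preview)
Your Zorn's lemma argument is correct and is the standard proof of this fact. The paper does not supply its own proof of this proposition; it is quoted from \cite{pears}, and the argument there is essentially the one you have written (minimal closed subset still mapping onto $Y$, with the chain step handled by compactness of fibers).
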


In particular, every surjection between compact Hausdorff spaces can be restricted to a subset of its domain on which it is irreducible.
\vspace{\baselineskip}

\noindent

Given a surjection $f:X\rightarrow Y$ and subset $B\subseteq X$ define

\[f^{*}(B)=\{y\in Y\mid f^{-1}(y)\subseteq B\}\]

These sets can be used to characterize irreducible maps as well as closed maps.

\begin{lemma}\label{star of open is open}
A continuous surjection $f:X\rightarrow Y$ is an irreducible map if and only if $f^{*}(U)$ is nonempty for each nonempty open set $U\subseteq X$. The map $f$ is closed if and only if $f^{*}(U)$ is open for each nonempty open set $U\subseteq X$.

\end{lemma}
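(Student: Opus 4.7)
The plan is to reduce everything to the single set-theoretic identity
\[
f^{*}(U) \;=\; Y \setminus f(X \setminus U),
\]
which holds for any surjection $f:X\to Y$ and any $U\subseteq X$. To verify it, I would just chase definitions: $y\in f^{*}(U)$ means $f^{-1}(y)\subseteq U$, i.e.\ no point of $X\setminus U$ is sent to $y$, i.e.\ $y\notin f(X\setminus U)$. This complement formula turns every statement about the sets $f^{*}(U)$ into a statement about images of closed sets, which is what the lemma is really about.

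For the first equivalence, I would observe that nonempty open sets $U\subseteq X$ correspond bijectively, via $U\mapsto X\setminus U$, to proper closed subsets of $X$. Under this correspondence, $f^{*}(U)\neq\emptyset$ is, by the identity above, equivalent to $f(X\setminus U)\neq Y$. Hence the condition \enquote{$f^{*}(U)\neq\emptyset$ for every nonempty open $U$} is literally the condition \enquote{$f(A)\neq Y$ for every proper closed $A$}, which is the definition of irreducibility. (Note that $f$ needs to be surjective for the equivalence to be sensible: on the nonempty side, we need $X\setminus U$ to actually be a proper subset, which uses $U\neq\emptyset$; on the other side, properness of $A$ is what $U\neq\emptyset$ encodes.)

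For the second equivalence, I would use the fact that every closed subset of $X$ has the form $X\setminus U$ for some open $U\subseteq X$. Thus $f$ is closed if and only if $f(X\setminus U)$ is closed in $Y$ for every open $U\subseteq X$, which by the identity is equivalent to $f^{*}(U) = Y\setminus f(X\setminus U)$ being open in $Y$ for every open $U\subseteq X$. The restriction to \emph{nonempty} $U$ in the lemma statement is harmless: for $U=\emptyset$ one has $f^{*}(U)=\emptyset$, which is open for free, so restricting to nonempty open sets does not weaken the condition.

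There is no real obstacle here; the only subtlety is making sure the complement identity is stated and applied correctly, and handling the edge case $U=\emptyset$ cleanly. Once the identity $f^{*}(U)=Y\setminus f(X\setminus U)$ is in hand, both equivalences are essentially immediate and the proof should fit in a short paragraph.
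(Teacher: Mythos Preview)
Your proposal is correct. The paper states this lemma without proof (it is quoted as a standard fact from \cite{pears}), so there is no argument to compare against; your approach via the identity $f^{*}(U)=Y\setminus f(X\setminus U)$ is exactly the standard one and handles both equivalences cleanly.
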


\begin{lemma}\label{commute irreducible}
If $f:X\rightarrow Y$ is a closed irreducible map, then for each open $U\subseteq X$ we have that 

\[f(cl(U))=cl(f^{*}(U))\]

\end{lemma}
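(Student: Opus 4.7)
The plan is to prove both inclusions separately, using closedness of $f$ for one direction and irreducibility for the other.

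For the inclusion $cl(f^{*}(U))\subseteq f(cl(U))$, I would first observe that surjectivity forces $f^{*}(U)\subseteq f(U)$: if $y\in f^{*}(U)$ then $f^{-1}(y)$ is nonempty and contained in $U$, so any preimage of $y$ lies in $U$ and witnesses $y\in f(U)$. Then $f^{*}(U)\subseteq f(U)\subseteq f(cl(U))$, and since $f$ is a closed map and $cl(U)$ is closed, $f(cl(U))$ is closed, whence $cl(f^{*}(U))\subseteq f(cl(U))$. This direction uses no irreducibility.

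For the reverse inclusion $f(cl(U))\subseteq cl(f^{*}(U))$, I would take $y\in f(cl(U))$ and pick $x\in cl(U)$ with $f(x)=y$; then for any open neighborhood $V$ of $y$ in $Y$, I want to exhibit a point of $f^{*}(U)\cap V$. Continuity gives that $f^{-1}(V)$ is an open neighborhood of $x$, and since $x\in cl(U)$ the set $W:=f^{-1}(V)\cap U$ is a nonempty open subset of $X$. Here I invoke Lemma \ref{star of open is open}: because $f$ is irreducible, $f^{*}(W)$ is nonempty, so there exists $z\in Y$ with $f^{-1}(z)\subseteq W\subseteq U$, giving $z\in f^{*}(U)$. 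Picking any $w\in f^{-1}(z)$ (nonempty by surjectivity), we have $w\in f^{-1}(V)$, hence $z=f(w)\in V$. Thus $V\cap f^{*}(U)\ne\emptyset$, which shows $y\in cl(f^{*}(U))$.

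The only genuinely nontrivial step is the use of irreducibility to extract a point $z\in V$ whose \emph{entire} fiber lies inside $U$; without that strengthening from $f(W)$ to $f^{*}(W)$, one would only recover $y\in cl(f(U))$, which is weaker than the stated conclusion. Once that step is in hand, both inclusions combine to give the desired equality $f(cl(U))=cl(f^{*}(U))$.
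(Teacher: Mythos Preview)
Your proof is correct. The paper does not supply its own proof of this lemma, citing it instead from \cite{pears}; your argument is the standard one and would serve perfectly well here.
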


\begin{proposition}\label{canonical to canonical}
If $f:X\rightarrow Y$ is a perfect irreducible map and $\{F_{\alpha}\}_{\alpha\in\Omega}$ is a canonical covering of $X$, then $\{f(F_{\alpha})\}_{\alpha\in\Omega}$ is a canonical covering of $Y$. 
\end{proposition}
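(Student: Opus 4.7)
The plan is to exhibit, for each $\alpha\in\Omega$, an open set $V_\alpha\subseteq Y$ whose closure equals $f(F_\alpha)$, with the family $\{V_\alpha\}$ pairwise disjoint, and then verify that $\{f(F_\alpha)\}$ is locally finite and covers $Y$. The natural candidate, given the canonical structure $F_\alpha=cl(U_\alpha)$ for a disjoint family of open sets $\{U_\alpha\}$, is to set $V_\alpha:=f^{*}(U_\alpha)$. This choice is dictated by Lemma \ref{commute irreducible}, which gives $f(cl(U_\alpha))=cl(f^{*}(U_\alpha))$, exactly the identity $f(F_\alpha)=cl(V_\alpha)$ that a canonical cover requires.

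First I would verify that each $V_\alpha$ is open and (when $U_\alpha\neq\emptyset$) nonempty: openness comes from Lemma \ref{star of open is open} applied to the closed surjection $f$ (a perfect map is closed), and nonemptiness follows from the other half of that lemma, using irreducibility of $f$. Disjointness of $\{V_\alpha\}$ is immediate from the definition of $f^{*}$: if $y\in V_\alpha\cap V_\beta$, then the nonempty fiber $f^{-1}(y)$ would lie in both $U_\alpha$ and $U_\beta$, contradicting $U_\alpha\cap U_\beta=\emptyset$. Then, applying Lemma \ref{commute irreducible} componentwise, we obtain $f(F_\alpha)=cl(V_\alpha)$ for every $\alpha$; in particular, each $f(F_\alpha)$ is closed in $Y$, and the family $\{f(F_\alpha)\}$ covers $Y$ because $\{F_\alpha\}$ covers $X$ and $f$ is surjective.

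The only step requiring some care is local finiteness of $\{f(F_\alpha)\}$. Given $y\in Y$, the fiber $f^{-1}(y)$ is compact because $f$ is perfect. For each $x\in f^{-1}(y)$, local finiteness of $\{F_\alpha\}$ provides an open neighborhood $W_x$ of $x$ meeting only finitely many $F_\alpha$; by compactness, finitely many such $W_x$ cover $f^{-1}(y)$, producing an open set $W\supseteq f^{-1}(y)$ that meets only finitely many $F_\alpha$. Since $f$ is closed, the set $V:=Y\setminus f(X\setminus W)$ is an open neighborhood of $y$ with $f^{-1}(V)\subseteq W$, and $V\cap f(F_\alpha)\neq\emptyset$ forces $W\cap F_\alpha\neq\emptyset$; hence $V$ meets only finitely many $f(F_\alpha)$. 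This is the main technical point of the proof, and it is exactly the reason the perfectness hypothesis is needed (irreducibility alone would not suffice). Putting these pieces together gives a canonical cover of $Y$ witnessed by the disjoint open family $\{V_\alpha\}$.
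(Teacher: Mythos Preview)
Your proof is correct. The paper does not supply its own proof of this proposition; it is one of the results quoted from \cite{pears} in the review of cofinal dimension, so there is nothing to compare against directly. That said, your argument is precisely the standard one from Pears: take $V_\alpha=f^{*}(U_\alpha)$, use Lemma~\ref{star of open is open} for openness and nonemptiness, use Lemma~\ref{commute irreducible} for $f(F_\alpha)=cl(V_\alpha)$, and verify local finiteness via compactness of fibers together with closedness of $f$. All steps are sound.
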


\begin{theorem}\label{cofinal dimension characterization}
For a regular topological space $X$, the following are equivalent:
\begin{enumerate}
\item $\Delta(X)\leq n$
\item there is a perfectly zero dimensional space $Z$ and a continuous closed surjection $f:Z\rightarrow X$ such that $f^{-1}(x)$ contains at most $n+1$ points for each $x\in X$. 
\end{enumerate}
\end{theorem}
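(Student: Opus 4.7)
The plan is to prove the two implications separately. Direction $(2) \Rightarrow (1)$ is carried out by pushing disjoint open refinements through the map $f$, while $(1) \Rightarrow (2)$ requires Ponomarev's inverse-limit-style construction of $Z$ out of the indexing sets of a fine directed family of canonical covers.

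For $(2) \Rightarrow (1)$, I would start with $f: Z \to X$ as in (2) and, since finite-fibered closed surjections are perfect, apply Proposition \ref{irreducible from perfect} to replace $Z$ by a closed subspace on which $f$ is also irreducible. This subspace remains perfectly zero-dimensional and retains the fiber bound. Given any open cover $\mathcal{U}$ of $X$, I would pull it back to an open cover of $Z$, refine by a disjoint open cover $\{W_\lambda\}$ using perfect zero-dimensionality, observe that each $W_\lambda$ is automatically clopen (so $\{W_\lambda\}$ is itself a canonical cover of $Z$), and apply Proposition \ref{canonical to canonical} to obtain that $\{f(W_\lambda)\}$ is a canonical cover of $X$ refining $\mathcal{U}$. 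The order of $\{f(W_\lambda)\}$ is at most $n+1$, since the disjointness of the $W_\lambda$ together with the fiber bound forces each $x \in X$ to lie in at most $n+1$ of the image sets. The collection of all such canonical covers, directed by common refinement, is then a fine directed family realizing $\Delta(X) \leq n$.

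For $(1) \Rightarrow (2)$, suppose $\{\mathcal{F}_\alpha\}_{\alpha \in \Omega}$ is a fine directed family of canonical covers of $X$ of order at most $n+1$, with $\mathcal{F}_\alpha = \{F_{\alpha, \lambda}\}_{\lambda \in \Lambda_\alpha}$ arising from disjoint open sets $G_{\alpha, \lambda}$. Give each $\Lambda_\alpha$ the discrete topology and let $Z$ be the subset of $\prod_\alpha \Lambda_\alpha$ consisting of those threads $(\lambda_\alpha)$ for which the filter generated by $\{F_{\alpha, \lambda_\alpha}\}_{\alpha \in \Omega}$ converges to a point in $X$. Fineness of the family together with regularity of $X$ make any such limit unique, so setting $f((\lambda_\alpha))$ equal to this limit gives a well-defined function $f: Z \to X$. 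The subspace topology on $Z$ inherits a clopen base from the discrete factors, so $Z$ is perfectly zero-dimensional. Surjectivity of $f$ follows by using fineness to select, for any $x \in X$, a compatible thread converging to $x$. Closedness is the usual argument: if $x \notin f(K)$ for $K \subseteq Z$ closed, fineness provides an $\mathcal{F}_\alpha$ whose finitely many elements containing $x$ avoid $f(K)$, and the preimages in the product topology of those indices form a clopen neighborhood of $f^{-1}(x)$ disjoint from $K$.

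The chief obstacle is the fiber bound $|f^{-1}(x)| \leq n+1$. For each $\alpha$, the order hypothesis ensures that the set $\Lambda_\alpha(x) := \{\lambda \in \Lambda_\alpha : x \in F_{\alpha, \lambda}\}$ has at most $n+1$ elements, and any thread in $f^{-1}(x)$ must choose some $\lambda_\alpha \in \Lambda_\alpha(x)$ at each level. The directed refinement structure is what forces coherence: by passing to a cofinal subfamily on which the disjoint opens $G_{\beta, \lambda}$ are small enough to sit inside a unique $G_{\alpha, \mu}$, one obtains well-defined transition maps $\Lambda_\beta(x) \to \Lambda_\alpha(x)$, and $f^{-1}(x)$ becomes identified with the inverse limit of this system of finite sets of cardinality at most $n+1$. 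Since an inverse limit of finite sets with uniformly bounded cardinality has cardinality bounded by its eventual stable size, the desired bound follows, completing the implication.
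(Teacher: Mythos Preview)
Your treatment of $(2)\Rightarrow(1)$ is essentially correct and matches the standard argument; the only point to tighten is that ``directed by common refinement'' should be arranged upstairs in $Z$ (intersect the disjoint clopen refinements there, then push forward) rather than by intersecting the resulting canonical covers in $X$.

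The $(1)\Rightarrow(2)$ direction, however, has a genuine gap in the definition of $Z$. As you have set it up, a thread $(\lambda_\alpha)$ lies in $f^{-1}(x)$ if and only if $x\in F_{\alpha,\lambda_\alpha}$ for every $\alpha$ (convergence of the filter forces $x$ into each closed $F_{\alpha,\lambda_\alpha}$, and conversely fineness plus regularity make the filter converge to $x$). That means $f^{-1}(x)$ is the \emph{full product} $\prod_\alpha \Lambda_\alpha(x)$, not an inverse limit, and this product is typically enormous once $\Omega$ is infinite. Your attempt to repair this by ``passing to a cofinal subfamily'' and introducing transition maps $\Lambda_\beta(x)\to\Lambda_\alpha(x)$ does not help: those maps are not part of the definition of $Z$, so nothing forces threads in $f^{-1}(x)$ to be compatible with them. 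The identification of $f^{-1}(x)$ with an inverse limit of the $\Lambda_\alpha(x)$ simply fails.

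The construction the paper describes (following Pears) fixes this by building compatibility into $Z$ from the start. One first replaces the fine directed family by a \emph{strongly directed} one, equipped with transition functions $\rho_{\beta\alpha}:\Lambda(\beta)\to\Lambda(\alpha)$ satisfying $F^\alpha_\lambda=\bigcup\{F^\beta_\mu:\rho_{\beta\alpha}(\mu)=\lambda\}$. Then one forms the spaces $X_\alpha=\bigsqcup_{\lambda\in\Lambda(\alpha)} F^\alpha_\lambda\times\{\lambda\}$ with the obvious maps $g_{\beta\alpha}(x,\mu)=(x,\rho_{\beta\alpha}(\mu))$, and takes $Z=\varprojlim X_\alpha$. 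Elements of $Z$ are then pairs $(x,(\lambda_\alpha))$ with $x\in F^\alpha_{\lambda_\alpha}$ \emph{and} $\rho_{\beta\alpha}(\lambda_\beta)=\lambda_\alpha$; the fiber over $x$ is exactly $\varprojlim_\alpha \Lambda_\alpha(x)$, and now your directed-set argument bounding an inverse limit of finite sets of size $\le n+1$ applies. The missing ingredient in your proposal is precisely this passage to a strongly directed system before defining $Z$.
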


The proof of the above theorem in \cite{pears} constructs a perfectly zero dimensional space in a particular way. This construction will be mimicked in section $8$ so we will review it here. We will omit the verification of the various details of the construction, but they can be found in \cite{pears}.
\vspace{\baselineskip}

\noindent
Let $X$ be a regular topological space and $\mathcal{F}=\{\mathcal{F}^{\alpha}\}_{\alpha\in\Omega}$ be a fine directed set of canonical covers of $X$ such that the order of each $\mathcal{F}^{\alpha}$ does not exceed $n+1$.  Assume that for each $\alpha$ we have that $\mathcal{F}^{\alpha}=\{F^{\alpha}_{\lambda}\}_{\lambda\in\Lambda(\alpha)}$ where $\Lambda(\alpha)$ is an index set for $\mathcal{F}_{\alpha}$. The family $\mathcal{F}$ can be {\bf strongly directed} in that for $\alpha,\beta\in\Omega$ such that $\beta\geq\alpha$ there is a function $\rho_{\beta\alpha}:\Lambda(\beta)\rightarrow\Lambda(\alpha)$ such that $\{\Lambda(\alpha),\rho_{\beta\alpha}\}_{\alpha,\beta\in\Omega}$ is an inverse system of sets over $\Omega$ and for each $\lambda\in\Lambda(\alpha)$ we have

\[F^{\alpha}_{\lambda}=\bigcup\{F^{\beta}_{\mu}\mid \rho_{\beta\alpha}(\mu)=\lambda\}\]

For each $\alpha\in\Omega$ a topological space $X_{\alpha}$ can be constructed by defining $X_{\alpha}=\bigcup_{\lambda\in\Lambda(\alpha)}F^{\alpha}_{\lambda}\times\{\lambda\}$, which is the disjoint union of the elements of $\mathcal{F}^{\alpha}$ and giving it the corresponding disjoint union topology. The obvious function $\sigma_{\alpha}:X_{\alpha}\rightarrow X$ defined by $\sigma_{\alpha}(x,\lambda)=x$ is continuous. When $\beta,\alpha\in\Omega$ are such that $\beta\geq\alpha$ the function $g_{\beta\alpha}:X_{\beta}\rightarrow X_{\alpha}$ defined by $g_{\beta\alpha}(x,\lambda)=(x,\rho_{\beta\alpha}(\lambda))$ is a continuous closed surjection. These maps make $\{X_{\alpha},g_{\beta\alpha}\}_{\alpha,\beta\in\Omega}$ an inverse system of topological spaces over $\Omega$. The space $Z$ used in \cite{pears} to prove Theorem \ref{cofinal dimension characterization} is the inverse limit of this system. The elements of $Z$ are of the form $\{(x,\lambda,\alpha)\}_{\alpha\in\Omega}$ where $\lambda\in\Lambda(\alpha)$. The obvious function that maps onto the first coordinate is the irreducible closed $n+1$-to-$1$ surjection mentioned in the theorem. Calling this function $h$ and denoting the projections from $Z$ to $X_{\alpha}$ by $\pi_{\alpha}$ we have that $\sigma_{\alpha}\circ\pi_{\alpha}=h$ for each $\alpha$. 
\vspace{\baselineskip}

\noindent
We will finish this section with a minor original result. Specifically we will show that, for compact Hausdorff spaces, the definition of canonical covers can be weakened without changing the cofinal dimension.

\begin{definition}\label{almost canonical}
Let $X$ be a topological space. We say that a locally finite closed cover $\{\mathcal{F}_{\alpha}\}_{\alpha\in\Omega}$ is an {\bf almost canonical cover} if it satisfies the following:\\
\begin{enumerate}
\item $int(F_{\alpha})\neq\emptyset$ for all $\alpha\in\Omega$
\item For $\alpha\neq\beta$, $int(F_{\alpha})\cap int(\mathcal{F}_{\beta})=\emptyset$
\item If $x\in X$ and $U\subseteq X$ is an open subset of $X$ that contains $x$, then there is a $y\in U$ and an $\alpha\in\Omega$ such that $y\in int(\mathcal{F}_{\alpha})$.
\end{enumerate}

\end{definition}

\begin{proposition}\label{almost canonical is equivalent}
Let $X$ be a compact Hausdorff space. Then the following are equivalent:\\
\begin{enumerate}
\item $\Delta(X)\leq n$
\item The statement ``$\Delta(X)\leq n$" still holds upon replacing every instance of ``canonical cover" in the definition of $\Delta(X)$ with "almost canonical cover".
\end{enumerate}
\end{proposition}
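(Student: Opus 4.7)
The plan is to establish the two directions of Proposition \ref{almost canonical is equivalent} separately, with essentially all the content living in (2) $\Rightarrow$ (1).

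For (1) $\Rightarrow$ (2), I would observe that every canonical cover of a compact Hausdorff space, after discarding empty elements, is already almost canonical. Lemma \ref{compact canonical is finite} makes the index set finite, so if $\{F_\alpha = cl(U_\alpha)\}$ is the canonical cover, the equation $X = \bigcup_\alpha cl(U_\alpha) = cl(\bigcup_\alpha U_\alpha)$ (valid because the union is finite) shows $\bigcup_\alpha U_\alpha$ is dense in $X$, yielding condition (3); conditions (1) and (2) come respectively from $\emptyset \neq U_\alpha \subseteq int(F_\alpha)$ and from Lemma \ref{interior of canonical is disjoint}. One then checks that pruning empties preserves fineness, the directed order, and the refinement relation, which is routine.

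The core step is (2) $\Rightarrow$ (1). Given a fine directed family $\{\mathcal{F}^\alpha\}_{\alpha \in \Omega}$ of almost canonical covers with $\mathcal{F}^\alpha = \{F^\alpha_i\}$ of order at most $n+1$, I would assign to each $\mathcal{F}^\alpha$ the derived family $\mathcal{G}^\alpha = \{cl(int(F^\alpha_i))\}$. The open sets $\{int(F^\alpha_i)\}$ are pairwise disjoint by condition (2), which is exactly the disjoint-interior requirement needed for $\mathcal{G}^\alpha$ to be canonical. The remaining — and pivotal — verification is that $\mathcal{G}^\alpha$ genuinely covers $X$: condition (3) says $\bigcup_i int(F^\alpha_i)$ is dense in $X$, while local finiteness together with compactness of $X$ forces $\mathcal{F}^\alpha$ to be finite, so I may commute union and closure to obtain $\bigcup_i cl(int(F^\alpha_i)) = cl(\bigcup_i int(F^\alpha_i)) = X$.

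With the construction in hand, I would transfer the remaining structure from $\{\mathcal{F}^\alpha\}$ to $\{\mathcal{G}^\alpha\}$. Since $cl(int(F)) \subseteq F$ whenever $F$ is closed, each $\mathcal{G}^\alpha$ refines $\mathcal{F}^\alpha$, so its order does not exceed $n+1$; refinement between $\mathcal{F}^\beta$ and $\mathcal{F}^\alpha$ propagates to $\mathcal{G}^\beta$ and $\mathcal{G}^\alpha$ by the monotonicity of $F \mapsto cl(int(F))$ under inclusion, giving directedness; and fineness is immediate because any open cover of $X$ refined by $\mathcal{F}^\alpha$ is also refined by the smaller $\mathcal{G}^\alpha$. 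Hence $\{\mathcal{G}^\alpha\}$ witnesses $\Delta(X) \leq n$ in the classical sense. The main obstacle is the cover check for $\mathcal{G}^\alpha$, which is precisely where compactness of $X$ is indispensable; without finiteness of $\mathcal{F}^\alpha$, the identity $\bigcup cl(V_i) = cl(\bigcup V_i)$ would fail and one would only obtain a dense subset rather than all of $X$.
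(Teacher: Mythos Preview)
Your proposal is correct and follows essentially the same approach as the paper: both directions hinge on the observation that canonical covers are almost canonical, and conversely that applying $F \mapsto cl(int(F))$ to an almost canonical cover yields a canonical refinement of no greater order. Your covering argument for $\mathcal{G}^\alpha$ (commuting closure with the finite union of interiors) is a mild streamlining of the paper's pigeonhole-on-a-neighbourhood-basis argument, and you supply the directedness check via monotonicity of $cl(int(\cdot))$ that the paper only asserts, but the overall route is the same.
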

\begin{proof}
That $(1)$ implies $(2)$ is immediate upon observing that every canonical cover of a compact Hausdorff space $X$ is an almost canonical cover. 
\vspace{\baselineskip}

To prove that $(2)$ implies $(1)$ we first show that every almost canonical cover $\mathcal{V}=\{\mathcal{F}_{\alpha}\}_{\alpha\in\Omega}$ induces a canonical cover $\mathcal{U}$ that refines $\mathcal{V}$ and has order not exceeding that of $\mathcal{V}$. If $\mathcal{V}=\{\mathcal{F}_{\alpha}\}_{\alpha\in\Omega}$ is an almost canonical cover define $\mathcal{U}=\{\hat{\mathcal{F}}_{\alpha}=cl(int(\mathcal{F}_{\alpha}))\mid \alpha\in\Omega\}$. We claim that $\mathcal{U}$ is our desired cover. First, note that because $\mathcal{V}$ is locally finite, so is $\mathcal{U}$. Secondly, let $x\in X$ be given. As $\mathcal{U}$ is locally finite, and $\mathcal{V}$ satisfies $(2)$ of Definition \ref{almost canonical} there is a finite subcollection $\{\hat{\mathcal{F}}_{\alpha_{1}},\ldots,\hat{\mathcal{F}}_{\alpha_{m}}\}$ of $\mathcal{V}$ such that $x$ admits a neighbourhood basis that only intersect the $\hat{\mathcal{F}}_{\alpha_{i}}$. By the pigeonhole principle $x$ must be an element of one of the $\hat{\mathcal{F}}_{\alpha_{i}}$. Therefore $\mathcal{U}$ covers $X$. Finally, as the interiors of the $\mathcal{F}_{\alpha}$ are disjoint, so are the interiors of the $\hat{\mathcal{F}}_{\alpha}$. We then have that $\mathcal{U}$ is a canonical cover of $X$. It is clear from the definition of $\mathcal{U}$ that the order of $\mathcal{U}$ does not exceed that of $\mathcal{V}$. Now, assume that we have that $X$ admits a directed fine family $\{\mathcal{V}_{\alpha}\}_{\alpha\in\Lambda}$ of almost canonical covers none of which has order exceeding $n$. Then the corresponding collection of canonical covers $\{\mathcal{U}_{\alpha}\}_{\alpha\in\Lambda}$ is a fine directed collection of canonical covers none of which has order exceeding $n$. Therefore $\Delta(X)\leq n$. 

\end{proof}

\begin{lemma}\label{image of almost canonical is almost canonical}
Let $f:X\rightarrow Y$ be a perfect irreducible map between compact Hausdorff spaces. If $\mathcal{F}=\{A_{1},\ldots,A_{n}\}$ is a almost canonical cover of $X$, then $f(\mathcal{F})=\{f(A_{1}),\ldots, f(A_{n})\}$ is a almost canonical cover of $Y$. 
\end{lemma}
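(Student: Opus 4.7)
The plan is to establish the identity $int(f(A_i)) = f^*(int(A_i))$ for each $i$, from which the three conditions of Definition \ref{almost canonical} for $\{f(A_i)\}$ follow in short order. Write $U_i := int(A_i)$ and $V_i := f^*(U_i)$. Since $f$ is closed and each $U_i$ is nonempty open by condition (1) for $\mathcal{F}$, Lemma \ref{star of open is open} shows that each $V_i$ is a nonempty open subset of $Y$. The $V_i$ are pairwise disjoint: if $y \in V_i \cap V_j$ with $i \neq j$, then surjectivity of $f$ forces $\emptyset \neq f^{-1}(y) \subseteq U_i \cap U_j = \emptyset$ (using condition (2) for $\mathcal{F}$), a contradiction. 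Finally $V_i \subseteq f(U_i) \subseteq f(A_i)$, and because $V_i$ is open this yields $V_i \subseteq int(f(A_i))$.

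The heart of the proof is the reverse inclusion $int(f(A_i)) \subseteq V_i$. Suppose $y \in int(f(A_i))$ and fix an open neighborhood $W$ of $y$ with $W \subseteq f(A_i)$. Let $x \in f^{-1}(y)$; I claim $x \in U_i$. If not, then $x \in cl(X \setminus A_i)$, so $f^{-1}(W) \cap (X \setminus A_i)$ is a nonempty open subset of $X$. Condition (3) for $\mathcal{F}$ then produces some index $k$ with $f^{-1}(W) \cap (X \setminus A_i) \cap U_k \neq \emptyset$; since this intersection is disjoint from $A_i$, necessarily $k \neq i$. Set $O := f^{-1}(W) \cap (X \setminus A_i) \cap U_k$. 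By Lemma \ref{star of open is open}, $f^*(O)$ is a nonempty open subset of $Y$, and any $y' \in f^*(O)$ satisfies simultaneously $f^{-1}(y') \subseteq f^{-1}(W)$, forcing $y' \in W \subseteq f(A_i)$, and $f^{-1}(y') \subseteq X \setminus A_i$, forcing $y' \notin f(A_i)$. This contradiction shows $f^{-1}(y) \subseteq U_i$, so $y \in V_i$ and $int(f(A_i)) = V_i$ as claimed.

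With this identity in hand, the remaining verifications are routine. The family $\{f(A_i)\}$ is a finite (hence locally finite) closed cover of $Y$: each $f(A_i)$ is the image of a closed subset of a compact Hausdorff space, hence compact, hence closed in $Y$, and surjectivity of $f$ gives coverage. Conditions (1) and (2) of Definition \ref{almost canonical} translate directly into the nonemptiness and pairwise disjointness of the $V_i$, both already confirmed. For condition (3), given any nonempty open $U \subseteq Y$, surjectivity makes $f^{-1}(U)$ nonempty and open, and condition (3) for $\mathcal{F}$ gives a $k$ with $f^{-1}(U) \cap U_k \neq \emptyset$; then $f^*(f^{-1}(U) \cap U_k)$ is a nonempty open subset of $U \cap V_k = U \cap int(f(A_k))$, so every nonempty open set in $Y$ meets some $int(f(A_k))$.

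The main obstacle is the reverse inclusion in the second paragraph. In the genuine canonical case treated by Proposition \ref{canonical to canonical} one has $A_i = cl(U_i)$, so $f(A_i) = cl(V_i)$ by Lemma \ref{commute irreducible} and the corresponding identity is automatic. In the almost canonical case $A_i$ may be strictly larger than $cl(U_i)$, so one cannot simply transport the canonical structure through $f$; the density axiom (3) of $\mathcal{F}$ must be combined with irreducibility and closedness, via the $f^*$ machinery, to force preimages of interior points of $f(A_i)$ into $U_i$.
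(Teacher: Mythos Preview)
Your proof is correct and in fact somewhat cleaner than the paper's. The paper argues by passing through the associated canonical cover $F_i = cl(int(A_i))$: it invokes Proposition \ref{canonical to canonical} to know $\{f(F_i)\}$ is canonical, and then shows $int(f(A_i)) \subseteq int(f(F_i))$ by a contradiction argument similar in spirit to yours, concluding that $int(f(A_i)) = int(f(F_i))$. You instead prove the sharper identity $int(f(A_i)) = f^*(int(A_i))$ directly, bypassing Proposition \ref{canonical to canonical} entirely. Both routes hinge on the same mechanism---combining condition (3) of Definition \ref{almost canonical} with Lemma \ref{star of open is open} to manufacture a point simultaneously in and out of $f(A_i)$---but your organization is more transparent, and your step ``if $x\notin int(A_i)$ then $f^{-1}(W)\cap(X\setminus A_i)$ is nonempty open, so condition (3) yields an index $k\neq i$'' is more careful than the paper's corresponding passage, which asserts the existence of such a $j\neq i$ without explicitly restricting to the complement of $A_i$. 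Your approach buys a self-contained argument; the paper's buys reuse of the canonical-cover machinery already in place.
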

\begin{proof}
Let $\{A_{1},\ldots,A_{n}\}$ be an almost canonical cover of $X$. For each $i$ define $F_{i}=cl_{X}(int(A_{i}))$. As shown in the proof of Proposition \ref{almost canonical is equivalent}, the set $\{F_{1},\ldots,F_{n}\}$ is a canonical cover of $X$. By Proposition \ref{canonical to canonical} the set $\{f(F_{1}),\ldots,f(F_{n})\}$ is a canonical cover of $Y$. To show that $\{f(A_{1}),\ldots,f(A_{n})\}$ we will begin by showing that if $y\in int(f(A_{i}))$, then $y\in int(f(F_{i}))$. Let $W\subseteq Y$ be an open set containing $y$ such that $W\subseteq int(f(A_{i}))$. Define $W_{i}=f^{-1}(W)\cap A_{i}$. As $W\subseteq int(f(A_{i}))$ we have that $f(W_{i})=W$. We claim that $W_{i}\subseteq int(A_{i})$. If $x\in W_{i}$ is not in $int(A_{i})$, then because $f^{-1}(W)$ is open in $X$ and $\mathcal{F}$ is an almost canonical cover of $X$, there is an open set $V_{j}$ contained in $f^{-1}(W)$ that is completely contained in $int(A_{j})$ for some $j\neq i$. Then $f(V_{j})\subseteq W$. Note that, by Lemma \ref{commute irreducible} we have that $f(F_{j})=f(cl(int(A_{j})))=cl(f^{*}(int(A_{j})))$. As $V_{j}\subseteq int(A_{j})$ we have that $f^{*}(V_{j})$ is a nonempty open subset of $f(F_{j})$ by Lemma \ref{commute irreducible}. However this is impossible as $f(V_{j})\subseteq W\subseteq int(F_{i})$ which is disjoint from $int(F_{j})$. Therefore $W_{i}\subseteq int(A_{i})$. Consequently $W_{i}$ is open as well. Now, $f(cl(W_{i}))\subseteq f(cl(int(A_{i})))=f(F_{i})$ which gives us that $y\in f(F_{i})$. Finally, as $f(W_{i})=W$ which is open in $Y$ we have that $y\in int(f(F_{i}))$ and $int(f(A_{i}))=int(f(F_{i}))$ which gives us that the interiors of the various $f(A_{i})$ are disjoint.
\vspace{\baselineskip}

Finally, we need to show that if $y\in Y$ is any point and $W\subseteq Y$ is an open subset of $Y$ containing $y$, then there is an $i$ and an $x\in W$ such that $x\in W\cap int(f(A_{i}))$. If $y$ and $W$ are given we may consider $f^{-1}(y)$ and $f^{-1}(W)$. As the collection of $A_{j}$'s is almost canonical there must be some $i$ for which $f^{-1}(W)\cap int(A_{i})$ is nontrivial. Call this intersection $W_{i}$. By Proposition \ref{star of open is open} we have that $f^{*}(W_{i})$ is open in $Y$. This set is necessarily contained in $W$. Then if $z\in f^{*}(W_{i})$ we have that $f^{-1}(z)\subseteq f^{-1}(f^{*}(W_{i}))\subseteq int(A_{i})$. Then $f(f^{-1}(f^{*}(W)))$ is an open set containing $z$, contained in $W$, that is also contained in $f(A_{i})$.

\end{proof}

\section{Asymptotic dimension of coarse proximity spaces}\label{asymptotic dimension of coarse proximity spaces}

In this section we will define and prove some very basic properties of the asymptotic dimension of coarse proximity spaces. The results and definitions are largely rephrasings of definitions and results for metric spaces and coarse spaces as one might find in \cite{Bell} and \cite{Roe}. We also provide some results about uniformly bounded covers in coarse proximity spaces that will be used in later sections. The definition of uniformly bounded covers that we present here is identical in form to the definition of uniformly bounded covers for asymptotic resemblance spaces as in \cite{Honari}.

\begin{definition}
	Let $(X,\mathcal{B}, {\bf b})$ be a coarse proximity space. A collection, $\mathcal{U}$, of subsets of $X$ is called a {\bf uniformly bounded family} if
	\begin{enumerate}
		\item $B\in\mathcal{U}$ implies $B\in\mathcal{B}$
		\item For all $A,C\subseteq X$, $A\subseteq st(C,\mathcal{U})$ and $C\subseteq st(C,\mathcal{U})$ implies $A\phi C$
	\end{enumerate}

If in addition we have that for all $x\in X$ there is a $B\in\mathcal{U}$ such that $x\in B$ then we call $\mathcal{U}$ a {\bf uniformly bounded cover} of $X$.
\end{definition}

\begin{proposition}\label{3.3}
	Let $(X,d)$ be a metric space and $(X,\mathcal{B},{\bf b})$ the corresponding metric coarse proximity structure. Then a collection $\mathcal{U}$ of subsets of $X$ is uniformly bounded in metric $d$ if and only if it is a uniformly bounded family with respect to the coarse proximity structure. Consequently $\mathcal{U}$ is a uniformly bounded cover of $X$ in the metric sense if and only if it is so in the coarse proximity sense.
\end{proposition}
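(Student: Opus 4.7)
My plan is to prove both implications directly from the definitions, using the fact (noted in the excerpt after Theorem \ref{asymptoticresemblance}) that for the metric coarse proximity, $\phi$ on nonempty sets coincides with having finite Hausdorff distance, and that a bounded set is never $\phi$-related to an unbounded one. For the forward direction, suppose $\mathcal{U}$ is uniformly bounded in $d$ with uniform bound $M$. Condition (1) of coarse proximity uniform boundedness is immediate since the bornology of the metric coarse proximity is exactly the family of metrically bounded sets. For condition (2), assume $A \subseteq st(C,\mathcal{U})$ and $C \subseteq st(A,\mathcal{U})$; each $a \in A$ lies in some $U \in \mathcal{U}$ meeting $C$, so $d(a, C) \leq M$, and symmetrically $d(c, A) \leq M$ for $c \in C$. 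Hence the Hausdorff distance $d_H(A, C) \leq M$, so $A \phi C$.

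For the converse, I would argue by contradiction: suppose $\mathcal{U}$ is coarse proximity uniformly bounded but not metrically uniformly bounded. Then I can pick $U_n \in \mathcal{U}$ with $\text{diam}(U_n) \geq 2n$ and points $a_n, b_n \in U_n$ with $d(a_n, b_n) \geq 2n$. Setting $A = \{a_n\}$ and $C = \{b_n\}$, the containments $A \subseteq st(C, \mathcal{U})$ and $C \subseteq st(A, \mathcal{U})$ follow automatically from $a_n, b_n \in U_n$, so the coarse proximity uniform boundedness assumption forces $A \phi C$. My goal is then to derive a contradiction by showing $A \bar{\phi} C$.

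The main obstacle is that $d(a_n, b_n) \to \infty$ does not by itself imply $d_H(A, C) = \infty$, since $a_n$ might be near some $b_m$ with $m \neq n$. To handle this, I would fix a basepoint $x_0 \in X$ and perform a case analysis on the boundedness of $\{a_n\}$ and $\{b_n\}$, passing to subsequences as needed. The case where both sequences are bounded is impossible because then $d(a_n, b_n)$ would be bounded. If exactly one sequence is bounded, then $A$ and $C$ differ in boundedness and $A \bar{\phi} C$ is immediate. In the remaining case where both are unbounded, I would pass to a further subsequence arranging $d(a_n, x_0), d(b_n, x_0) \geq 3^n$; the triangle inequality then gives $d(a_n, b_m) \geq |3^n - 3^m|$ for $n \neq m$, while $d(a_n, b_n) \geq 2n$, so $\inf_m d(a_n, b_m) \to \infty$, forcing $d_H(A, C) = \infty$ and yielding the contradiction. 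This subsequence extraction and case split is the only substantive technical step; the rest of the proof is a routine check against the definitions.
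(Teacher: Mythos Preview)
Your forward direction and the case reductions are fine, but the crucial step in the ``both unbounded'' case has a genuine gap. From the lower bounds $d(a_n,x_0),\,d(b_n,x_0)\geq 3^n$ alone, the triangle inequality yields only $d(a_n,b_m)\geq \bigl|d(a_n,x_0)-d(b_m,x_0)\bigr|$, and this does \emph{not} imply $d(a_n,b_m)\geq |3^n-3^m|$: without upper bounds, $d(a_n,x_0)$ and $d(b_m,x_0)$ can both be large and nearly equal. Concretely, in $\mathbb{R}$ with $x_0=0$, the choices $a_n=3^n$ and $b_n=3^{n+1}$ satisfy $d(a_n,x_0),d(b_n,x_0)\geq 3^n$ and $d(a_n,b_n)=2\cdot 3^n\to\infty$, yet $a_{n+1}=b_n$, so $A=\{a_n\}$ and $C=\{b_n\}$ have finite Hausdorff distance and $A\phi C$. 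Thus no contradiction is reached.

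The paper closes this gap by a more careful inductive construction rather than a bare subsequence extraction. It first proves a Claim: for every $n$ there exists $C_n\in\mathcal{U}$ with $\operatorname{diam}\bigl(C_n\setminus B(x_0,n^2)\bigr)>n$ (if not, a short triangle-inequality estimate bounds all diameters uniformly). This lets one choose each successive pair $(x_n,y_n)$ inside some element of $\mathcal{U}$ lying far outside a ball containing all previously chosen points, so that $d(x_n,y_n)>n$ \emph{and} $x_n,y_n$ are at distance $>n^2$ from every earlier $x_j,y_j$. These separation conditions between \emph{different} pairs are exactly what your argument is missing; with them, $A=\{x_n\}$ and $C=\{y_n\}$ genuinely have infinite Hausdorff distance. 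Your approach can be repaired along these lines, but you must control the new pair relative to the previously chosen points, not merely relative to $x_0$.
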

\begin{proof}
Let $\mathcal{U}$ be a collection of subsets of $X$ that is uniformly bounded with respect to the metric $d$. Then there is an $M<\infty$ such that for all $B\in\mathcal{U}$ we have that $diam(B)\leq M$. Thus $B\in\mathcal{B}$ for all $B\in\mathcal{U}$. If $A,C\subseteq X$ are such that $A\subseteq st(C,\mathcal{U})$ and $C\subseteq st(A,\mathcal{U})$ then the Hausdorff distance between $A$ and $C$ is at most $M$, which implies $A\phi C$. Therefore $\mathcal{U}$ is a uniformly bounded family with respect to the coarse proximity structure.
\vspace{\baselineskip}

Now assume that $\mathcal{U}$ is a uniformly bounded family of $X$ with respect to the coarse proximity structure. The second condition in the definition of a uniformly bounded family says that if $A,C\subseteq X$ are such that $A\subseteq st(C,\mathcal{U})$ and $C\subseteq st(A,\mathcal{U})$ then $A\phi C$ where $\phi$ is the relation defined by having finite Hausdorff distance. If $B\in\mathcal{U}$ is unbounded in the metric sense then for any $x\in B$ we have that $B\subseteq st(x,\mathcal{U})$ and $\{x\}\subseteq st(B,\mathcal{U})$, but $\{x\}\bar{\phi}B$. Therefore every element of $\mathcal{U}$ must be bounded in the metric sense. If $\mathcal{U}$ is not uniformly bounded in the metric sense, then for every $n\in\mathcal{N}$ we can find a $B_{n}\in\mathcal{U}$ such that $diam(B_{n})\geq n$. Fix a point $x_{0}\in X$.

\begin{claim}
For every $n\in\mathbb{N}$ there is an element $C_{n}\in\mathcal{U}$ such that $diam(C_{n}\setminus B(x_{0},n^{2}))>n$.

\end{claim}
\begin{proof}
Assume otherwise. Then there is an $n\in\mathbb{N}$ such that if $B\in\mathcal{U}$, then $diam(C\setminus B(x_{0},n^{2}))\leq n$. Then by the triangle inequality we have that $diam(B)\leq n+2n^{2}$ for all $B\in\mathcal{U}$, contradicting our assumption that $\mathcal{U}$ is not uniformly bounded in the metric sense. Thus the claim holds.
\end{proof}

Now, let $B_{1}\in\mathcal{U}$ be such that $diam(B_{1})>1$ and let $x_{1},y_{1}\in B_{1}$. Choose $m$ large enough so that $B(\{x_{1},y_{1}\},n^{2})\subseteq B(x_{0},m^{2})$ and let $B_{2}\in\mathcal{U}$ be such that $diam(B_{2}\setminus B(x_{0},m^{2}))>m$ as given by the claim. Let $x_{2},y_{2}\in B_{2}\setminus B(x_{0},m^{2})$ be such that $d(x_{2},y_{2})>m$. Proceeding in like fashion we may by induction find sequences $\{x_{n}\}_{n\in\mathbb{N}}$ and $\{y_{n}\}_{n\in\mathbb{N}}$ with the following properties:
\begin{enumerate}
\item $d(x_{j+1},\{x_{1},\ldots,x_{j}\})>j^{2}$ for $1\leq j\leq n-1$
\item $d(y_{j+1},\{y_{1},\ldots,y_{j}\})>j^{2}$ for $1\leq j\leq n-1$
\item $d(x_{j},y_{j})>j$ for $1\leq j\leq n$
\item $d(y_{j+1},\{x_{1},\ldots,x_{j}\})>j^{2}$ for $1\leq j\leq n-1$
\item $d(x_{j+1},\{y_{1},\ldots,y_{j}\})>j^{2}$ for $1\leq j\leq n-1$
\item For $1\leq j\leq n$ there is a $B_{j}\in\mathcal{U}$ such that $x_{j},y_{j}\in B_{j}$.
\end{enumerate}
 Define $A$ to be the collection of points $x_{n}$ and $C$ to be the collection of points $y_{n}$ it is clear from construction that $A\subseteq st(C,\mathcal{U})$ and $C\subseteq st(A,\mathcal{U})$ but $A$ and $C$ do not have finite Hausdorff distance to each other. This contradicts $\mathcal{U}$ being uniformly bounded with respect to the metric coarse proximity structure. Therefore we must have that $\mathcal{U}$ is uniformly bounded in the metric sense. The statement about uniformly bounded covers then follows trivially.
\end{proof}

\begin{definition}\label{asymptotic dimension for coarse proximity spaces}
Given a coarse proximity space $(X,\mathcal{B},{\bf b})$, the {\bf asymptotic dimension} of $X$, denoted $asdim(X)$, is the least nonnegative integer $n$ such that for every uniformly bounded cover $\mathcal{U}$ of $X$, there is a uniformly bounded cover $\mathcal{V}$ of order at most $n+1$ such that $\mathcal{U}<\mathcal{V}$.
\end{definition}

This notion of asymptotic dimension agrees with the usual definition on the metric coarse proximity structure. Arguably the greatest extant result connecting the asymptotic dimension of metric spaces to dimensional invariants of the Higson corona is the following result prove in \cite{dranishnikov} and \cite{asdimlessthandim}.

\begin{theorem}\label{asdim and covering dimension}
If $(X,d)$ is an unbounded proper metric space with $asdim(X)<\infty$, then $asdim(X)=dim(\nu X)$.
\end{theorem}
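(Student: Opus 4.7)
The plan is to establish the two inequalities $dim(\nu X) \leq asdim(X)$ and $asdim(X) \leq dim(\nu X)$ separately, using the identification of $\nu X$ with the coarse proximity boundary $\mathcal{U}X$ along with the trace-based characterization of coarse proximity in Proposition \ref{coarse proximity characterization} and the Urysohn-style construction of Proposition \ref{preliminary proposition}.

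For $dim(\nu X) \leq asdim(X)$, write $n = asdim(X)$ and let $\mathcal{V} = \{V_1, \ldots, V_k\}$ be a finite open cover of $\nu X$. I would first use normality of $\nu X$ to shrink $\mathcal{V}$ to an open cover $\{W_i\}$ with $\overline{W_i} \subseteq V_i$, then apply Proposition \ref{preliminary proposition} iteratively to produce unbounded sets $C_i \subseteq X$ with $\overline{W_i} \subseteq int(tr(C_i)) \subseteq tr(C_i) \subseteq V_i$. Invoking the standard characterization of asymptotic dimension via $(n+1)$-colorable uniformly bounded covers with large Lebesgue number (Proposition \ref{3.3} allows free translation between the metric and coarse proximity formulations), I pick a uniformly bounded cover $\mathcal{U}$ of $X$ of multiplicity $\leq n+1$ with Lebesgue number $R$ large enough that each $U \in \mathcal{U}$ whose Hausdorff-$R$-neighborhood meets $\overline{W_i}$ is absorbed by $C_i$. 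Splitting $\mathcal{U}$ into $n+1$ color classes of pairwise $R$-disjoint sets and taking the union $W^j$ of the $j$-th class, the traces $\{tr(W^j)\}_{j=0}^{n}$ form a closed cover of $\nu X$ of order $\leq n+1$ refining $\mathcal{V}$, which suffices since covering dimension of compact Hausdorff spaces is characterized by closed refinements.

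For $asdim(X) \leq dim(\nu X)$, I would use the finiteness of $asdim(X)$ (hence, by the first inequality, the finiteness of $dim(\nu X)$). Given a uniformly bounded cover $\mathcal{U}$ of $X$, form the open cover of $\nu X$ obtained by taking interiors of traces of suitable coarse neighborhoods of each $U \in \mathcal{U}$; by finite-dimensionality this has an open refinement on $\nu X$ of multiplicity $\leq dim(\nu X) + 1$. Pulling the refinement back to $X$ via Proposition \ref{preliminary proposition}, I obtain unbounded sets $A_i$ whose traces lie in the refined open sets. Coupled with a bounded ``collar'' covering the rest of $X$, the family $\{A_i\}$ gives a uniformly bounded refinement of $\mathcal{U}$ of multiplicity $\leq dim(\nu X) + 1$; the uniform boundedness emerges from the fact that the refinement is finite and each $A_i$ inherits its scale from the $R$-neighborhoods in the original construction.

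The main obstacle is the second inequality. Since traces of bounded sets are empty, there is no direct push/pull correspondence at the bounded scale, so transferring a finite-dimensional topological refinement on $\nu X$ back to a uniformly bounded refinement on $X$ requires that the pullbacks not spread out without control. The finiteness of $dim(\nu X)$ is what guarantees that only finitely many pullbacks are needed and that they can be made uniformly bounded, rather than merely bounded; this is precisely the step that fails absent the finite-$asdim$ hypothesis, and it is essentially where the original argument of Dranishnikov, Keesling, and Uspenskij does its work. My proposed proof simply recasts that argument in the boundary/trace language developed in the sections above.
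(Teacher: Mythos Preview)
The paper does not prove this theorem at all: it is quoted as an external result of Dranishnikov and of Dranishnikov--Keesling--Uspenskij (references \cite{dranishnikov} and \cite{asdimlessthandim}), stated without proof immediately after Definition~\ref{asymptotic dimension for coarse proximity spaces}. So there is no ``paper's own proof'' to compare against; the question is simply whether your sketch stands on its own.

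It does not. The second inequality is where the content lies, and your outline for it is internally inconsistent. You begin with a uniformly bounded cover $\mathcal{U}$ of $X$ and say you will ``form the open cover of $\nu X$ obtained by taking interiors of traces of suitable coarse neighborhoods of each $U\in\mathcal{U}$''; but every $U\in\mathcal{U}$ is bounded, so $tr(U)=\emptyset$ by Proposition~\ref{coarse proximity characterization}(1), and no coarse neighbourhood of a bounded set carries any information about $\nu X$. You then pull back a low-order open refinement on $\nu X$ to unbounded sets $A_i\subseteq X$ and assert that the $A_i$ form a \emph{uniformly bounded} cover refined by $\mathcal{U}$; unbounded sets cannot belong to a uniformly bounded family, and ``each $A_i$ inherits its scale from the $R$-neighborhoods'' has no meaning for an unbounded $A_i$. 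The direction of refinement is also reversed in your phrasing. In short, there is no mechanism in your sketch that converts a topological refinement on the corona into a uniformly bounded cover of $X$ of controlled multiplicity, and your final paragraph effectively concedes this by deferring to ``the original argument of Dranishnikov, Keesling, and Uspenskij'' without reproducing any of its substance. That argument is not a recasting exercise: it uses genuinely different tools (extension of Higson functions, cohomological dimension, mapping-into-spheres characterizations) that are nowhere present here.

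Your first inequality sketch is closer to something workable, but it too has a type error: $\overline{W_i}$ lives in $\nu X$, so ``$U\in\mathcal{U}$ whose Hausdorff-$R$-neighborhood meets $\overline{W_i}$'' is not well formed. The honest move is to cite the theorem exactly as the paper does.
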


\begin{lemma}
	If $\mathcal{U}$ and $\mathcal{V}$ are uniformly bounded covers of a coarse proximity space $(X,\mathcal{B},{\bf b})$ then $st(\mathcal{U},\mathcal{V})$ is a uniformly bounded cover of $X$.
\end{lemma}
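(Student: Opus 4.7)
The plan is to verify directly the three defining properties of a uniformly bounded cover for $\mathcal{W}:=st(\mathcal{U},\mathcal{V})=\{st(U,\mathcal{V})\mid U\in\mathcal{U}\}$. Two of them are immediate: $\mathcal{W}$ covers $X$ because $\mathcal{U}$ does and $U\subseteq st(U,\mathcal{V})$ for every $U\in\mathcal{U}$ (using that $\mathcal{V}$ is itself a cover). Moreover, each $W=st(U,\mathcal{V})\in\mathcal{W}$ is bounded: applying the uniform-boundedness axiom of $\mathcal{V}$ to the pair $(W,U)$ — the inclusions $W\subseteq st(U,\mathcal{V})$ and $U\subseteq W\subseteq st(W,\mathcal{V})$ both hold trivially, the latter because any $V\in\mathcal{V}$ through a point of $U$ meets $U$ and hence lies in $W$ — yields $W\phi U$, and then $W$ is bounded because bounded sets are only $\phi$-related to bounded sets.

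The main work is the uniform-boundedness condition. Suppose $A,C\subseteq X$ satisfy $A\subseteq st(C,\mathcal{W})$ and $C\subseteq st(A,\mathcal{W})$; the goal is $A\phi C$. First I would record a generic fact usable throughout the argument: for any $B\subseteq X$ and any uniformly bounded cover $\mathcal{E}$ of $X$, one has $B\phi st(B,\mathcal{E})$. This is obtained by feeding $B$ and $st(B,\mathcal{E})$ into the uniform-boundedness axiom of $\mathcal{E}$, since both required star-inclusions are automatic ($\mathcal{E}$ covers, so each $b\in B$ is in some $E\in\mathcal{E}$ with $E\cap B\neq\emptyset$, and each $c\in st(B,\mathcal{E})$ is in some $E\in\mathcal{E}$ meeting $B$). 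Crucially, this lemma cannot be applied to $\mathcal{W}$ itself at this stage, so I restrict its use to $\mathcal{U}$ and $\mathcal{V}$.

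Next I would pointwise unpack the star with respect to $\mathcal{W}$ to show that, for any $B\subseteq X$,
\[st(B,\mathcal{W})\subseteq st(st(st(B,\mathcal{V}),\mathcal{U}),\mathcal{V}).\]
Given $x\in st(B,\mathcal{W})$, choose $W=st(U,\mathcal{V})\in\mathcal{W}$ with $x\in W$ and $W\cap B\neq\emptyset$. A point of $W\cap B$ lies in some $V\in\mathcal{V}$ meeting $U$, so $V\subseteq st(B,\mathcal{V})$, whence $U$ meets $st(B,\mathcal{V})$ and therefore $U\subseteq st(st(B,\mathcal{V}),\mathcal{U})$. Since $x$ itself lies in some $V'\in\mathcal{V}$ meeting $U$, we conclude $V'\subseteq st(st(st(B,\mathcal{V}),\mathcal{U}),\mathcal{V})$, and in particular so does $x$.

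Combining these ingredients, set $A^+:=st(st(st(A,\mathcal{V}),\mathcal{U}),\mathcal{V})$ and define $C^+$ analogously. The two hypotheses give $A\subseteq C^+$ and $C\subseteq A^+$, while three applications of the generic fact — with $\mathcal{V}$, then $\mathcal{U}$, then $\mathcal{V}$ — together with the transitivity of $\phi$ yield $A\phi A^+$ and $C\phi C^+$. To conclude $A\phi C$ from the definition of $\phi$: any unbounded $A'\subseteq A$ is an unbounded subset of $C^+$, so $C\phi C^+$ delivers $A'{\bf b}C$; symmetrically any unbounded $C'\subseteq C$ lies in $A^+$ and $A\phi A^+$ delivers $A{\bf b}C'$. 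The main obstacle is the careful triple-nested bookkeeping of the star operations, and the need to prove the auxiliary $B\phi st(B,\mathcal{E})$ lemma separately so as to avoid the circularity of invoking it on $\mathcal{W}$ before its uniform boundedness has been established.
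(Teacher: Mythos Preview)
Your proof is correct and follows essentially the same route as the paper's: show boundedness of each $st(U,\mathcal{V})$ via $U\phi st(U,\mathcal{V})$, reduce $st(B,\mathcal{W})$ to an iterated star in $\mathcal{U}$ and $\mathcal{V}$, and then use the auxiliary fact $B\phi st(B,\mathcal{E})$ (for the already known uniformly bounded covers $\mathcal{U},\mathcal{V}$) together with the definition of $\phi$ to conclude. Your version is slightly tidier in two respects: you prove only the inclusion $st(B,\mathcal{W})\subseteq st(st(st(B,\mathcal{V}),\mathcal{U}),\mathcal{V})$, which is all that is needed (the paper states an equality with an extra union term), and you explicitly flag the circularity issue of not invoking $B\phi st(B,\mathcal{W})$ before $\mathcal{W}$ is shown to be uniformly bounded.
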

\begin{proof}

If $W\in st(\mathcal{U},\mathcal{V})$ then $W=st(U,\mathcal{V})$ for some $U\in\mathcal{U}$. Elements of $\mathcal{U}$ are bounded and $U\phi st(U,\mathcal{V})$, so $W$ must also be bounded. Recall that if $\mathcal{W}$ is a uniformly bounded cover of $X$, then $st(A,\mathcal{W})\phi A$ for all $A\subseteq X$. Then note that 

\[st(A,st(\mathcal{U},\mathcal{V}))=st(st(st(A,\mathcal{V}),\mathcal{U}),\mathcal{V})\cup st(st(A,\mathcal{U}),\mathcal{V})\]

Which yields $A\phi st(A,st(\mathcal{U},\mathcal{V}))$. Now assume $A,C\subseteq X$ are such that $A\subseteq st(C,st(\mathcal{U},\mathcal{V}))$ and $C\subseteq st(A,st(\mathcal{U},\mathcal{V}))$ then if $\hat{C}\subseteq C$ is an unbounded set then $\hat{C}\subseteq st(A,st(\mathcal{U},\mathcal{V}))$ and because $st(A,st(\mathcal{U},\mathcal{V}))\phi A$ we have that $A{\bf b}\hat{C}$. Similarly if $\hat{A}\subseteq A$ is unbounded we have $\hat{A}{\bf b}C$. Therefore $A\phi C$ and $st(\mathcal{U},\mathcal{V})$ is a uniformly bounded cover. 
\end{proof}

\begin{example}
	Let $\mathbb{N}$ be equipped the metric bornology and the coarse proximity ${\bf b}$ induced by the one point compactification of $\mathbb{N}$. That is, sets are considered bounded if and only if they are metrically bounded and two sets $A,C\subseteq\mathbb{N}$ are coarsely close if and only if they are unbounded. Then the boundary $\mathcal{U}\mathbb{N}$ is a single point. Define $\mathcal{U}=\{[n,n+1]\mid n\in\mathbb{N}\}$. Then $\mathcal{U}$ is a uniformly bounded cover of $\mathbb{N}$. If $\mathcal{V}$ is any uniformly bounded cover of $\mathbb{N}$ that is refined by $\mathcal{U}$ then $\mathcal{V}$ must have order at least $2$, which implies $asdim(\mathbb{N})\geq 1$ with this coarse proximity structure. 
\end{example}

\begin{definition}
Let $(X,\mathcal{B},{\bf b})$ be a coarse proximity space, $\mathcal{U}$ a collection of subsets of $X$, and $\mathcal{V}$ a uniformly bounded cover of $X$. The $\mathcal{V}$-{\bf multiplicity} of $\mathcal{U}$ is the greatest integer $n$ such that there is a $V\in\mathcal{V}$ and $U_{1},\ldots,U_{n}\in\mathcal{U}$ such that $U_{i}\cap V\neq\emptyset$ for $1\leq i\leq n$. 
\end{definition}

\begin{proposition}\label{asdim 0 characterization}
Let $(X,\mathcal{B},{\bf b})$ be a coarse proximity space. Then the following are equivalent:\\
\begin{enumerate}
\item $asdim(X)= 0$
\item For every uniformly bounded cover $\mathcal{V}$ there is a uniformly bounded cover $\mathcal{U}$ such that the $\mathcal{V}$-multiplicity of $\mathcal{U}$ is at most $1$. 
\item For every uniformly bounded cover $\mathcal{U}$, the set of $\mathcal{U}$-components in $X$ is uniformly bounded.
\end{enumerate}
\end{proposition}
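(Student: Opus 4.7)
The plan is to establish the cyclic chain of implications $(1)\Rightarrow(2)\Rightarrow(3)\Rightarrow(1)$. The core observation driving all three steps is that if a uniformly bounded cover $\mathcal{W}$ has the property that each element of a second cover is contained in a unique element of $\mathcal{W}$, then $\mathcal{U}$-chains (for the second cover) are trapped inside single elements of $\mathcal{W}$, so $\mathcal{U}$-components inherit uniform boundedness from $\mathcal{W}$. The preceding lemma that $st(\mathcal{U},\mathcal{V})$ is a uniformly bounded cover whenever $\mathcal{U}$ and $\mathcal{V}$ are provides the needed control on the weak asymptotic resemblance.

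For $(1)\Rightarrow(2)$, given a uniformly bounded cover $\mathcal{V}$, apply $(1)$ with $\mathcal{U}=\mathcal{V}$ to obtain a uniformly bounded cover $\mathcal{W}$ of order at most $1$ with $\mathcal{V}<\mathcal{W}$. Since order at most $1$ means the members of $\mathcal{W}$ are pairwise disjoint, each $V\in\mathcal{V}$ is contained in a unique $W\in\mathcal{W}$ and therefore intersects no other element of $\mathcal{W}$. Hence the $\mathcal{V}$-multiplicity of $\mathcal{W}$ is at most $1$, and setting $\mathcal{U}=\mathcal{W}$ gives $(2)$.

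For $(2)\Rightarrow(3)$, given a uniformly bounded cover $\mathcal{U}$, invoke $(2)$ to produce a uniformly bounded cover $\mathcal{W}$ whose $\mathcal{U}$-multiplicity is at most $1$; that is, every $U\in\mathcal{U}$ meets at most one $W\in\mathcal{W}$. Because $\mathcal{W}$ covers $X$, each nonempty $U$ is in fact contained in that unique $W$. A straightforward induction on chain length then shows that any $\mathcal{U}$-chain starting inside some $W$ remains in $W$, so every $\mathcal{U}$-component $C$ satisfies $C\subseteq W$ for some $W\in\mathcal{W}$. The family of $\mathcal{U}$-components therefore consists of bounded sets and trivially covers $X$. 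To check the star condition, observe that for any subset $B\subseteq X$, every $\mathcal{U}$-component meeting $B$ sits inside a $W\in\mathcal{W}$ meeting $B$, so $st(B,\text{components})\subseteq st(B,\mathcal{W})$. Thus if $A\subseteq st(B,\text{components})$ and $B\subseteq st(A,\text{components})$, the same inclusions hold with $\mathcal{W}$ in place of components, and the uniform boundedness of $\mathcal{W}$ yields $A\phi B$. Hence the components form a uniformly bounded cover.

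For $(3)\Rightarrow(1)$, given any uniformly bounded cover $\mathcal{U}$, the $\mathcal{U}$-components form a uniformly bounded cover by hypothesis; they partition $X$, so they have order at most $1$, and each $U\in\mathcal{U}$ is contained in the unique component of any of its points, giving $\mathcal{U}<\{\text{components}\}$. Thus $asdim(X)\leq 0$, i.e., $asdim(X)=0$. The main obstacle in writing this up cleanly is keeping straight the two directions of refinement/multiplicity and verifying that the star condition transfers from $\mathcal{W}$ to the coarser partition into components; once that inheritance is in place, the rest is bookkeeping.
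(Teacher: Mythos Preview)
Your proof is correct and close in spirit to the paper's, though organized differently: you run the cycle $(1)\Rightarrow(2)\Rightarrow(3)\Rightarrow(1)$, whereas the paper proves $(1)\Leftrightarrow(2)$ and $(1)\Leftrightarrow(3)$ separately. Your $(1)\Rightarrow(2)$ is in fact cleaner than the paper's: the paper applies the asymptotic-dimension-zero hypothesis to the iterated star $st(st(\mathcal{V},\mathcal{V}),\mathcal{V})$ before extracting the order-$1$ coarsening, but as you observe, applying it directly to $\mathcal{V}$ already suffices, since a disjoint cover refined by $\mathcal{V}$ automatically has $\mathcal{V}$-multiplicity at most $1$. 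Your $(2)\Rightarrow(3)$ spells out explicitly why refining a uniformly bounded cover yields a uniformly bounded family (via the inclusion $st(B,\text{components})\subseteq st(B,\mathcal{W})$), whereas the paper simply asserts this; both are fine. The $(3)\Rightarrow(1)$ steps are identical.
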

\begin{proof}
$(1)\implies(2)$ Assume that $asdim(X)=0$ and let $\mathcal{V}$ be a uniformly bounded cover. Then there is a uniformly bounded cover $\mathcal{U}$ of order at most $1$ that is refined by $st(st(\mathcal{V},\mathcal{V}),\mathcal{V})$. Then the $\mathcal{V}$-multiplicity of $\mathcal{U}$ is at most $1$. 
\vspace{\baselineskip}

$(2)\implies(1)$ Assume $(2)$ and let $\mathcal{V}$ be a uniformly bounded cover. Then there is a uniformly bounded cover $\mathcal{U}$ of $\mathcal{V}$-multiplicity at most $1$. Then $\mathcal{V}$ must refine $\mathcal{U}$, so $asdim(X)=0$.
\vspace{\baselineskip}

$(1)\implies (3)$ Assume that $asdim(X)=0$, that $\mathcal{U}$ is a uniformly bounded cover, and that $\mathcal{V}$ is a uniformly bounded cover with $\mathcal{U}$-multiplicity $1$ (as given by $(2)$). Then $\mathcal{U}$ refines $\mathcal{V}$ and the set of $\mathcal{U}$-components refines $\mathcal{V}$ and is therefore uniformly bounded.
\vspace{\baselineskip}

$(3)\implies (1)$ Let $\mathcal{U}$ be a uniformly bounded cover of $X$. Then the collection $\mathcal{V}$ of $\mathcal{U}$-components is uniformly bounded and is refined by $\mathcal{U}$. The order of $\mathcal{V}$ is $1$, so $asdim(X)=0$.
\end{proof}

\begin{definition}
Given a coarse proximity space $(X,\mathcal{B},{\bf b})$ we say that the bornology $\mathcal{B}$ is {\bf generated} by a collection $\{B_{\alpha}\}_{\alpha\in\Omega}\subseteq\mathcal{B}$ if for all $B\in\mathcal{B}$ there is a $B_{\alpha}\in\{B_{\alpha}\}_{\alpha\in\Omega}$ such that $B\subseteq B_{\alpha}$. Similarly, if $\{\mathcal{U}_{\alpha}\}_{\alpha\in\Omega}$ is a collection of uniformly bounded covers of $X$ we say that the collection {\bf generates} the uniformly bounded covers of $X$ if for every uniformly bounded cover $\mathcal{U}$ there is some $\mathcal{U}_{\alpha}$ such that $\mathcal{U}$ refines $\mathcal{U}_{\alpha}$. 

\end{definition}

\begin{definition}
A coarse proximity space $(X,\mathcal{B},{\bf b})$ is said to be {\bf witnessed by uniform covers} if for all $A,C\subseteq X$, $A{\bf b}C$ implies that there is a uniformly bounded cover $\mathcal{U}$ of $X$ such that $st(A,\mathcal{U})\cap C$ is unbounded. In general if for a particular pair of sets $A,C\subseteq X$ we have that $A{\bf b}C$ and there is a uniformly bounded cover $\mathcal{U}$ such that $st(A,\mathcal{U})\cap C$ is unbounded, then we say that $\mathcal{U}$ {\bf witnesses} $A$ and $B$ being close. If $\mathcal{C}$ is a family of uniformly bounded covers of $X$ such that $A{\bf b}C$ implies that there is some $\mathcal{V}\in\mathcal{C}$ such that $A\cap st(C,\mathcal{V})$ is unbounded then we say that the coarse proximity is witnessed by the family $\mathcal{C}$.
\end{definition}

\begin{remark}
An equivalent condition for a coarse proximity to be witnessed by uniform covers if that $A{\bf b}C$ implies that there is a uniformly bounded cover $\mathcal{U}$ for which $st(A,\mathcal{U})\cap st(C,\mathcal{U})$ is unbounded.
\end{remark}

\begin{lemma}\label{countable generation criterion}
Let $(X,\mathcal{B},{\bf b})$ be an infinite coarse proximity space in which $\mathcal{B}$ is countably generated and let $\mathcal{C}=\{\mathcal{V}_{n}\}_{n\in\mathbb{N}}$ be a family of uniformly bounded covers that witnesses the coarse proximity structure on $X$. Then the family of uniformly bounded covers of $X$ is countably generated.
\end{lemma}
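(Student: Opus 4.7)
The plan is to exhibit an explicit countable family of uniformly bounded covers and show, by a contradiction argument using the witnessing hypothesis, that every uniformly bounded cover refines one of them. Without loss of generality, I would arrange $\{B_m\}$ to be increasing and replace each $\mathcal{V}_n$ by the iterated star $st(\mathcal{V}_1, st(\mathcal{V}_2, \ldots, \mathcal{V}_n)\ldots)$; by repeated application of the lemma that the star of two uniformly bounded covers is uniformly bounded, the new sequence is still a countable family of uniformly bounded covers, it still witnesses ${\bf b}$ (its stars contain those of the original family), and it has the crucial monotonicity property that $\mathcal{V}_k$ refines $\mathcal{V}_n$ whenever $k \leq n$.

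The proposed generating family is $\{\mathcal{G}_{n, m}\}_{n, m \in \mathbb{N}}$, where $\mathcal{G}_{n, m} := \{st(\{x\}, \mathcal{V}_n) : x \in X\} \cup \{B_m\}$. Each $\mathcal{G}_{n, m}$ is a uniformly bounded cover: the star of any subset $A$ under the point-star collection equals $st(st(A, \mathcal{V}_n), \mathcal{V}_n)$, which is $\phi$-related to $A$ by two applications of the uniform boundedness of $\mathcal{V}_n$, and appending a single bounded set does not alter the $\phi$-class of any set that interacts with it.

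To prove generation, suppose $\mathcal{U}$ is uniformly bounded and refines no $\mathcal{G}_{n, m}$; I would inductively build sequences $\{x_n\}, \{y_n\} \subseteq X$ and $\{U_n\} \subseteq \mathcal{U}$ as follows. At stage $n$, form the bounded set $D_n := B_n \cup st(\{x_k, y_k : k < n\}, \mathcal{V}_n)$, pick $m_n$ with $B_{m_n} \supseteq D_n$, take $U_n \in \mathcal{U}$ not contained in any element of $\mathcal{G}_{n, m_n}$, and choose $x_n \in U_n \setminus B_{m_n}$ and $y_n \in U_n$ with $\{x_n, y_n\}$ in no single element of $\mathcal{V}_n$ (using $U_n \not\subseteq st(\{x_n\}, \mathcal{V}_n)$). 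Setting $A := \{x_n\}$ and $C := \{y_n\}$, $A$ is unbounded by the cofinality of $\{B_m\}$, and $x_n, y_n \in U_n$ gives $A \phi C$ via the uniform boundedness of $\mathcal{U}$, which forces $C$ to be unbounded and yields $A {\bf b} C$. The witnessing hypothesis then produces $n_0$ with $st(A, \mathcal{V}_{n_0}) \cap C$ unbounded; but using the monotonicity of $\{\mathcal{V}_n\}$ and the containment $st(x_j, \mathcal{V}_n) \subseteq D_n \subseteq B_{m_n}$ for $j < n$, together with the symmetric containment for $j > n$ coming from $x_j \notin B_{m_j}$, one shows $y_n \notin st(x_j, \mathcal{V}_{n_0})$ for all sufficiently large $n$ and all $j$, so $st(A, \mathcal{V}_{n_0}) \cap C$ is finite, contradicting the witnessing output.

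The delicate point of this argument is the inductive step: one must ensure that $y_n$ can actually be chosen outside $B_{m_n}$, so that the exclusion for indices $j < n$ goes through (otherwise $y_n \in B_{m_n} \supseteq st(x_j, \mathcal{V}_n) \supseteq st(x_j, \mathcal{V}_{n_0})$ is not ruled out). When $U_n \setminus B_{m_n}$ is entirely swallowed by $st(\{x_n\}, \mathcal{V}_n)$, $U_n$ lies in the bounded set $B_{m_n} \cup st(\{x_n\}, \mathcal{V}_n)$, which is not itself an element of any $\mathcal{G}_{n, m}$; handling this pathological case either demands enlarging the generating family with additional covers that remain uniformly bounded (naive candidates such as $\{B_m \cup st(\{x\}, \mathcal{V}_n) : x \in X\}$ fail to be uniformly bounded because of the overlap at $B_m$, whose presence forces the star of any set meeting $B_m$ to swell catastrophically), or demands refining the inductive selection to bypass such degenerate $U_n$'s. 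Getting this bookkeeping right while preserving uniform boundedness at every stage is the main technical obstacle I would expect to confront.
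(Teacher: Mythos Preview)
Your overall strategy matches the paper's: build a countable candidate family from the $\mathcal{V}_n$'s and the bornology generators, and if some uniformly bounded $\mathcal{U}$ refines none of them, extract sequences $A=\{x_n\}$, $C=\{y_n\}$ with $A\phi C$ (hence $A{\bf b}C$) that nonetheless cannot be witnessed by any $\mathcal{V}_n$. The gap you flag is real, and the paper's proof resolves it by two devices you do not use.

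First, instead of forming $\mathcal{G}_{n,m}$ from point-stars plus a single bounded set, the paper adjoins each $B_m$ as a \emph{standalone element} to each $\mathcal{V}_n$, enumerates the resulting covers $\mathcal{U}_1,\mathcal{U}_2,\ldots$, and then takes the iterated stars $\hat{\mathcal{U}}_n=st(\hat{\mathcal{U}}_{n-1},\mathcal{U}_n)$. This keeps each $\hat{\mathcal{U}}_n$ uniformly bounded (a single bounded element causes no blow-up under starring, unlike your $\{B_m\cup st(\{x\},\mathcal{V}_n)\}$), while guaranteeing that every bounded set is contained in an element of some $\hat{\mathcal{U}}_n$. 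Second, and decisively, the paper uses this last fact to define an auxiliary metric $d(x,y)=\min\{n:\{x,y\}\subseteq B\text{ for some }B\in\hat{\mathcal{U}}_n\}$, so that failure to refine any $\hat{\mathcal{U}}_n$ becomes failure to be $d$-uniformly bounded. The delicate ``escape to infinity'' bookkeeping you anticipate is then handled by invoking the argument of Proposition~\ref{3.3} (the metric case), whose Claim already carries out the careful selection of $(x_n,y_n)$ with both $d(x_n,y_n)>n$ and $d(\{x_{n+1},y_{n+1}\},\{x_j,y_j:j\le n\})\to\infty$. Passing through the metric $d$ is exactly what lets the paper avoid the degenerate-$U_n$ case that blocks your direct induction.
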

\begin{proof}
Say $\mathcal{B}$ is generated by the family $\{B_{n}\}_{n\in\mathbb{N}}$. We then define $\mathcal{W}=\{\mathcal{V}_{n}\cup\{B_{m}\}\mid n,m\in\mathbb{N}\}$. Then $\mathcal{W}$ is a countable set. Enumerate $\mathcal{W}$ by denoting the elements as $\mathcal{U}_{1},\mathcal{U}_{2},\ldots$. We then define $\hat{\mathcal{U}}_{1}=\mathcal{U}_{1}$ and for general $n\geq 2$ we define $\hat{\mathcal{U}}_{n}=st(\hat{\mathcal{U}}_{n-1},\mathcal{U}_{n})$. Let $\hat{\mathcal{W}}$ be the set of all $\hat{\mathcal{U}}_{n}$. We claim that every uniformly bounded cover of $X$ refines an element of $\hat{\mathcal{W}}$. Define a function $d:X\times X\rightarrow[0,\infty)$ by setting $\hat{\mathcal{U}}_{0}$ to be the set of singleton subsets of $X$ and setting $d(x,y)$ to be

\[d(x,y)=\min\{n \mid\exists B\in\hat{\mathcal{U}}_{n},\,\{x,y\}\subseteq B\}\]

As each bounded subset of $X$ is a subset of some element of some $\hat{\mathcal{U}}_{n}$ we have that $d$ is well defined. In fact, it is a metric on $X$ as seen in \cite{Roe}. With this metric we have that the elements of $\hat{\mathcal{U}}_{n}$ have diameter less than or equal to $n$ with respect to this metric. If $\mathcal{V}$ is a uniformly bounded family of $X$ that does not refine any $\hat{\mathcal{U}}_{n}$ then there is a sequence of pairs $(x_{n},y_{n})$ in $X$ such that $\{\{x_{n},y_{n}\}\}_{n\in\mathbb{N}}$ refines $\mathcal{V}$ and $d(x_{n},y_{n})>n$ for each $n\in\mathbb{N}$. As we saw in the proof of Proposition \ref{3.3} we may assume that  the sequence $(d(\{x_{n},y_{n}\},\{x_{n+1},y_{n+1}\}))_{n\in\mathbb{N}}$ diverges to infinity. Defining $A$ to be the collection of $x_{n}$'s and $C$ to be the collection of $y_{n}$'s we have that $A$ and $C$ must be unbounded as if they were both bounded then there would be a $\hat{\mathcal{U}}_{n}$ that contains the union of both of them. Similarly because $A\subseteq st(C,\mathcal{V})$ and $C\subseteq st(A,\mathcal{V})$ we have that $A\phi C$, so both must either unbounded or bounded. Because $A\phi C$ we have that $A{\bf b}C$. Because the coarse proximity on $X$ is witnessed by the family $\{\hat{\mathcal{U}}_{n}\}_{n\in\mathbb{N}}$ we must have that there is some $n$ for which $A\cap st(C,\hat{\mathcal{U}}_{n})$ is unbounded. However, there are only finitely many elements $(x,y)\in A\times C$ for which $d(x,y)\leq n$, so this is impossible. This is a contradiction. Therefore $\mathcal{V}$ must refine some $\hat{\mathcal{U}}_{n}$ meaning that the uniformly bounded covers of $X$ are countably generated.
\end{proof}

\begin{lemma}\label{witnessing weak asr}
If $(X,\mathcal{B},{\bf b})$ is a coarse proximity space whose bornology is countably generated and whose coarse proximity is witnessed by the countably family $\mathcal{C}=\{\mathcal{V}_{n}\}_{n\in\mathbb{N}}$, then there is a countably family of uniformly bounded covers of $X$, $\hat{\mathcal{C}}=\{\hat{\mathcal{V}}_{n}\}_{n\in\mathbb{N}}$ that is linearly ordered by refinement, is such that if $\mathcal{U}$ is a uniformly bounded cover of $X$ then $\mathcal{U}$ refines some $\hat{\mathcal{V}}_{n}$, and satisfies the property that if $A,C\subseteq X$ are such that $A\phi C$ then there is a $\hat{\mathcal{V}}_{n}$ such that $A\subseteq st(C,\hat{\mathcal{V}})$ and $C\subseteq st(A,\hat{\mathcal{V}})$. 
\end{lemma}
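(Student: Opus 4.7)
The plan is to reuse the construction from Lemma \ref{countable generation criterion} essentially verbatim, and then show that the chain of covers it produces already witnesses the weak asymptotic resemblance $\phi$ in the required sense. Specifically, I would enumerate $\mathcal{W} = \{\mathcal{V}_{n}\cup\{B_{m}\}\mid n,m\in\mathbb{N}\}$ (where $\{B_m\}$ generates $\mathcal{B}$) as $\mathcal{U}_1,\mathcal{U}_2,\ldots$ and define $\hat{\mathcal{V}}_1 = \mathcal{U}_1$ and $\hat{\mathcal{V}}_n = st(\hat{\mathcal{V}}_{n-1},\mathcal{U}_n)$ for $n\geq 2$. Since $X\in\hat{\mathcal{V}}_n$-elements cover $X$, each element of $\hat{\mathcal{V}}_{n-1}$ is contained in an element of $\hat{\mathcal{V}}_n$, so this chain is linearly ordered by refinement. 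The fact that every uniformly bounded cover of $X$ refines some $\hat{\mathcal{V}}_n$ is exactly the content of Lemma \ref{countable generation criterion}.

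The only genuinely new content is the $\phi$-witnessing property, and this is the main obstacle. I would approach it through the metric $d$ constructed in the previous proof, where $d(x,y)$ is the least $n$ such that some $B\in\hat{\mathcal{V}}_n$ contains $\{x,y\}$. Two observations are essential: elements of $\hat{\mathcal{V}}_n$ have $d$-diameter at most $n$; and $\mathcal{B}$-boundedness coincides with $d$-boundedness, since on one hand the elements of each $\hat{\mathcal{V}}_n$ are $\mathcal{B}$-bounded, and on the other hand every $B\in\mathcal{B}$ is contained in some generator $B_m$, which appears as an element of some $\mathcal{U}_k$ and hence sits inside an element of $\hat{\mathcal{V}}_k$ of finite $d$-diameter.

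The heart of the proof is the claim that $A\phi C$ implies $d_H(A,C)<\infty$. I would argue by contrapositive: if the Hausdorff distance is infinite, then (without loss of generality) there is a sequence $(x_n)\subseteq A$ with $d(x_n,C)\to\infty$. An elementary triangle-inequality argument shows $A':=\{x_n\}$ has infinite $d$-diameter, hence is $\mathcal{B}$-unbounded. Since $A'\subseteq A$ and $A\phi C$, the definition of $\phi$ forces $A'\mathbf{b}C$. By the witnessing hypothesis, some $\mathcal{V}_m$ — which is among the $\mathcal{U}_k$ and thus has elements of $d$-diameter bounded by a constant $K$ — satisfies that $st(A',\mathcal{V}_m)\cap C$ is unbounded. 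This produces infinitely many $c\in C$ lying in a $\mathcal{V}_m$-element containing some $x_n$, i.e. $d(x_n,c)\leq K$, contradicting $d(x_n,C)\to\infty$. (The symmetric side is identical, and the case where $A$ or $C$ is bounded is handled by using that $A\cup C\in\mathcal{B}$ is then contained in some $B_m$.)

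Once this is established, the conclusion is immediate: given $A\phi C$ with $d_H(A,C)\leq K$, each $a\in A$ lies at $d$-distance at most $K$ from some $c\in C$, so $a$ and $c$ share an element of $\hat{\mathcal{V}}_K$, giving $A\subseteq st(C,\hat{\mathcal{V}}_K)$, and symmetrically $C\subseteq st(A,\hat{\mathcal{V}}_K)$. Thus $\hat{\mathcal{V}}_K$ is the desired member of $\hat{\mathcal{C}}$. The technically delicate point throughout is keeping track of the distinction between bounded and unbounded sets when extracting the sequence $(x_n)$, but once one commits to working with the metric $d$ introduced in the previous lemma, the witnessing hypothesis on $\mathcal{C}$ translates very cleanly into the required metric estimate.
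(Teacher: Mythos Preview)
Your proposal is correct and follows essentially the same approach as the paper: both reuse the chain $\hat{\mathcal{V}}_n$ and the associated metric $d$ from Lemma~\ref{countable generation criterion}, then argue by contradiction that $A\phi C$ with infinite $d$-Hausdorff distance would produce an unbounded $A'\subseteq A$ satisfying $A'\mathbf{b}C$ yet having $A'\cap st(C,\hat{\mathcal{V}}_n)$ finite for every $n$, contradicting the witnessing hypothesis. The paper phrases the contradiction directly as ``finite for all $n$'' versus ``unbounded (hence infinite) for some $n$'', whereas you route through $d(x_n,C)\to\infty$; your final step (``contradicting $d(x_n,C)\to\infty$'') tacitly uses that if only finitely many indices $x_n$ were hit then the resulting $c$'s would be $d$-bounded, but this is an elementary observation and does not constitute a gap.
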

\begin{proof}
If $\mathcal{C}=\{\mathcal{V}_{n}\}_{n\in\mathbb{N}}$ is a countable family of uniformly bounded covers of a coarse proximity space $X$, then by the proof of Lemma \ref{countable generation criterion} we have that the linearly ordered family $\hat{\mathcal{C}}=\{\hat{\mathcal{V}}_{n}\}_{n\in\mathbb{N}}$ as described in the statement of the Lemma, exists. Now let $A,C\subseteq X$ be unbounded sets such that $A\phi C$. Let $d$ be the metric on $X$ constructed from the family $\hat{\mathcal{C}}$ as in the proof of Lemma \ref{countable generation criterion}. Assume towards a contradiction that there is no $n\in\mathbb{N}$ such that $A\subseteq st(C,\hat{\mathcal{V}}_{n})$ and $C\subseteq st(A,\hat{\mathcal{V}}_{n})$. This is equivalent to the Hausdorff distance between $A$ and $C$, with respect to the metric $d$, being infinite. By the pigeonhole principle we then have that, without loss of generality, there is an unbounded subset $\hat{A}\subseteq A$ such that for every $n$, $\hat{A}\cap st(C,\hat{\mathcal{U}}_{n})$ is finite. However, because $A\phi C$ we must have that $\hat{A}{\bf b}C$ and because the coarse proximity on $X$ it witnessed by the family $\hat{\mathcal{C}}$ there must be some $n\in\mathbb{N}$ for which $\hat{A}\cap st(C,\hat{\mathcal{U}}_{n})$ is unbounded and therefore infinite. This is a contradiction. Therefore there must be some $n\in\mathbb{N}$ for which $A\subseteq st(C,\hat{\mathcal{U}}_{n})$ and $C\subseteq st(A,\hat{\mathcal{U}}_{n})$. 
\end{proof}

\begin{proposition}\label{witnessability through isomorphisms}
If $f:(X,\mathcal{B}_{X},{\bf b}_{X})\rightarrow (Y,\mathcal{B}_{Y},{\bf b}_{Y})$ is an injective coarse proximity embedding, $A,C\subseteq X$ are such that $A{\bf b}_{X} C$, and there is a uniformly bounded cover $\mathcal{U}$ in $Y$ that witnesses $f(A)$ being close to $f(C)$, then $f^{-1}(\mathcal{U})=\{f^{-1}(B)\mid B\in\mathcal{U}\}$ is a uniformly bounded cover of $X$ that witnesses $A$ being close to $C$. 
\end{proposition}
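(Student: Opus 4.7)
The plan is to check the two pieces: that $f^{-1}(\mathcal{U})$ is a uniformly bounded cover of $X$, and that it witnesses $A$ being close to $C$.

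First I would record the key containment that for any $D\subseteq X$,
\[
f\big(st(D,f^{-1}(\mathcal{U}))\big)\subseteq st(f(D),\mathcal{U}).
\]
This is immediate by unpacking definitions: if $x\in f^{-1}(B)$ for some $B\in\mathcal{U}$ with $f^{-1}(B)\cap D\neq\emptyset$, then $f(x)\in B$ and $B\cap f(D)\supseteq f(f^{-1}(B)\cap D)\neq\emptyset$. Since $f$ is a coarse proximity embedding, $f$ is a coarse proximity isomorphism onto $f(X)$ equipped with its subspace structure; in particular, a set $D\subseteq X$ is bounded iff $f(D)$ is bounded in $Y$, and two subsets of $X$ are ${\bf b}_X$-close iff their images are ${\bf b}_Y$-close (combining the subspace definition with the isomorphism). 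Consequently every $f^{-1}(B)$ with $B\in\mathcal{U}\subseteq\mathcal{B}_Y$ lies in $\mathcal{B}_X$, and $f^{-1}(\mathcal{U})$ covers $X$ because $\mathcal{U}$ covers $Y$.

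Next I verify the uniform boundedness condition for $f^{-1}(\mathcal{U})$. Given $A',C'\subseteq X$ with $A'\subseteq st(C',f^{-1}(\mathcal{U}))$ and $C'\subseteq st(A',f^{-1}(\mathcal{U}))$, applying $f$ and the star containment above yields $f(A')\subseteq st(f(C'),\mathcal{U})$ and $f(C')\subseteq st(f(A'),\mathcal{U})$, so $f(A')\phi_Y f(C')$ by uniform boundedness of $\mathcal{U}$. To pull this back to $A'\phi_X C'$, I take an arbitrary unbounded $C''\subseteq C'$; by reflection of boundedness (and injectivity, which gives $f^{-1}(f(C''))=C''$), $f(C'')$ is unbounded in $Y$, so $f(A'){\bf b}_Y f(C'')$, which gives $A'{\bf b}_X C''$ by reflection of the coarse proximity relation. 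A symmetric argument for unbounded subsets of $A'$ completes the verification.

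Finally, to confirm that $f^{-1}(\mathcal{U})$ witnesses $A{\bf b}_X C$, I prove
\[
st(f(A),\mathcal{U})\cap f(C)\subseteq f\big(st(A,f^{-1}(\mathcal{U}))\cap C\big).
\]
Given $y$ in the left side, injectivity produces a unique $x\in C$ with $f(x)=y$; choosing $B\in\mathcal{U}$ with $y\in B$ and $a\in A$ with $f(a)\in B$ gives $x\in f^{-1}(B)$ and $a\in f^{-1}(B)\cap A$, so $x\in st(A,f^{-1}(\mathcal{U}))\cap C$. Since $f$ maps bounded sets to bounded sets, the unboundedness of the left side forces $st(A,f^{-1}(\mathcal{U}))\cap C$ to be unbounded. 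The only subtlety is keeping track of the two-sided boundedness and proximity reflection provided by the embedding hypothesis; everything else is direct from the definitions of star and of coarse subspace structure.
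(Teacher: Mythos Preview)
Your proof is correct and follows essentially the same approach as the paper: you verify that $f^{-1}(\mathcal{U})$ is a uniformly bounded cover by pushing the star condition forward along $f$ and pulling back $\phi_Y$ via the embedding, and you establish the witnessing by showing $st(f(A),\mathcal{U})\cap f(C)\subseteq f\big(st(A,f^{-1}(\mathcal{U}))\cap C\big)$ and using that $f$ preserves (and, via the embedding, reflects) boundedness. The only difference is that you spell out the star containment and the $\phi$-pullback in slightly more detail than the paper does.
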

\begin{proof}
Let $A,C,$ and $\mathcal{U}$ be given. First we will show that $f^{-1}(\mathcal{U})$ is a uniformly bounded cover of $X$. As $\mathcal{U}$ covers $Y$ it is clear that $f^{-1}(\mathcal{U})$ covers $X$. Moreover, as the preimage of bounded sets via a coarse proximity map are bounded we have that all elements of $f^{-1}(\mathcal{U})$ are bounded. Now let $A_{1},A_{2}\subseteq X$ be unbounded sets such that $A_{1}\subseteq st(A_{2},f^{-1}(\mathcal{U}))$ and $A_{2}\subseteq st(A_{1},f^{-1}(\mathcal{U}))$. Then $f(A_{1})\subseteq st(f(A_{2}),\mathcal{U})$ and $f(A_{2})\subseteq st(f(A_{1}),\mathcal{U})$. As $\mathcal{U}$ is a uniformly bounded cover of $Y$ we have that $f(A_{1})\phi_{Y}f(A_{2})$ which implies that $A_{1}\phi_{X}A_{2}$ because $f$ is a coarse proximity embedding. Therefore $f^{-1}(\mathcal{U})$ is a uniformly bounded cover of $X$.
\vspace{\baselineskip}

Now, let $D$ be an unbounded subset of $st(f(A),\mathcal{U})\cap f(C)$. Because $f$ is injective we have that $f^{-1}(D)$ is an unbounded subset of $st(A,f^{-1}(\mathcal{U}))\cap C$, establishing the result.

\end{proof}

\begin{definition}
A subspace $Y$ of a coarse proximity space $(X,\mathcal{B},{\bf b})$ is called {\bf coarsely dense} in $X$ if $Y\phi X$. 
\end{definition}

\begin{proposition}\label{may assume bijective}
Let $(X,d)$ be a metric space equipped with its metric coarse proximity structure, $(Y,\mathcal{B},{\bf b})$ a coarse proximity space, and $f:Y\rightarrow X$ a coarse proximity map such that $f(Y)$ is coarsely dense in $X$. Then there are subspaces $\hat{Y}\subset Y$ and $\hat{X}\subseteq X$ such that $f\mid_{\hat{Y}}$ is a bijective coarse map between $\hat{Y}$ and $\hat{X}$, and $\hat{X}$ is coarsely equivalent to $X$.
\end{proposition}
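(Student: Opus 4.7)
The plan is to construct $\hat{X}$ and $\hat{Y}$ by choosing single preimages, and then to verify that the inclusion of a coarsely dense subspace of a metric space is automatically a coarse equivalence. Explicitly, set $\hat{X}:=f(Y)$ equipped with the subspace coarse proximity structure inherited from $(X,d)$, and use the axiom of choice to pick, for each $x\in\hat{X}$, a single point $y_{x}\in f^{-1}(x)\subseteq Y$. Let $\hat{Y}:=\{y_{x}\mid x\in\hat{X}\}$ with the subspace structure inherited from $Y$. By construction, $f\vert_{\hat{Y}}:\hat{Y}\to\hat{X}$ is a bijection.

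Verifying that $f\vert_{\hat{Y}}$ is a coarse proximity map is routine. The subspace coarse proximities on $\hat{Y}$ and $\hat{X}$ are just restrictions of those on $Y$ and $X$, and a bounded subset of $\hat{Y}$ is of the form $B\cap\hat{Y}$ for some bounded $B\in\mathcal{B}_{Y}$, whose $f$-image is bounded in $X$ and hence in $\hat{X}$. For closeness, if $A,B\subseteq\hat{Y}$ satisfy $A\,\mathbf{b}_{\hat{Y}}B$, then $A\,\mathbf{b}_{Y}B$, hence $f(A)\,\mathbf{b}_{X}f(B)$ by hypothesis on $f$; since both images lie in $\hat{X}$, this gives $f(A)\,\mathbf{b}_{\hat{X}}f(B)$.

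The heart of the argument is to show that the inclusion $\iota:\hat{X}\hookrightarrow X$ is a coarse proximity isomorphism. Since $f(Y)$ is coarsely dense in $X$, we have $\hat{X}\,\phi\, X$, and in the metric coarse proximity structure $\phi$ is the relation of having finite Hausdorff distance (see the remark following Theorem \ref{asymptoticresemblance}). Thus there is $M<\infty$ such that every point of $X$ lies within distance $M$ of $\hat{X}$. Using the axiom of choice once more, define $g:X\to\hat{X}$ by selecting for each $x\in X$ a point $g(x)\in\hat{X}$ with $d(x,g(x))\leq M$. A direct check shows $g$ sends bounded sets to bounded sets and preserves $\mathbf{b}_{X}$ (since any witnessing $\epsilon$-closeness for $\mathbf{b}_{X}$ survives a pointwise perturbation of size $\leq M$). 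The compositions $g\circ\iota$ and $\iota\circ g$ differ pointwise from the respective identities by at most $M$, hence are close. This makes $\hat{X}$ coarsely equivalent to $X$.

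The one step requiring care is the translation of the abstract statement $\hat{X}\,\phi\,X$ into a concrete finite-Hausdorff-distance bound, since the rest of the argument is elementary choice-making and restriction. This translation is itself a standard property of the metric coarse proximity recorded in the paper, so no real obstacle arises, and the proof amounts to stitching these pieces together.
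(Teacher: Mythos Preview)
Your proof is correct and follows essentially the same approach as the paper: define $\hat{X}=f(Y)$, use choice to select one preimage $y_{x}\in f^{-1}(x)$ for each $x\in\hat{X}$, and set $\hat{Y}=\{y_{x}\}$. The paper's proof is extremely terse---it simply asserts that coarse density of $\hat{X}$ in $X$ immediately gives the coarse equivalence and that ``the result then follows''---whereas you have supplied the details (the finite-Hausdorff-distance translation of $\phi$ and the explicit construction of the coarse inverse $g$) that the paper leaves implicit.
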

\begin{proof}
Let $X$, $Y$, and $f$ be given. Define $\hat{X}=f(Y)$. As $f(Y)$ is coarsely dense in $X$ we immediately have that $\hat{X}$ is coarsely equivalent to $X$. For each $z\in\hat{X}$ let $y_{z}\in Y$ be an element such that $f(y_{z})=z$ and define $\hat{Y}$ to be the set of such elements. The result then follows.
\end{proof}

\section{Metrizability of coarse proximity spaces}\label{metrizability}

In this short section we provide a characterization of coarse proximity spaces that are equivalent to some metric space. One may compare the result of this section to the corresponding result about the metrizability of coarse spaces in \cite{Roe}.

\begin{definition}
	A coarse proximity space $(X,\mathcal{B},{\bf b})$ is called {\bf metrizable} if there is a metric $d$ on $X$ such that $\mathcal{B}$ is the bornology of sets that are bounded via the metric $d$ and ${\bf b}$ is the metric coarse proximity induced by $d$.
\end{definition}

\begin{theorem}\label{metrizability criterion}
	Let $(X,\mathcal{B},{\bf b})$ be a coarse proximity space. Then the following are equivalent:
\begin{enumerate}
\item $(X,\mathcal{B},{\bf b})$ is metrizable.
\item $\mathcal{B}$ is countably generated and ${\bf b}$ is witnessed by a countable family of uniformly bounded covers.

\end{enumerate}
\end{theorem}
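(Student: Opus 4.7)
The plan is to prove the two directions separately, with the reverse direction being the substantive content.

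For $(1) \Rightarrow (2)$: Suppose $d$ is a metric realizing the coarse proximity structure. Fix any basepoint $x_{0}$; the balls $\{B(x_{0},n)\}_{n\in\mathbb{N}}$ clearly generate $\mathcal{B}$. For the witnessing family, take $\mathcal{V}_{n}=\{B(x,n)\mid x\in X\}$, which is uniformly bounded for each $n$. If $A{\bf b}C$ in the metric coarse proximity, then by definition there is some $\epsilon<\infty$ such that for every bounded $D$ one can find $a\in A\setminus D$ and $c\in C\setminus D$ with $d(a,c)<\epsilon$. Choosing any integer $n>\epsilon$, each such $c$ lies in $st(A,\mathcal{V}_{n})\cap C$, showing that this intersection is unbounded and hence that $\mathcal{V}_{n}$ witnesses $A{\bf b}C$.

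For $(2)\Rightarrow(1)$: I would invoke the construction in the proof of Lemma \ref{countable generation criterion}. Feeding in the countably-generated bornology and the countable witnessing family yields a linearly ordered sequence of uniformly bounded covers $\{\hat{\mathcal{V}}_{n}\}_{n\in\mathbb{N}}$ refining every uniformly bounded cover of $X$, together with a metric $d$ defined by $d(x,y)=\min\{n\mid \exists B\in\hat{\mathcal{V}}_{n},\{x,y\}\subseteq B\}$. The task is then to verify that $\mathcal{B}$ coincides with the $d$-bounded sets and that ${\bf b}$ coincides with the metric coarse proximity ${\bf b}_{d}$ induced by $d$.

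For the bornology claim: if $A\in\mathcal{B}$, then $A\subseteq B_{m}$ for some generator, and $B_{m}$ appears as an element of some $\mathcal{U}_{k}$ by construction, whence $A$ has $d$-diameter at most $k$. Conversely, if $A$ has $d$-diameter at most $n$ and $x_{0}\in A$, then $A\subseteq st(\{x_{0}\},\hat{\mathcal{V}}_{n})$, which via the uniformly bounded cover axiom gives $A\phi\{x_{0}\}$; since $\phi$ never relates a bounded set to an unbounded one, $A$ must itself lie in $\mathcal{B}$. For the coarse proximity claim, suppose $A{\bf b}C$. Witnessing yields some $\hat{\mathcal{V}}_{n}$ with $st(A,\hat{\mathcal{V}}_{n})\cap C$ unbounded; given bounded $D$, the star $D'=st(D,\hat{\mathcal{V}}_{n})$ is bounded by the same $\phi$-argument, and any $c\in(st(A,\hat{\mathcal{V}}_{n})\cap C)\setminus D'$ comes with an accompanying $a\in A\setminus D$ at $d$-distance $\leq n$, giving $A{\bf b}_{d}C$. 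Conversely, if $A{\bf b}_{d}C$ with constant $\epsilon$ and $n>\epsilon$, the $\epsilon$-close pairs witness that $st(A,\hat{\mathcal{V}}_{n})\cap C$ is unbounded; since $A\phi st(A,\hat{\mathcal{V}}_{n})$, axiom (3) and the $\phi$-invariance of ${\bf b}$ give $A{\bf b}C$.

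The main obstacle is the second bookkeeping step: making sure the witnessing hypothesis, which is only an implication from ${\bf b}$ to nonempty unbounded star-intersection, can be upgraded to an equivalence using $\phi$-invariance, and correctly enlarging bounded sets by stars to extract the right pairs $(a,c)$ avoiding any prescribed bounded set. Everything else, including verification that $d$ is actually a metric, is contained in or directly parallel to Lemma \ref{countable generation criterion} and standard arguments as in \cite{Roe}.
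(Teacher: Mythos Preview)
Your proposal is correct and follows essentially the same approach as the paper: both invoke the metric $d$ built in the proof of Lemma~\ref{countable generation criterion} from the linearly ordered family $\{\hat{\mathcal{V}}_{n}\}$ and then verify that it recovers the original bornology and coarse proximity. The paper's proof is a brief sketch (it cites Lemmas~\ref{countable generation criterion} and~\ref{witnessing weak asr} and says one ``quickly'' sees $d$ induces the original structure), whereas you have spelled out the bornology and ${\bf b}$-versus-${\bf b}_{d}$ verifications explicitly; those details are accurate and match what the paper leaves implicit.
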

\begin{proof}
($(1)\Rightarrow (2)$) This direction is trivial.
\vspace{\baselineskip}

($(2)\Leftarrow(1)$) This will follow from Lemma \ref{countable generation criterion} and Lemma \ref{witnessing weak asr}. If $\mathcal{V}=\{\mathcal{V}_{n}\}_{n\in\mathbb{N}}$ is a countable family of uniformly bounded covers that witnesses the coarse proximity ${\bf b}$ and $\mathcal{C}=\{B_{n}\}_{n\in\mathbb{N}}$ is a countable subset of $\mathcal{B}$ that generates the bornology. By Lemma \ref{witnessing weak asr} we may assume that $\mathcal{V}$ is linearly ordered with respect to refinement. We may also assume that every $B_{n}$ is contained in an element of some $\mathcal{V}_{n}$. Defining a metric $d$ as in the proof of Lemma \ref{countable generation criterion} we quickly have that the metric induces the original coarse proximity structure on $X$.
\end{proof}

\section{Inverse limits of coarse proximity spaces}\label{inverse limits}

Let $\Omega$ be a directed set. That is, $\Omega$ is a nonempty set with an order $\leq$ that is reflexive, antisymmetric, transitive, and is such that every two elements have a common upper bound. Let $\{(X_{\alpha},\mathcal{B}_{\alpha},{\bf b}_{\alpha}),f_{\alpha\beta}\}_{\alpha,\beta\in \Omega}$ be an inverse system of coarse proximity spaces over $\Omega$. That is, for each $\alpha\in \Omega$ there is a coarse proximity space $(X_{\alpha},\mathcal{B}_{\alpha},{\bf b}_{\alpha})$, for each pair of distinct elements $\alpha,\beta\in \Omega$ such that $\alpha<\beta$ there is a coarse proximity map $f_{\beta\alpha}:X_{\beta}\rightarrow X_{\alpha}$, and the collection of maps $f_{\beta\alpha}$ is such that if $\alpha<\beta<\gamma$ then $f_{\beta\alpha}\circ f_{\gamma\beta}=f_{\gamma\alpha}$. 
\vspace{\baselineskip}

Consider the product $\prod_{\alpha\in \Omega}X_{\alpha}$. Denote each element of this set by $(x_{\alpha})_{\alpha\in \Omega}$ where $x_{\alpha}\in X_{\alpha}$. Let $Z\subseteq\prod_{\alpha\in \Omega}X_{\alpha}$ be the set of all $(x_{\alpha})_{\alpha\in \Omega}$ such that $f_{\beta\alpha}(x_{\beta})=x_{\alpha}$ for all $\alpha,\beta\in \Omega$ with $\alpha<\beta$. For each $\alpha\in \Omega$ define $\pi_{\alpha}:Z\rightarrow X_{\alpha}$ by $\pi_{\alpha}((x_{\alpha})_{\alpha\in \Omega})=x_{\alpha}$. 

\begin{lemma}\label{inverse limit bornology}
With $Z$ defined as above, define $\mathcal{B}$ to the collection of all subsets $B\subseteq Z$ with the property that there is an $\alpha\in \Omega$ for which $\pi_{\alpha}(B)\in\mathcal{B}_{\alpha}$. Then $\mathcal{B}$ is a bornology on $\mathcal{B}$. 
\end{lemma}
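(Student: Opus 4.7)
The plan is to verify the three axioms of a bornology for $\mathcal{B}$ in turn, using throughout that each $\mathcal{B}_{\alpha}$ is already a bornology and that each bonding map $f_{\beta\alpha}$, being a coarse proximity map, sends bounded sets to bounded sets. The singleton and subset axioms are essentially immediate: for $z=(z_{\alpha}) \in Z$, any single $\alpha \in \Omega$ witnesses $\{z\}\in\mathcal{B}$ because $\pi_{\alpha}(\{z\})=\{z_{\alpha}\}\in\mathcal{B}_{\alpha}$, and if $A\in\mathcal{B}$ with $\pi_{\alpha}(A)\in\mathcal{B}_{\alpha}$ and $C\subseteq A$, then $\pi_{\alpha}(C)\subseteq\pi_{\alpha}(A)\in\mathcal{B}_{\alpha}$, so the same $\alpha$ witnesses $C$.

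The substantive step is closure under finite unions. The main lever I would use is downward propagation of witnesses along the order on $\Omega$: whenever $\gamma\leq\alpha$, the compatibility relations defining $Z$ give $\pi_{\gamma}(A)=f_{\alpha\gamma}(\pi_{\alpha}(A))$, so a witness at $\alpha$ forces a witness at $\gamma$ since coarse proximity maps preserve boundedness. Given $A,B\in\mathcal{B}$ with respective witnesses $\alpha,\beta$, the strategy is then to locate a common index $\gamma$ lying below both $\alpha$ and $\beta$; the downward propagation gives $\pi_{\gamma}(A),\pi_{\gamma}(B)\in\mathcal{B}_{\gamma}$, whence $\pi_{\gamma}(A\cup B)=\pi_{\gamma}(A)\cup\pi_{\gamma}(B)\in\mathcal{B}_{\gamma}$ by the bornology axioms for $\mathcal{B}_{\gamma}$, and $A\cup B\in\mathcal{B}$ follows.

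The delicate point, and the one I expect to be the main obstacle, is the actual production of such a common lower index $\gamma$. The directedness assumption on $\Omega$ as stated yields common upper bounds, not common lower bounds, and witnesses do not in general propagate upward through the bonding maps (coarse proximity maps need not reflect boundedness). I would handle this either by invoking additional structure implicit in the intended inverse systems — for instance, a least element of $\Omega$, as is natural in the Ponomarev-style inverse systems foreshadowed in Section~\ref{basic constructions} — or, equivalently, by reading the defining condition on $\mathcal{B}$ as ``for every $\alpha$'' rather than ``there exists an $\alpha$'', under which reading the union axiom becomes a direct coordinatewise consequence of each $\mathcal{B}_{\alpha}$ being a bornology. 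Either way, the verification reduces to the propagation-and-align recipe sketched in the previous paragraph.
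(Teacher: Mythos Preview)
Your treatment of the singleton and subset axioms matches the paper exactly. For the union axiom you have correctly noticed that the paper's one-line justification (``functions commute with set theoretic unions'') does not by itself explain how to obtain a common witnessing index. However, your diagnosis of the obstruction is mistaken: coarse proximity maps \emph{do} send unbounded sets to unbounded sets. Indeed, if $A$ is unbounded then $A\cap A=A\notin\mathcal{B}$, so axiom~(3) of Definition~\ref{coarseproximitydefinition} gives $A\,{\bf b}\,A$; hence $f(A)\,{\bf b}\,f(A)$, and axiom~(2) forces $f(A)$ unbounded. Contrapositively, if the image of a set under a coarse proximity map is bounded, the set itself was bounded.

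This is precisely what makes witnesses propagate \emph{upward}. If $\pi_{\alpha}(A)\in\mathcal{B}_{\alpha}$ and $\gamma\geq\alpha$, then $\pi_{\alpha}(A)=f_{\gamma\alpha}(\pi_{\gamma}(A))$ is bounded, so $\pi_{\gamma}(A)$ must already lie in $\mathcal{B}_{\gamma}$. Given $A,B\in\mathcal{B}$ with witnesses $\alpha,\beta$, directedness supplies $\gamma\geq\alpha,\beta$; by upward propagation both $\pi_{\gamma}(A)$ and $\pi_{\gamma}(B)$ are bounded, and then $\pi_{\gamma}(A\cup B)=\pi_{\gamma}(A)\cup\pi_{\gamma}(B)\in\mathcal{B}_{\gamma}$. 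No extra hypothesis (a least element, or rereading ``there exists'' as ``for all'') is required; in fact this argument shows the two readings coincide. Your downward-propagation idea is correct but points the wrong way --- the directedness of $\Omega$ already gives what is needed once one uses that the bonding maps preserve unboundedness.
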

\begin{proof}
Given any point $(x_{\alpha})_{\alpha\in \Omega}\in Z$ and any $\alpha\in \Omega$ we have that $\pi_{\alpha}((x_{\alpha})_{\alpha\in \Omega})=x_{\alpha}$ and the singleton $\{x_{\alpha}\}$ is an element of $\mathcal{B}_{\alpha}$. Therefore, all singleton sets in $Z$ are in $\mathcal{B}$, making $\mathcal{B}$ a cover of $Z$. Similarly, all finite subsets of $Z$ are in $\mathcal{B}$. As images of functions preserve the subset relation we have that $\mathcal{B}$ is closed under subsets. Similarly as functions commute with set theoretic unions we have that $\mathcal{B}$ is closed under finite unions. We then have that $\mathcal{B}$ is a bornology on $Z$. 
\end{proof}

\begin{proposition}
Define $Z$ as above and equip it with the bornology described in Lemma \ref{inverse limit bornology}. Define a binary relation ${\bf b}$ on subsets of $(Z,\mathcal{B})$ by setting $A{\bf b}C$ if and only if $\pi_{\alpha}(A){\bf b}_{\alpha}\pi_{\alpha}(C)$ for all $\alpha\in \Omega$.  If $Z\neq\emptyset$ then the triple $(Z,\mathcal{B},{\bf b})$ is a coarse proximity space.
\end{proposition}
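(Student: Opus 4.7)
The plan is to verify each of the five axioms from Definition \ref{coarseproximitydefinition} for the relation ${\bf b}$ on $(Z,\mathcal{B})$. Axioms (\ref{axiom1}) (symmetry) and (\ref{axiom2}) are immediate: symmetry in each coordinate $\alpha$ gives symmetry of ${\bf b}$, and if $A \in \mathcal{B}$ then some $\pi_{\alpha}(A) \in \mathcal{B}_{\alpha}$, which by axiom (\ref{axiom2}) for ${\bf b}_{\alpha}$ precludes $\pi_{\alpha}(A){\bf b}_{\alpha}\pi_{\alpha}(C)$, contradicting $A{\bf b}C$. Axiom (\ref{axiom3}) uses the observation that $\pi_{\alpha}(A\cap C)\subseteq\pi_{\alpha}(A)\cap\pi_{\alpha}(C)$: if $A\cap C\notin\mathcal{B}$, then $\pi_{\alpha}(A\cap C)\notin\mathcal{B}_{\alpha}$ for every $\alpha$, so $\pi_{\alpha}(A)\cap\pi_{\alpha}(C)\notin\mathcal{B}_{\alpha}$, whence $\pi_{\alpha}(A){\bf b}_{\alpha}\pi_{\alpha}(C)$ for every $\alpha$.

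For the union axiom (\ref{axiom4}), the forward direction is routine since $\pi_{\alpha}(A)\subseteq\pi_{\alpha}(A\cup B)$ and ${\bf b}_{\alpha}$ satisfies its own union axiom. For the converse, I will argue by contrapositive: suppose $A\bar{\bf b}C$ and $B\bar{\bf b}C$, so there exist $\alpha,\beta\in\Omega$ with $\pi_{\alpha}(A)\bar{\bf b}_{\alpha}\pi_{\alpha}(C)$ and $\pi_{\beta}(B)\bar{\bf b}_{\beta}\pi_{\beta}(C)$. Choose an upper bound $\gamma\geq\alpha,\beta$ in $\Omega$. Since $f_{\gamma\alpha}$ is a coarse proximity map satisfying $f_{\gamma\alpha}\circ\pi_{\gamma}=\pi_{\alpha}$, the contrapositive of the preservation of ${\bf b}$ forces $\pi_{\gamma}(A)\bar{\bf b}_{\gamma}\pi_{\gamma}(C)$, and likewise $\pi_{\gamma}(B)\bar{\bf b}_{\gamma}\pi_{\gamma}(C)$. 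Applying axiom (\ref{axiom4}) inside $X_{\gamma}$ yields $\pi_{\gamma}(A\cup B)\bar{\bf b}_{\gamma}\pi_{\gamma}(C)$, hence $(A\cup B)\bar{\bf b}C$.

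For the strong axiom (\ref{axiom5}), suppose $A\bar{\bf b}C$, so there is some index $\alpha$ with $\pi_{\alpha}(A)\bar{\bf b}_{\alpha}\pi_{\alpha}(C)$. Apply axiom (\ref{axiom5}) in $X_{\alpha}$ to obtain $E_{\alpha}\subseteq X_{\alpha}$ with $\pi_{\alpha}(A)\bar{\bf b}_{\alpha}E_{\alpha}$ and $(X_{\alpha}\setminus E_{\alpha})\bar{\bf b}_{\alpha}\pi_{\alpha}(C)$. Set $E:=\pi_{\alpha}^{-1}(E_{\alpha})\subseteq Z$. Then $\pi_{\alpha}(E)\subseteq E_{\alpha}$, so by monotonicity of $\bar{\bf b}_{\alpha}$ in the second argument (a consequence of axiom (\ref{axiom4})) we get $\pi_{\alpha}(A)\bar{\bf b}_{\alpha}\pi_{\alpha}(E)$, which gives $A\bar{\bf b}E$. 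Similarly $\pi_{\alpha}(Z\setminus E)\subseteq X_{\alpha}\setminus E_{\alpha}$ since any $z\notin\pi_{\alpha}^{-1}(E_{\alpha})$ satisfies $\pi_{\alpha}(z)\notin E_{\alpha}$, giving $(Z\setminus E)\bar{\bf b}C$.

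The main obstacle is the reverse direction of the union axiom: the coordinates witnessing $A\bar{\bf b}C$ and $B\bar{\bf b}C$ need not a priori agree, so the argument hinges essentially on two features of the setup — the directedness of $\Omega$, which supplies a common upper index $\gamma$, and the fact that the bonding maps $f_{\gamma\alpha}$ are coarse proximity maps, which propagates non-closeness from lower indices upward. Once these two structural ingredients are combined, every other axiom reduces either to a coordinatewise argument or to an application of the corresponding axiom inside a single $X_{\alpha}$.
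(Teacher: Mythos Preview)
Your proof is correct and follows essentially the same route as the paper's: verify each axiom coordinatewise, pulling back a separating set along a single $\pi_{\alpha}$ for the strong axiom, and reducing the union axiom to a single index. In fact your treatment of the union axiom is more careful than the paper's, which simply asserts the existence of a common $\gamma$ without spelling out that this relies on directedness of $\Omega$ together with the fact that the bonding maps $f_{\gamma\alpha}$ are coarse proximity maps (so non-closeness propagates upward); you make this explicit.
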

\begin{proof}
Symmetry and the union axiom are clear. Moreover, if $A{\bf b}C$ then it is clear that neither $A$ nor $C$ are in $\mathcal{B}$ as otherwise there is a $\gamma\in \Omega$ such that $\pi_{\gamma}(A)$ of $\pi_{\gamma}(C)$ is in $\mathcal{B}_{\gamma}$ which would imply that the image of $A$ or $C$ under $\pi_{\gamma}$ is coarsely close to a bounded set, which is impossible. As images of unbounded subsets of $Z$ under every $\pi_{\alpha}$ are unbounded we have that if $A,C\subseteq Z$ intersect along an unbounded set, then $A{\bf b}C$. Next, let $A,C\subseteq Z$ be such that $A\bar{\bf b}C$. Then there is a $\gamma\in \Omega$ such that $\pi_{\gamma}(A)\bar{\bf b}_{\gamma}\pi_{\gamma}(C)$. As ${\bf b}_{\gamma}$ is a coarse proximity space there is a $E\subseteq X_{\gamma}$ such that $\pi_{\gamma}(A)\bar{\bf b}_{\gamma}E$ and $(X_{\gamma}\setminus E)\bar{\bf b}_{\gamma}\pi_{\gamma}(C)$. Define $\hat{E}$ to be $\pi_{\gamma}^{-1}(E)$. We then must have that $A\bar{\bf b}\hat{E}$ and $Z\setminus\hat{E}=\pi_{\gamma}^{-1}(X_{\gamma}\setminus E)\bar{\bf b}C$. Finally, let $A,C,D\subseteq Z$ be unbounded sets. It is clear that if $A{\bf b}C$ or $A{\bf b}D$ then $A{\bf b}(C\cup D)$. Conversely, assume that $A{\bf b}(C\cup D)$. If $A\bar{\bf b}C$ and $A\bar{\bf b}D$ then there is a $\gamma\in D$ such that $\pi_{\gamma}(A)\bar{\bf b}_{\gamma}\pi_{\gamma}(C)$ and $\pi_{\gamma}(A)\bar{\bf b}_{\gamma}\pi_{\gamma}(D)$. As $X_{\gamma}$ is a coarse proximity space, this implies that $\pi_{\gamma}(A)\bar{\bf b}_{\gamma}(\pi_{\gamma}(C)\cup\pi_{\gamma}(D))$. Consequently $\pi_{\gamma}(A)\bar{\bf b}_{\gamma}\pi_{\gamma}(C\cup D)$, which is a contradiction. We then have that ${\bf b}$ satisfies the union axiom and $(Z,\mathcal{B},{\bf b})$ is a coarse proximity space. 
\end{proof}

As is the case with general inverse systems of sets it is very possible for the inverse limit of coarse proximity spaces to be empty, even when dealing with an inverse system indexed by $\mathbb{N}$ and whose connection morphisms are coarse proximity isomorphisms. For example, let $(X,d)$ be any unbounded metric space with fixed point $x_{0}$. For each $n\in\mathbb{N}$ let $X_{n}=X_{n}\setminus B(x_{0},n)$ equipped with the metric coarse proximity structure and let connection morphisms simply be inclusion. Then each connection morphism is a coarse proximity isomorphism, but the set theoretic inverse limit is empty. In the following theorems we will restrict ourselves to the case where the set theoretic inverse limit exists and has surjective projections. This may seem excessively restrictive, but will end up being critical in later sections.

\begin{theorem}\label{inverse limit in coarse proximity category}
Let $\mathfrak{C}$ be the category of coarse proximity spaces and coarse proximity maps. Let $\{(X_{\alpha},\mathcal{B}_{\alpha},{\bf b}_{\alpha}),f_{\beta\alpha}\}_{\alpha,\beta\in \Omega}$ be an inverse system in $\mathcal{C}$ whose set theoretic inverse limit $Z$ exists and is such that the projections $\pi_{\alpha}:Z\rightarrow X_{\alpha}$ are surjective. Then the space $(Z,\mathcal{B},{\bf b})$ as described above is the inverse limit of the inverse system $\{(X_{\alpha}),\mathcal{B}_{\alpha},{\bf b}_{\alpha}),f_{\alpha\beta}\}_{\alpha,\beta\in \Omega}$.
\end{theorem}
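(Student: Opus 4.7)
The plan is to verify the two parts of the inverse limit's defining property: that the projections $\pi_\alpha$ form a compatible cone in the category $\mathfrak{C}$, and that any compatible cone from an arbitrary coarse proximity space factors uniquely through $Z$.

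For the first part, I would check that each $\pi_\alpha : Z \to X_\alpha$ is a coarse proximity map. Preservation of the coarse proximity relation is automatic from the definition of ${\bf b}$ on $Z$: if $A{\bf b}C$ then $\pi_\alpha(A){\bf b}_\alpha\pi_\alpha(C)$ for every $\alpha$, in particular for the one at hand. Preservation of bornology follows from the definition of $\mathcal{B}$ together with the fact that each bonding map $f_{\beta\alpha}$ preserves bounded sets: if $\pi_\gamma(B) \in \mathcal{B}_\gamma$ witnesses $B \in \mathcal{B}$, then for any $\alpha \leq \gamma$ we have $\pi_\alpha(B) = f_{\gamma\alpha}(\pi_\gamma(B)) \in \mathcal{B}_\alpha$, and the general case uses the directedness of $\Omega$ to reduce to this downward case. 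The cone coherence $\pi_\alpha = f_{\beta\alpha}\circ\pi_\beta$ for $\alpha \leq \beta$ is built into the definition of $Z$.

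For the universal property, let $(Y, \mathcal{B}_Y, {\bf b}_Y)$ be any coarse proximity space equipped with a family of coarse proximity maps $g_\alpha : Y \to X_\alpha$ satisfying $f_{\beta\alpha}\circ g_\beta = g_\alpha$ for $\alpha \leq \beta$. I would define the factoring map $g : Y \to Z$ pointwise by $g(y) = (g_\alpha(y))_{\alpha\in\Omega}$, with the compatibility of the $g_\alpha$ ensuring that $g(y)$ actually lies in $Z$. The identity $\pi_\alpha\circ g = g_\alpha$ is then automatic, and any other map $h : Y \to Z$ satisfying $\pi_\alpha\circ h = g_\alpha$ for all $\alpha$ must agree with $g$ coordinate-wise, giving uniqueness. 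To verify that $g$ is a coarse proximity map: for $B \in \mathcal{B}_Y$, the set $g(B)$ satisfies $\pi_\alpha(g(B)) = g_\alpha(B) \in \mathcal{B}_\alpha$ for each $\alpha$, witnessing $g(B) \in \mathcal{B}$; and for $A{\bf b}_Y C$, we have $\pi_\alpha(g(A)) = g_\alpha(A){\bf b}_\alpha g_\alpha(C) = \pi_\alpha(g(C))$ for every $\alpha$, giving $g(A){\bf b}g(C)$ by the definition of ${\bf b}$ on $Z$.

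The main obstacle I anticipate is confirming bornology preservation for the projections under the ``there exists $\alpha$'' formulation of $\mathcal{B}$: propagating a single boundedness witness to an arbitrary target index is the delicate point, though it should succeed by combining downward propagation via the bonding maps with the directedness of $\Omega$. The surjectivity assumption on the $\pi_\alpha$, while not explicitly invoked in the universal property argument, is used implicitly in the preceding propositions that set up $(Z, \mathcal{B}, {\bf b})$ as a coarse proximity space in the first place, and is what prevents the limit structure from degenerating onto a proper subset of each $X_\alpha$.
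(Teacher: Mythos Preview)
Your proposal is correct and follows essentially the same route as the paper, which defines the factoring map $g$ coordinatewise and verifies the two coarse-proximity-map conditions directly; the paper is in fact less explicit than you are about checking that the projections $\pi_\alpha$ themselves are morphisms. The obstacle you flag about propagating a single boundedness witness is resolved by the fact (used implicitly throughout the paper) that coarse proximity maps preserve \emph{un}boundedness---if $A\notin\mathcal{B}$ then $A{\bf b}A$, hence $f(A){\bf b}f(A)$, hence $f(A)\notin\mathcal{B}$---so given $\pi_\gamma(B)\in\mathcal{B}_\gamma$ and any $\alpha$, pick $\delta\geq\gamma,\alpha$, conclude $\pi_\delta(B)\in\mathcal{B}_\delta$ since $f_{\delta\gamma}(\pi_\delta(B))=\pi_\gamma(B)$ is bounded, and push down to $\pi_\alpha(B)=f_{\delta\alpha}(\pi_\delta(B))\in\mathcal{B}_\alpha$.
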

\begin{proof}
We begin by noting that by the construction of $Z$ for all $\alpha,\beta\in \Omega$ with $\alpha<\beta$ we have that $f_{\beta\alpha}\circ\pi_{\beta}=\pi_{\alpha}$. Let $(Y,\mathcal{B}_{Y},{\bf b}_{Y})$ be a coarse proximity space, and for each $\alpha\in \Omega$ let $g_{\alpha}:Y\rightarrow X_{\alpha}$ be a coarse proximity map satisfying such that the collection of $g_{\alpha}$ satisfies the same commutativity with the $f_{\alpha\beta}$'s as the $\pi_{\alpha}$'s. Then for a given $y\in Y$ define $g:Y\rightarrow Z$ by $g(y)=(g_{\alpha}(y))_{\alpha\in \Omega}$. It is clear that this map is well defined by the commutativity condition on the $g_{\alpha}$'s. Likewise, it is trivial that $\pi_{\alpha}\circ g=g_{\alpha}$ for every $\alpha$. We must show that $g$ is a coarse proximity map, and is the unique coarse proximity such that $\pi_{\alpha}\circ g_{\alpha}=g$ for each $\alpha$. Uniqueness is clear. We then begin by letting $B\subseteq Y$ be bounded. Then for any given $\alpha$ we have that $g_{\alpha}(B)$ is bounded. As $\pi_{\alpha}\circ g(B)=g_{\alpha}(B)$ we have that the image of $g(B)$ under $\pi_{\alpha}$ is bounded in $X_{\alpha}$. Therefore $g(B)$ is bounded in $Z$. Next let $A,C\subseteq Y$ be unbounded such that $A{\bf b}_{Y}C$. Then $g_{\alpha}(A){\bf b}_{\alpha}g_{\alpha}(C)$ for every $\alpha$. Then $\pi_{\alpha}(g(A)){\bf b}_{\alpha}\pi_{\alpha}(g(C))$ for every $\alpha$, which gives us that $g(A){\bf b}g(C)$. Thus, $g$ is a coarse proximity map. We have then shown that $Z$ is indeed the inverse limit of the system.
\end{proof}

\begin{theorem}\label{inverse limit commutes with boundary functor}
Let $\{(X_{\alpha},\mathcal{B}_{\alpha},{\bf b}_{\alpha}),f_{\beta\alpha}\}_{\alpha,\beta\in \Omega}$ be an inverse system in $\mathcal{C}$ whose set theoretic inverse limit $Z$ exists and is such that the projections $\pi_{\alpha}:Z\rightarrow X_{\alpha}$ are surjective. Equipping $Z$ with the inverse limit coarse proximity structure $(Z,\mathcal{B},{\bf b})$ we have that $\mathcal{U}Z$ is the inverse limit of the inverse system $\{\mathcal{U}X_{\alpha},\mathcal{U}f_{\alpha\beta}\}_{\alpha,\beta\in \Omega}$. 
\end{theorem}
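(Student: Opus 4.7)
The plan is to construct the natural continuous map
\[
\Phi:\mathcal{U}Z\longrightarrow\varprojlim\mathcal{U}X_{\alpha}
\]
furnished by the universal property of the inverse limit in $\mathbf{CompHs}$, and then show it is a homeomorphism. The family $\{\mathcal{U}\pi_{\alpha}\}_{\alpha\in\Omega}$ is compatible with the $\mathcal{U}f_{\beta\alpha}$ by functoriality of $\mathcal{U}$, so $\Phi$ is well defined and continuous. Since both source and target are compact Hausdorff, it suffices to verify that $\Phi$ is a continuous bijection, whence the homeomorphism conclusion is automatic.

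Injectivity will follow from the interaction between Hausdorffness of the Smirnov compactification $\mathfrak{Z}$ and the definition of the inverse-limit coarse proximity on $Z$. Given $\sigma\neq\tau$ in $\mathcal{U}Z$, there exist $A\in\sigma$ and $B\in\tau$ with $A\bar{\delta}_{dis}B$; since no cluster in $\mathcal{U}Z$ contains a bounded set, both $A$ and $B$ are unbounded in $Z$ and $A\bar{{\bf b}}B$. By the very definition of ${\bf b}$ on $Z$, there is some $\alpha$ with $\pi_{\alpha}(A)\bar{{\bf b}}_{\alpha}\pi_{\alpha}(B)$. The bornology of $Z$ guarantees that both $\pi_{\alpha}(A)$ and $\pi_{\alpha}(B)$ remain unbounded in $X_{\alpha}$, so by Proposition \ref{coarse proximity characterization} the traces $tr(\pi_{\alpha}(A))$ and $tr(\pi_{\alpha}(B))$ in $\mathcal{U}X_{\alpha}$ are disjoint. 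Since $A\in\sigma$ forces $\pi_{\alpha}(A)$ into the associated cluster $\mathcal{U}\pi_{\alpha}(\sigma)$ (and symmetrically for $\tau$), the images $\mathcal{U}\pi_{\alpha}(\sigma)$ and $\mathcal{U}\pi_{\alpha}(\tau)$ lie in disjoint traces and are therefore distinct, giving $\Phi(\sigma)\neq\Phi(\tau)$.

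Surjectivity will be proved via a finite-intersection argument in the compact space $\mathcal{U}Z$. For $(\sigma_{\alpha})\in\varprojlim\mathcal{U}X_{\alpha}$, set $K_{\alpha}:=\mathcal{U}\pi_{\alpha}^{-1}(\sigma_{\alpha})$; these are closed subsets of $\mathcal{U}Z$, and for any $\alpha_{1},\ldots,\alpha_{n}$ with common upper bound $\gamma$, functoriality of $\mathcal{U}$ combined with the compatibility $\mathcal{U}f_{\gamma\alpha_{i}}(\sigma_{\gamma})=\sigma_{\alpha_{i}}$ yields $K_{\gamma}\subseteq\bigcap_{i}K_{\alpha_{i}}$. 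Compactness then produces some $\sigma\in\bigcap_{\alpha}K_{\alpha}$ with $\Phi(\sigma)=(\sigma_{\alpha})$, provided each $K_{\alpha}$ is itself nonempty.

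The main obstacle is therefore showing that each $\mathcal{U}\pi_{\alpha}$ is surjective, which is where the assumption of surjective projections is genuinely used. My plan here is to work with the Smirnov extension $\overline{\pi}_{\alpha}:\mathfrak{Z}\to\mathfrak{X}_{\alpha}$: by continuity and surjectivity of $\pi_{\alpha}$ onto the dense subset $X_{\alpha}\subseteq\mathfrak{X}_{\alpha}$, the closed image $\overline{\pi}_{\alpha}(\mathfrak{Z})$ equals $\mathfrak{X}_{\alpha}$. To promote this to $\overline{\pi}_{\alpha}(\mathcal{U}Z)=\mathcal{U}X_{\alpha}$ I would verify $\overline{\pi}_{\alpha}^{-1}(\mathcal{U}X_{\alpha})\subseteq\mathcal{U}Z$ by a cluster-theoretic argument exploiting that, under surjectivity of $\pi_{\alpha}$, any bounded $D\subseteq X_{\alpha}$ has bounded preimage $\pi_{\alpha}^{-1}(D)$ in $Z$ (since $\pi_{\alpha}(\pi_{\alpha}^{-1}(D))=D$); together with cluster axiom (3) this converts bounded elements of the associated cluster $\overline{\pi}_{\alpha}(\sigma)$ back into bounded elements of $\sigma$ itself. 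The delicate bookkeeping of how bounded sets lift and descend under $\overline{\pi}_{\alpha}$ is the most technical ingredient of the argument.
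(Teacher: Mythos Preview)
Your approach is essentially the same as the paper's: build the comparison map $\Phi$ from the universal property, prove injectivity by finding an index $\alpha$ where the projections separate, prove surjectivity via compactness together with surjectivity of each $\mathcal{U}\pi_{\alpha}$, and conclude by the compact--Hausdorff bijection argument. The paper's surjectivity step is phrased slightly differently (a closed subset of an inverse limit of compacta that surjects onto every factor must be everything), but this is equivalent to your finite-intersection-property argument. In fact you are more thorough than the paper on one point: the paper simply asserts ``each $\mathcal{U}\pi_{\alpha}$ is surjective'' without justification, while you correctly flag this as the substantive step.

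One minor slip in your sketch of that step: the ingredient you describe (bounded $D\subseteq X_{\alpha}$ has bounded preimage, then cluster axiom (3)) actually yields the \emph{opposite} implication, namely that bounded elements of $\overline{\pi}_{\alpha}(\sigma)$ pull back to bounded elements of $\sigma$; this gives $\overline{\pi}_{\alpha}(\mathcal{U}Z)\subseteq\mathcal{U}X_{\alpha}$, which is already the well-definedness of the boundary functor. What you need for $\overline{\pi}_{\alpha}^{-1}(\mathcal{U}X_{\alpha})\subseteq\mathcal{U}Z$ is the forward direction: if $B\in\sigma$ is bounded then $\pi_{\alpha}(B)$ is bounded and lies in $\overline{\pi}_{\alpha}(\sigma)$, since $B\cap C\neq\emptyset$ for all $C\in\sigma$ forces $\pi_{\alpha}(B)\cap\pi_{\alpha}(C)\neq\emptyset$. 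This is actually simpler than what you wrote and does not use surjectivity of $\pi_{\alpha}$ at all (surjectivity is only needed to get $\overline{\pi}_{\alpha}(\mathfrak{Z})=\mathfrak{X}_{\alpha}$). With that correction your argument goes through.
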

\begin{proof}
That $\{\mathcal{U}X_{\alpha},\mathcal{U}f_{\alpha\beta}\}_{\alpha,\beta\in \Omega}$ is an inverse system is given by the functorality of the boundary. Similarly we have the requisite commutativity of triangles involving $\mathcal{U}\pi_{\alpha}$'s and the $\mathcal{U}f_{\alpha\beta}$'s. Then, letting $Y$ be the inverse limit of the inverse system $\{\mathcal{U}X_{\alpha},\mathcal{U}f_{\alpha\beta}\}_{\alpha,\beta\in \Omega}$ with projections $g_{\alpha}$, we have that there is a unique continuous function $f:\mathcal{U}Z\rightarrow Y$ such that all $g_{\alpha}\circ f=\mathcal{U}\pi_{\alpha}$ for each $\alpha$. We will show that $f$ is a homeomorphism. Let $\sigma_{1},\sigma_{2}\in\mathcal{U}Z$ be given and assume that $f(\sigma_{1})=f(\sigma_{2})$. Then for each $\alpha$ we have that $g_{\alpha}(f(\sigma_{1}))=g_{\alpha}(f(\sigma_{2}))$, which by the commutativity gives us that $\mathcal{U}\pi_{\alpha}(\sigma_{1})=\mathcal{U}\pi_{\alpha}(\sigma_{2})$ for each $\alpha$. If $\sigma_{1}\neq\sigma_{2}$ then there are $A\in\sigma_{1}$ and $C\in\sigma_{2}$ such that $A\bar{\bf b}C$ in $Z$. This implies that there is some $\gamma$ for which $\pi_{\gamma}(A)\bar{\bf b}_{\gamma}\pi_{\gamma}(C)$. We then recall that $\mathcal{U}\pi_{\gamma}(\sigma_{1})=\{D\subseteq X_{\gamma}\mid D{\bf b}_{\gamma}\pi_{\gamma}(E),\,\forall E\in\sigma_{1}\}$. Clearly $\pi_{\gamma}(C)$ will not be in this set, which implies that $\mathcal{U}\pi_{\gamma}(\sigma_{1})$ would not be equal to $\mathcal{U}\pi_{\gamma}(\sigma_{2})$. We then must have that $\sigma_{1}=\sigma_{2}$. We then have that $f$ is injective.
\vspace{\baselineskip}

As $\mathcal{U}Z$ and $Y$ are compact and Hausdorff we have that $f(\mathcal{U}Z)$ is closed in $Y$. Because $g_{\alpha}\circ f=\mathcal{U}\pi_{\alpha}$ for each $\alpha$ and each $\mathcal{U}\pi_{\alpha}$ is surjective we have that $g_{\alpha}(f(\mathcal{U}Z))=\mathcal{U}X_{\alpha}$ for each $\alpha$. Then, by general properties of inverse limits we have that $f(\mathcal{U}Z)=\varprojlim\mathcal{U}X_{\alpha}=Y$. We have then shown that $f$ is a continuous bijection between compact Hausdorff spaces and is therefore a homeomorphism.

\end{proof}

\section{Coarsely $n-to-1$ maps and canonical covers of coarse proximity spaces}\label{coarse n to 1}

This section is dedicated to translating results of Austin and Virk from \cite{austinvirk} to the language of coarse proximity spaces. The results pertain to coarse analogs of $n$-to-$1$ maps between topological spaces. In particular, using the results of Austin and Virk, we can provide conditions under which coarse proximity maps induce $n$-to-$1$ maps between boundaries of the coarse proximity spaces involved. The following three definitions are as they appear in \cite{austinvirk}.

\begin{definition}
Suppose $\mathcal{A}=\{A_{1},\ldots,A_{k}\}$ is a collection of disjoint subsets of a metric space $X$. The collection $\mathcal{A}$ is called {\bf gradually disjoint} if for each $R>0$ there is a bounded set $B\subseteq X$ such that the sets $B(A_{i},R)\setminus B$ are disjoint. 
\end{definition}

\begin{definition}
Suppose $\mathcal{A}=\{A_{1},\ldots,A_{k}\}$ is a collection of subsets of a metric space $X$. We say that the collection $\mathcal{A}$ {\bf diverges} or is {\bf divergent} if there is a bounded set $B\subseteq X$ such that 

\[\bigcap_{i=1}^{k}B(A_{i},R)\setminus B=\emptyset\]
\end{definition}

In a discrete proper metric space $X$, a collection $\mathcal{A}=\{A_{1},\ldots,A_{k}\}$ is gradually disjoint if and only if the traces of the $A_{i}$ on the Higson corona are pairwise disjoint. The collection is divergent if and only if the intersection of the traces of the $A_{i}$ is empty. 
\vspace{\baselineskip}

\begin{definition}
A coarse map $f:X\rightarrow Y$ between discrete proper metric spaces is called {\bf coarsely $n-to-1$} if for every $R>0$ there is an $S>0$ such that if $B\subseteq Y$ satisfies $diam(B)\leq R$ then there are at most $n$ sets $K_{1},\ldots,K_{n}\subseteq X$ such that $diam(K_{i})\leq S$ for each $i$ and $f^{-1}(B)\subseteq\bigcup_{i=1}^{n}K_{i}$. 
\end{definition}

The following theorems appears in \cite{austinvirk}. 

\begin{theorem}\label{metric n to one}
Let $X$ and $Y$ be discrete proper metric spaces. Suppose $f:X\rightarrow Y$ is a coarse map. Then the following are equivalent:
\begin{enumerate}
\item $f$ is coarsely $n-to-1$.
\item $\nu f$ is $n-to-1$.
\item If $\mathcal{A}=\{A_{1},\ldots,A_{k}\}$ is a gradually disjoint collection of subsets of $X$ for which $f(\mathcal{A})=\{f(A_{1}),\ldots,f(A_{k})\}$ is not divergent, then $k\leq n$.
\end{enumerate}
\end{theorem}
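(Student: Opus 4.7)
The plan is to prove the equivalences as $(2) \Leftrightarrow (3)$ and $(1) \Leftrightarrow (3)$, using as the primary tool the correspondence between gradual disjointness and divergence of subsets in a discrete proper metric space and the corresponding intersection properties of their traces in the Higson corona $\nu X$. The first step is to record this dictionary: for such $X$, a collection $\{A_1, \ldots, A_k\}$ is gradually disjoint if and only if the traces $tr(A_i)$ are pairwise disjoint in $\nu X$, and is divergent if and only if $\bigcap_i tr(A_i) = \emptyset$. Both follow from Proposition \ref{coarse proximity characterization} together with the fact that, in the discrete proper setting, traces are generated by sequences escaping to infinity. I would also need a lifting statement: $\nu f(tr(A)) \subseteq tr(f(A))$ always, and conversely if $\tau \in tr(f(A))$ then some $\sigma \in tr(A)$ satisfies $\nu f(\sigma) = \tau$, obtained by choosing preimages of a divergent sequence in $f(A)$ converging to $\tau$ and extracting a cluster limit in the compact space $\nu X$.

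Granted these preliminaries, $(2) \Leftrightarrow (3)$ is essentially a restatement. For $(2) \Rightarrow (3)$, a gradually disjoint $\{A_1, \ldots, A_k\}$ with non-divergent image yields some $\tau \in \bigcap_i tr(f(A_i))$; lifting to distinct $\sigma_i \in tr(A_i) \cap (\nu f)^{-1}(\tau)$ forces $k \leq n$. For $(3) \Rightarrow (2)$, given $n+1$ distinct points in a fiber of $\nu f$, normality of $\nu X$ separates them into disjoint open neighborhoods, which pull back via Proposition \ref{preliminary proposition} to unbounded $A_1, \ldots, A_{n+1}$ whose traces are pairwise disjoint and whose images all contain the chosen fiber point in their traces, contradicting $(3)$.

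The more substantive part is $(1) \Leftrightarrow (3)$. For $(1) \Rightarrow (3)$, given gradually disjoint $\{A_1, \ldots, A_k\}$ with $\bigcap_i B(f(A_i), R) \setminus B$ nonempty for every bounded $B$, I would extract a sequence $y_m \to \infty$ in this intersection, choose $x_m^i \in A_i$ with $d(f(x_m^i), y_m) \leq R$, and observe that the $k$-element set $\{x_m^1, \ldots, x_m^k\}$ lies in $f^{-1}(B(y_m, 2R))$ while its pairwise distances grow with $m$ by gradual disjointness; coarse $n$-to-$1$-ness and pigeonhole on a cover by $n$ bounded-diameter sets then yield $k \leq n$. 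The main obstacle, and the most delicate step, is $(3) \Rightarrow (1)$, argued by contrapositive. Failure of coarsely $n$-to-$1$ at some scale $R$ provides, for each $S$, a set $B_S \subseteq Y$ of diameter at most $R$ whose preimage is not covered by $n$ sets of diameter $\leq S$. I would run a diagonal induction, simultaneously choosing centers $y_m \to \infty$ in $Y$, a fast-growing sequence $S_m \to \infty$, and $(n+1)$-tuples $(x_m^1, \ldots, x_m^{n+1}) \subseteq f^{-1}(B_{S_m})$ pairwise $S_m$-separated, while pushing everything past an exhausting sequence of bounded sets to secure inter-level separation. Setting $A_i = \{x_m^i : m \in \mathbb{N}\}$ would then yield a gradually disjoint $(n+1)$-collection whose images all track $\{y_m\}$ and hence fail to diverge, contradicting $(3)$. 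The delicate point is choreographing the growth of $S_m$ and of the exhausting bounded sets so that both intra-level separation (within each tuple) and inter-level separation (between tuples of different $m$) tend to infinity; this is made possible by properness and discreteness, which reduce bounded sets to finite sets and allow the induction to proceed by excluding a finite bad set at each stage.
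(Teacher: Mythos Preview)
The paper does not prove this theorem; it is quoted verbatim from \cite{austinvirk} with the line ``The following theorems appears in \cite{austinvirk}'' and no argument is given. So there is no in-paper proof to compare against. Your proposal is a correct and complete sketch of a proof, and in fact the $(2)\Leftrightarrow(3)$ portion is exactly the argument the paper later runs in the general coarse proximity setting as Theorem~\ref{finite to one criterion}.

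One small point worth tightening in your $(3)\Rightarrow(1)$ step: the passage from ``$f^{-1}(B_S)$ cannot be covered by $n$ sets of diameter $\le S$'' to ``$f^{-1}(B_S)$ contains $n+1$ points pairwise $S$-separated'' is not literally true as stated; what you get from a maximal packing argument is $n+1$ points pairwise $(S/2)$-separated (a maximal $(S/2)$-net covers by balls of diameter $\le S$, so it must have more than $n$ points). This costs only a harmless factor of two. You should also make explicit that the properness of $f$ (preimages of bounded sets are bounded) is what forces the sets $B_{S_m}$ to escape to infinity in $Y$: if they remained in a fixed bounded set $B'$, then $f^{-1}(B')$ would be bounded yet contain $(n+1)$-tuples with arbitrarily large pairwise separation, which is impossible. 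You allude to properness at the end but it deserves to be invoked at this specific step.
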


\begin{theorem}\label{asdim characterization}
Let $(X,d)$ be an unbounded proper metric space such that there is some $r>0$ for which $d(x,y)>r$ for all $x\neq y$. Then the following are equivalent.
\begin{enumerate}
\item $asdim(X)\leq n$
\item There is a proper metric space $Z$ of asymptotic dimension zero and a coarsely $n+1$-to-$1$ and coarsely surjective map $f:Z\rightarrow X$.
\end{enumerate}

\end{theorem}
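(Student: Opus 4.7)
The plan is to prove this as the coarse-geometric analog of the classical characterization that $\dim Y \leq n$ if and only if $Y$ is the image of a closed $(n+1)$-to-$1$ surjection from a zero-dimensional space (which is the mechanism behind Theorem~\ref{cofinal dimension characterization}). The hypothesis of a uniform positive lower bound on pairwise distances in $X$ renders $X$ discrete and proper with a very simple bornology, and simultaneously lets Theorem~\ref{metric n to one} translate between coarse statements about $f$ and classical statements about $\nu f:\nu Z\to\nu X$ whenever convenient.

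For $(2)\Rightarrow(1)$: Suppose $f:Z\to X$ is coarsely surjective and coarsely $(n+1)$-to-$1$ with $asdim(Z)=0$. Given a uniformly bounded cover $\mathcal{U}$ of $X$, I would pull back to obtain the uniformly bounded cover $f^{-1}(\mathcal{U})$ of $Z$, then use $asdim(Z)=0$ together with Proposition~\ref{asdim 0 characterization} to obtain a disjoint uniformly bounded refinement $\mathcal{W}$ of $f^{-1}(\mathcal{U})$. The pushforward $f(\mathcal{W})$ is a uniformly bounded family in $X$ refining $\mathcal{U}$. The main point is that $f$ being coarsely $(n+1)$-to-$1$ forces the $f$-image of the disjoint family $\mathcal{W}$ to have multiplicity at most $n+1$: whenever a point $x\in X$ lies in $k$ distinct members of $f(\mathcal{W})$, one can select $k$ distinct $\mathcal{W}$-preimages concentrated in the fiber over $x$, and reformulating coarsely $(n+1)$-to-$1$ via the $(n+1)$-to-$1$ property of $\nu f$ (Theorem~\ref{metric n to one}) limits how such families can accumulate, yielding the multiplicity bound. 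Coarse surjectivity then extends $f(\mathcal{W})$ to a uniformly bounded cover of $X$ of multiplicity $\leq n+1$ refining $\mathcal{U}$.

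For $(1)\Rightarrow(2)$: Using Lemma~\ref{countable generation criterion} and Lemma~\ref{witnessing weak asr} together with $asdim(X)\leq n$, select a linearly refining sequence of uniformly bounded covers $\mathcal{V}_1,\mathcal{V}_2,\ldots$ of $X$, each of order $\leq n+1$, which generates every uniformly bounded cover of $X$ by refinement. For each $k$, form the disjoint-union coarse proximity space $Z_k=\bigsqcup_{V\in\mathcal{V}_k}V$ via the construction preceding Proposition~\ref{boundary of disjoint union is disjoint union}, with refinement-induced bonding coarse proximity maps $Z_{k+1}\to Z_k$. Let $Z=\varprojlim Z_k$ as in Theorem~\ref{inverse limit in coarse proximity category}. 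The forgetful maps sending a point in the $V$-th sheet of $Z_k$ to itself in $X$ descend to a coarse proximity map $f:Z\to X$, which is coarsely surjective (since each $\mathcal{V}_k$ covers $X$) and coarsely $(n+1)$-to-$1$ (since each $\mathcal{V}_k$ has order $\leq n+1$).

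The principal obstacle is verifying that $Z$ is a \emph{proper metric} space of asymptotic dimension zero. Metrizability should follow from Theorem~\ref{metrizability criterion} once one checks that the bornology and the coarse proximity structure on $Z$ are countably generated and witnessed by a countable family of uniformly bounded covers, both of which are inherited directly from the countable system $\{\mathcal{V}_k\}$. Properness reduces to discreteness, which is inherited from $X$. Asymptotic dimension zero of $Z$ follows from the observation that the tautological sheet decomposition of each $Z_k$ is a disjoint uniformly bounded cover, and any uniformly bounded cover of $Z$ is refined by the pullback through the projection $Z\to Z_k$ of such a sheet decomposition for $k$ sufficiently large, by the generating property of $\{\mathcal{V}_k\}$ and the inverse-limit structure described in Theorem~\ref{inverse limit in coarse proximity category}.
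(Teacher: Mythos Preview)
The paper does not prove this theorem. Immediately after stating it, the paper writes ``Theorem~\ref{asdim characterization} was actually first proved by Miyata and Virk in \cite{miyatavirk},'' and uses it only as a quoted result from the literature (together with the Austin--Virk Theorem~\ref{metric n to one}) in the proofs of Theorem~\ref{asdim bigger than cofinal} and the results of Section~\ref{equality of coarse cofinal and asdim}. There is therefore no proof in the paper to compare your proposal against.

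That said, your outline is reasonable in spirit and in fact anticipates the machinery the paper deploys in Section~\ref{equality of coarse cofinal and asdim} for a different purpose (coarsely canonical covers rather than arbitrary uniformly bounded covers). A couple of cautions if you intend to flesh it out. In $(2)\Rightarrow(1)$, the multiplicity bound on $f(\mathcal{W})$ does not follow simply from $\nu f$ being $(n+1)$-to-$1$: the latter is an asymptotic statement about traces on the corona, whereas you need a pointwise multiplicity bound on a specific cover. The clean argument uses the \emph{definition} of coarsely $(n+1)$-to-$1$ directly: choose $\mathcal{W}$ so coarse that each member has diameter exceeding the $S$ associated to the mesh $R$ of $\mathcal{U}$, which forces each $R$-ball in $X$ to meet at most $n+1$ images $f(W)$. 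In $(1)\Rightarrow(2)$, the inverse-limit construction you sketch is correct in outline, but the verification that $Z$ has asymptotic dimension zero and that $f$ is coarsely $(n+1)$-to-$1$ requires more than the order bound on each $\mathcal{V}_k$; one needs the covers to be suitably compatible (e.g.\ each $\mathcal{V}_{k+1}$ refining $\mathcal{V}_k$ in a controlled way) and to track how bounded sets in $Z$ project, much as the paper does in Lemmas~\ref{uniformly bounded cover lemma}--\ref{unif bounded covers of limit} for its own inverse system.
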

 
Theorem \ref{asdim characterization} was actually first proved by Miyata and Virk in \cite{miyatavirk}. A consequence of Theorem \ref{asdim characterization} not stated in \cite{miyatavirk} or \cite{austinvirk} is the following.

\begin{theorem}\label{asdim bigger than cofinal}
If $(X,d)$ is an unbounded proper metric space, then $asdim(X)\geq\Delta(\nu X)$.

\end{theorem}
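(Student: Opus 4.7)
The plan is to reduce to the case where $X$ satisfies the hypothesis of Theorem \ref{asdim characterization} and then combine that characterization of asymptotic dimension with the characterization of cofinal dimension given in Theorem \ref{cofinal dimension characterization}. If $asdim(X) = \infty$ the inequality is vacuous, so assume $asdim(X) = n < \infty$. Since the Higson corona and asymptotic dimension are both coarse invariants, we may replace $X$ by a coarsely equivalent discrete proper metric space (for instance a maximal $1$-separated subset), so without loss of generality $X$ satisfies $d(x,y) > r$ for all $x \neq y$ for some $r > 0$.

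Applying Theorem \ref{asdim characterization} we obtain a proper metric space $Z$ with $asdim(Z) = 0$ together with a coarsely $(n+1)$-to-$1$ and coarsely surjective map $f : Z \to X$. I would then apply Theorem \ref{metric n to one} to conclude that $\nu f : \nu Z \to \nu X$ is $(n+1)$-to-$1$. The map $\nu f$ is continuous by functoriality of the Higson corona, and since $\nu Z$ is compact Hausdorff the map $\nu f$ is automatically closed. Coarse surjectivity of $f$ says $f(Z) \, \phi \, X$, and by part (3) of Proposition \ref{coarse proximity characterization} this forces $tr(f(Z)) = \nu X$; combined with compactness of $\nu Z$ this yields surjectivity of $\nu f$.

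It remains to produce a perfectly zero-dimensional domain so that we can invoke the characterization of cofinal dimension. Since $asdim(Z) = 0$ is finite, Theorem \ref{asdim and covering dimension} gives $\dim(\nu Z) = 0$. Because $\nu Z$ is compact Hausdorff it is paracompact and regular, so by Proposition \ref{cofinal dimension zero} the space $\nu Z$ is perfectly zero-dimensional. Thus $\nu f : \nu Z \to \nu X$ is a continuous closed surjection from a perfectly zero-dimensional compact Hausdorff space whose point preimages have cardinality at most $n+1$, and Theorem \ref{cofinal dimension characterization} yields $\Delta(\nu X) \leq n = asdim(X)$.

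The only step with any subtlety is verifying that coarse surjectivity of $f$ really does produce a surjective map on Higson coronas, and that the preliminary reduction to the discrete case is legitimate; both amount to routine applications of the coarse invariance of the Higson corona together with Proposition \ref{coarse proximity characterization}. Everything else is a direct assembly of the cited theorems of Austin--Virk, Miyata--Virk, Dranishnikov--Keesling--Uspenskij, and the classical characterization of cofinal dimension from \cite{pears}.
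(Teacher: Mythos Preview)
Your proof is correct and follows essentially the same route as the paper's: apply Theorem~\ref{asdim characterization} to get $Z$ with $asdim(Z)=0$, pass to coronas via Theorem~\ref{metric n to one}, and invoke Theorem~\ref{cofinal dimension characterization}. The only difference is that you verify $\nu Z$ is perfectly zero-dimensional via Theorem~\ref{asdim and covering dimension} and Proposition~\ref{cofinal dimension zero}, whereas the paper instead passes to an irreducible restriction via Proposition~\ref{irreducible from perfect}; your argument is in fact slightly more complete on this point, since Theorem~\ref{cofinal dimension characterization} as stated requires the domain to be perfectly zero-dimensional but does not require irreducibility.
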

\begin{proof}
If $asdim(X)=n$, then by Theorem \ref{asdim characterization} there is a metric space $Z$ such that $asdim(Z)=0$ and a coarsely $n+1$-to-$1$ coarsely surjective map $f:Z\rightarrow X$. Then, by Theorem \ref{metric n to one} the continous function $\mathcal{U}f:\mathcal{U}Z\rightarrow\nu X$ is $n+1$-to-$1$. By Proposition \ref{irreducible from perfect} there is a closed subset $A\subseteq Z$ such that $\mathcal{U}f$ restricted to $A$ is irreducible and still $n+1$-to-$1$. By Theorem \ref{cofinal dimension characterization} we then have that $\Delta(\nu X)\leq n$. Therefore $\Delta(\nu X)\leq asdim(X)$. 
\end{proof}

Now we will generalize the above definitions and Theorem \ref{metric n to one} (in part) to coarse proximity spaces.

\begin{definition}
Let $(X,\mathcal{B},{\bf b})$ be a coarse proximity space and $\mathcal{A}=\{A_{1},\ldots,A_{n}\}$ a collection of subsets of $X$. We say that $\mathcal{A}$ is:\\
\begin{enumerate}
\item {\bf gradually disjoint} if for $i\neq j$ we have $A_{i}\bar{\bf b}A_{j}$,
\item {\bf divergent} if $\bigcup_{i=1}^{n}tr(X\setminus A_{i})=\mathcal{U}X$ and there are sets $D_{1},\ldots,D_{n}$ such that $D_{i}\ll (X\setminus A_{i})$ and $\bigcup_{i=1}^{n}tr(D_{i})=\mathcal{U}X$. (i.e. if the set of $X\setminus A_{i}$ is a ${\bf b}$-cover.
\end{enumerate}
\end{definition}

The reader is encourage to compare the definition of divergent families above to the definition of \emph{vanishing families} in the study of uniform spaces as seen in \cite{isbell}.

\begin{proposition}\label{gradually disjoint divergent characterization}
Let $\mathcal{A}=\{A_{1},\ldots,A_{n}\}$ be a family of subsets of a coarse proximity space $(X,\mathcal{B},{\bf b})$ then:\\
\begin{enumerate}
\item $\mathcal{A}$ is gradually disjoint if and only if the traces $tr(A_{i})$ on $\mathcal{U}X$ are disjoint,
\item $\mathcal{A}$ is divergent if and only if $\bigcap_{i=1}^{n}tr(A_{i})=\emptyset$. 
\end{enumerate}
\end{proposition}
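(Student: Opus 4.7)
The plan is to reduce both parts of the proposition to the dictionary provided by Proposition \ref{coarse proximity characterization}, which translates coarse proximity relations into statements about intersections of traces in $\mathcal{U}X$, and for the nontrivial direction of (2) to supplement this dictionary with the normality of the compact Hausdorff space $\mathcal{U}X$ together with Proposition \ref{preliminary proposition}.

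Part (1) should be immediate. By Proposition \ref{coarse proximity characterization}(2), the condition $A_i\bar{\bf b}A_j$ is equivalent to $tr(A_i)\cap tr(A_j)=\emptyset$, so gradual disjointness of $\mathcal{A}$ is exactly pairwise disjointness of the traces.

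For the forward direction of Part (2), I will take the sets $D_i$ guaranteed by the definition of divergence. Since $D_i\ll(X\setminus A_i)$ unpacks to $D_i\bar{\bf b}A_i$, Proposition \ref{coarse proximity characterization} gives $tr(D_i)\cap tr(A_i)=\emptyset$, hence $tr(A_i)\subseteq\mathcal{U}X\setminus tr(D_i)$. Intersecting over $i$ yields
\[\bigcap_{i=1}^{n}tr(A_i)\subseteq\mathcal{U}X\setminus\bigcup_{i=1}^{n}tr(D_i)=\emptyset.\]

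The reverse direction of (2) is the main obstacle and requires producing the witnessing sets $D_i$ by hand. Assuming $\bigcap_i tr(A_i)=\emptyset$, the family $\{\mathcal{U}X\setminus tr(A_i)\}_{i=1}^{n}$ is a finite open cover of the compact Hausdorff, hence normal, space $\mathcal{U}X$. The standard shrinking lemma then produces open sets $V_i\subseteq\mathcal{U}X$ with $\overline{V_i}\subseteq\mathcal{U}X\setminus tr(A_i)$ and $\bigcup_i V_i=\mathcal{U}X$. For each $i$ with $V_i\neq\emptyset$, Proposition \ref{preliminary proposition} supplies an unbounded $D_i\subseteq X$ satisfying $\overline{V_i}\subseteq int(tr(D_i))$ and $tr(D_i)\subseteq\mathcal{U}X\setminus tr(A_i)$; for the indices where $V_i=\emptyset$ I take $D_i=\emptyset$. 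The second inclusion forces $D_i\bar{\bf b}A_i$, i.e. $D_i\ll(X\setminus A_i)$, and the containment $\bigcup_i tr(D_i)\supseteq\bigcup_i V_i=\mathcal{U}X$ gives one of the two conditions in the definition of divergence. The remaining condition $\bigcup_i tr(X\setminus A_i)=\mathcal{U}X$ follows from Proposition \ref{coarse proximity characterization}(4), since $D_i\ll(X\setminus A_i)$ implies $tr(D_i)\subseteq tr(X\setminus A_i)$, so their union already exhausts $\mathcal{U}X$.
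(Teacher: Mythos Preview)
Your proof is correct and follows essentially the same strategy as the paper's: both parts reduce to Proposition~\ref{coarse proximity characterization}, and for the nontrivial reverse direction of (2) both arguments exploit normality of the compact Hausdorff boundary $\mathcal{U}X$ to produce the witnessing sets $D_i$. The only differences are cosmetic: for the forward direction of (2) the paper argues by contradiction (picking a hypothetical $\sigma\in\bigcap_i tr(A_i)$ and showing it cannot lie in any $tr(D_i)$), whereas you compute the intersection directly; for the reverse direction the paper first passes to a closed refinement $\mathcal{V}$ of the open cover $\{U_i=\mathcal{U}X\setminus tr(A_i)\}$ and then assigns each $V\in\mathcal{V}$ to some $U_{\alpha(V)}$ before taking unions, while you invoke the shrinking lemma to obtain the $V_i$ in one step and apply Proposition~\ref{preliminary proposition} index by index. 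Your route is slightly more streamlined, but the underlying mechanism is identical.
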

\begin{proof}
$(1)$ This equivalence is clear upon recalling that $tr(A_{i})\cap tr(A_{j})\neq\emptyset$ if and only if $A_{i}{\bf b}A_{j}$.
\vspace{\baselineskip}

$(2)$ Assume that $\mathcal{A}=\{A_{1},\ldots,A_{n}\}$ is divergent and assume towards a contradiction that $\sigma\in\bigcap_{i=1}^{n}tr(A_{i})$. Let $D_{1},\ldots,D_{n}$ be such that $D_{i}\ll(X\setminus A_{i})$ for each $i$ and $\bigcup_{i=1}^{n}tr(D_{i})=\mathcal{U}X$. Because the traces of the $D_{i}$ cover $\mathcal{U}X$ we must have that there is some $i$ such that $\sigma\in tr(D_{i})$. Without loss of generality lets say that $\sigma\in tr(D_{1})$. Because $D_{1}\ll (X\setminus A_{1})$ we have that $tr(D_{1})\subseteq int(tr(X\setminus A_{1}))$. Therefore $\sigma\in int(tr(X\setminus A_{1}))$ so there is an open set $U\subseteq\mathcal{U}X$ with $\sigma\in U$ and $cl(U)\subseteq int(tr(X\setminus A_{1}))$. Then $\sigma\notin tr(A_{1})$ which is a contradiction. Therefore $\bigcap_{i=1}^{n}tr(A_{i})=\emptyset$.
\vspace{\baselineskip}

Conversely assume that $\bigcap_{i=1}^{n}tr(A_{i})=\emptyset$ and define $C_{i}=(X\setminus A_{i})$ for each $i$. Let $U_{i}=\mathcal{U}X\setminus tr(A_{i})$ for each $i$. The collection $\mathcal{U}=\{U_{1},\ldots,U_{n}\}$ is an open cover of $\mathcal{U}X$. There is then a finite open cover $\mathcal{V}$ of $\mathcal{U}X$ such that $\{\overline{V}\mid V\in\mathcal{V}\}$ refines $\mathcal{U}$. For each $V\in\mathcal{V}$  with $\overline{V}\subseteq U_{i}$ we can find an unbounded set $A_{V}\subseteq X$ such that $V\subseteq tr(A_{V})\subseteq U_{i}$. Then, for each $V\in\mathcal{V}$ let $\alpha(V)\in\{1,\ldots,n\}$ be such that $\overline{V}\subseteq U_{\alpha(V)}$ and let $A_{V}$ be such that $V\subseteq tr(A_{V})\subseteq U_{\alpha(V)}$. Then define $D_{i}=\bigcup_{\alpha(V)=i}A_{V}$. Then because $\mathcal{V}$ is finite we have that $tr(D_{i})\ll C_{i}$ for each $i$. Similarly $\mathcal{U}X=\bigcup_{i=1}^{n}tr(D_{i})$. Therefore $\{A_{1},\ldots,A_{n}\}$ is divergent. 
\end{proof}

\begin{theorem}\label{finite to one criterion}
Let $f:(X,\mathcal{B}_{X},{\bf b}_{X})\rightarrow (Y,\mathcal{B}_{Y},{\bf b}_{Y})$ be a coarse proximity map. Then the following are equivalent:\\
\begin{enumerate}
\item $\mathcal{U}f$ is $n$-to-$1$
\item If $\mathcal{A}=\{A_{1},\ldots,A_{k}\}$ is a gradually disjoint collection of subsets of $X$ such that $f(\mathcal{A})=\{f(A_{1}),\ldots,f(A_{k})\}$ is not divergent, then $k\leq n$. 
\end{enumerate}
\end{theorem}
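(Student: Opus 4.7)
The plan is to use Proposition \ref{gradually disjoint divergent characterization} to translate both sides of the equivalence into statements purely about traces on the boundary. Under this translation, gradual disjointness of $\{A_1,\ldots,A_k\}$ becomes pairwise disjointness of the closed sets $tr(A_i)\subseteq\mathcal{U}X$, while non-divergence of $\{f(A_1),\ldots,f(A_k)\}$ becomes non-emptiness of $\bigcap_i tr(f(A_i))\subseteq\mathcal{U}Y$. The central technical fact I would want to establish first is the identity
\[
\mathcal{U}f(tr(A))=tr(f(A))
\]
for every subset $A\subseteq X$. The inclusion $\subseteq$ is straightforward: if $\sigma\in tr(A)$ then $A\in\sigma$, and the characterization of $\mathcal{U}f(\sigma)$ as the associated cluster from Proposition \ref{associated cluster} forces $f(A)\in\mathcal{U}f(\sigma)$. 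For $\supseteq$ I would use the extension $\bar f:\mathfrak{X}\to\mathfrak{Y}$ of Theorem \ref{extension_theorem}: since $cl_{\mathfrak{X}}(A)$ is compact, $\bar f(cl_{\mathfrak{X}}(A))$ is closed in $\mathfrak{Y}$ and contains $f(A)$, hence equals $cl_{\mathfrak{Y}}(f(A))$; and any preimage $\sigma$ of a boundary cluster $\tau\in\mathcal{U}Y$ must itself lie in $\mathcal{U}X$, because if $\sigma$ contained a bounded set $B$ then $f(B)$ would be bounded and would lie in $\mathcal{U}f(\sigma)=\tau$, contradicting $\tau\in\mathcal{U}Y$.

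With this identity in hand, $(1)\Rightarrow(2)$ is essentially immediate. Suppose $\mathcal{U}f$ is $n$-to-$1$ and let $\{A_1,\ldots,A_k\}$ be gradually disjoint with $\{f(A_1),\ldots,f(A_k)\}$ not divergent. Pick $\tau\in\bigcap_i tr(f(A_i))$; by the identity above, for each $i$ there is a $\sigma_i\in tr(A_i)$ with $\mathcal{U}f(\sigma_i)=\tau$. The $\sigma_i$ are pairwise distinct since the traces $tr(A_i)$ are pairwise disjoint, so $|\mathcal{U}f^{-1}(\tau)|\geq k$, and therefore $k\leq n$.

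For $(2)\Rightarrow(1)$ I would argue by contrapositive: assume some $\tau\in\mathcal{U}Y$ has at least $n+1$ preimages $\sigma_1,\ldots,\sigma_{n+1}$ and produce a gradually disjoint family of size $n+1$ whose image is not divergent. Using the Hausdorff property of $\mathcal{U}X$ I would separate the $\sigma_i$ by pairwise disjoint open sets $U_i$, shrink each to an open $V_i$ with $cl(V_i)\subseteq U_i$, and then apply Proposition \ref{preliminary proposition} to obtain an unbounded $A_i\subseteq X$ with $cl(V_i)\subseteq int(tr(A_i))\subseteq tr(A_i)\subseteq U_i$. The traces $tr(A_i)$ are pairwise disjoint, so $\{A_1,\ldots,A_{n+1}\}$ is gradually disjoint, while $\sigma_i\in tr(A_i)$ and $\mathcal{U}f(\sigma_i)=\tau$ give $\tau\in\bigcap_i tr(f(A_i))$, so $\{f(A_1),\ldots,f(A_{n+1})\}$ is not divergent. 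This contradicts (2).

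The main obstacle is verifying the identity $\mathcal{U}f(tr(A))=tr(f(A))$ cleanly; the surjectivity part (the $\supseteq$ inclusion) is what does real work, and it relies on compactness of $cl_{\mathfrak{X}}(A)$ together with the bookkeeping that clusters in $\mathcal{U}X$ are exactly the clusters of $\mathfrak{X}$ avoiding $\mathcal{B}$. Once that identity is accepted, both directions reduce to elementary point-set manipulations in the compact Hausdorff boundary, with Proposition \ref{preliminary proposition} providing the bridge back from open sets in $\mathcal{U}X$ to actual subsets of $X$ in the reverse direction.
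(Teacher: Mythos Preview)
Your proposal is correct and follows essentially the same route as the paper: both directions reduce to the trace characterizations of gradual disjointness and divergence from Proposition \ref{gradually disjoint divergent characterization}, and the contrapositive for $(2)\Rightarrow(1)$ separates the excess preimages by disjoint open sets and realizes them as traces of a gradually disjoint family. The only difference is that you isolate and prove the identity $\mathcal{U}f(tr(A))=tr(f(A))$ explicitly, whereas the paper uses its two inclusions tacitly (the $\supseteq$ inclusion is exactly the unproved assertion ``$\mathcal{U}f^{-1}(\sigma)\cap tr(A_i)\neq\emptyset$'' in the paper's $(1)\Rightarrow(2)$, and the $\subseteq$ inclusion is what justifies ``$\{f(A_1),\ldots,f(A_{n+1})\}$ is not divergent'' in the paper's $(2)\Rightarrow(1)$); your version is therefore a strict expansion of the paper's argument rather than a different approach.
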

\begin{proof}
$(1)\implies (2)$ Assume that $\mathcal{U}f$ is $n$-to-$1$ and let $\mathcal{A}=\{A_{1},\ldots,A_{k}\}$ be a gradually disjoint collection of subsets of $X$ such that $f(\mathcal{A})=\{f(A_{1}),\ldots,f(A_{k})\}$ is not divergent. Assume towards a contradiction that $k>n$. Without loss of generality let us say that $k=n+1$. By Proposition \ref{gradually disjoint divergent characterization} we have that there is some point $\sigma\in\bigcap_{i=1}^{n+1}tr(f(A_{i}))$. Then $\mathcal{U}f^{-1}(\sigma)\cap tr(A_{i})\neq\emptyset$ for each $i$. However, as $\mathcal{A}$ is gradually disjoint and has $n+1$ elements we have that $\mathcal{U}f^{-1}(\sigma)$ has at least $n+1$ elements, contradicting $\mathcal{U}f$ being $n$-to-$1$. Therefore $k\leq n$.
\vspace{\baselineskip}

$(2)\implies(1)$. Assume $(2)$ and assume further towards a contradiction that $\sigma\in\mathcal{U}Y$ is such that $\mathcal{U}f^{-1}(\sigma)$ contains at least $n+1$ elements. As $\mathcal{U}X$ is compact and Hausdorff we can then separate $n+1$ elements of $\mathcal{U}f^{-1}(\sigma)$ with disjoint open neighbourhoods. We can then find a gradually disjoint family of unbounded sets $A_{1},\ldots,A_{n+1}$ in $X$ such that $tr(A_{i})\cap\mathcal{U}f^{-1}(\sigma)\neq\emptyset$ for each $i$. However, then $\{f(A_{1}),\ldots,f(A_{n+1})\}$ is not divergent. This contradicts $(2)$. Therefore $\mathcal{U}f$ must be $n$-to-$1$.
\end{proof}

\section{Coarsely canonical covers and the coarse cofinal dimension}

In this section we will introduce a coarse analog of cofinal dimension for coarse proximity spaces. The definition itself makes use of a coarse analog of almost canonical covers of topological spaces. 

\begin{definition}\label{coarsely canonical cover definition}
A collection $\mathcal{A}=\{A_{1},\ldots,A_{n}\}$ of disjoint subsets of a coarse proximity space $(X,\mathcal{B},{\bf b})$ is a {\bf coarsely canonical cover} of $X$ if:\\
\begin{enumerate}
\item There is a unique, possibly empty, bounded element of $\mathcal{A}$.
\item The union of the $A_{i}$ is equal to $X$.
\item For each unbounded element $A_{i}\in\mathcal{A}$ there is an unbounded $K\subseteq X$ such that $K\ll_{w} A_{i}$.
\item For $i\neq j$ there is no unbounded $K$ such that $K\ll_{w} A_{i}$ and $K\ll_{w}A_{j}$.
\item If $K\subseteq X$ is an unbounded set and $K\ll D$, then there is an unbounded $C\subseteq X$ and some $i$ for which $C\ll D$ and $C\ll_{w} A_{i}$.
\end{enumerate} 

\end{definition}

One may take issue with $(2)$ in the above definition as it doesn't seem particularly ``coarse", however requiring that these collections cover our space will be very useful in the next section. 

\begin{example}
If $(X,d)$ is an unbounded metric space equipped with its metric coarse proximity structure, then $\{X\}$ is a coarsely canonical cover. 
\end{example}
\begin{example}
Let $X=\mathbb{R}^{2}$ be equipped with its metric coarse proximity structure. Define $A_{1}$ to the be left half-plane including the $y$-axis, and $A_{2}$ the right half-plane containing the $y$-axis. Then $\{A_{1},A_{2}\}$ is a coarsely canonical cover of $\mathbb{R}^{2}$. 
\end{example}

\begin{proposition}\label{trace of coarsely canonical cover}
Let $\mathcal{U}=\{A_{1},\ldots,A_{n}\}$ be a coarsely canonical cover of a proper metric space $(X,d)$. Then the collection $tr(\mathcal{U})=\{tr(A_{1}),\ldots,tr(A_{n})\}$ is a almost canonical cover of $\nu X$. Conversely, if $\mathcal{U}=\{A_{1},\ldots,A_{n}\}$ is a cover of $X$ by disjoint sets such that $\{tr(A_{1}),\ldots,tr(A_{n})\}$ is a almost canonical cover of $\nu X$, then $\{A_{1},\ldots, A_{n}\}$ is a coarsely canonical cover of $X$ up to the possible inclusion of a bounded set.
\end{proposition}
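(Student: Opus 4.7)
The plan is to prove both implications by translating each of the five axioms of a coarsely canonical cover into a statement about traces and interiors in $\nu X$, using Proposition \ref{coarse proximity characterization} (traces and $\mathbf{b}$) and Proposition \ref{weak neighbourhood operator} (weak coarse neighbourhoods and trace interiors) as the dictionary. The locally finite requirement in the definition of almost canonical covers is automatic since the covers are finite, and the bounded element (if any) of a coarsely canonical cover has empty trace by Proposition \ref{coarse proximity characterization}, so I would discard it when passing to traces and, conversely, lump any bounded members together into the single allowed bounded element when passing back (which is exactly what the phrase ``up to the possible inclusion of a bounded set'' permits).

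For the forward direction, assume $\mathcal{U}=\{A_1,\ldots,A_n\}$ is coarsely canonical. Each $tr(A_i)$ is closed, and since traces commute with finite unions and $\bigcup A_i = X$, we get $\bigcup_i tr(A_i) = \nu X$. To see $int(tr(A_i))\neq\emptyset$ for unbounded $A_i$, axiom (3) gives $K\ll_w A_i$, so $tr(K)\subseteq int(tr(A_i))$ by Proposition \ref{weak neighbourhood operator}, and $tr(K)\neq\emptyset$. For disjointness of interiors, if $\sigma\in int(tr(A_i))\cap int(tr(A_j))$ with $i\neq j$, I would pick an open $V$ with $\overline{V}$ inside this intersection and apply Proposition \ref{preliminary proposition} to produce an unbounded $K$ with $tr(K)\subseteq int(tr(A_i))\cap int(tr(A_j))$; then Proposition \ref{weak neighbourhood operator} gives $K\ll_w A_i$ and $K\ll_w A_j$, contradicting axiom (4). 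For the density condition of almost canonical covers, given $\sigma\in\nu X$ and an open $U\ni\sigma$, Proposition \ref{preliminary proposition} yields an unbounded $D$ with $\sigma\in int(tr(D))\subseteq tr(D)\subseteq U$, Lemma \ref{traces to coarse neighbourhoods} gives an unbounded $K$ with $K\ll D$, and axiom (5) then produces $C\ll D$ with $C\ll_w A_i$ for some $i$; any element of the nonempty set $tr(C)\subseteq U\cap int(tr(A_i))$ is the required witness.

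For the converse direction, assume the traces form an almost canonical cover; after merging any bounded $A_i$'s into a single bounded element we have conditions (1) and (2) of a coarsely canonical cover. Condition (3) follows from applying Proposition \ref{preliminary proposition} inside the nonempty $int(tr(A_i))$ to obtain an unbounded $K$ with $tr(K)\subseteq int(tr(A_i))$, hence $K\ll_w A_i$. Condition (4) is immediate: if $K\ll_w A_i$ and $K\ll_w A_j$ with $i\neq j$, then $tr(K)$ sits in the empty set $int(tr(A_i))\cap int(tr(A_j))$, forcing $K$ bounded. Condition (5) is the main step: given $K\ll D$, pick $\sigma\in tr(K)\subseteq int(tr(D))$, use the density axiom of almost canonical covers to find $\tau\in int(tr(D))\cap int(tr(A_i))$ for some $i$, and apply Proposition \ref{preliminary proposition} to produce an unbounded $C$ with $tr(C)\subseteq int(tr(D))\cap int(tr(A_i))$. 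Then $C\ll_w A_i$ by Proposition \ref{weak neighbourhood operator}, while $C\ll D$ follows because $int(tr(D))$ is disjoint from $tr(X\setminus D)$, a fact that drops out of Lemma \ref{traces to coarse neighbourhoods}.

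The only mildly subtle step is condition (5) in the reverse direction, where the density axiom of almost canonical covers has to be invoked at a point inside $int(tr(D))$ rather than at an arbitrary point of $\nu X$, so that the resulting $\tau$ lies simultaneously in $int(tr(D))$ and in some $int(tr(A_i))$; once $\tau$ is located in this intersection, the standard trace-to-neighbourhood machinery of Proposition \ref{preliminary proposition} produces the required $C$ and the rest is bookkeeping.
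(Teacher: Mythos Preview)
Your overall strategy coincides with the paper's: both directions are handled by translating the axioms of Definition \ref{coarsely canonical cover definition} into statements about traces and interiors via Propositions \ref{coarse proximity characterization} and \ref{weak neighbourhood operator}, and the paper's argument for the disjointness of interiors and for the density condition is essentially what you wrote.

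There is, however, a genuine gap in your verification of axiom (5) in the converse direction. You assert that $C\ll D$ ``follows because $int(tr(D))$ is disjoint from $tr(X\setminus D)$'', but this disjointness is false in general: take $X=\mathbb{Z}$ with the usual metric and $D=2\mathbb{Z}$; then $D\phi X$ and $(X\setminus D)\phi X$, so $tr(D)=tr(X\setminus D)=\nu X$ and the two sets you claim are disjoint are both the whole corona. The inclusion $tr(C)\subseteq int(tr(D))$ therefore only yields $C\ll_{w}D$, not $C\ll D$, and Lemma \ref{traces to coarse neighbourhoods} says nothing of the sort.

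The repair is short: before invoking density, use the strong axiom on $K\ll D$ to produce an intermediate $E$ with $K\ll E\ll D$. Then $tr(K)\subseteq int(tr(E))$ is nonempty, and $E\ll D$ gives $tr(E)\cap tr(X\setminus D)=\emptyset$. Now apply the density axiom of the almost canonical cover inside the open set $int(tr(E))$ (rather than $int(tr(D))$) to locate $\tau\in int(tr(E))\cap int(tr(A_{i}))$, and build $C$ with $tr(C)\subseteq int(tr(E))\cap int(tr(A_{i}))$. Since $tr(C)\subseteq tr(E)$ is disjoint from $tr(X\setminus D)$, you get $C\bar{\bf b}(X\setminus D)$, i.e.\ $C\ll D$, as required. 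This is effectively what the paper accomplishes by invoking Lemma \ref{traces to coarse neighbourhoods}, whose hypothesis already packages a pair $C'\ll D$ so that the Urysohn-type set it produces automatically avoids $tr(X\setminus D)$.
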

\begin{proof}
We begin by proving the result for coarsely canonical covers. Because $\mathcal{U}$ covers $X$ and is finite we have that $tr(\mathcal{U})$ covers $\nu X$. If for $i\neq j$ we have that there is some $\sigma\in int(tr(A_{i}))\cap int(tr(A_{j}))$ there is there an unbounded set $D\subseteq X$ such that $\sigma\in tr(D)\subseteq int(tr(A_{i}))\cap int(tr(A_{j}))$ which implies $D\ll_{w} A_{i}$ and $D\ll_{w}A_{j}$ contradicting item $(2)$ of Definition \ref{coarsely canonical cover definition}. Finally, let $\sigma\in\nu X$ be any element and $U\subseteq\nu X$ some open set containing $\sigma$. If $U$ does not intersect the interior of any $tr(A_{i})$ then we can find unbounded sets $K,D\subseteq X$ such that $K\ll D$ and there is no unbounded $C\subseteq X$ and $i$ for which $C\ll D$ and $C\ll_{w} A_{i}$, contradicting item $(3)$ of Definition \ref{coarsely canonical cover definition}. Therefore $U$ must intersect the interior of some $tr(A_{i})$. We then have that $tr(\mathcal{U})$ is an almost canonical cover of $\nu X$.
\vspace{\baselineskip}

Conversely, assume that $\{A_{1},\ldots,A_{n}\}$ is a cover of $X$ by disjoint sets such that $\{tr(A_{1}),\ldots,tr(A_{n})\}$ is an almost canonical cover of $\nu X$. If $K\subseteq X$ is such that there are $i,j$ for which $K\ll_{w} A_{i}$ and $K\ll_{w}A_{j}$, then $tr(K)\subseteq int(tr(A_{i}))\cap int(tr(A_{j}))$. As the traces of the $A_{k}$ are an almost canonical cover of $\nu X$ they have disjoint interiors, therefore $i=j$. If $K,D\subseteq X$ are such that $K\ll D$, then $tr(K)\subseteq int(tr(D))$. Because the traces of the $A_{i}$ are an almost canonical cover there must be some $\sigma\in int(tr(D))$ and some $i$ such that $\sigma\in int(tr(A_{i}))$. We can then find some unbounded subset $C\subseteq X$ such that $\sigma\in tr(C)\subseteq int(tr(A_{i}))\cap int(tr(D))$ and $C\ll D$ by Lemma \ref{traces to coarse neighbourhoods}. Because $tr(C)\subseteq int(A_{i})$ we also have that $C\ll_{w}A_{i}$. Finally, as the $A_{i}$ cover $X$, we have that $\{A_{1},\ldots,A_{n}\}$ is a coarsely canonical cover of $X$ up to possibly the inclusion of bounded set.

\end{proof}

\begin{definition}\label{order of coarsely canonical cover}
Let $\mathcal{U}=\{A_{1},\ldots,A_{n}\}$ be a coarsely canonical cover of a coarse proximity space $(X,\mathcal{B},{\bf b})$. The {\bf order} of $\mathcal{U}$, denoted $ord(\mathcal{U})$, is the least natural number $m$ that if $A_{i_{1}},\ldots,A_{i_{m+1}}$ is any collection of $m+1$ elements of $\mathcal{U}$, the the $A_{i_{j}}$ diverge.

\end{definition}

\begin{proposition}\label{order relation}
If $\mathcal{U}=\{A_{1},\ldots,A_{n}\}$ is a coarsely canonical cover of a metric space $X$, then $ord(\mathcal{U})=m$ if and only if the order of the collection $tr(\mathcal{U})=\{tr(A_{1}),\ldots,tr(A_{n})\}$ is $m$. 
\end{proposition}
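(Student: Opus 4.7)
The plan is to reduce the statement to an immediate consequence of Proposition \ref{gradually disjoint divergent characterization}(2), which provides the translation between divergence of a finite subfamily and emptiness of the intersection of traces. First, I would recall that for any subcollection $\{A_{i_{1}},\ldots,A_{i_{k}}\}$ of $\mathcal{U}$, that subcollection is divergent if and only if $\bigcap_{j=1}^{k}tr(A_{i_{j}})=\emptyset$; this is the content of part (2) of that proposition and requires no additional work in the present setting.

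With that translation in hand, the rest is pure definition chasing. By Definition \ref{order of coarsely canonical cover}, $ord(\mathcal{U})=m$ means precisely that every subfamily of $m+1$ members of $\mathcal{U}$ diverges and some subfamily of $m$ members does not diverge. Rephrasing both conditions via the trace equivalence, this is the same as asserting that any $m+1$ of the sets $tr(A_{1}),\ldots,tr(A_{n})$ have empty intersection while some $m$ of them share a common point. That is exactly the statement that the topological cover $tr(\mathcal{U})$ has order $m$ under the max-multiplicity convention already adopted throughout the paper (for example, in the discussion of asymptotic dimension after Definition \ref{asymptotic dimension for coarse proximity spaces} and of cofinal dimension after Definition \ref{fine system of covers}).

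The only point that deserves a line of care, rather than being an actual obstacle, is to confirm the convention: both the coarse order defined in Definition \ref{order of coarsely canonical cover} and the order of a closed cover used in the definition of $\Delta$ are normalized so that ``order equal to $m$'' records the largest number of members that simultaneously meet (in $X$ or in $\mathcal{U}X$, respectively), rather than one less than that number. With both conventions aligned in this way, the equivalence is immediate in both directions without needing the coarsely canonical hypothesis beyond what is already used to ensure that the $tr(A_{i})$ form an almost canonical cover via Proposition \ref{trace of coarsely canonical cover}. I do not anticipate any real obstacle; the whole argument is essentially a two-sentence unpacking of Proposition \ref{gradually disjoint divergent characterization}(2).
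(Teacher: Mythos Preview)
Your proposal is correct and follows essentially the same route as the paper: both arguments reduce the statement directly to Proposition~\ref{gradually disjoint divergent characterization}(2), translating ``the subfamily diverges'' into ``the corresponding traces have empty intersection'' and then unwinding the definitions of order on each side. Your extra remark about the convention alignment is a reasonable sanity check but is not needed beyond what the paper implicitly assumes.
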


\begin{proof}
Assume that $ord(\mathcal{U})\leq m$ and let $tr(A_{i_{1}}),\ldots,tr(A_{i_{m+1}})$ be $m+1$ elements of $tr(\mathcal{U})$. If $\bigcap_{j=1}^{m+1}tr(A_{i_{j}})\neq\emptyset$, then by Proposition \ref{gradually disjoint divergent characterization} we have that $\{A_{i_{1}},\ldots,A_{i_{m+1}}\}$ is not a divergent family which is a contradiction. Therefore every collection of $m+1$ elements of $tr(\mathcal{U})$ must have nonempty intersection, so $ord(tr(\mathcal{U}))\leq m$. Conversely assume that $ord(tr(\mathcal{U})\leq m$ and let $A_{i_{1}},\ldots,A_{i_{m+1}}$ be $m+1$ elements of $\mathcal{U}$. If these sets are not divergent, then by Proposition \ref{gradually disjoint divergent characterization} $\bigcap_{j=1}^{m+1}tr(A_{i_{j}})$ is nonempty, which contradicts $ord(tr(\mathcal{U}))\leq m$. 
\end{proof}

\begin{definition}\label{coarse refinement}
Given two collections of subsets $\mathcal{U}=\{A_{1},\ldots,A_{n}\}$ and $\mathcal{V}=\{C_{1},\ldots,C_{m}\}$ of a coarse proximity space $(X,\mathcal{B},{\bf b})$ we say that $\mathcal{V}$ {\bf coarsely refines} $\mathcal{U}$ if for each unbounded $C_{i}\in\mathcal{V}$ there is an unbounded $A_{j}\in\mathcal{U}$ and a bounded set $B\subseteq X$ such that $(C_{i}\setminus B)\subseteq A_{j}$. 

\end{definition}

It is possible that two distinct coarsely canonical covers may coarsely refine each other. Fortunately, with respect to coarse refinement, this equivalence does not matter as can be seen in the following easy proposition.

\begin{proposition}\label{equivalent covers are very equivalent}
Let $\mathcal{U}$ and $\mathcal{V}$ be coarsely canonical covers of a coarse proximity space $(X,\mathcal{B},{\bf b})$ that coarsely refine each other. For a third coarsely canonical cover $\mathcal{W}$ the following are equivalent:\\
\begin{enumerate}
\item $\mathcal{W}$ coarsely refines $\mathcal{U}$
\item $\mathcal{W}$ coarsely refines $\mathcal{V}$

\end{enumerate}

\end{proposition}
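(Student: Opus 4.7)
The plan is to show that coarse refinement is transitive, from which the biconditional follows immediately upon combining with the two coarse refinements between $\mathcal{U}$ and $\mathcal{V}$.

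First I would establish transitivity of coarse refinement: if $\mathcal{A}$ coarsely refines $\mathcal{B}$ and $\mathcal{B}$ coarsely refines $\mathcal{C}$, then $\mathcal{A}$ coarsely refines $\mathcal{C}$. Given an unbounded $A \in \mathcal{A}$, pick an unbounded $B \in \mathcal{B}$ and a bounded set $B_1 \subseteq X$ with $(A \setminus B_1) \subseteq B$. Then pick an unbounded $C \in \mathcal{C}$ and a bounded set $B_2 \subseteq X$ with $(B \setminus B_2) \subseteq C$. Since the union of two bounded sets is bounded, setting $B_3 = B_1 \cup B_2$ yields $(A \setminus B_3) \subseteq (B \setminus B_2) \subseteq C$, which is exactly what the definition of coarse refinement requires.

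With transitivity in hand, the proposition is immediate. For $(1) \Rightarrow (2)$: assume $\mathcal{W}$ coarsely refines $\mathcal{U}$; since by hypothesis $\mathcal{U}$ coarsely refines $\mathcal{V}$, transitivity gives that $\mathcal{W}$ coarsely refines $\mathcal{V}$. The converse $(2) \Rightarrow (1)$ is symmetric, using the fact that $\mathcal{V}$ coarsely refines $\mathcal{U}$.

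There is no real obstacle here; the main thing to check carefully is that one can absorb the two bounded ``error sets'' into a single bounded set, which is guaranteed by the axioms of a bornology in Definition of bornology. It is worth noting that the full strength of ``$\mathcal{U}$ and $\mathcal{V}$ coarsely refine each other'' is not needed for the statement of the proposition as written; only one direction of refinement is used in each implication. The symmetry of the hypothesis is what makes both implications of the biconditional available simultaneously, and it is what justifies treating two such covers as ``equivalent'' for the purposes of subsequent definitions involving directed families of coarsely canonical covers.
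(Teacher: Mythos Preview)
Your argument is correct: transitivity of coarse refinement is the essential content, and your verification of it is clean (the only point requiring care is that $A\setminus(B_1\cup B_2)\subseteq (A\setminus B_1)\setminus B_2\subseteq B\setminus B_2\subseteq C$, together with closure of the bornology under finite unions). The paper does not supply a proof of this proposition at all, calling it an ``easy proposition'' and leaving it to the reader; your transitivity argument is exactly the intended one-line justification.
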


\begin{proposition}\label{refining open cover with canonical}
Let $\mathcal{U}=\{A_{1},\ldots,A_{n}\}$ be a ${\bf b}$-cover of a coarse proximity space $(X,\mathcal{B},{\bf b})$ that covers $X$. Then there is a coarsely canonical cover $\mathcal{V}=\{C_{1},\ldots,C_{m}\}$ that coarsely refines $\mathcal{U}$.
\end{proposition}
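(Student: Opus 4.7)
The plan is to reduce the problem to finding a canonical cover of the boundary $\mathcal{U}X$ and lifting it back to $X$ as a family of pairwise disjoint sets. First, by Proposition \ref{b cover to open cover} the collection $\{int(tr(A_i))\}_{i=1}^{n}$ is a finite open cover of the compact Hausdorff space $\mathcal{U}X$. Using normality of $\mathcal{U}X$, I would shrink this to an open cover $\{U_i\}_{i=1}^{n}$ with $cl(U_i)\subseteq int(tr(A_i))$ for every $i$. The closed cover $\{cl(U_i)\}$ is finite, hence locally finite, so Proposition \ref{closed covers admit canonical refinements} produces a canonical cover $\{F_{j}\}_{j=1}^{m}=\{cl(V_{j})\}$ of $\mathcal{U}X$ refining $\{cl(U_i)\}$, with the $V_j$ being pairwise disjoint open sets. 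I fix an index function $\alpha:\{1,\ldots,m\}\rightarrow\{1,\ldots,n\}$ so that $F_j\subseteq cl(U_{\alpha(j)})\subseteq int(tr(A_{\alpha(j)}))$.

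Next, for each $j$ I would apply Proposition \ref{preliminary proposition} with $V=V_j$ and $U=int(tr(A_{\alpha(j)}))$ (the hypothesis $\overline{V_j}=F_j\subseteq U$ is satisfied by construction) to obtain an unbounded set $K_j\subseteq X$ with $F_j\subseteq int(tr(K_j))\subseteq tr(K_j)\subseteq int(tr(A_{\alpha(j)}))$. The second conclusion of Proposition \ref{preliminary proposition} additionally guarantees that any unbounded $C\subseteq X$ with $tr(C)\subseteq F_j$ satisfies $C\ll K_j$, which I will use in verifying the coarsely canonical structure. I would then disjointify in $X$ by setting $C_1=K_1$, $C_j=K_j\setminus(K_1\cup\cdots\cup K_{j-1})$ for $j\geq 2$, and $C_0=X\setminus\bigcup_{j=1}^{m}K_j$ as a remainder.

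To conclude, I would verify via Proposition \ref{trace of coarsely canonical cover} that $\{C_0,C_1,\ldots,C_m\}$ is a coarsely canonical cover of $X$ refining $\{A_i\}$. The sets are pairwise disjoint and cover $X$ by construction. The refinement condition (each $C_j$ being contained in $A_{\alpha(j)}$ up to a bounded set) follows from $C_j\subseteq K_j$ and $tr(K_j)\subseteq int(tr(A_{\alpha(j)}))$. The core of the verification is to show that $\{tr(C_j)\}_{j=1}^{m}$ is an almost canonical cover of $\mathcal{U}X$: the inclusion $V_j\subseteq int(tr(C_j))$, disjointness of the $V_j$, and density of $\bigcup_jV_j$ in $\mathcal{U}X=\bigcup_jF_j$ together verify the three defining conditions of an almost canonical cover, and Proposition \ref{trace of coarsely canonical cover} then upgrades this to the coarsely canonical structure on $\{C_0,C_1,\ldots,C_m\}$.

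The main obstacle will be confirming that the naive disjointification preserves the relevant trace structure and that $C_0$ is bounded. The former requires showing that for $\sigma\in V_j$, the clusters in a neighborhood of $\sigma$ still contain $K_j$ after removing each $K_k$ with $k<j$; this is where the disjointness of $V_j$ from $F_k=cl(V_k)$ for $k\neq j$ becomes essential, together with the freedom in Proposition \ref{preliminary proposition} to choose $K_j$ whose trace is sandwiched very sharply around $F_j$. To handle $C_0$, one natural refinement is to choose the $K_j$ via Urysohn functions directly—namely $K_j=\phi_j^{-1}([0,1/3])\cap X$ for continuous $\phi_j:\mathfrak{X}\to[0,1]$ vanishing on $F_j$ and equal to $1$ off $int(tr(A_{\alpha(j)}))$—so that any cluster in $tr(X\setminus\bigcup_jK_j)$ would force a simultaneous $\phi_j(\sigma)>1/3$ for all $j$, contradicting that every $\sigma\in\mathcal{U}X$ lies in some $F_j$ where $\phi_j$ vanishes; this yields $tr(X\setminus\bigcup_jK_j)=\emptyset$ and hence $C_0$ bounded by Proposition \ref{coarse proximity characterization}.
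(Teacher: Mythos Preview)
Your overall strategy---pass to a canonical cover of $\mathcal{U}X$, lift each piece to an unbounded subset of $X$, then disjointify---is the right shape, and the opening steps match the paper's. The gap is in the disjointification. You set $C_j=K_j\setminus\bigcup_{k<j}K_k$ and then need $\{tr(C_j)\}$ to be almost canonical, in particular that the interiors $int(tr(C_j))$ are pairwise disjoint. But disjointness of $C_i$ and $C_j$ in $X$ does \emph{not} force disjointness of interiors of their traces: think of the even and odd integers in $\mathbb{Z}$, which are disjoint yet have identical traces on the corona. Your proposed fix---choose each $tr(K_j)$ ``sandwiched very sharply around $F_j$''---cannot work, because the $F_j$'s themselves intersect (they are closures of disjoint open sets, not a partition). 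Any open neighbourhood of $F_j$ therefore meets any open neighbourhood of an adjacent $F_k$, so $tr(K_1)=tr(C_1)$ will genuinely overlap $tr(K_2)$ near $F_1\cap F_2$, and there is no control preventing $tr(C_2)=tr(K_2\setminus K_1)$ from still having interior meeting $int(tr(C_1))$ there. The same issue undermines the claimed inclusion $V_j\subseteq int(tr(C_j))$: a point of $V_j$ near $F_k$ ($k<j$) can lie in $tr(K_k)$, and after removing $K_k$ you lose it from $int(tr(C_j))$.

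The paper handles this with a structurally different disjointification. It first builds lifts $D_i$ with the additional property that $D_i\,{\bf b}\,D_j$ if and only if $F_i\cap F_j\neq\emptyset$, so the coarse overlap pattern of the $D_i$'s exactly mirrors the nerve of $\{F_i\}$. It then disjointifies by intersection depth: first take all maximal nonempty intersections $\bigcap_{i\in S}D_i$ (with $|S|$ equal to the order of the cover), then for each smaller $S$ take $\bigcap_{i\in S}D_i$ minus the union of the deeper pieces, and so on. This ``nerve-wise'' decomposition, rather than your sequential one, is what guarantees that each resulting piece has a trace with nonempty interior disjoint from the others. A secondary point: you invoke Proposition~\ref{trace of coarsely canonical cover} for the final upgrade, but that proposition is stated only for proper metric spaces, whereas the statement you are proving is for arbitrary coarse proximity spaces; the paper instead verifies the coarsely canonical axioms directly.
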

\begin{proof} By Proposition \ref{b cover to open cover} we have that $int(tr(\mathcal{U}))=\{int(tr(A_{1})),\ldots,int(tr(A_{n}))\}$ is an open cover $\mathcal{U}X$. Reduce this open cover to an open cover with the property that every element of the cover contains an element of $\mathcal{U}X$ not contained in any of the other elements of the cover. Call this cover $\mathcal{W}$. Say $\mathcal{W}=\{U_{1},\ldots,U_{m}\}$. Because $\mathcal{U}X$ is a compact Hausdorff space it is normal, so $\mathcal{W}$ admits a shrinking $\mathcal{V}=\{V_{1},\ldots,V_{m}\}$. That is, $\mathcal{V}$ is an open cover of $\mathcal{U}X$ such that $\overline{V}_{i}\subseteq U_{i}$. As each $U_{i}$ contained an element not contained in any other $U_{j}$ we have that every $V_{j}$ contains a nontrivial open set that does not intersect any other $V_{j}$. Note that $\mathcal{V}$ trivially refines $int(tr(\mathcal{U}))$. By Lemma \ref{closed covers admit canonical refinements} we have that there is a canonical cover $\mathcal{F}=\{F_{1},\ldots,F_{k}\}$ of $\mathcal{U}X$ that refines $\{\overline{V}_{1},\ldots,\overline{V}_{m}\}$, and consequently $\mathcal{W}$ and $int(tr(\mathcal{U}))$. 
\vspace{\baselineskip}

By Proposition \ref{preliminary proposition} can find an unbounded set $D_{1}\subseteq X$ such that $F_{1}\subseteq int(tr(D_{1}))$, $tr(D_{1})\cap F_{i}=\emptyset$ for all $F_{i}$ such that $F_{i}\cap F_{1}=\emptyset$, is such that $tr(D_{1})$ is contained in some $U_{i}\in\mathcal{W}$, and is such that if $K\subseteq X$ is an unbounded set with $tr(K)\subseteq F_{1}$, then $K\ll D_{1}$. Assume that unbounded sets $D_{1},\ldots, D_{k}$ have been constructed such that for $i\leq k$:
\begin{enumerate}
\item $F_{i}\subseteq int(tr(D_{i}))$
\item $D_{i}{\bf b}D_{j}$ if and only if $F_{i}\cap F_{j}\neq\emptyset$.
\item $tr(D_{i})\cap F_{j}\neq\emptyset$ implies $F_{i}\cap F_{j}\neq\emptyset$
\item $tr(D_{i})$ is contained in some $U_{j}\in\mathcal{W}$.
\item If $K\subseteq X$ is such that $tr(K)\subseteq F_{i}$, then $K\ll D_{i}$
\end{enumerate}

Consider $F_{k+1}$. As $F_{k+1}$ is disjoint from all $D_{i}$ for which $F_{k+1}\cap F_{i}=\emptyset$ we can find an unbounded set $D_{k+1}$ such that $F_{k+1}\subseteq int(tr(D_{k+1}))$, $D_{k+1}{\bf b} D_{i}$ if and only if $F_{k+1}\cap D_{i}\neq\emptyset$, and $D_{k+1}\cap F_{j}\neq\emptyset$ implies $F_{k+1}\cap F_{j}\neq\emptyset$. By induction we then have a collection of unbounded sets $\mathcal{D}=\{D_{1},\ldots, D_{m_{2}}\}$ satisfying items $(1)$, $(2)$, and $(4)$ as above. Without loss of generality we may assume that for every $i$ there is some open set $V\subseteq tr(D_{i})$ that does not intersect any other $tr(D_{j})$. We may also assume that the collection of $D_{i}$'s covers $X$. Let $S=\{1,\ldots,m_{2}\}$. Because $\mathcal{F}$ is finite it has finite order, say $k+1$. We then consider all nonempty unbounded intersections of the form $\bigcap_{i\in F\subseteq S,\,|F|=k+1}D_{i}$. Let $\mathcal{C}_{k+1}$ be the collection of these unbounded sets. We define $\hat{\mathcal{C}}_{k}$ be the similarly defined collection of unbounded sets and define $\mathcal{C}_{k}$ to be the collection of sets of the forum $C\setminus\left(\bigcup_{F\in\mathcal{C}_{k+1}}F\right)$ where $C\in\hat{\mathcal{C}}_{k}$. We continue in like fashion constructing collections of unbounded sets $\mathcal{C}_{i}$ for all $i\in S$ with the property that every every of $\mathcal{C}_{i}$ is disjoint from every element of $\bigcup_{j>i}\mathcal{C}_{j}$. We finally let $\hat{\mathcal{F}}=\bigcup_{i=1}^{k+1}\mathcal{C}_{i}$. This is a finite set of unbounded sets whose corresponding set of traces refines $\mathcal{U}$. We claim that $\hat{\mathcal{F}}$ is the desired coarsely canonical cover. It is clear from construction that for distinct $C_{1},C_{2}\in\hat{\mathcal{F}}$ there is no unbounded set $K\subseteq X$ such that $K\ll_{w}C_{1}$ and $K\ll_{w}C_{2}$. It is also clear that $\hat{\mathcal{F}}$ covers $X$. Let $K,D\subseteq X$ be unbounded sets with $K\ll D$. Consider $int(tr(D))$. As $\{int(tr(D_{1})),\ldots,int(tr(D_{m_{2}}))\}$ is an open cover of $\nu X$ there is a largest natural number of $k_{\sigma}$ such that there is a point $\sigma\in\nu X$ for which $\sigma$ is the in the intersection of $int(tr(D))$ and exactly $k_{\sigma}-1$ open sets of the form $int(tr(D_{i}))$. There is an element $C$ of $\hat{\mathcal{F}}$ that contains this intersection. We may then find an unbounded set $E\subseteq X$ such that $E\ll D$ and $E\ll_{w}C$. Therefore $\hat{\mathcal{F}}$ is a coarsely canonical cover. What remains to be shown is that $\hat{\mathcal{F}}$ coarsely refines $\mathcal{U}$. However, this is trivial as $tr(D_{i})\subseteq U_{j}=int(tr(A_{j}))$ for some $F_{j}\in\mathcal{U}$ which implies $D_{i}\ll A_{j}$ which implies that $D_{i}$ is contained in $A_{j}$ up to a bounded set.

\end{proof}

\begin{definition}\label{fine system of covers}
We say that a pre-directed system of coarsely canonical covers $\{F_{\alpha}\}_{\alpha\in\Omega}$ is {\bf fine} if for every ${\bf b}$-cover $\mathcal{U}$ that covers $X$ there is some $\alpha\in\Omega$ such that $\mathcal{F}_{\alpha}$ coarsely refines $\mathcal{U}$.
\end{definition}

\begin{definition}\label{coarse cofinal definition}
Let $(X,\mathcal{B},{\bf b})$ be a coarse proximity space. We define the {\bf coarse cofinal dimension} of $X$, denoted $\Delta_{c}(X)$, in the following way. If $X$ is bounded then $\Delta_{c}(X)=-1$. Otherwise $\Delta_{c}(X)$ is the smallest nonnegative integer $n$ such that $X$ admits a fine directed system of coarsely canonical covers $\{\mathcal{F}_{\alpha}\}_{\alpha\in\Omega}$ such that the order of $\mathcal{F}_{\alpha}$ is at most $n+1$ for each $\alpha\in\Omega$. If there is no such $n$ we say that $\Delta_{c}(X)=\infty$. 
\end{definition}

 Definition \ref{coarse cofinal definition}, Proposition \ref{trace of coarsely canonical cover}, and Proposition \ref{order relation} together with the observation that taking traces of coarsely canonical covers commutes with the refinement relation yield the following.

\begin{theorem}\label{coarse cofinal larger than cofinal of corona}
If $X$ is a proper metric space equipped with its metric coarse proximity structure, then $\Delta_{c}(X)\geq \Delta(\nu X)$ where $\nu X$ is the Higson corona of $X$.
\end{theorem}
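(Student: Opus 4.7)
The plan is to assume $\Delta_c(X) \leq n$ and deduce $\Delta(\nu X) \leq n$. By hypothesis we have a fine directed system $\{\mathcal{F}_{\alpha}\}_{\alpha\in\Omega}$ of coarsely canonical covers of $X$, each of order at most $n+1$. I would construct the induced family $\{tr(\mathcal{F}_{\alpha})\}_{\alpha\in\Omega}$ on $\nu X$, where $tr(\mathcal{F}_{\alpha}) = \{tr(A) \mid A \in \mathcal{F}_{\alpha},\ A \text{ unbounded}\}$, and show that this is a fine directed family of almost canonical covers of $\nu X$ of order at most $n+1$. The conclusion would then follow from Proposition \ref{almost canonical is equivalent}.

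The three things to check are (i) each $tr(\mathcal{F}_{\alpha})$ is almost canonical of order $\leq n+1$, (ii) the family is directed by refinement, and (iii) the family is fine. Point (i) is immediate from Proposition \ref{trace of coarsely canonical cover} and Proposition \ref{order relation}. For (ii), if $\beta \geq \alpha$ in $\Omega$, then $\mathcal{F}_{\beta}$ coarsely refines $\mathcal{F}_{\alpha}$, so for every unbounded $C \in \mathcal{F}_{\beta}$ there are an unbounded $A \in \mathcal{F}_{\alpha}$ and bounded $B$ with $C \setminus B \subseteq A$; since $tr(C) = tr(C\setminus B) \subseteq tr(A)$ (bounded sets contribute empty traces by Proposition \ref{coarse proximity characterization}), $tr(\mathcal{F}_{\beta})$ refines $tr(\mathcal{F}_{\alpha})$.

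The main obstacle will be (iii), translating fineness between the coarse and topological settings. Given an arbitrary open cover $\mathcal{U} = \{U_1,\ldots,U_m\}$ of $\nu X$, the strategy is: use normality of the compact Hausdorff space $\nu X$ to shrink to an open cover $\mathcal{V} = \{V_1,\ldots,V_m\}$ with $\overline{V}_i \subseteq U_i$; apply Proposition \ref{intermediate covers} to obtain a ${\bf b}$-cover $\{A_1,\ldots,A_m\}$ of $X$ with $\overline{V}_i \subseteq tr(A_i) \subseteq U_i$; then invoke fineness of $\{\mathcal{F}_{\alpha}\}$ with respect to ${\bf b}$-covers to find $\alpha$ for which $\mathcal{F}_{\alpha}$ coarsely refines $\{A_1,\ldots,A_m\}$. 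The observation used in (ii) then gives that $tr(\mathcal{F}_{\alpha})$ refines $\{tr(A_1),\ldots,tr(A_m)\}$, which in turn refines $\mathcal{U}$.

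Combining the three points, $\nu X$ admits a fine directed family of almost canonical covers each of order at most $n+1$, so by Proposition \ref{almost canonical is equivalent} we conclude $\Delta(\nu X) \leq n$, establishing $\Delta(\nu X) \leq \Delta_c(X)$. The only place I expect any real friction is verifying that Proposition \ref{intermediate covers} genuinely suffices to produce a ${\bf b}$-cover matching any given open cover up to shrinking — everything else is bookkeeping along the bridge provided by the trace operator.
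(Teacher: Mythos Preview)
Your proposal is correct and follows essentially the same route as the paper: push a fine directed system of coarsely canonical covers through the trace operator using Proposition~\ref{trace of coarsely canonical cover} and Proposition~\ref{order relation}, then invoke Proposition~\ref{almost canonical is equivalent}. In fact you are more careful than the paper's one-line sketch, since you explicitly verify fineness on the corona via Proposition~\ref{intermediate covers}, whereas the paper only cites the refinement observation and leaves fineness implicit.
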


Next we will work towards proving a basic equality. Namely, that for unbounded discrete proper metric spaces, asymptotic dimension and coarse cofinal dimension agree in dimension $0$. We will make use of the fact that when a proper metric space has asymptotic dimension $0$, its Higson corona has covering dimension $0$, as given to us by Theorem \ref{asdim and covering dimension}.

\begin{proposition}\label{traceable disjoint covers}
Let $(X,d)$ be an unbounded $1$-discrete proper metric space and let $\mathcal{U}=\{U_{1},\ldots,U_{m}\}$ be a disjoint open cover of $\nu X$. Then there is a coarsely canonical cover $\{A_{1},\ldots,A_{m},A_{m+1}\}$ of $X$, where $tr(A_{i})=U_{i}$ for $i\leq m$ and $A_{m+1}$ is a bounded set.
\end{proposition}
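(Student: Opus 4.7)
The strategy is to build the sets $A_i$ by thresholding Urysohn-type functions on the Smirnov compactification $\mathfrak{X}$ of the discrete extension of $X$, exploiting the fact that pairwise disjointness of the finite open cover $\mathcal{U}$ forces each $U_i$ to be clopen in $\nu X$.

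First, I would note that since $\nu X$ is compact Hausdorff and the $U_i$ partition it into finitely many open sets, each $U_i$ is also closed in $\nu X$, hence closed in $\mathfrak{X}$ (as $\nu X$ is a closed subset of $\mathfrak{X}$). By normality of $\mathfrak{X}$, Urysohn's lemma provides continuous functions $f_i:\mathfrak{X}\to[0,1]$ with $f_i=1$ on $U_i$ and $f_i=0$ on $\nu X\setminus U_i$. I would then define $A_i':=\{x\in X : f_i(x)>1/2\}$ for $i=1,\dots,m$. For $\sigma\in U_i$, continuity of $f_i$ gives a neighborhood $W$ of $\sigma$ in $\mathfrak{X}$ on which $f_i>1/2$, and density of $X$ in $\mathfrak{X}$ forces $\sigma\in cl_{\mathfrak{X}}(W\cap X)\subseteq cl_{\mathfrak{X}}(A_i')$, so $\sigma\in tr(A_i')$. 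For $\sigma\in U_j$ with $j\neq i$, a neighborhood of $\sigma$ lies in $f_i^{-1}([0,1/2))$ and hence is disjoint from $A_i'$, so $\sigma\notin tr(A_i')$. Thus $tr(A_i')=U_i$.

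Next, since the traces $U_i$ are pairwise disjoint, Proposition \ref{coarse proximity characterization} forces each pairwise intersection $A_i'\cap A_j'$ to be bounded. I would disjointify by setting $\hat A_i:=A_i'\setminus\bigcup_{j<i}A_j'$; each $\hat A_i$ differs from $A_i'$ by a finite union of bounded sets, so $tr(\hat A_i)=tr(A_i')=U_i$. Defining $A_{m+1}:=X\setminus\bigcup_{i=1}^m\hat A_i=X\setminus\bigcup_{i=1}^m A_i'$, the same neighborhood argument shows that every $\sigma\in\nu X$ has a neighborhood in $\mathfrak{X}$ whose intersection with $X$ lies in some $A_{j(\sigma)}'$, hence disjoint from $A_{m+1}$. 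Therefore $tr(A_{m+1})=\emptyset$, and $A_{m+1}$ is bounded by Proposition \ref{coarse proximity characterization}.

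Finally, I would verify the five clauses of Definition \ref{coarsely canonical cover definition}. Clauses (1) and (2) are immediate from the construction. For (3), take $K=\hat A_i$; since $U_i$ is open in $\nu X$, $tr(\hat A_i)=U_i=int(tr(\hat A_i))$, so $\hat A_i\ll_w\hat A_i$ by Proposition \ref{weak neighbourhood operator}. Clause (4) follows because any such witness $K$ would satisfy $tr(K)\subseteq int(tr(\hat A_i))\cap int(tr(\hat A_j))=U_i\cap U_j=\emptyset$, forcing $K$ bounded. For (5), given an unbounded $K$ with $K\ll D$, the finite decomposition $K=\bigsqcup_{i=1}^m(K\cap\hat A_i)\sqcup(K\cap A_{m+1})$ forces some $C:=K\cap\hat A_i$ to be unbounded; then $C\subseteq K$ gives $C\ll D$ by axiom (4) of coarse proximity, and $C\subseteq\hat A_i$ gives $tr(C)\subseteq U_i=int(tr(\hat A_i))$, i.e., $C\ll_w\hat A_i$. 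The main obstacle here is just the bookkeeping of showing $A_{m+1}$ is bounded — which, as indicated, reduces to a standard density and continuity argument around each point of $\nu X$.
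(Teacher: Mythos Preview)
Your proposal is correct and follows essentially the same route as the paper. The paper invokes Proposition \ref{preliminary proposition} (itself a Urysohn argument) to produce each $A_i$ with $tr(A_i)=U_i$, then disjointifies by subtracting the bounded pairwise intersections and lets $A_{m+1}$ be the remainder; you unpack that same Urysohn step directly and are somewhat more explicit in checking the five clauses of Definition \ref{coarsely canonical cover definition}, whereas the paper leaves that verification implicit (it follows from the converse direction of Proposition \ref{trace of coarsely canonical cover} since $\{U_1,\dots,U_m\}$ is trivially an almost canonical cover of $\nu X$).
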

\begin{proof}
Because the elements of the finite open cover $\mathcal{U}$ are disjoint, they are all also closed. By Proposition \ref{preliminary proposition} there is an unbounded set $A_{1}\subseteq X$ such that $tr(A_{1})$ contains $U_{1}$ and is disjoint from $U_{j}$ for $j>1$. Then $tr(A_{1})=U_{1}$. Assume that $A_{i}$ has been constructed for $i\leq n<m$ in such way that the $A_{i}$ are disjoint and are such that if $K\subseteq X$ satisfies $tr(K)\subseteq U_{i}$, then $K\ll A_{i}$. We can again, by Proposition \ref{preliminary proposition} find an unbounded set $\hat{A}_{n+1}\subseteq X$ such that $tr(\hat{A}_{n+1})=U_{n+1}$. As the $U_{i}$ are all disjoint we have that $\left(\bigcup_{i\leq n}A_{i}\right)\bar{\bf b}\hat{A}_{n+1}$ where ${\bf b}$ is the metric coarse proximity on $X$. Then, $\bigcup_{i\leq n}A_{i}$ intersects $\hat{A}_{n+1}$ in at most a a bounded set. Subtract this bounded set from $\hat{A}_{n+1}$ to get $A_{n+1}$. Proposition \ref{preliminary proposition} gives us that if $K\subseteq X$ is such that $tr(K)\subseteq U_{n+1}$, then $K\ll A_{n+1}$. By induction we then have our unbounded sets $A_{1},\ldots,A_{m}$. By construction we have that $X\setminus\left(\bigcup_{i=1}^{m}A_{i}\right)$ is bounded. We then set $A_{m+1}$ to be this bounded set. This completes the proof.
\end{proof}

\begin{proposition}\label{traceable intersection of covers dimension zero}
Let $(X,d)$ be an unbounded $1$-discrete proper metric space of asymptotic dimension $0$, and let $\mathcal{U}=\{A_{1},\ldots,A_{m_{1}}\}$ and $\mathcal{V}=\{C_{1},\ldots,C_{m_{2}}\}$ be coarsely canonical covers of $X$ of order $1$. Then there is a coarsely cofinal cover $\mathcal{W}$ of $X$ that coarsely refines both $\mathcal{U}$ and $\mathcal{V}$.
\end{proposition}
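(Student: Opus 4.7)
The plan is to pass to the Higson corona $\nu X$, take the common refinement of the trace partitions there, and lift the result back to $X$ via Proposition \ref{traceable disjoint covers}. I interpret ``coarsely cofinal cover" in the statement as a typo for ``coarsely canonical cover".

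First, I observe that because each of $\mathcal{U}$ and $\mathcal{V}$ has order $1$, any two distinct unbounded elements are divergent; by Proposition \ref{gradually disjoint divergent characterization} (together with Proposition \ref{coarse proximity characterization}), their traces are disjoint closed subsets of $\nu X$. By Proposition \ref{trace of coarsely canonical cover} the nonempty traces $tr(A_1), \ldots, tr(A_{m_1})$ still cover $\nu X$. Hence they form a finite, pairwise disjoint closed cover of the compact Hausdorff space $\nu X$, which forces each $tr(A_i)$ to be clopen (its complement is a finite union of closed sets). The same holds for the traces of $\mathcal{V}$.

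Next, I form the common refinement
\[
\mathcal{P} \;=\; \bigl\{\, tr(A_i) \cap tr(C_j) \;:\; 1 \leq i \leq m_1,\ 1 \leq j \leq m_2,\ tr(A_i) \cap tr(C_j) \neq \emptyset \,\bigr\}.
\]
This is a finite pairwise disjoint clopen (hence open) cover of $\nu X$, so Proposition \ref{traceable disjoint covers} applies and yields a coarsely canonical cover $\mathcal{W} = \{W_{ij}\} \cup \{B\}$ of $X$, where $B$ is a (possibly empty) bounded set and $tr(W_{ij}) = tr(A_i) \cap tr(C_j)$ for every nonempty intersection.

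It remains to verify that $\mathcal{W}$ coarsely refines both $\mathcal{U}$ and $\mathcal{V}$; I do the argument for $\mathcal{U}$. Since the elements of a coarsely canonical cover are disjoint and exhaust $X$, I can write $W_{ij} = \bigsqcup_{k} (W_{ij} \cap A_k)$. For each $k \neq i$ the set $W_{ij} \cap A_k$ has trace contained in $tr(W_{ij}) \cap tr(A_k) \subseteq tr(A_i) \cap tr(A_k) = \emptyset$, so by Proposition \ref{coarse proximity characterization} it must be bounded. Summing over the finitely many $k \neq i$, the set $W_{ij} \setminus A_i$ is bounded, which is exactly the condition of Definition \ref{coarse refinement}. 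The same argument with $\mathcal{V}$ in place of $\mathcal{U}$ finishes the proof.

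The main potential obstacle is confirming that the order-$1$ hypothesis genuinely upgrades the trace partitions from ``almost canonical" to honestly disjoint (and hence clopen) closed covers of $\nu X$; once that is in hand, everything else is bookkeeping with Proposition \ref{traceable disjoint covers} and the disjointness built into the definition of a coarsely canonical cover. The asymptotic dimension zero hypothesis does not appear to be essential for this particular step of the argument, but is presumably invoked for the surrounding theorem (via $\dim \nu X = 0$ from Theorem \ref{asdim and covering dimension}) to guarantee an abundant supply of order-$1$ coarsely canonical covers in the first place.
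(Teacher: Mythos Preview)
Your proof is correct and follows essentially the same route as the paper: pass to $\nu X$, use the order-$1$ hypothesis to see that the trace covers are disjoint clopen partitions, take their common refinement, and lift back via Proposition \ref{traceable disjoint covers}. Your verification of coarse refinement (showing $W_{ij}\setminus A_i$ is bounded via trace disjointness) is a minor rephrasing of the paper's argument, which instead observes $D_k \ll A_i$ and then intersects with $A_i\cap C_j$ to get literal containment; your remark that the $asdim(X)=0$ hypothesis is not actually used in this step is also accurate.
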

\begin{proof}
We begin by noting that by Proposition \ref{order relation} we have that $tr(\mathcal{U})=\{tr(A_{1}),\ldots,tr(A_{m_{1}})\}$ and $tr(\mathcal{V})=\{tr(C_{1}),\ldots,tr(C_{m_{2}})\}$ are almost canonical covers of $\nu X$ of order $1$ and are therefore disjoint open covers. Then, the set of nonempty intersections $\mathcal{C}=\{tr(A_{i})\cap tr(C_{j})\mid 1\leq i\leq m_{1},\,1\leq j\leq m_{2}\}$ is another disjoint open cover of $\nu X$. By Proposition \ref{traceable disjoint covers} there is a coarsely canonical cover $\hat{\mathcal{W}}=\{\hat{D}_{1},\ldots,\hat{D}_{m_{3}}\}$ whose corresponding set of traces is equal to $\mathcal{C}$. If $tr(D_{k})=tr(A_{i})\cap tr(C_{j})$ then $D_{k}\ll A_{i}$ and $D_{k}\ll C_{j}$. Therefore, $D_{k}\ll (A_{i}\cap C_{j})$ which implies that $D_{k}$ is contained in $A_{i}\cap C_{j}$ up to a bounded set. Define $D_{k}=\hat{D}_{k}\cap(A_{i}\cap C_{j})$ and define $\mathcal{W}$ to be the collection of $D_{k}$'s with $D_{m_{3}}$ being equal to $X\setminus\left(\bigcup_{i<m_{3}}D_{i}\right)$.
\end{proof}

\begin{theorem}\label{asdim and coarse cofinal agree in dimension zero}
Let $(X,d)$ be an unbounded $1$-discrete proper metric space of asymptotic dimension $0$. Then $\Delta_{c}(X)=0$. 
\end{theorem}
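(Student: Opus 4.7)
The plan is to establish $\Delta_c(X) \leq 0$; since $X$ is unbounded, Definition \ref{coarse cofinal definition} already forces $\Delta_c(X) \geq 0$, so this yields $\Delta_c(X) = 0$. The strategy is to construct a fine directed system of coarsely canonical covers of $X$ of order at most $1$ by importing perfect zero-dimensionality of the Higson corona into $X$.

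First I invoke Theorem \ref{asdim and covering dimension} to obtain $\dim(\nu X) = 0$, and by Proposition \ref{cofinal dimension zero} the corona $\nu X$ is perfectly zero-dimensional, so every finite open cover of $\nu X$ admits a disjoint clopen refinement. Let $\Omega$ denote the collection of all coarsely canonical covers of $X$ of order at most $1$, partially ordered by coarse refinement. Directedness of $\Omega$ is immediate from Proposition \ref{traceable intersection of covers dimension zero}: given $\mathcal{U}, \mathcal{V} \in \Omega$, the common refinement $\mathcal{W}$ produced there has traces equal to the pairwise intersections $tr(A_i) \cap tr(C_j)$, which are pairwise disjoint, so $\mathcal{W}$ has order at most $1$ and again lies in $\Omega$.

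The substantive step is fineness. Given a ${\bf b}$-cover $\{A_1, \ldots, A_n\}$ of $X$ witnessed by sets $\{C_1, \ldots, C_n\}$ with $C_i \ll A_i$, I define $O_i := \nu X \setminus tr(X \setminus A_i)$. Each $O_i$ is open, and $C_i \ll A_i$ yields $tr(C_i) \cap tr(X \setminus A_i) = \emptyset$ by Proposition \ref{coarse proximity characterization}, so $tr(C_i) \subseteq O_i$; since the $C_i$'s cover $X$, the $tr(C_i)$'s cover $\nu X$, and hence $\{O_i\}$ is an open cover of $\nu X$. Applying perfect zero-dimensionality, take a finite disjoint clopen refinement $\{W_1, \ldots, W_m\}$ with each $W_k \subseteq O_{j(k)}$ for some index $j(k)$. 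By Proposition \ref{traceable disjoint covers} (using that $X$ is $1$-discrete and proper), this lifts to a coarsely canonical cover $\{D_1, \ldots, D_m, D_{m+1}\}$ of $X$ with $tr(D_k) = W_k$ and $D_{m+1}$ bounded; its order is at most $1$ since the $W_k$'s are disjoint.

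To verify the coarse refinement property, observe that $tr(D_k) = W_k \subseteq O_{j(k)}$ implies $tr(D_k) \cap tr(X \setminus A_{j(k)}) = \emptyset$, so by Proposition \ref{coarse proximity characterization} we have $D_k \bar{\bf b}(X \setminus A_{j(k)})$, i.e., $D_k \ll A_{j(k)}$; axiom (3) of coarse proximity then forces $D_k \cap (X \setminus A_{j(k)})$ to be bounded, so $D_k$ is contained in $A_{j(k)}$ up to a bounded set, giving the required coarse refinement. The main obstacle I anticipate is precisely this choice of $O_i$ rather than the a priori more natural $int(tr(A_i))$: in general $int(tr(A_i))$ need not be disjoint from $tr(X \setminus A_i)$, so lifting a clopen refinement of $\{int(tr(A_i))\}$ would only yield $D_k \ll_w A_{j(k)}$, which is too weak to conclude that $D_k \cap (X \setminus A_{j(k)})$ is bounded.
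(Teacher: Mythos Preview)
Your proof is correct and follows essentially the same route as the paper: invoke $\dim(\nu X)=0$, take the family of order-$1$ coarsely canonical covers, get directedness from Proposition~\ref{traceable intersection of covers dimension zero}, and get fineness by lifting a disjoint clopen refinement on the corona through Proposition~\ref{traceable disjoint covers}. Your treatment of fineness is in fact more careful than the paper's: the paper simply asserts that $\mathcal{L}$ is fine, while you correctly note that working with $O_i=\nu X\setminus tr(X\setminus A_i)$ rather than $int(tr(A_i))$ is what delivers $D_k\ll A_{j(k)}$ (hence $D_k\subseteq A_{j(k)}$ up to a bounded set) instead of merely $D_k\ll_w A_{j(k)}$.

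There is one small gap you should close. The paper's definition of a directed set requires antisymmetry, and coarse refinement is only a preorder: two distinct coarsely canonical covers can coarsely refine each other. Your $\Omega$ as written is therefore not literally a directed set. The paper handles this by passing to a subfamily $\mathcal{L}\subseteq\Omega$ containing exactly one representative from each mutual-refinement class, invoking Proposition~\ref{equivalent covers are very equivalent} to ensure nothing is lost. You should either do the same or note that the definition of $\Delta_c$ only uses the preorder structure so that antisymmetry is immaterial.
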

\begin{proof}
Let $X$ be given. Then by Theorem \ref{asdim and covering dimension} we have that $dim(\nu X)=0$. Denote by $Cov(\nu X)$ the collection of finite disjoint open covers of $\nu X$, ordered by refinement. Denote by $CCCov(X,1)$ the collection of coarsely canonical covers of $X$ or order $1$, ordered by coarse refinement. As the order defined by coarse refinement is not antisymmetric we let $\mathcal{L}\subseteq CCCov(X,1)$ be such that no two elements of $\mathcal{L}$ coarsely refine each other and every element of $CCCov(X,1)$ is coarsely refined some element of $\mathcal{L}$. Proposition \ref{equivalent covers are very equivalent} assures us that this is possible. By Proposition \ref{traceable disjoint covers} we have that for every $\mathcal{U}\in Cov(\nu X)$ there is a $\mathcal{V}\in \mathcal{L}$ such that the collection of traces of elements of $\mathcal{V}$ is exactly $\mathcal{U}$. If $\mathcal{U},\mathcal{V}\in \mathcal{L}$ are given coarsely canonical covers of $X$, then by Proposition \ref{traceable intersection of covers dimension zero} there is a coarsely canonical cover $\mathcal{W}$ of $X$ coarsely refines both $\mathcal{U}$ and $\mathcal{V}$. Moreover, this coarsely canonical cover has order $1$. We then have that $\mathcal{L}$ is a fine directed system of coarsely canonical covers of $X$ none of which has order greater than $1$. Therefore $\Delta_{c}(X)\leq 0$ and as $X$ is unbounded we have that $\Delta_{c}(X)=0$.

\end{proof}

\section{From coarsely canonical covers to an asymptotic dimension zero coarse proximity space}\label{equality of coarse cofinal and asdim}

This section is focused on proving our main result, that $\Delta_{c}(X)\geq asdim(X)$ for every unbounded proper metric space $X$. We do this by providing a coarse version of a result in classic dimension theory. Specifically we ``coarsify" the result that says that if a space $X$ satisfies $\Delta(X)\leq n$, then there is a perfectly zero dimensional space $Y$ and a closed $n+1$-to-$1$ map $f:Y\rightarrow X$. Our coarse version of the result is made by following the methods for the classical result as thoroughly explained in Chapter $6$ of \cite{pears}. What we show in this section specifically is the following, if $X$ is an unbounded proper metric space satisfying $\Delta_{c}(X)\leq n$, then there is another unbounded proper metric space $Z$ of asymptotic dimension zero (and satisfying $\Delta_{c}(Z)=0$) and a coarsely surjective and coarsely $n+1$-to-$1$ map $f:Z\rightarrow X$.  This will give us that $asdim(X)\leq n$ by Theorem \ref{asdim characterization}. This in conjunction with Theorem \ref{asdim bigger than cofinal} will yield that for every unbounded proper metric space $X$, the inequality $\Delta(\nu X)\leq asdim(X)\leq\Delta_{c}(X)$ holds. 
\vspace{\baselineskip}

Let $\{\mathcal{F}_{\alpha}\}_{\alpha\in\Omega}$ be a fine directed system of coarsely canonical covers of a countable $1$-discrete proper metric space $X$ where $\beta\geq\alpha$ implies that $\mathcal{F}_{\alpha}$ is coarsely refined by $\mathcal{F}_{\beta}$. Say $\mathcal{F}_{\alpha}=\{A_{1},\ldots,A_{n}\}$ and $\mathcal{F}_{\beta}=\{C_{1},\ldots,C_{m}\}$. For a given $\alpha\in\Omega$ we will denote by $\Omega(\alpha)$ the index set of the coarsely canonical cover $\mathcal{F}_{\alpha}$. We will assume that every $\Omega(\alpha)$ is a set of natural numbers of the form $\{1,\ldots,n\}$ for some natural number $n$.  Condition $(1)$ of Definition \ref{coarsely canonical cover definition} makes it so that if $\mathcal{F}_{\alpha}$ refines $\mathcal{F}_{\beta}$ then for each $A_{i}\in\mathcal{F}_{\alpha}$ there is a unique $C_{j}\in\mathcal{F}_{\beta}$ such that $A_{i}\subseteq C_{j}$. There is then a unique function from $\Omega(\beta)=\{1,\ldots,m\}$ to $\Omega(\alpha)=\{1,\ldots, n\}$ denoted by $\rho_{\alpha\beta}$ such that $C_{i}\subseteq A_{\rho_{\alpha\beta}(i)}$ for each $i$. For a given $\alpha$ we then define a coarse proximity space $(X_{\alpha},\mathcal{B}_{\alpha},{\bf b}_{\alpha})$ by define $X_{\alpha}=\bigcup_{i=1}^{n}(A_{i}\times\{i\})$. That is, $X_{\alpha}$ is the disjoint union of the elements of $\mathcal{F}_{\alpha}$. Recall that coarsely canonical covers of $X$ are made up of disjoint sets that cover $X$. The function $\delta_{\alpha}:X_{\alpha}\rightarrow X$ defined by $(x,i)\mapsto x$ is therefore bijective . We say that a subset $B\subseteq X_{\alpha}$ if $\delta_{\alpha}(B)$ is bounded in $X$. It is easily seen that this is a bornology on $X_{\alpha}$. For unbounded sets $A,C\subseteq X_{\alpha}$ we say that $A{\bf b}_{\alpha}C$ if and only if $(\delta_{\alpha}(A)\cap A_{i}){\bf b}(\delta_{\alpha}(C)\cap A_{i})$ for some $i$. It is again easily seen that this relation makes $(X_{\alpha},\mathcal{B}_{\alpha},{\bf b}_{\alpha})$ into a coarse proximity space. With this coarse proximity structure $\delta_{\alpha}$ is a bijective coarse proximity map.
\vspace{\baselineskip}

\begin{lemma}\label{uniformly bounded cover lemma}
A collection $\mathcal{W}$ of bounded subsets of $X_{\alpha}$ is uniformly bounded if and only if the collection 

\[\{\delta_{\alpha}(B)\cap A_{i}\mid B\in\mathcal{W},\,1\leq i\leq n\}\]

is uniformly bounded in $X$ and the collection 

\[\{B\in\mathcal{W}\mid \exists i\neq j,\,B\cap(A_{i}\times{i})\neq\emptyset,\,B\cap(A_{j}\times\{j\})\neq\emptyset\}\]

is finite.
\end{lemma}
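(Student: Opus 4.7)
The plan is to prove both implications by exploiting two structural facts: on each slice $A_i \times \{i\}$ the projection $\delta_\alpha$ restricts to a coarse proximity isomorphism onto the subspace $A_i \subseteq X$ (slicewise, the bornology and proximity on $X_\alpha$ are literally pulled back from $X$), and the relation ${\bf b}_\alpha$ only recognises closeness \emph{within} a single slice, because the elements $A_1, \ldots, A_n$ of the coarsely canonical cover are pairwise disjoint. Condition (1) will correspond to intra-slice uniform boundedness while (2) will be forced by inter-slice disjointness.

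For the forward direction, suppose $\mathcal{W}$ is uniformly bounded in $X_\alpha$. If (1) failed, a pigeonhole argument would give a single slice index $i_0$ for which the collection $\{\delta_\alpha(B) \cap A_{i_0} : B \in \mathcal{W}\}$ is not uniformly bounded in $X$; mimicking the construction in the proof of Proposition \ref{3.3}, one can build unbounded $P, Q \subseteq A_{i_0}$ that are star-related under this collection but with $P \phi Q$ failing in $X$. Lifting to $P \times \{i_0\}$ and $Q \times \{i_0\}$ inside $X_\alpha$ then produces unbounded sets star-related under $\mathcal{W}$ but not $\phi_\alpha$-related (since $\phi_\alpha$ restricted to a single slice is just $\phi$ pulled back via $\delta_\alpha$), contradicting uniform boundedness. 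If (2) failed, pigeonhole yields infinitely many distinct $B_k \in \mathcal{W}$ all straddling the same pair of slices, which after relabelling we take to be $1$ and $2$. Setting $D_1 = \bigcup_k (B_k \cap (A_1 \times \{1\}))$ and $D_2 = \bigcup_k (B_k \cap (A_2 \times \{2\}))$, either both $D_1, D_2$ are bounded, which contradicts distinctness of the $B_k$ because bounded sets in $X_\alpha$ are finite ($X$ is $1$-discrete proper) and a finite set has only finitely many subsets, or without loss of generality $D_1$ is unbounded. Taking $A = D_1$ and $C = D_2$, both star conditions hold by construction while $\delta_\alpha(A) \subseteq A_1$ and $\delta_\alpha(C) \subseteq A_2$ are disjoint, so $A \bar{\bf b}_\alpha C$ and hence $A \bar{\phi}_\alpha C$ either directly or by the bounded/unbounded convention, again contradicting uniform boundedness.

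For the backward direction, assume (1) and (2), and let $\mathcal{W}_0 \subseteq \mathcal{W}$ be the finite subfamily of straddling elements, so that $U_0 := \bigcup \mathcal{W}_0$ is bounded in $X_\alpha$. For any bounded $A$, each $p = (x, i) \in A$ contributes $\bigcup\{B \in \mathcal{W} : p \in B\} \subseteq U_0 \cup V_p$, where $V_p$ is a union of non-straddling elements in slice $i$ whose $\delta_\alpha$-images all contain $x$ and therefore lie in the star of $\{x\} \subseteq A_i$ under the uniformly bounded family of (1); that star is bounded, so $V_p$ is bounded, and summing over the finite set $A$ shows $st(A, \mathcal{W})$ is bounded. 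Hence the star conditions force both or neither of $A, C$ to be bounded. For the essential case in which both are unbounded, fix an unbounded $A' \subseteq A$ and pigeonhole to a slice $i_0$ with $\delta_\alpha(A') \cap A_{i_0}$ unbounded. Removing the bounded straddling contribution yields an unbounded $\hat A' \subseteq A' \cap (A_{i_0} \times \{i_0\})$ sitting inside $st(C \cap (A_{i_0} \times \{i_0\}), \mathcal{W}_1)$, where $\mathcal{W}_1$ is the non-straddling subfamily inside slice $i_0$. Pushing this through $\delta_\alpha$, the unbounded set $\delta_\alpha(\hat A')$ lies inside $st(\delta_\alpha(C) \cap A_{i_0}, \mathcal{V})$ for the uniformly bounded family $\mathcal{V}$ in $X$ extracted from $\mathcal{W}_1$; this forces $\delta_\alpha(C) \cap A_{i_0}$ unbounded (else its uniform star would be bounded too) and $\phi$-related to that star, which combine to give $(\delta_\alpha(A') \cap A_{i_0}) {\bf b} (\delta_\alpha(C) \cap A_{i_0})$ in $X$, and hence $A' {\bf b}_\alpha C$. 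The symmetric argument delivers $A \phi_\alpha C$. The main technical obstacle is precisely this slicewise reduction: one must carefully verify that subtracting the bounded straddling contribution preserves unboundedness of $\hat A'$, and that the star-with-uniformly-bounded-family argument in $X$ really does yield genuine ${\bf b}$-closeness, which ultimately rests on the identity $P \phi st(P, \mathcal{V})$ for any uniformly bounded $\mathcal{V}$ and any $P \subseteq X$.
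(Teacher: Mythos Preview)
Your argument is correct and follows the same overall strategy as the paper's: both directions exploit the slicewise definition of ${\bf b}_\alpha$ and discard the finitely many straddling members of $\mathcal{W}$ before reducing to the uniformly bounded family in $X$. The one tactical difference is in the forward implication for condition (1): the paper observes in one line that since $\delta_\alpha$ is a bijective coarse proximity map, $\delta_\alpha(\mathcal{W})$ is uniformly bounded in $X$ and the family in (1) refines it, whereas you argue by contradiction, reconstructing Proposition~\ref{3.3} inside a single slice; your route works but is longer. Conversely, you are more careful than the paper in the backward direction, explicitly handling the mixed bounded/unbounded case by showing $st(A,\mathcal{W})$ is bounded whenever $A$ is---a point the paper passes over by simply restricting to unbounded $K_1,K_2$. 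One small caveat: your closing identity ``$P\phi\, st(P,\mathcal{V})$ for any uniformly bounded $\mathcal{V}$'' is not literally true for non-covering families, but the direction you actually need (every unbounded subset of $st(P,\mathcal{V})$ is ${\bf b}$-close to $P$) does hold here because every point of $\delta_\alpha(\hat A')$ lies in some member of $\mathcal{V}$, and that suffices.
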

\begin{proof}
Assume $\mathcal{W}$ is a collection of subsets of $X_{\alpha}$ that is uniformly bounded. Because $\delta_{\alpha}$ is a coarse proximity map we have that $\partial_{\alpha}(\mathcal{U})$ is uniformly bounded. As $\{\delta_{\alpha}(B)\cap A^{\alpha}_{i}\mid B\in\mathcal{W},\,1\leq i\leq n\}$ refines $\delta_{\alpha}(\mathcal{W})$ it too must be uniformly bounded. If $\{B\in\mathcal{W}\mid \exists i\neq j,\,B\cap(A_{i}\times{i})\neq\emptyset,\,B\cap(A_{j}\cap\{j\})\neq\emptyset\}$  is infinite then by the pigeon hole principle there are $i\neq j$ and an infinite collection of subsets $\hat{\mathcal{W}}\subseteq\mathcal{W}$ such that $B$ contains elements of both $A^{\alpha}_{i}\times\{i\}$ and $A^{\alpha}_{j}\times\{j\}$ for all $B\in\hat{\mathcal{W}}$. We can then find an unbounded subset $K_{1}\subseteq A^{\alpha}_{i}\times\{i\}$ and $K_{2}\subseteq A^{\alpha}_{j}\times\{j\}$ such that $K_{1}\subseteq st(K_{2},\hat{\mathcal{W}})$ and $K_{2}\subseteq st(K_{1},\hat{\mathcal{W}})$. This implies that $K_{1}{\bf b}_{\alpha}K_{2}$. However, this is impossible as $K_{1}\overline{\bf b}_{\alpha}K_{2}$ by the definition of ${\bf b}_{\alpha}$. 
\vspace{\baselineskip}

Conversely, suppose $\mathcal{W}$ is a collection of sets satisfying the two conditions stated in the Lemma. Because $\delta_{\alpha}$ is a coarse proximity map we have that each element of $\mathcal{W}$ is bounded. Let $K_{1},K_{2}\subseteq X_{\alpha}$ be unbounded sets such that $K_{1}\subseteq st(K_{2},\mathcal{W})$ and $K_{2}\subseteq st(K_{1},\mathcal{W})$. We wish to show that $K_{1}\phi_{\alpha}K_{2}$ where $\phi_{\alpha}$ is the weak asymptotic resemblance induced by ${\bf b}_{\alpha}$. Assume towards a contradiction that this is not true. Then, without loss of generality, there is an unbounded $\hat{K}_{1}\subseteq K_{1}$ such that $\hat{K}_{1}\bar{\bf b}_{\alpha}K_{2}$. We may also assume that $\hat{K}_{1}$ does not intersect any of the elements of $\mathcal{W}$ that intersect more than one $A_{i}\times\{i\}$. Let $\tilde{\mathcal{W}}$ be the collection of elements of $\mathcal{W}$ that do not intersect more than one $A_{i}\times\{i\}$ we have that $\hat{K}_{1}\subseteq st(K_{2},\tilde{\mathcal{W}})$. Using the pigeonhole principle we may assume that $\hat{K}_{2}$ is entirely contained in one $A_{i}\times\{i\}$. Again, without loss of generality, this is $A_{1}\times\{1\}$. Then there is an unbounded set $\hat{K}_{2}$ contained entirely in $A_{1}\times\{1\}$ such that $\hat{K}_{1}\subseteq st(\hat{K}_{2},\tilde{\mathcal{W}})$. However, this implies that $(\delta_{\alpha}(\hat{K}_{1})\cap A_{1}){\bf b}(\delta_{\alpha}(\hat{K}_{2})\cap A_{1})$ which implies $\hat{K}_{1}{\bf b}_{\alpha}\hat{K}_{2}$ and consequently that $\hat{K}_{1}{\bf b}_{\alpha}K_{2}$, a contradiction. Therefore $K_{1}\phi_{\alpha}K_{2}$ and $\mathcal{W}$ is uniformly bounded in $X_{\alpha}$.
\end{proof}

Returning to our directed set $\{F_{\alpha}\}_{\alpha\in\Omega}$ of coarsely canonical covers, let $\beta\geq\alpha$ and consider the corresponding coarse proximity spaces $X_{\beta}$ and $X_{\alpha}$. The function $\rho_{\alpha\beta}$ induces a function $\pi_{\alpha\beta}:X_{\beta}\rightarrow X_{\alpha}$ in the following way. If $(x,i)\in X_{\beta}$ then $\pi_{\alpha\beta}(x,i)=(x,\rho_{\alpha\beta}(i))$ is well defined and bijective. Moreover, it is easily seen that $\pi_{\alpha\beta}$ is a coarse proximity map from $X_{\beta}$ to $X_{\alpha}$ such that the collection $\{(X_{\alpha},\mathcal{B}_{\alpha},{\bf b}_{\alpha}),\pi_{\alpha\beta}\}_{\alpha\beta\in\Omega}$ is an inverse system of coarse proximity spaces. Let $(X_{\infty},\mathcal{B}_{\infty},{\bf b}_{\infty})$ be the corresponding inverse limit coarse proximity space. Because each $\rho_{\beta\alpha}$ is bijective and $(\delta_{\alpha}\circ \pi_{\beta\alpha})=\delta_{\beta}$ for all $\beta\geq\alpha$ we have that elements of $X_{\infty}$ can be expressed uniquely as $(x,i,\alpha)_{\alpha\in\Omega}$ where $x\in X$, and $i\in\Omega(\alpha)$ is such that $x\in A^{\alpha}_{i}$. We can then define a map $\delta:X_{\infty}\rightarrow X$ by $(x,i,\alpha)_{\alpha\in\Omega}\mapsto x$.  It is easily seen that $\delta$ is a bijective coarse proximity map. 
\vspace{\baselineskip}

Note that for each $\alpha\in\Omega$ we have that $\mathcal{U}X_{\alpha}$ is homeomorphic to the disjoint union of the boundaries of the elements of $\mathcal{F}_{\alpha}$ by Proposition \ref{boundary of disjoint union is disjoint union}. By Theorem \ref{inverse limit commutes with boundary functor} we have that $\mathcal{U}X_{\infty}$ is homeomorphic to the inverse limit of these disjoint unions.

\begin{lemma}\label{weak ASR at limit}
Given $A,C\subseteq X_{\infty}$ we have that $A\phi_{\infty}C$ if and only if $\pi_{\alpha}(A)\phi_{\alpha}\pi_{\alpha}(C)$ for all $\alpha$.
\end{lemma}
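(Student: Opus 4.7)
The plan is to establish both implications by directly unpacking the definition of the induced weak asymptotic resemblance (as in Theorem \ref{asymptoticresemblance}), together with a preliminary observation about how boundedness in $X_\infty$ is detected by the projections $\pi_\alpha$. Specifically, because each $\delta_\alpha: X_\alpha \rightarrow X$ is a bijective coarse proximity map with $\delta = \delta_\alpha \circ \pi_\alpha$ also bijective, the bornology $\mathcal{B}_\infty$ admits the simple characterization that $B \subseteq X_\infty$ is bounded if and only if $\delta(B)$ is bounded in $X$, which is in turn equivalent to $\pi_\alpha(B)$ being bounded in $X_\alpha$ for every $\alpha \in \Omega$. I would record this observation first, since it is what lets unboundedness pass cleanly through projections in both directions.

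For the forward direction, suppose $A \phi_\infty C$ and fix $\alpha \in \Omega$. Given an unbounded $B' \subseteq \pi_\alpha(C)$, I would lift it by setting $C' = C \cap \pi_\alpha^{-1}(B')$. Since $\pi_\alpha$ is surjective and $B' \subseteq \pi_\alpha(C)$, we have $\pi_\alpha(C') = B'$, and by the bornology observation $C'$ is unbounded in $X_\infty$. Because $C' \subseteq C$ is unbounded and $A \phi_\infty C$, we obtain $A {\bf b}_\infty C'$, which by definition of ${\bf b}_\infty$ yields $\pi_\alpha(A) {\bf b}_\alpha \pi_\alpha(C') = \pi_\alpha(A) {\bf b}_\alpha B'$. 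The symmetric argument for unbounded subsets of $\pi_\alpha(A)$ then gives $\pi_\alpha(A) \phi_\alpha \pi_\alpha(C)$.

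For the converse, assume $\pi_\alpha(A) \phi_\alpha \pi_\alpha(C)$ for every $\alpha \in \Omega$, and let $C' \subseteq C$ be unbounded. For each $\alpha$, $\pi_\alpha(C')$ is an unbounded subset of $\pi_\alpha(C)$ (again by the bornology observation), so the hypothesis gives $\pi_\alpha(A) {\bf b}_\alpha \pi_\alpha(C')$. Since this holds for every $\alpha$, the definition of ${\bf b}_\infty$ yields $A {\bf b}_\infty C'$. The symmetric argument for unbounded subsets of $A$ completes the proof that $A \phi_\infty C$.

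The main obstacle is the bookkeeping around passing unboundedness through the projections. In the full generality of the inverse limit construction of Section \ref{inverse limits}, the bornology only requires that \emph{some} projection of a bounded set be bounded, so \emph{a priori} an unbounded set could have bounded image under a particular $\pi_\alpha$, which would break the lifting step $C' = C \cap \pi_\alpha^{-1}(B')$ above. The saving feature is precisely the setup of Section \ref{equality of coarse cofinal and asdim}, where the $\delta_\alpha$ are bijective coarse proximity maps pulling the bornology of each $X_\alpha$ back from $X$; verifying the preliminary bornology characterization is therefore the step that needs the most care.
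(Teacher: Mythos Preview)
Your proposal is correct and follows essentially the same approach as the paper. The paper dispatches the forward direction in one line by citing the general fact that coarse proximity maps preserve the induced weak asymptotic resemblance, whereas you reprove this directly via the lifting $C' = C \cap \pi_\alpha^{-1}(B')$; your converse is identical to the paper's. Your explicit bornology observation (that boundedness in $X_\infty$, in each $X_\alpha$, and in $X$ all coincide via $\delta$ and $\delta_\alpha$) is a welcome clarification that the paper leaves implicit.
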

\begin{proof}
Weak asymptotic resemblances are preserved by coarse proximity maps, so one direction is clear. Assume that $\pi_{\alpha}(C)\phi_{\alpha}\pi_{\alpha}(A)$ for all $\alpha$. If $\hat{C}\subseteq C$ is an unbounded subset of $X_{\infty}$ then $\pi_{\alpha}(\hat{C}){\bf b}_{\alpha}\pi_{\alpha}(A)$ for all $\alpha$ because $\pi_{\alpha}(C)\phi_{\alpha}\pi_{\alpha}(A)$ and $\pi_{\alpha}(\hat{C})\subseteq\pi_{\alpha}(C)$. Then $\hat{C}{\bf b}_{\infty}A$. Similarly one shows that if $\hat{A}\subseteq A$ is unbounded, then $C{\bf b}_{\infty}\hat{A}$, so $C\phi_{\infty}A$.
\end{proof}

\begin{lemma}\label{unif bounded covers of limit}
A collection $\mathcal{U}$ of bounded subsets of $X_{\infty}$ is uniformly bounded if and only if $\pi_{\alpha}(\mathcal{U})$ is a uniformly bounded collection of subsets of $X_{\alpha}$ for every $\alpha\in\Omega$.
\end{lemma}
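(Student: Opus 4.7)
The plan is to exploit two facts from the construction: first, each projection $\pi_{\alpha}\colon X_{\infty}\to X_{\alpha}$ is a \emph{bijective} coarse proximity map (bijectivity of each $\pi_{\beta\alpha}$ carries through to the inverse limit), and second, Lemma \ref{weak ASR at limit} lets us detect $\phi_{\infty}$-relations one coordinate at a time. As a preliminary, observe that a set $B\subseteq X_{\infty}$ is bounded if and only if $\pi_{\alpha}(B)$ is bounded in $X_{\alpha}$ for some (equivalently every) $\alpha$, since both conditions amount to $\delta(B)$ being bounded in $X$.

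For the forward direction, suppose $\mathcal{U}$ is uniformly bounded in $X_{\infty}$ and fix $\alpha\in\Omega$. Elements of $\pi_{\alpha}(\mathcal{U})$ are bounded because $\pi_{\alpha}$ is a coarse proximity map. Given $A',C'\subseteq X_{\alpha}$ with $A'\subseteq st(C',\pi_{\alpha}(\mathcal{U}))$ and $C'\subseteq st(A',\pi_{\alpha}(\mathcal{U}))$, set $A=\pi_{\alpha}^{-1}(A')$ and $C=\pi_{\alpha}^{-1}(C')$. Bijectivity of $\pi_{\alpha}$ makes the star operation pull back cleanly: a point of $A$ lying in some $\pi_{\alpha}(U)$ that meets $C'$ must itself lie in $U$, and $U$ meets $C$. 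Therefore $A\subseteq st(C,\mathcal{U})$ and $C\subseteq st(A,\mathcal{U})$, so $A\phi_{\infty}C$. Lemma \ref{weak ASR at limit} then yields $A'=\pi_{\alpha}(A)\phi_{\alpha}\pi_{\alpha}(C)=C'$, completing the second axiom for $\pi_{\alpha}(\mathcal{U})$.

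For the reverse direction, suppose $\pi_{\alpha}(\mathcal{U})$ is uniformly bounded in $X_{\alpha}$ for every $\alpha$. Each $U\in\mathcal{U}$ is bounded in $X_{\infty}$ by the preliminary observation. Given $A,C\subseteq X_{\infty}$ with $A\subseteq st(C,\mathcal{U})$ and $C\subseteq st(A,\mathcal{U})$, I push forward by $\pi_{\alpha}$: a point of $A$ in some $U\in\mathcal{U}$ meeting $C$ projects into $\pi_{\alpha}(U)\in\pi_{\alpha}(\mathcal{U})$ which meets $\pi_{\alpha}(C)$, giving $\pi_{\alpha}(A)\subseteq st(\pi_{\alpha}(C),\pi_{\alpha}(\mathcal{U}))$ (and symmetrically). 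Uniform boundedness of $\pi_{\alpha}(\mathcal{U})$ then gives $\pi_{\alpha}(A)\phi_{\alpha}\pi_{\alpha}(C)$. Since this holds for every $\alpha$, Lemma \ref{weak ASR at limit} concludes $A\phi_{\infty}C$.

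The main subtlety is the bijectivity used in the forward direction: without it, there is no clean way to pull a star-neighborhood in $X_{\alpha}$ back to a star-neighborhood in $X_{\infty}$ to access the uniform boundedness hypothesis on $\mathcal{U}$. In the present construction this bijectivity is free, and every other step is a routine unwinding of definitions together with Lemma \ref{weak ASR at limit}.
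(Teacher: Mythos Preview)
Your proof is correct and, for the reverse direction, identical to the paper's: push the star condition through $\pi_{\alpha}$, use uniform boundedness of $\pi_{\alpha}(\mathcal{U})$ to get $\pi_{\alpha}(A)\phi_{\alpha}\pi_{\alpha}(C)$ for all $\alpha$, then invoke Lemma~\ref{weak ASR at limit}.

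The forward direction is handled differently. The paper dispatches it in one line by asserting that uniformly bounded collections are preserved by coarse proximity maps. You instead give a direct argument that uses the bijectivity of $\pi_{\alpha}$ to pull the star-containment back from $X_{\alpha}$ to $X_{\infty}$, apply the hypothesis on $\mathcal{U}$, and then push $\phi_{\infty}$ forward to $\phi_{\alpha}$. Your route is more self-contained: the general preservation claim the paper invokes is not proved in the paper and is not obviously true for arbitrary coarse proximity maps (without injectivity, pulling back $A'\subseteq st(C',f(\mathcal{U}))$ to a star-containment in the domain is exactly the problematic step). In the present setting bijectivity of $\pi_{\alpha}$ makes the issue disappear, and your argument makes that dependence explicit. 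So both proofs are short, but yours isolates precisely the structural feature of the construction that makes the forward direction work.
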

\begin{proof}
Uniformly bounded collections are preserved by coarse proximity maps, so one direction is clear. Assume that $\pi_{\alpha}(\mathcal{U})$ is uniformly bounded in $X_{\alpha}$ for each $\alpha$. Then, for every $B\in\mathcal{U}$ we must have that $\pi_{\alpha}(B)$ is bounded in $X_{\alpha}$, so $B$ is bounded in $X_{\infty}$. Next, if $A,C\subseteq X_{\infty}$ are such that $A\subseteq st(C,\mathcal{U})$ and $C\subseteq st(A,\mathcal{U})$ then $\pi_{\alpha}(A)\subseteq st(\pi_{\alpha}(C),\pi_{\alpha}(\mathcal{U}))$ and $\pi_{\alpha}(C)\subseteq st(\pi_{\alpha}(A),\pi_{\alpha}(\mathcal{U}))$. As $\pi_{\alpha}(\mathcal{U})$ is uniformly bounded for each $\alpha$ we have that $\pi_{\alpha}(A)\phi_{\alpha}\pi_{\alpha}(C)$ for each $\alpha$, which by Lemma \ref{weak ASR at limit} implies that $A\phi_{\infty}C$. Therefore, $\mathcal{U}$ is uniformly bounded in $X_{\infty}$.
\end{proof}

\begin{proposition}
Let $\mathcal{V}$ be a uniformly bounded cover of $X_{\infty}$, then the collection of $\mathcal{V}$-components of $X_{\infty}$, denoted $\mathcal{C}(\mathcal{V})$, is uniformly bounded.
\end{proposition}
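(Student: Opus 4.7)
The plan is to invoke Lemma~\ref{unif bounded covers of limit} to reduce the task of showing that $\mathcal{C}(\mathcal{V})$ is uniformly bounded in $X_\infty$ to showing that $\pi_\alpha(\mathcal{C}(\mathcal{V}))$ is uniformly bounded in $X_\alpha$ for every $\alpha\in\Omega$. Fixing such an $\alpha$ and applying Lemma~\ref{uniformly bounded cover lemma} to the disjoint-union space $X_\alpha$, the problem splits into two subconditions: (a) the refined family $\{\delta_\alpha(\pi_\alpha(K))\cap A^\alpha_i : K\in\mathcal{C}(\mathcal{V}),\,1\leq i\leq n\}$ is uniformly bounded in $X$, and (b) only finitely many $K\in\mathcal{C}(\mathcal{V})$ have $\pi_\alpha(K)$ meeting more than one piece $A^\alpha_i\times\{i\}$.

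Subcondition (b) is the easier of the two. Any $V\in\mathcal{V}$ is $\mathcal{V}$-chain connected to itself by the one-element chain $V$, and therefore lies in a unique $\mathcal{V}$-component. Since $\pi_\alpha(\mathcal{V})$ is uniformly bounded in $X_\alpha$ by Lemma~\ref{unif bounded covers of limit}, Lemma~\ref{uniformly bounded cover lemma} guarantees that only finitely many $V\in\mathcal{V}$ have $\pi_\alpha(V)$ meeting multiple pieces of $\mathcal{F}_\alpha$. A component's $\pi_\alpha$-image can span multiple pieces only if the component contains such a ``crossing'' element, so at most finitely many components do.

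Subcondition (a) is the main content of the proof. The plan is to analyze each component $K$ via the subcollection $\mathcal{V}_K:=\{V\in\mathcal{V}:V\subseteq K\}$, which covers $K$ (any $V\in\mathcal{V}$ meeting $K$ is in fact entirely contained in $K$). At level $\alpha$, $\mathcal{V}_K$ decomposes into the finitely many elements whose image crosses pieces of $\mathcal{F}_\alpha$ and the rest, each of which sits inside a single piece $A^\alpha_i\times\{i\}$. Chain adjacency inside $K$ forces any transition between pieces to pass through a crossing element, so $\delta(K)$ is contained in a finite union of pieces modulo a bounded set. Combining this piecewise decomposition with the star axiom inherited from uniform boundedness of $\mathcal{V}$ in $X_\infty$ (and the characterization of $\phi_\infty$ in Lemma~\ref{weak ASR at limit}) should translate the star condition on $\mathcal{C}(\mathcal{V})$ into the required level-$\alpha$ bound.

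The main obstacle will be ruling out components $K$ whose restriction $\delta_\alpha(\pi_\alpha(K))\cap A^\alpha_i$ is unbounded in $X$. Here the fineness of the directed system $\{\mathcal{F}_\alpha\}$ is essential: an unbounded restriction would yield a compatible chain $(j_\beta)_{\beta\geq\alpha}$ of piece indices with $\rho_{\alpha\beta}(j_\beta)=i$, whose traces $tr(A^\beta_{j_\beta})$ nest in $\nu X$ and, by fineness of the induced trace system, shrink toward a point of $\nu X$ while still containing the trace of the unbounded bulk of $K$. The finite-crossings condition at each level $\beta$ (Lemma~\ref{uniformly bounded cover lemma} again) limits how $\mathcal{V}$-chains inside $K$ can populate this shrinking trace family, and a careful combination of the chain-theoretic decomposition above with this trace analysis is expected to produce the required contradiction, thereby establishing (a) and completing the proof.
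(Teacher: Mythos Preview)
Your reduction via Lemma~\ref{unif bounded covers of limit} and Lemma~\ref{uniformly bounded cover lemma} is sound, and your argument for subcondition~(b) is correct: since the $\pi_\alpha$-preimages of the pieces $A^\alpha_i\times\{i\}$ are disjoint in $X_\infty$, a $\mathcal{V}$-chain cannot pass between pieces without using a crossing element of $\mathcal{V}$, and there are only finitely many of those.

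The gap is subcondition~(a). What you have written is not a proof but a description of ingredients, ending with ``is expected to produce the required contradiction.'' Two concrete issues: First, your trace picture is off. The traces $tr(A^\beta_{j_\beta})$ along a compatible chain of indices need not shrink toward a point of $\nu X$; fineness only says that every ${\bf b}$-cover of $X$ is refined by some $\mathcal{F}_\alpha$, which is a much coarser statement than ``the nested traces have singleton intersection.'' Second, you never actually produce the contradiction. The paper's argument shows what is really needed here: one must manufacture, from a putative failure of~(a), a specific pair of unbounded sets $C_1,C_2\subseteq X_\infty$ with $\delta(C_1)\bar{\bf b}\,\delta(C_2)$ in $X$ that are nonetheless joined by $\mathcal{V}$-paths outside every bounded set. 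One then builds a ${\bf b}$-cover of $X$ separating $\delta(C_1)$ from $\delta(C_2)$, invokes fineness to find a single level $\beta$ at which $\mathcal{F}_\beta$ refines that cover, and observes that at level $\beta$ the $\pi_\beta(\mathcal{V})$-paths between $\pi_\beta(C_1)$ and $\pi_\beta(C_2)$ force infinitely many crossing elements, contradicting Lemma~\ref{uniformly bounded cover lemma}. This has to be done twice: once to rule out an unbounded component (your ``main obstacle''), and once more, assuming all components are bounded, to rule out a failure of the star condition for $\mathcal{C}(\mathcal{V})$. Your outline gestures at the first of these and does not address the second at all; both require the explicit construction of $C_1,C_2$ and the passage through fineness to a single separating level, none of which you carry out.
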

\begin{proof}
Let $\mathcal{V}$ be a uniformly bounded cover of $X_{\infty}$ and assume towards a contradiction that $\mathcal{C}(\mathcal{V})$ is not uniformly bounded. Then one of two things must be true. Either $\mathcal{C}(\mathcal{V})$ contains an unbounded set, or there are unbounded sets $A,C\subseteq X_{\infty}$ with $A\subseteq st(C,\mathcal{C}(\mathcal{V}))$ and $C\subseteq st(A,\mathcal{C}(\mathcal{V}))$, but $A\overline{\phi_{\infty}} C$. We will consider these cases separately. Note that for a uniformly bounded cover $\mathcal{U}$ of a space $Y$, by a $\mathcal{U}$-path of length $m$ in $Y$ we mean a sequence of points $y_{0},y_{1},\ldots,y_{m}\in Y$ such that $y_{i-1},y_{i+1}\in st(y_{i},\mathcal{U})$ for $1\leq i\leq m-1$ and is such that $y_{m}\notin st^{m-1}(y_{0},\mathcal{U})$. 
\vspace{\baselineskip}

{\bf Case 1: ($\mathcal{C}(\mathcal{V})$ contains an unbounded set)} In this case there is an unbounded $\mathcal{V}$-component in $X$. Let us denote this component by $K$. Then $K$ is necessarily infinite and $f(K)$ is an unbounded $f(\mathcal{V})$-component in $X$. We can then find an $x_{0}\in f(K)$ and an unbounded $A\subseteq f(K)$ that contains $x_{0}$ and is such that for every $m\in\mathbb{N}$ we can find a $\mathcal{V}$-path of length $m$ in $K$ outside $B(x_{0},n)$. There are then disjoint  unbounded sets $f(C_{1}),f(C_{2})\subseteq A$ such that $f(C_{1})\overline{\bf b}f(C_{2})$ and for every $m,n\in\mathbb{N}$ there is an $f(\mathcal{V})$-path of length $m$ outside of $B(x_{0},n)$ between $f(C_{1})$ and $f(C_{2})$. Then $C_{1}\overline{\bf b}_{\infty}C_{2}$ and for every $m,n\in\mathbb{N}$ there is a $\mathcal{V}$-path of length $m$ outside of $f^{-1}(B(x_{0},n))$ of length $m$. As $f(C_{1})\bar{\bf b}f(C_{2})$ there is a ${\bf b}$-cover $\{D_{1},D_{2},D_{3}\}$ of $X$ such that $f(C_{1})\ll D_{1}$, $f(C_{2})\ll D_{2}$, $f(C_{1})\bar{\bf b}D_{2}$, $f(C_{2})\bar{\bf b} D_{1}$, and $(f(C_{1})\cup f(C_{2}))\bar{\bf b}D_{3}$. Such a cover can be constructed by using the strong axiom for the coarse proximity on $X$. As $\{\mathcal{F}_{\alpha}\}_{\alpha\in\Omega}$ is a fine collection of coarse canonical covers, there is an $\alpha\in\Omega$ such that $\mathcal{F}_{\alpha}=\{A_{1},\ldots,A_{n}\}$ refines this ${\bf b}$-cover. Then $\pi_{\alpha}(C_{1})$ and $\pi_{\alpha}(C_{2})$ are contained in different $A_{i}\in\mathcal{F}_{\alpha}$. As there are $\mathcal{V}$-paths of length $m$ between $C_{1}$ and $C_{2}$ outside of $f^{-1}(B(x_{0},k)$ for every $k$ and $m$ in $\mathbb{N}$ and $f^{-1}(B(x_{0},k))$ exhausts the bounded sets of $X_{\infty}$ we have that a similar condition holds for $\pi_{\alpha}(C_{1})$ and $\pi_{\alpha}(C_{2})$ in $X_{\alpha}$. However, this would imply that the number of elements of $\pi_{\alpha}(\mathcal{V})$ that intersect multiple $A_{i}\in\mathcal{F}_{\alpha}$. However, $\mathcal{V}$ is a uniformly bounded cover of $X_{\infty}$, so this contradicts Lemma \ref{uniformly bounded cover lemma}. Therefore $\mathcal{C}(\mathcal{V})$ can not contain an unbounded set.
\vspace{\baselineskip}

{\bf Case 2: (There are unbounded $A,C\subseteq X_{\infty}$ with $A\overline{\phi}_{\infty}C$, $A\subseteq st(C,\mathcal{C}(\mathcal{V}))$, and $C\subseteq st(A,\mathcal{C}(\mathcal{V}))$)} By the previous case we have that $\mathcal{C}(\mathcal{V})$ does not contained unbounded sets. Therefore $X_{\infty}$ can decomposed into countably many $\mathcal{V}$-components $\{X_{\infty,i}\}_{i\in\mathbb{N}}$. Similarly, each $X_{\alpha}$ can be decomposed into countably many $\pi_{\alpha}(\mathcal{V})$-components $\{X_{\alpha,i}\}_{i\in\mathbb{N}}$. We may assume that $\pi_{\alpha}(X_{\infty,i})=X_{\alpha,i}$ for each $\alpha$ and $i$. Now, turning our attention to $A$ and $C$ we have that there is some $\alpha\in\Omega$ such that $\pi_{\alpha}(A)\overline{\phi}_{\alpha}\pi_{\alpha}(C)$. Then, without loss of generality there is an unbounded $\hat{A}\subseteq\pi_{\alpha}(A)$ such that $\hat{A}\overline{\bf b}_{\alpha}\pi_{\alpha}(C)$. By assumption, for every $x\in\hat{A}$ there is a $y_{x}\in\pi_{\alpha}(C)$ such that $x$ is connected to $y_{x}$ by a $\pi_{\alpha}(\mathcal{V})$-path. By {\bf case 1} we have that neither $\hat{A}$ nor $\pi_{\alpha}(C)$ can be $\pi_{\alpha}(\mathcal{V})$-connected. We may then assume that $\hat{A}$ can be decomposed into countably many finite $\mathcal{V}$-connected components. Say $\hat{A}=\{K_{i}\}_{i\in\mathbb{N}}$ and $\mathcal{F}_{\alpha}=\{A_{1},\ldots,A_{n}\}$. Similarly $\pi_{\alpha}(C)$ can be decomposed into $\pi_{\alpha}(C)=\{C_{i}\}_{i\in\mathbb{N}}$. Up to taking subsets we may assume that no $X_{\alpha,i}$ intersects distinct $K_{j}$'s. Then there is an injective function $\psi:\mathbb{N}\rightarrow\mathbb{N}$ such that $K_{i}$ is joined to $C_{\psi(i)}$ via a finite $\pi_{\alpha}(\mathcal{V})$-chain. As the $\pi_{\alpha}(\mathcal{V})$-chains witnessing the connections between $\pi(\mathcal{V})$ components of $\hat{A}$ and $\pi_{\alpha}(C)$ must be disjoint and bounded subsets of $X_{\alpha}$ are finite we must have that for a given $x_{0}\in X_{\alpha}$ and for every $m\in\mathbb{N}$ there is a $\pi_{\alpha}(\mathcal{V})$-chain in $X_{\alpha}$ from some $K_{i}$ to some $C_{\psi(i)}$ lying completely outside of $B(x_{0},m)$. As a uniformly bounded cover of $X_{\alpha}$ can only have finite many elements that intersect multiple $A_{i}$ we must have that there is some $N\in\mathbb{N}$ and $A_{k}$ such that if $m\geq N$ then $K_{m},C_{m}\subseteq A_{k}$. We may then assume that $\hat{A}$ is made up of those $K_{m}$ contained in $A_{k}$. Then, $f(\hat{A})\overline{\bf b}f(C)$. Then, using an argument similar to that in {\bf case 1} we can find some ${\bf b}$-cover $\{D_{1},D_{2},D_{3}\}$ with $f(\hat{A})\ll D_{1}$, $f(C)\ll D_{2}$, $f(\hat{A})\bar{\bf b}D_{2}$, $f(C)\bar{\bf b} D_{1}$, and $(f(\hat{A})\cup f(C))\bar{\bf b}D_{3}$. Then, by the fineness of our collection of coarsely canonical covers we can find an $X_{\beta}$ for which $\pi_{\beta}(\hat{A}),\pi_{\beta}(C)$ lie in different $A_{i}\in\mathcal{F}_{\beta}$. The $\pi_{\beta}(\mathcal{V})$ with have infinitely many elements that intersects multiple elements of $\mathcal{F}_{\beta}$, which is a contradiction. Therefore we must have that $A{\bf b}_{\infty}C$.
\end{proof}

Using Proposition \ref{asdim 0 characterization} we then have:

\begin{corollary}
If $\{\mathcal{F}_{\alpha}\}_{\alpha\in\Omega}$ is a directed fine collection of coarsely canonical covers of a proper discrete metric space $X$, then $asdim(X_{\infty})=0$. 
\end{corollary}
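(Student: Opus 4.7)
The plan is to deduce the corollary as a direct consequence of the preceding proposition together with Proposition \ref{asdim 0 characterization}. Specifically, I would invoke the equivalence $(3)\Leftrightarrow(1)$ of that proposition, which says that a coarse proximity space has asymptotic dimension zero if and only if the collection of $\mathcal{U}$-components is uniformly bounded for every uniformly bounded cover $\mathcal{U}$.

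Concretely, I would proceed as follows. Let $\mathcal{V}$ be an arbitrary uniformly bounded cover of $X_{\infty}$. The immediately preceding proposition (whose proof is the real content) establishes that the collection $\mathcal{C}(\mathcal{V})$ of $\mathcal{V}$-components of $X_{\infty}$ is itself uniformly bounded. This is precisely condition $(3)$ in Proposition \ref{asdim 0 characterization} applied to $X_{\infty}$. By that proposition, $(3)$ implies $(1)$, so $asdim(X_{\infty}) = 0$, finishing the proof.

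There is no real obstacle here, since all the work has already been carried out in the preceding proposition; the only step is the observation that the hypothesis of $(3)$ in Proposition \ref{asdim 0 characterization} has been verified. If one wanted to emphasize the picture, one could also point out that this matches the expectation built into the inverse limit construction: $X_{\infty}$ is assembled from disjoint union coarse proximity spaces $X_{\alpha}$ whose ${\bf b}_{\alpha}$-components correspond to the canonical pieces $A_{i}\times\{i\}$, so pushing through the inverse limit one expects the components to become uniformly bounded, matching the asdim $=0$ criterion.
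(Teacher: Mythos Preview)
Your proposal is correct and matches the paper's own treatment exactly: the paper simply states the corollary as an immediate consequence of the preceding proposition via Proposition \ref{asdim 0 characterization}, with no additional argument. The only extra content in your write-up is the informal remark about the inverse limit picture, which is harmless commentary.
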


Now, we wish to show that $\Delta_{c}(X)\leq n$, then $asdim(X)\leq n$. To this end we consider our coarse proximity space $X_{\infty}$ constructed via coarse canonical covers of order not exceeding $n+1$ as above. To do this we will proceed to show that $X_{\infty}$ is is a metrizable coarse proximity space. Then we will show that $\delta$ is coarsely $n+1$-to-$1$ and employ Theorem \ref{asdim characterization}. Proving that $X_{\infty}$ is metrizable will require that instead of working with the original inverse system used to define $X_{\infty}$ we should work instead with all possible ``thickenings" of the inverse system. More specifically, let $\{\mathcal{F}_{\alpha}\}_{\alpha\in\Omega}$ be the inverse system of disjoint coarse canonical covers of $X$ that is used to define $X_{\infty}$. Say $\mathcal{F}_{\alpha}=\{A_{1}^{\alpha},\ldots,A_{\phi(\alpha)}^{\alpha}\}$. For a given $n\in\mathbb{N}$ we then define $\mathcal{F}_{\alpha,n}=\{A_{1,n}^{\alpha},\ldots,A_{\phi(\alpha),n}^{\alpha}\}$ where $A_{i,n}^{\alpha}=B(A_{i}^{\alpha},n)$. We then put a coarse proximity structure on $X_{\alpha,n}=\bigcup_{i=1}^{\phi(\alpha)}(A_{i,n}^{\alpha}\times\{i\})$ by setting $\mathcal{B}_{\alpha,n}$ to be the bornology of all finite sets and define the coarse proximity relation ${\bf b}_{\alpha,n}$ by $A{\bf b}_{\alpha,n}$ if and only if the projections of $(A\cap (A_{i,n}^{\alpha}\times\{i\}))$ and $(C\cap (A_{i,n}^{\alpha}\times\{i\}))$ into $X$ are close in $X$ with the metric coarse proximity. It is easily seen that this is a coarse proximity structure on $X_{\alpha,n}$. For $\beta\geq\alpha$ we see that $X_{\beta,n}$ surjects onto $X_{\alpha,n}$ via the inclusions $A_{i,n}^{\beta}\rightarrow A_{\rho_{\beta\alpha},n}^{\alpha}$, and that this map is a coarse proximity map. Denote these maps by $g_{\beta\alpha}$. Similarly, for each $\alpha$ we see that that there is a natural injection $h_{\alpha}:X_{\alpha}\rightarrow X_{\alpha,n}$ again by the simple inclusion $A_{i}^{\alpha}\rightarrow A_{i,n}^{\alpha}$. It is easily seen that $h_{\alpha}$ is a coarse proximity isomorphism for each $\alpha$. In summary, we have the following commutative diagram.

\begin{center}
\begin{tikzcd}
{} \arrow[r, dotted] & {X_{\gamma}} \arrow[r, "F_{\gamma\beta}"] \arrow[d, "h_{\gamma}", hook] & {X_{\beta}} \arrow[r, "f_{\beta\alpha}"] \arrow[d, "h_{\beta}", hook] & {X_{\alpha}} \arrow[d,"h_{\alpha}", hook]\\
{} \arrow[r, dotted] & {X_{\gamma,n}} \arrow[r, "g_{\gamma\beta}", two heads] & {X_{\beta,n}}\arrow[r, "g_{\beta\alpha}", two heads] & {X_{\alpha,n}}
\end{tikzcd}
\end{center}

Denoting $\varprojlim_{\alpha}X_{\alpha,n}$ by $X_{\infty,n}$ we have the following.

\begin{proposition}\label{thickening doesnt change limit}
The coarse proximity space $X_{\infty}$ coarse embeds into $X_{\infty,n}$.
\end{proposition}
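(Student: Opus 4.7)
The plan is to construct the embedding $H: X_{\infty} \to X_{\infty,n}$ directly from the coordinate maps $h_{\alpha}$, then verify coordinatewise the properties required of a coarse proximity embedding: that $H$ is an injective coarse proximity map which reflects both the bornology and the coarse proximity relation.

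First I set $H(x) = (h_{\alpha}(\pi_{\alpha}(x)))_{\alpha \in \Omega}$. The tuple lies in $X_{\infty,n}$ because the commuting diagram $g_{\beta\alpha} \circ h_{\beta} = h_{\alpha} \circ f_{\beta\alpha}$ combined with $f_{\beta\alpha} \circ \pi_{\beta} = \pi_{\alpha}$ furnishes the required $g_{\beta\alpha}$-compatibility. That $H$ is a coarse proximity map is immediate from the coordinatewise definitions of $\mathcal{B}_{\infty,n}$ and $\mathbf{b}_{\infty,n}$ together with the fact that each $h_{\alpha}$ is a coarse proximity map. I choose not to invoke the universal property of Theorem \ref{inverse limit in coarse proximity category} for this construction, since its surjectivity hypothesis is not obviously present for the thickened inverse system, but the direct construction works without it.

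For the embedding properties, injectivity is immediate: each $h_{\alpha}$ is a set inclusion, and each $\pi_{\alpha}$ is injective because the elements of $X_{\infty}$ are in bijection with $X$ (the $\mathcal{F}_{\alpha}$ being disjoint covers of $X$, so that a tuple in the inverse limit is uniquely specified by its common underlying point of $X$). Reflection of the bornology then follows from the fact that the bornologies on $X_{\alpha}$ and $X_{\alpha,n}$ both consist of the finite sets, so $h_{\alpha}$ being injective forces $\pi_{\alpha}(B)$ to be finite whenever $\pi_{\alpha,n}(H(B)) = h_{\alpha}(\pi_{\alpha}(B))$ is finite, and the inverse limit bornologies are determined by having some projection bounded. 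Reflection of the coarse proximity is the step requiring the most care: $H(A) \mathbf{b}_{\infty,n} H(C)$ gives $h_{\alpha}(\pi_{\alpha}(A)) \mathbf{b}_{\alpha,n} h_{\alpha}(\pi_{\alpha}(C))$ for every $\alpha$, and I need to promote this back to $\pi_{\alpha}(A) \mathbf{b}_{\alpha} \pi_{\alpha}(C)$ for every $\alpha$.

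This last promotion is the principal obstacle, because a coarse proximity isomorphism $h_{\alpha}$ comes with an inverse $k_{\alpha}: X_{\alpha,n} \to X_{\alpha}$ only up to closeness. My plan to handle it is as follows: apply the coarse proximity map $k_{\alpha}$ to both sides to obtain $(k_{\alpha} \circ h_{\alpha})(\pi_{\alpha}(A)) \mathbf{b}_{\alpha} (k_{\alpha} \circ h_{\alpha})(\pi_{\alpha}(C))$, and then use $k_{\alpha} \circ h_{\alpha} \sim \mathrm{id}_{X_{\alpha}}$, so that the images are $\phi_{\alpha}$-related to $\pi_{\alpha}(A)$ and $\pi_{\alpha}(C)$ respectively. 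The general fact noted in the remark after Theorem \ref{asymptoticresemblance}, that $\phi$-related sets share all coarse proximity relations with any third set, then delivers $\pi_{\alpha}(A) \mathbf{b}_{\alpha} \pi_{\alpha}(C)$. Collecting this conclusion across all $\alpha \in \Omega$ yields $A \mathbf{b}_{\infty} C$, completing the verification that $H$ is a coarse proximity embedding.
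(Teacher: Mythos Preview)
Your proof is correct and arrives at the same map as the paper, but the route differs in emphasis. The paper invokes the universal property of the inverse limit (Theorem~\ref{inverse limit in coarse proximity category}) to produce $h:X_{\infty}\to X_{\infty,n}$ and then asserts in one line that $h$ is an injective coarse proximity embedding because each $h_{\alpha}$ is an injective coarse equivalence. You instead construct $H$ explicitly coordinatewise and verify injectivity, bornology reflection, and proximity reflection by hand, the last via the coarse inverse $k_{\alpha}$ and the $\phi$-invariance of $\mathbf{b}$. Your caution about the surjectivity hypothesis in Theorem~\ref{inverse limit in coarse proximity category} is reasonable: the paper does not verify that the projections $\rho_{\alpha}:X_{\infty,n}\to X_{\alpha,n}$ are surjective, so strictly speaking its appeal to that theorem is incomplete, whereas your direct construction sidesteps the issue entirely. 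The trade-off is that the paper's argument is shorter and signals that the embedding property is a formal consequence of the coordinate isomorphisms, while yours makes transparent exactly which facts about $h_{\alpha}$ and $k_{\alpha}$ are actually used---in particular, your handling of proximity reflection via $k_{\alpha}\circ h_{\alpha}\sim\mathrm{id}$ fills in precisely the step the paper leaves implicit.
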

\begin{proof}
Denote the projection maps from $X_{\infty}$ to each $X_{\alpha}$ by $\pi_{\alpha}$. Similarly, denote the projection maps from $X_{\infty,n}$ to $X_{\alpha,n}$ by $\rho_{\alpha}$. By construction we have that $g_{\beta\alpha}\circ h_{\beta}\circ\pi_{\beta}=h_{\alpha}\circ\pi_{\alpha}$ for each $\beta,\alpha\in\Omega$. Therefore, by the universal property of the inverse limit there is a unique coarse proximity map $h:X_{\infty}\rightarrow X_{\infty,n}$ such that $h_{\alpha}\circ\pi_{\alpha}=\rho_{\alpha}\circ h$ for each $\alpha\in\Omega$. As each $h_{\alpha}$ is an injective coarse equivalence we have that $h$ is an injective coarse proximity embedding. 
\end{proof}

\begin{corollary}
For each $n,m\in\mathbb{N}$ with $n\leq m$, the inclusion $X_{\infty,n}\rightarrow X_{\infty,m}$ is an injective coarse proximity embedding. 
\end{corollary}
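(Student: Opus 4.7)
The plan is to verify the statement level-by-level and then invoke the universal property of the inverse limit. For each $\alpha \in \Omega$, there is an obvious inclusion $\iota_\alpha : X_{\alpha, n} \to X_{\alpha, m}$ sending $(x, i) \mapsto (x, i)$, which makes sense because $A_{i,n}^\alpha \subseteq A_{i,m}^\alpha$. I would first check that each $\iota_\alpha$ is an injective coarse proximity embedding. Since $X$ is $1$-discrete and proper, metrically bounded subsets of $X$ are finite, so the bornologies on both $X_{\alpha, n}$ and $X_{\alpha, m}$ consist of the finite subsets, and $\iota_\alpha$ trivially preserves and reflects boundedness. For $A, C \subseteq X_{\alpha, n}$ we have $A \cap (A_{i,m}^\alpha \times \{i\}) = A \cap (A_{i,n}^\alpha \times \{i\})$ (and similarly for $C$), so $\iota_\alpha(A) \, {\bf b}_{\alpha, m} \, \iota_\alpha(C)$ if and only if $A \, {\bf b}_{\alpha, n} \, C$. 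Hence $\iota_\alpha$ is an injective coarse proximity embedding.

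Next, the maps $\iota_\alpha$ commute with the connecting morphisms $g_{\beta\alpha}$ of the two inverse systems by direct unpacking of the formulas. Theorem \ref{inverse limit in coarse proximity category} then yields a unique coarse proximity map $\iota : X_{\infty, n} \to X_{\infty, m}$ with $\rho_\alpha^{(m)} \circ \iota = \iota_\alpha \circ \rho_\alpha^{(n)}$ for each $\alpha$, where $\rho_\alpha^{(n)}$ and $\rho_\alpha^{(m)}$ denote the projections out of $X_{\infty, n}$ and $X_{\infty, m}$ respectively. Injectivity of $\iota$ follows from the fact that a compatible family in $X_{\infty, n}$ is determined by its coordinates and each $\iota_\alpha$ is injective.

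Finally I would verify that $\iota$ itself is a coarse proximity embedding. Recall that in the inverse limit the coarse proximity relation is characterized by $A \, {\bf b} \, C$ if and only if every projection is related in the corresponding level. Given $A, C \subseteq X_{\infty, n}$ with $\iota(A) \, {\bf b}_{\infty, m} \, \iota(C)$, one has $\rho_\alpha^{(m)}(\iota(A)) = \iota_\alpha(\rho_\alpha^{(n)}(A))$ is ${\bf b}_{\alpha, m}$-related to $\iota_\alpha(\rho_\alpha^{(n)}(C))$ for each $\alpha$. Since $\iota_\alpha$ is a coarse proximity embedding, this transfers to $\rho_\alpha^{(n)}(A) \, {\bf b}_{\alpha, n} \, \rho_\alpha^{(n)}(C)$ for every $\alpha$, and thus $A \, {\bf b}_{\infty, n} \, C$. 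An analogous argument handles the bornology: if $\iota(B)$ is bounded in $X_{\infty, m}$ then some $\rho_\alpha^{(m)}(\iota(B)) = \iota_\alpha(\rho_\alpha^{(n)}(B))$ is finite, forcing $\rho_\alpha^{(n)}(B)$ to be finite and hence $B$ to be bounded in $X_{\infty, n}$. There is no real obstacle here; the whole argument is formal once one has observed that each $\iota_\alpha$ is an embedding, so the only care required is to chase the definitions of the inverse limit coarse proximity and bornology correctly.
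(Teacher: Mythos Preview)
Your proposal is correct and follows essentially the same approach the paper intends: the corollary is stated without proof because it is meant to be proved exactly as the preceding Proposition~\ref{thickening doesnt change limit}, replacing the level maps $h_{\alpha}:X_{\alpha}\hookrightarrow X_{\alpha,n}$ by the inclusions $\iota_{\alpha}:X_{\alpha,n}\hookrightarrow X_{\alpha,m}$ and then invoking the universal property of the inverse limit. You have simply written out in full the step the paper leaves implicit, namely that a limit of levelwise injective coarse proximity embeddings is again an injective coarse proximity embedding, by chasing the definitions of the inverse-limit bornology and coarse proximity.
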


\begin{proposition}\label{limit is witnessed}
The limit coarse proximity space $X_{\infty}$ is witnessed by uniform covers. 
\end{proposition}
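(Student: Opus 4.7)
Fix $A, C \subseteq X_\infty$ with $A\,{\bf b}_{\infty}\,C$; I wish to produce a uniformly bounded cover $\mathcal{U}$ of $X_\infty$ with $st(A, \mathcal{U}) \cap C$ unbounded. First I would note that $\delta : X_\infty \to X$ is a set-theoretic bijection (each $\mathcal{F}_\alpha$ partitions $X$ by its disjoint members, forcing a unique coherent choice per point) and that the bornology on $X_\infty$ consists of finite sets, so $\delta$ both preserves and reflects boundedness. Unpacking $A\,{\bf b}_{\infty}\,C$ via the construction of the inverse limit gives: for each $\alpha \in \Omega$ there exists $i_\alpha \in \Omega(\alpha)$ with $(\delta(A) \cap A_{i_\alpha}^\alpha)\,{\bf b}\,(\delta(C) \cap A_{i_\alpha}^\alpha)$ in the metric coarse proximity of $X$. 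The nonempty finite sets $S_\alpha = \{i \in \Omega(\alpha) : (\delta(A) \cap A_i^\alpha)\,{\bf b}\,(\delta(C) \cap A_i^\alpha)\}$ form an inverse system under the $\rho_{\alpha\beta}$ (a finer close-witnessing index maps down to a coarser one), so by compactness of an inverse limit of nonempty finite sets I obtain a coherent thread $(i_\alpha)_{\alpha \in \Omega}$ with $\rho_{\alpha\beta}(i_\beta) = i_\alpha$ for $\alpha \le \beta$.

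Next, fix $\alpha_0 \in \Omega$. Since $X$ is a $1$-discrete proper metric space, its metric coarse proximity is witnessed by the ball covers $\{B(x, r) : x \in X\}$; hence there exist $r > 0$ and an infinite sequence of pairs $(a_n, c_n)_{n \in \mathbb{N}}$ with $a_n \in \delta(A) \cap A_{i_{\alpha_0}}^{\alpha_0}$, $c_n \in \delta(C) \cap A_{i_{\alpha_0}}^{\alpha_0}$, $d(a_n, c_n) \le r$, and $(a_n)$ escaping every bounded subset of $X$. Using the finiteness of each $\mathcal{F}_\alpha$ and a compactness/pigeonhole refinement, I would successively pass to a subsequence so that for every $\alpha$, all but finitely many pairs $\{a_n, c_n\}$ lie inside a single $A_{i_\alpha}^\alpha$; the containment $A_{i_\beta}^\beta \subseteq A_{i_\alpha}^\alpha$ for $\beta \ge \alpha$ makes it possible to do this coherently (possibly after revising the thread from the first step).

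With such a sequence I would set $\mathcal{U} = \{\delta^{-1}(\{a_n, c_n\}) : n \in \mathbb{N}\} \cup \{\delta^{-1}(\{x\}) : x \in X \setminus \bigcup_n \{a_n, c_n\}\}$, each member of which is finite, hence bounded in $X_\infty$. By Lemma \ref{unif bounded covers of limit}, $\mathcal{U}$ is uniformly bounded in $X_\infty$ iff $\pi_\alpha(\mathcal{U})$ is uniformly bounded in $X_\alpha$ for every $\alpha$; by Lemma \ref{uniformly bounded cover lemma} this in turn reduces to checking (a) the $\delta$-images of the elements are uniformly bounded in $X$, which is clear since they have diameter at most $r$, and (b) only finitely many elements cross multiple pieces of $\mathcal{F}_\alpha$, which is precisely the content of the refinement above. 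The witnessing property is then immediate: for each $n$ the element $\delta^{-1}(\{a_n, c_n\}) \in \mathcal{U}$ places $\delta^{-1}(c_n) \in C$ into $st(A, \mathcal{U})$, so $st(A, \mathcal{U}) \cap C$ contains the infinite sequence $(\delta^{-1}(c_n))_n$ and is unbounded.

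The main obstacle is the refinement step: aligning one pair sequence with the coherent thread at every $\alpha \in \Omega$ simultaneously when $\Omega$ is uncountable, since a naive diagonalization handles only countably many levels. The intended resolution is that the thread $(i_\alpha)$ determines a single boundary point $\sigma^* \in \bigcap_\alpha tr(A_{i_\alpha}^\alpha) \cap tr(\delta(A)) \cap tr(\delta(C)) \subseteq \mathcal{U}X$, and any sequence of $r$-close pairs in $\delta(A) \times \delta(C)$ whose members cluster at $\sigma^*$ (in the Smirnov compactification of the discrete extension of $X$) automatically has its tail inside each $A_{i_\alpha}^\alpha$; producing such a clustering sequence is where the $1$-discreteness and properness of $X$ enter critically, via the fact that $\sigma^*$ has arbitrarily small Smirnov neighborhoods whose intersections with $\delta(A)$ and $\delta(C)$ remain $r$-interlinked.
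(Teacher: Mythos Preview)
Your approach differs substantially from the paper's. The paper does not attempt to align a pair sequence with a coherent thread through the pieces $A^{\alpha}_{i_\alpha}$. Instead it builds auxiliary ``thickened'' spaces $X_{\alpha,3n}$ (with pieces $A^{\alpha}_{i,3n}=B(A^{\alpha}_{i},3n)$), shows that the natural map $h:X_\infty\to X_{\infty,3n}$ is an injective coarse proximity embedding (Proposition~\ref{thickening doesnt change limit}), exhibits a uniformly bounded cover of $X_{\infty,3n}$ built from the diameter-$n$ cover of $X$ that witnesses $h(A)$ close to $h(C)$, and then pulls the witness back through $h$ via Proposition~\ref{witnessability through isomorphisms}. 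The point of the thickening is exactly to dissolve the obstacle you identify: once the pieces are enlarged by $3n$, any pair $(a,c)$ with $d(a,c)\le n$ lies in a common thickened piece at \emph{every} level $\alpha$, so no coherence argument across $\Omega$ is needed.

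Your own route has a genuine gap at the resolution step. You correctly isolate the difficulty (aligning a single pair sequence with all levels of a possibly uncountable $\Omega$), but the proposed fix via a boundary point $\sigma^{*}\in\bigcap_\alpha tr(A^{\alpha}_{i_\alpha})$ does not deliver what you need. The pieces $A^{\alpha}_{i}$ partition $X$, yet their traces on $\mathcal{U}X$ typically overlap; nothing prevents $\sigma^{*}$ from lying in $tr(A^{\alpha}_{j})$ for some $j\neq i_\alpha$, so a sequence clustering at $\sigma^{*}$ in the Smirnov compactification can have infinitely many terms outside $A^{\alpha}_{i_\alpha}$. Concretely, take $X=\mathbb{Z}$ and the two-piece cover by evens and odds: both traces equal all of $\nu\mathbb{Z}$, and any $\sigma^{*}$ is approached by sequences lying entirely in the \emph{other} piece. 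Your claim that ``members clustering at $\sigma^{*}$ automatically have their tail inside each $A^{\alpha}_{i_\alpha}$'' is therefore false in general. A secondary issue is that $\nu X$ is not first countable, so producing a sequence that clusters at a prescribed $\sigma^{*}$ with the additional $r$-interlinking property is itself not immediate. The paper's thickening device avoids both problems entirely.
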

\begin{proof}
Let $A,C\subseteq X_{\infty}$ be such that $A{\bf b}_{\infty}C$. Then $\delta(A)$ and $\delta(C)$ are close via the metric coarse proximity structure on $X$. Therefore there is an $n\in\mathbb{N}$ such that $st(\delta(A),\mathcal{V})\cap \delta(C)$ contains some unbounded set $D$, where $\mathcal{V}$ is the uniformly bounded cover of $X$ by all sets of diameter at most $n$. We then Consider $X_{\infty,3n}$. As shown above and in Proposition \ref{thickening doesnt change limit} there is an injective coarse proximity embedding $h:X_{\infty}\rightarrow X_{\infty,3n}$. For every $\alpha\in\Omega$ let $\mathcal{V}_{\alpha,3n}$ be the collection of all subsets of $X_{\alpha,3n}^{\alpha}$ of the form $(B\times\{i\})\cap (A_{i,3n}^{\alpha}\times\{i\})$ where $B$ is n element of $\mathcal{V}$. Then $\mathcal{V}_{\alpha,3n}$ is a uniformly bounded cover of $X_{\alpha,3n}$ such that $g_{\beta\alpha}(\mathcal{V}_{\beta,3n})$ refines $\mathcal{V}_{\alpha,3n}$ when $\beta\geq\alpha$. For each $\alpha$ we have that $(\rho_{\alpha}\circ h)(A){\bf b}_{\alpha,3n}(\rho_{\alpha}\circ h)(C)$ for all $\alpha$. Moreover, for each $\alpha$ this relation is witnessed by $\mathcal{V}_{\alpha,3n}$. Then, the uniformly bounded cover of $X_{\infty,3n}$ defined by $\{B\subseteq X_{\infty,3n}\mid \exists E\in\mathcal{V}_{\alpha,3n},\,\rho_{\alpha}(B)\subseteq E\}$ witnesses $h(A)$ being close to $h(C)$. Then by Proposition \ref{witnessability through isomorphisms} we have that $A$ being close to $C$ in $X_{\infty}$ is witnessed by a uniformly bounded cover of $X_{\infty}$. 

\end{proof}

Notice that in the proof of Proposition \ref{limit is witnessed} we in fact show that the coarse proximity of $X_{\infty}$ is witnessed by a countable family of uniformly bounded covers. Also, by the construction, the bornologies of $X_{\infty}$ and $X$ are isomorphic with respect to the order structure given by set inclusion. Therefore by Theorem \ref{metrizability criterion} we have the following.

\begin{corollary}\label{limit is metrizable}
The coarse proximity space $(X_{\infty},\mathcal{B}_{\infty},{\bf b}_{\infty})$ is metrizable and any metric inducing the coarse proximity structure on $X_{\infty}$ makes it into a proper metric space.
\end{corollary}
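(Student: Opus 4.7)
The plan is to invoke Theorem \ref{metrizability criterion} directly, which requires two things of $(X_{\infty},\mathcal{B}_{\infty},{\bf b}_{\infty})$: that the bornology $\mathcal{B}_{\infty}$ is countably generated and that ${\bf b}_{\infty}$ is witnessed by a countable family of uniformly bounded covers. Once both are in hand, Theorem \ref{metrizability criterion} produces the desired metric, and properness will follow from the $1$-discreteness of the base space $X$.

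For the bornology, I would use the bijective coarse proximity map $\delta:X_{\infty}\to X$ constructed just above the statement. By the definition of $\mathcal{B}_{\infty}$, a set $B\subseteq X_{\infty}$ lies in $\mathcal{B}_{\infty}$ precisely when $\delta(B)$ lies in $\mathcal{B}_X$, so $\delta^{-1}$ transports any generating family of $\mathcal{B}_X$ to a generating family of $\mathcal{B}_{\infty}$. Since $X$ is a proper metric space, the family of closed balls around a fixed basepoint is a countable generating family for $\mathcal{B}_X$, and pulling these back yields the required countable generator for $\mathcal{B}_{\infty}$. For the countable witnessing, I would revisit the construction inside the proof of Proposition \ref{limit is witnessed}. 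For each $n\in\mathbb{N}$ that argument produces a uniformly bounded cover of $X_{\infty,3n}$ from the uniformly bounded cover of $X$ by sets of diameter at most $n$, and then uses the injective coarse proximity embedding $h:X_{\infty}\to X_{\infty,3n}$ together with Proposition \ref{witnessability through isomorphisms} to pull this back to a uniformly bounded cover $\mathcal{W}_n$ of $X_{\infty}$ that witnesses any closeness relation detected at scale $n$ in $X$. Since every ${\bf b}_{\infty}$-relation arises from a closeness at some integer scale $n$ in $X$, the countable collection $\{\mathcal{W}_n\}_{n\in\mathbb{N}}$ witnesses ${\bf b}_{\infty}$.

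Having verified both hypotheses, Theorem \ref{metrizability criterion} yields a metric $d'$ on $X_{\infty}$ inducing $(\mathcal{B}_{\infty},{\bf b}_{\infty})$. For the properness assertion I would note that $X$ is assumed to be countable and $1$-discrete, so every element of $\mathcal{B}_X$ is finite; since $\delta$ is a bijection identifying $\mathcal{B}_{\infty}$ with $\mathcal{B}_X$, every element of $\mathcal{B}_{\infty}$ is finite as well. Because any metric $d'$ inducing the coarse proximity structure has metric bornology equal to $\mathcal{B}_{\infty}$, closed $d'$-bounded sets are finite and therefore compact, so $(X_{\infty},d')$ is proper. The only place calling for any genuine care is the claim that the pullback covers $\mathcal{W}_n$ witness the right closeness relations at the right scales, and this is essentially already extracted inside the proof of Proposition \ref{limit is witnessed}; consequently the corollary is little more than a consolidation of the observations made there together with an application of the metrizability criterion.
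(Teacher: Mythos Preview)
Your proposal is correct and follows essentially the same approach as the paper: the paper's argument is simply the paragraph immediately preceding the corollary, noting that the proof of Proposition \ref{limit is witnessed} already produces a countable witnessing family, that the bornology of $X_{\infty}$ is order-isomorphic to that of $X$ (hence countably generated), and then invoking Theorem \ref{metrizability criterion}. You have supplied more detail than the paper does, in particular an explicit justification of properness via finiteness of bounded sets, which the paper leaves implicit.
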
 

\begin{proposition}\label{boundary map n+1 to 1}
The continuous map $\mathcal{U}\delta:X_{\infty}\rightarrow X$ is $n+1$-to-$1$.
\end{proposition}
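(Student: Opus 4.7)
The plan is to reduce the fiber computation to each level $\alpha$ of the inverse system, where the bound $n+1$ will follow directly from the order condition on $\mathcal{F}_\alpha$, and then pass to the limit via a standard pigeonhole argument for inverse limits of finite sets.

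First I would apply Theorem \ref{inverse limit commutes with boundary functor} to identify $\mathcal{U}X_\infty$ with $\varprojlim \mathcal{U}X_\alpha$, equipped with bonding maps $\mathcal{U}\pi_{\beta\alpha}$ and projections $\mathcal{U}\pi_\alpha$. Since $\delta = \delta_\alpha \circ \pi_\alpha$ holds for every $\alpha$ by construction of $X_\infty$, functoriality yields $\mathcal{U}\delta = \mathcal{U}\delta_\alpha \circ \mathcal{U}\pi_\alpha$. Fixing $\sigma \in \mathcal{U}X$, this identity gives that $(\mathcal{U}\delta)^{-1}(\sigma)$ is exactly the inverse limit of the sets $\Phi_\alpha := (\mathcal{U}\delta_\alpha)^{-1}(\sigma) \subseteq \mathcal{U}X_\alpha$, with bonding maps inherited from the $\mathcal{U}\pi_{\beta\alpha}$.

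Next I would bound $|\Phi_\alpha|$ at each level. By Proposition \ref{boundary of disjoint union is disjoint union}, $\mathcal{U}X_\alpha$ decomposes as the disjoint union $\bigsqcup_{i} \mathcal{U}(A_i^\alpha \times \{i\})$, where $\mathcal{F}_\alpha = \{A_1^\alpha,\ldots,A_{\phi(\alpha)}^\alpha\}$. The restriction of $\delta_\alpha$ to each summand $A_i^\alpha \times \{i\}$ is a coarse proximity isomorphism onto the subspace $A_i^\alpha \subseteq X$ (because ${\bf b}_\alpha$ is defined coordinate-wise on the summands), so Proposition \ref{trace of subspace is its boundary} identifies its boundary map with the inclusion $tr(A_i^\alpha) \hookrightarrow \mathcal{U}X$. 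Thus $\Phi_\alpha$ is in bijection with the set of indices $i$ for which $\sigma \in tr(A_i^\alpha)$. Because $ord(\mathcal{F}_\alpha) \leq n+1$, Proposition \ref{order relation} gives $ord(tr(\mathcal{F}_\alpha)) \leq n+1$, and then Proposition \ref{gradually disjoint divergent characterization}(2) forces any $n+2$ of the traces to have empty common intersection, yielding $|\Phi_\alpha| \leq n+1$.

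To conclude, I would invoke the standard fact that an inverse limit of finite sets over a directed set whose cardinalities are uniformly bounded by $N$ has cardinality at most $N$. Explicitly, if $(\tau_\alpha^{(1)}),\ldots,(\tau_\alpha^{(n+2)})$ were $n+2$ distinct elements of $(\mathcal{U}\delta)^{-1}(\sigma)$, then for each pair $j \neq k$ there would exist some $\alpha_{jk} \in \Omega$ with $\tau_{\alpha_{jk}}^{(j)} \neq \tau_{\alpha_{jk}}^{(k)}$, and a common upper bound $\alpha^*$ of all the $\alpha_{jk}$ (which exists by directedness) would produce $n+2$ distinct elements of $\Phi_{\alpha^*}$, contradicting the previous step. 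Hence $|(\mathcal{U}\delta)^{-1}(\sigma)| \leq n+1$, establishing the claim. The only mildly delicate point is verifying in the second paragraph that the restriction of $\delta_\alpha$ to each summand is a subspace coarse proximity isomorphism; this reduces immediately to the definition of ${\bf b}_\alpha$, since for $A,C \subseteq A_i^\alpha \times \{i\}$ the relation $A\,{\bf b}_\alpha\,C$ coincides with $\delta_\alpha(A)\,{\bf b}\,\delta_\alpha(C)$ in $X$.
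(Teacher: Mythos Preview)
Your proof is correct but takes a genuinely different route from the paper. The paper argues at the coarse level via Theorem~\ref{finite to one criterion}: it assumes a gradually disjoint family $\{C_1,\ldots,C_k\}$ in $X_\infty$ with $k>n+1$ whose images under $\delta$ are not divergent, passes to a single index $\alpha$ at which all pairwise relations $\pi_\alpha(C_i)\bar{\bf b}_\alpha\pi_\alpha(C_j)$ already fail, and then extracts a contradiction with $ord(\mathcal{F}_\alpha)\leq n+1$. Your argument instead works entirely at the boundary level: you identify $\mathcal{U}X_\infty$ with the inverse limit of the $\mathcal{U}X_\alpha$, compute each fiber $(\mathcal{U}\delta_\alpha)^{-1}(\sigma)$ explicitly as the set of indices $i$ with $\sigma\in tr(A_i^\alpha)$ using Propositions~\ref{boundary of disjoint union is disjoint union} and~\ref{trace of subspace is its boundary}, bound that set by $n+1$ via Proposition~\ref{order relation}, and finish with the elementary pigeonhole bound on inverse limits of uniformly small finite sets. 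The paper's approach has the virtue of exercising the gradually disjoint/divergent machinery it set up in Section~\ref{coarse n to 1}; your approach is more structural, making the role of the inverse limit theorem (Theorem~\ref{inverse limit commutes with boundary functor}) and the order hypothesis completely transparent, and it avoids any need to manipulate unbounded subsets of $X_\infty$ directly.
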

\begin{proof}
Let $\{C_{1},\ldots,C_{k}\}$ be a collection of gradually disjoint unbounded subsets of $X_{\infty}$ such that $\{\delta(C_{1}),\ldots,\delta(C_{k})\}$ is not divergent. Assume towards a contradiction that $k>n+1$. Because the $C_{i}$ are gradually disjoint we have that $C_{i}\bar{\bf b}_{\infty}C_{j}$ for $i\neq j$. Then there is some $\alpha$ such that $\pi_{\alpha}(C_{i})\bar{\bf b}_{\alpha}\pi_{\alpha}(C_{j})$ for $i\neq j$. Because $\delta$ is a bijective function and the $\delta(C_{i})=\delta_{\alpha}(\pi_{\alpha}(C_{i}))$ are not divergent we have that each $\pi_{\alpha}(C_{i})$ must be in a different element of the cover $\mathcal{F}_{\alpha}$. However, this would imply that there are $k>n+1$ members of the cover that are not divergent, contradicting the fact that the order of every $\mathcal{F}_{\alpha}$ does not exceed $n+1$. Therefore $k\leq n+1$, which implies that $\mathcal{U}\delta$ is $n+1$-to-$1$ by Theorem \ref{finite to one criterion}. 
\end{proof}

\begin{corollary}
$asdim(X)\leq n$.
\end{corollary}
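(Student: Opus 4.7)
The plan is to assemble the pieces already established in this section. We have constructed, from a fine directed system $\{\mathcal{F}_\alpha\}_{\alpha\in\Omega}$ of coarsely canonical covers of $X$ of order at most $n+1$, a coarse proximity space $X_\infty$ together with a bijective coarse proximity map $\delta:X_\infty\to X$. From the corollary following the $\mathcal{C}(\mathcal{V})$--argument we know $asdim(X_\infty)=0$; from Corollary \ref{limit is metrizable} we know $X_\infty$ is metrizable and any inducing metric makes it proper; and from Proposition \ref{boundary map n+1 to 1} we know $\mathcal{U}\delta$ is $(n+1)$-to-$1$. The aim is to feed this data into Theorem \ref{asdim characterization} of Miyata--Virk.

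First I would fix a metric on $X_\infty$ witnessing its metrizability. The explicit metric produced in the proof of the metrizability criterion (Theorem \ref{metrizability criterion}, via Lemma \ref{countable generation criterion}) is defined by $d(x,y)=\min\{n\mid \{x,y\}\subseteq B\text{ for some }B\in\hat{\mathcal{U}}_n\}$ with $\hat{\mathcal{U}}_0$ taken to be the singleton cover, so distinct points have distance at least $1$. Thus $X_\infty$ is, with this metric, an unbounded $1$-discrete proper metric space of asymptotic dimension $0$.

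Next I would apply Theorem \ref{metric n to one} to the coarse map $\delta:X_\infty\to X$. Proposition \ref{boundary map n+1 to 1} already verifies condition (2) of that theorem --- gradually disjoint families in $X_\infty$ whose image is non-divergent have cardinality at most $n+1$ --- and the equivalence with condition (1) of Theorem \ref{metric n to one} gives that $\delta$ is coarsely $(n+1)$-to-$1$. Since $\delta$ is a bijection between underlying sets it is trivially coarsely surjective. Theorem \ref{asdim characterization} in the direction (2) $\Rightarrow$ (1), applied with $Z=X_\infty$ and $f=\delta$, then delivers $asdim(X)\le n$.

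The only real subtlety I anticipate is verifying that the discreteness hypothesis of Theorems \ref{metric n to one} and \ref{asdim characterization} is available for the constructed space $X_\infty$; this is settled by inspecting the metric produced by the proof of Theorem \ref{metrizability criterion}, as noted above. Everything else is a direct invocation of results already in hand, and combining this corollary with Theorem \ref{coarse cofinal larger than cofinal of corona} and Theorem \ref{asdim bigger than cofinal} completes the chain $\Delta(\nu X)\le asdim(X)\le \Delta_c(X)$ advertised at the start of the section.
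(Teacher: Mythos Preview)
Your proof is correct and follows the same route as the paper: metrize $X_\infty$ via Corollary \ref{limit is metrizable}, use Proposition \ref{boundary map n+1 to 1} together with Theorem \ref{metric n to one} to conclude that $\delta$ is coarsely $(n{+}1)$-to-$1$, note that $\delta$ is (coarsely) surjective, and feed this into Theorem \ref{asdim characterization}. Your explicit verification of the discreteness hypothesis on $X_\infty$ via the metric of Lemma \ref{countable generation criterion} fills in a detail the paper leaves implicit; note only that the parenthetical description you give of ``condition (2)'' of Theorem \ref{metric n to one} is actually condition (3), though either suffices since the proposition yields $\mathcal{U}\delta=\nu\delta$ being $(n{+}1)$-to-$1$.
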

\begin{proof}
The space $X_{\infty}$ is metrizable by Corollary \ref{limit is metrizable} and $\mathcal{U}\delta$ is $n+1$-to-$1$ and surjective by Theorem \ref{finite to one criterion}. Then by Theorem \ref{metric n to one} we have that $\delta$ is coarsely $n+1$-to-$1$ and coarsely surjective, so by Theorem \ref{asdim characterization} we have that $asdim(X)\leq n$. 
\end{proof}

We then quickly have the following.

\begin{theorem}\label{coarse cofinal bounds asdim}
Given a proper metric space $X$ equipped with its metric coarse proximity structure the following inequality holds:

\[\Delta(\nu X)\leq asdim(X)\leq\Delta_{c}(X)\]
\end{theorem}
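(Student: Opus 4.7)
The plan is to concatenate work already done in the paper rather than prove anything genuinely new. The left inequality $\Delta(\nu X)\leq asdim(X)$ is exactly Theorem \ref{asdim bigger than cofinal}: given $asdim(X)=n$, one feeds the coarsely $(n+1)$-to-$1$ coarsely surjective map $f:Z\to X$ from an asymptotic dimension zero space produced by Theorem \ref{asdim characterization} through the boundary functor using Theorem \ref{metric n to one} to obtain an $(n+1)$-to-$1$ continuous surjection $\mathcal{U}f:\mathcal{U}Z\to\nu X$, restricts to an irreducible subset via Proposition \ref{irreducible from perfect}, and applies Theorem \ref{cofinal dimension characterization} to conclude $\Delta(\nu X)\leq n$.

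For the right inequality $asdim(X)\leq\Delta_c(X)$, I would reuse the entire machinery of Section \ref{equality of coarse cofinal and asdim}. Assuming $\Delta_c(X)\leq n$, fix a fine directed system of coarsely canonical covers $\{\mathcal{F}_\alpha\}_{\alpha\in\Omega}$ of order at most $n+1$, form the inverse limit coarse proximity space $X_\infty$ together with the bijective coarse proximity map $\delta:X_\infty\to X$, and bundle three facts already established: (a) $asdim(X_\infty)=0$ from the $\mathcal{V}$-component proposition, (b) $X_\infty$ is metrizable by a proper metric via Corollary \ref{limit is metrizable}, which rests on Proposition \ref{limit is witnessed} and Theorem \ref{metrizability criterion}, and (c) $\mathcal{U}\delta$ is $(n+1)$-to-$1$ by Proposition \ref{boundary map n+1 to 1}. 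Inserting (b) and (c) into Theorem \ref{metric n to one} shows that $\delta$ itself is coarsely $(n+1)$-to-$1$, and since $\delta$ is bijective it is trivially coarsely surjective. Theorem \ref{asdim characterization} then yields $asdim(X)\leq n$, so the two bounds combine into the displayed chain of inequalities.

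The only technical hiccup I anticipate is the $1$-discrete hypothesis required by Theorems \ref{metric n to one} and \ref{asdim characterization}. Since $asdim$, the Higson corona, and $\Delta_c$ are all coarse invariants, and every proper metric space is coarsely equivalent to a $1$-discrete proper metric space, I would replace $X$ by such a model at the outset; $X_\infty$ then inherits the discreteness through the bijection $\delta$, or alternatively one invokes Proposition \ref{may assume bijective} to reduce to the bijective situation. With that caveat absorbed, no genuinely hard step remains, and the theorem follows directly by combining Theorem \ref{asdim bigger than cofinal} with the corollary immediately preceding the statement.
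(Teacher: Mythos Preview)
Your proposal is correct and mirrors the paper's approach exactly: the paper also derives the theorem by simply concatenating Theorem \ref{asdim bigger than cofinal} with the Corollary immediately preceding the statement (which in turn packages Corollary \ref{limit is metrizable}, Proposition \ref{boundary map n+1 to 1}, Theorem \ref{metric n to one}, and Theorem \ref{asdim characterization}). Your observation about the $1$-discrete hypothesis is on point and is handled in the paper the same way---Section \ref{equality of coarse cofinal and asdim} works throughout with a countable $1$-discrete proper metric space, tacitly using that passing to such a model is a coarse equivalence.
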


\section{Discussion and Problems}

The original problem suggested by Dranishnikov was the problem of whether or not the asymptotic dimension of an unbounded proper metric space was equal to the covering dimension of the Higson corona of the space. For general compact Hausdorff spaces $X$ the inequality $dim(X)\leq\Delta(X)$ holds. There are compact Hausdorff spaces with finite covering dimension and infinite cofinal dimension. In fact, for every natural number $n$ there is a compact Hausdorff space $X_{n}$ such that $dim(X_{n})=n$ and $\Delta(X_{n})=\infty$. Detailed constructions of such spaces can be found in \cite{pears}. 

\begin{question}
Is there a compact Hausdorff space $Z$ satisfying $dim(Z)<\Delta(Z)$ that can be realized as the Higson corona of a proper metric space?
\end{question}

We showed that for every unbounded proper metric space that $\Delta(\nu X)\leq asdim(X)\leq\Delta_{c}(X)$. We can weaken the question of whether or not $asdim(X)$ and $dim(\nu X)$ agree to the question of whether or not $\Delta(\nu X)$ and $asdim(X)$ agree. 

\begin{question}
Does the equality $\Delta(\nu X)=asdim(X)$ hold for every unbounded proper metric space?
\end{question}

Stronger questions that imply the previous are the very natural questions:

\begin{question}\label{equality}
Given an unbounded proper metric space $X$, does $\Delta(\nu X)<\infty$ imply that $\Delta_{c}(X)<\infty$?
\end{question}

\begin{question}
Does $\Delta(\nu X)=\Delta_{c}(X)$ for all unbounded proper metric spaces?
\end{question}


\end{document}